\newcommand{\form}[1]{{\langle #1 \rangle }}
\newcommand{\pfister}[1]{{\langle \! \langle #1 \rangle \! \rangle}}
\newcommand{\qpfister}[2]{{\langle \!  \langle #1, #2 ]]}}
\newcommand{\mydim}[1]{{\mathrm{dim}\!\; #1}}
\newcommand{\Izhdim}[1]{{\mathrm{dim}_{\mathrm{Izh}}\!\; #1}}
\newcommand{\anispart}[1]{#1_{\mathrm{an}}}
\newcommand{\windex}[1]{{\mathfrak{i}_W(#1)}}
\newcommand{\witti}[2]{{\mathfrak{i}_{#1}(#2)}}
\newcommand{\wittj}[2]{{\mathfrak{j}_{#1}(#2)}}
\newcommand{\stb}[0]{\stackrel{\mathrm{stb}}{\sim}}
\newcommand{\ratchow}[1]{\overline{\mathrm{Ch}}(#1)}
\newcommand{\chowbar}[1]{\mathrm{Ch}(\overline{#1})}
\newcommand{\pyrdot}[2]{
    % you can tune the size of the pyramid using the scale option
    \begin{tikzpicture}[scale=.4] 
        \pgfmathsetmacro{\n}{#1}                            % define \n, the number of rows 
        \edef\m{0}                                          % define \m, the total number of circles
        \foreach \i in {1, ..., \n} {                       % vertical loop
            \foreach \k in {1, ..., \i} {                   % horizontal loop
                \draw ($(\k,-\i)+(-.5*\i,0)$) circle (1.5mm); % draw the circle (size = 4mm, shifted to the left)
                \pgfmathparse{int(\m+1)}                    % increment \m
                \xdef\m{\pgfmathresult}
            }
        }
        % put the text below the pyramid
        \node[anchor=north] at (current bounding box.south) {\small #2};
    \end{tikzpicture}
}
\newtheorem{theorem}{Theorem}[section]
\newtheorem{lemma}[theorem]{Lemma}
\newtheorem{proposition}[theorem]{Proposition}
\newtheorem{corollary}[theorem]{Corollary}
\newtheorem{conjecture}[theorem]{Conjecture}
\newtheorem{conjectures}[theorem]{Conjectures}
\theoremstyle{definition}
\newtheorem{definition}[theorem]{Definition}
\newtheorem{example}[theorem]{Example}
\newtheorem{examples}[theorem]{Examples}
\newtheorem{question}[theorem]{Question}
\theoremstyle{remark}
\newtheorem{remark}[theorem]{Remark}
\newtheorem{remarks}[theorem]{Remarks}
\numberwithin{equation}{section}
\begin{document}

\title[Rationality of cycles on products of generically smooth quadrics ]{Rationality of cycles modulo 2 on products of generically smooth quadrics in characteristic 2}
\author{Stephen Scully}
\address{Department of Mathematics and Statistics, University of Victoria}
\email{scully@uvic.ca}
\author{Guangzhao Zhu}
\address{Department of Mathematical and Statistical Sciences, University of Alberta}
\email{guangzha@ualberta.ca}

\subjclass[2020]{11E04, 14C15}
\keywords{Quadratic forms, products of generically smoooth quadrics, rationality of algebraic cycles}

\maketitle

\begin{abstract} A 2022 result of Karpenko establishes a conjecture of Hoffmann-Totaro on the possible values of the first higher isotropy index of an arbitrary anisotropic quadratic form of given dimension over an arbitrary field. For nondegenerate forms, this essentially goes back to a 2003 article of the same author on quadratic forms over fields of characteristic not 2. To handle the more involved case of degenerate forms in characteristic $2$, Karpenko showed that certain aspects of the algebraic-geometric approach to nondegenerate quadratic forms developed by Karpenko, Merkurjev, Rost, Vishik and others can be adapted to a study of rational cycles modulo $2$ on powers of a given generically smooth quadric. In this paper, we extend this to a broader study of rational cycles modulo $2$ on arbitrary products of generically smooth quadrics in characteristic $2$.  A basic objective is to have tools available to study correspondences between general quadrics, in particular, between smooth and non-smooth quadrics.  Applications of the theory to the study of degenerate quadratic forms in characteristic $2$ are provided, and a number of open problems on forms of this type are also formulated and discussed. \end{abstract}
 
\section{Introduction} \label{SECintroduction} \label{SECintroduction}

Let $F$ be a field with algebraic closure $\overline{F}$, $\varphi$ an anisotropic quadratic form of dimension $d+2$ over $F$, and $X$ the $d$-dimensional projective $F$-quadric with equation $\varphi = 0$. The latter is smooth precisely when $\varphi$ is nondegenerate in the sense of \cite[\S 7.A]{EKM}. In this case, the motive of $X$ in the category of Chow motives over $F$ with $\mathbb{F}_2$-coefficients decomposes in an essentially unique way as a finite direct sum of indecomposable objects. When $F = \overline{F}$, this of course depends only on $d$. More specifically, $M(X)$ decomposes here as a direct sum of prescribed Tate motives indexed by integers in the interval $[0,d]$. In general, the complete decomposition of $M(X)$ yields a partition of the same set of Tate motives via scalar extension to $\overline{F}$. We call this the \emph{motivic decomposition type of $\varphi$}, and denote it $\mathrm{MDT}(\varphi)$.\footnote{The term \emph{motivic decomposition type of $X$} (resp. the notation $\mathrm{MDT}(X)$) is more appropriate, but we shall not consider here motives of other varieties that one may naturally attach to quadratic forms.} This invariant has played a key role in some of the major advances on nondegenerate quadratic forms achieved since the late 90s, notably in work of Karpenko, Merkurjev and Vishik. As expounded in \cite{EKM}, its study forms part of a well-developed algebraic-geometric approach to nondegenerate quadratic forms based on the investigation of algebraic cycles on products of smooth quadrics and quadratic Grassmannians. 

An important early achievement of this algebraic-geometric approach was Karpenko's theorem on the possible values of the first higher isotropy index 
$$\witti{1}{\varphi} := \mathrm{min}\lbrace \witti{0}{\varphi_L}\;|\; L \text{ an extension of } F \text{ with $\varphi_L$ isotropic}\rbrace$$
for nondegenerate $\varphi$ (\cite{Karpenko1}). Like many articles on the topic appearing at that time, \cite{Karpenko1} limited its considerations to the case where the characteristic of the base field is not $2$, owing to an essential use of the cohomological-type Steenrod operations for mod-2 Chow groups of smooth varieties constructed by Brosnan in \cite{Brosnan}. With the recent construction of the analogous operations over fields of characteristic $2$ by Primozic (\cite{Primozic}), many of the characteristic restrictions in the existing literature on the algebraic-geometric approach to nondegenerate quadratic forms can now be relaxed. 

As far as the general scope of these ideas is concerned, however, interesting questions remain regarding fields of characteristic $2$. Indeed, while anisotropic quadratic forms over fields of characteristic not 2 are necessarily nondegenerate, the study of nondegenerate forms in characteristic $2$ is only part of a broader theory of quadratic forms within which standard algebraic-geometric tools are less directly applicable. For instance, one encounters here the extreme class of \emph{quasilinear} quadratic forms, whose associated quadrics have no smooth points at all. The first real attempts to explore this broader picture came in the 00s with a series of works by Hoffmann, Laghribi, Totaro and others, where some well-known results of inherently algebraic-geometric nature on nondegenerate forms were shown to admit extensions to the degenerate case. This suggested that certain aspects of the algebraic-geometric perspective may be adaptable to the study of degenerate forms, despite the lack of a well-developed intersection theory for non-smooth varieties.

The first steps in this direction were recently taken in \cite{Karpenko2}, where Karpenko extended his result on the $\mathfrak{i}_1$ invariant to the case of degenerate but nonquasilinear quadratic forms in characteristic $2$.\footnote{The statement is also known to be valid for quasilinear forms (\cite{Scully1}), but this requires different methods.} The basic point is the following: Suppose that $\mathrm{char}(F) = 2$, and let $U$ be the smooth locus of $X$. If $\varphi$ is not quasilinear, then $U$ is nonempty, and the scalar extension homomorphism $\mathrm{CH}(X^r)/2 \rightarrow \mathrm{CH}(X^r_{\overline{F}})/2$ factors canonically through $\mathrm{CH}(U^r)/2$ for any positive integer $r$. By passing through the smooth variety $U^r$, one can then apply some of the standard tools of intersection theory to the study of the image of the scalar extension map. As is well known in the nondegenerate case, getting a handle on this image is already sufficient for interesting applications to discrete invariants of quadratic forms.

While \cite{Karpenko2} did not go beyond its intended application to the study of $\mathfrak{i}_1$, it was clear that the arguments found there could be extended to develop a theory of ``rational cycles modulo $2$'' for arbitrary products of generically smooth anisotropic quadrics in characteristic $2$ along the lines of that developed for smooth quadrics in \cite{EKM} and \cite{Vishik1}. The purpose of the present article is to make some of this explicit in order to have tools available for handling certain algebraic-geometric problems for degenerate quadratic forms in characteristic $2$. Of particular interest here are conjectures of the first author on the possible splitting behaviour of forms under scalar extension to function fields of quadrics (\cite[Conj. 1.1]{Scully2} and its refinement discussed in \cite[\S 1]{Scully3}).  These are expected to be valid for all forms, nondegenerate or otherwise, and the tools discussed here are directly applicable to the study of the nonquasilinear case.\footnote{The quasilinear case has been fully resolved in \cite{Scully3} using different methods.} This will be considered in a separate text.  \vspace{.5 \baselineskip}

\noindent {\bf Overview.} After a preliminary section on quadratic forms and their associated quadrics, sections \ref{SECnumericaltriviality}, \ref{SECrationalcycles}, \ref{SECcorrespondences} and \ref{SECisotropicreduction} of the present text extend the discussion of \cite{Karpenko2} to a wider study of arbitrary products of generically smooth anisotropic quadrics in characteristic $2$. In short, if $Y$ is such a product, then minor modifications of the arguments in \cite{Karpenko2} show that
$$ \overline{\mathrm{Ch}}(Y): = \mathrm{Im}(\mathrm{CH}(Y)/2 \rightarrow \mathrm{CH}(Y_{\overline{F}})/2) $$
inherits from the smooth locus of $Y$ the structure of an $\mathbb{F}_2$-algebra and an action of cohomological-type Steenrod operations. Given three such products $Y_1,Y_2,Y_3$, one may then define a composition law $\overline{\mathrm{Ch}}(Y_1 \times Y_2) \otimes_{\mathbb{F}_2}  \overline{\mathrm{Ch}}(Y_2 \times Y_3) \rightarrow \overline{\mathrm{Ch}}(Y_1 \times Y_3)$ that serves as a substitute for the standard composition of Chow correspondences for smooth varieties. 

After presenting the basic tools, we introduce in \S \ref{SECMDT} the obvious extension of the motivic decomposition type invariant to degenerate but nonquasilinear anisotropic forms in characteristic $2$. In the interest of consistency, we continue to use the notation $\mathrm{MDT}$, though we do not introduce any formal categorical framework for the study of this invariant. With the intersection-theoretic tools in place, some of the basic results on the $\mathrm{MDT}$ for nondegenerate forms can be immediately extended to the degenerate case.

In \S \ref{SECVishik}, we establish one of the main objectives of the work, namely a variant of a theorem of Vishik concerning stable birational equivalences of quadratic Grassmannians. Vishik's result, which in the literature is limited to the characteristic-not-$2$ setting, establishes motivic decompositions of smooth quadrics arising from such equivalences. Our Theorem \ref{THMVishik} is a discrete variant of this valid for all nonquasilinear anisotropic forms in characteristic $2$ (for nondegenerate forms, it yields the stronger motivic statement of Vishik's result by the discussion of \cite[Ch.  XVII]{EKM}). A key aspect of this result is that it permits, in certain situations, to relate the $\mathrm{MDT}$ invariants of nondegenerate and degenerate forms.  For example, we can derive degenerate variants of the well-known results of Rost on the motivic structure of nondegenerate Pfister neighbours and excellent forms (see \S \ref{SUBSECPNSandexcellent}). More significantly, the results of \S 8 can be used to recast certain problems on degenerate forms as problems lying within the nondegenerate framework.  For instance, in \S \ref{SECPfisterneighbourconjecture}, we reduce the essential part of a conjecture of Hoffmann-Laghribi on the classification of degenerate Pfister neighbours to a well-known conjecture of Vishik on binary direct summands in the motives of smooth quadrics (see Proposition \ref{PROPreductionofPNproblem}). As part of this discussion, we also provide some more direct evidence for the Hoffmann-Laghribi conjecture (Theorem \ref{THMPfisterneighbourproblem}), and in fact reduce it to another important open problem in the degenerate setting, namely the classification of forms of \emph{nondefective height $1$} (Corollary \ref{CORequivalenceofconjectures}).  In \S \ref{SECexcellentconnections}, we also consider the problem of determining the extent  to which Karpenko's theorem on the possible values of the $\mathfrak{i}_1$ invariant remains valid if the dimension of the quasilinear part of the form is taken into account. We raise here a general question (Question \ref{Qi1}) for which we conjecture a positive answer when the dimension of the quasilinear part is sufficiently small (Conjecture \ref{CONJi1smalls}). Using the results of \S 8 and known results on nondegenerate quadratic forms, we provide some evidence for this conjecture (see Theorem \ref{THMi1restrictionssingular} in particular). 

Finally, while the proofs of some important results on nondegenerate forms (e.g., Karpenko's theorem on $\mathfrak{i}_1$) can be directly adapted to the degenerate setting, there are others lying at a deeper level for which things are more involved. The primary issue here is the fact that the action of the cohomological-type Steenrod operations on the groups $\overline{\mathrm{Ch}}(Y)$ is not intrinsically defined over $F$ outside of the nondegenerate setting. There, the descent of the action has been used to establish a major result on the $\mathrm{MDT}$ invariant, namely Vishik's theorem on the existence of so-called \emph{excellent connections} (\cite{Vishik2}).  In \S \ref{SECexcellentconnections}, we conjecture that the obvious variant of this for degenerate but nonquasilinear anisotropic forms in characteristic $2$ is valid (Conjecture \ref{CONJexcellentconnections}), and discuss some implications of this claim.  While proving the statement requires further development of the algebraic-geometric machinery, we at least verify its validity in dimension $\leq 9$ (Lemma \ref{LEMexcellentconnectionsindimensionatmost9}).  \vspace{.5 \baselineskip}

\noindent {\bf Terminology and Notation.} Throughout this text, a \emph{scheme} is a separated scheme of finite type over a field, and a \emph{variety} is an integral scheme. If $X$ is a scheme, then we shall write $d_X$ for the dimension of $X$.  The letters $\mathrm{CH}$ shall be used to denote integral Chow groups, and $\mathrm{Ch}$ to denote Chow groups modulo $2$.  We shall essentially only work with the latter groups. Basic facts in Chow theory (see, e.g., \cite[Pt. 2]{EKM}) shall be used freely.

\section{Preliminaries on Quadratic Forms and Quadrics in Characteristic 2} \label{SECpreliminaries}

For the remainder of the paper, we fix a field $F$ of characteristic $2$. In this section, we present some preliminary material on quadratic forms and their associated quadrics. For all further background information, the reader is referred to \cite{EKM}.

\subsection{Symmetric Bilinear Forms} By a \emph{symmetric bilinear form over $F$}, we mean a pair $(V, \mathfrak{b})$ consisting of a finite-dimensional $F$-vector space $V$ and a nondegenerate symmetric $F$-bilinear form $\mathfrak{b} \colon V \times V \rightarrow F$. In practice, we suppress $V$ from our notation and simply talk about the form $\mathfrak{b}$. If $a_1,\hdots,a_n \in F^\times$, then we write $\form{a_1,\hdots,a_n}_b$ for form $F^n \times F^n \rightarrow F$ that sends $((x_1,\hdots,x_n),(y_1,\hdots,y_n))$ to $\sum_{i=1}^n a_ix_iy_i$. We also write $\pfister{a_1,\hdots,a_n}_b$ for the $n$-fold bilinear Pfister form $\form{1,a_1}_b \otimes \cdots \otimes \form{1,a_n}_b$. Any such form $\mathfrak{b}$ is \emph{round}, in the sense that $\mathfrak{b} \simeq \mathfrak{b}(v,v)\mathfrak{b}$ for all vectors $v$ with $\mathfrak{b}(v,v) \neq 0$ (\cite[Cor. 6.2]{EKM}).

\subsection{Quadratic Forms} \label{SUBSECquadraticforms}By a \emph{quadratic form over $F$}, we mean a pair $(V, \varphi)$ consisting of a finite-dimensional $F$-vector space $V$ and a quadratic form $\varphi \colon V \rightarrow F$. In practice, we suppress $V$ from our notation and simply talk about the form $\varphi$. In particular, we shall write $\mydim{\varphi}$ for the dimension of $V$ and refer to it as the \emph{dimension of $\varphi$}. If the need arises to consider the underlying space, we shall denote it $V_{\varphi}$. Given $a, b \in F$, we write $[a,b]$ for the form $F^2 \rightarrow F$ that sends $(x,y)$ to $ax^2 + xy + by^2$. Given $a_1,\hdots,a_n \in F$, we write $\form{a_1,\hdots,a_n}$ for the form $F^n \rightarrow F$ that sends $(x_1,\hdots,x_n)$ to $\sum_{i=1}^n a_ix_i^2$. Quadratic forms of this type are said to be \emph{quasilinear}. If $\varphi$ is a quadratic form over $F$, then there is, up to isometry, a unique quadratic form $\mathrm{ql}(\varphi)$ over $F$ with the following properties:
\begin{itemize} \item[(i)] $\mathrm{ql}(\varphi)$ is quasilinear;
\item[(ii)] $\varphi \simeq \left(\perp_{i=1}^r [a_i,b_i]\right) \perp \mathrm{ql}(\varphi)$ for some nonnegative integer $r$ and elements $a_i,b_i \in F$. \end{itemize}
In (ii), the form $\perp_{i=1}^r [a_i,b_i]$ is not uniquely determined by $\varphi$ in general, but the integer $r$ is. If $\mydim{\mathrm{ql}(\varphi)} = s$, then $\mydim{\varphi} = 2r + s$, and we say that \emph{$\varphi$ has type $(r,s)$}. If $W$ is a finite-dimensional $F$-vector space, then we write $\mathbb{H}(W)$ for the form $W \oplus W^{\vee} \rightarrow F$ that sends $(w,f)$ to $f(w)$. Quadratic forms isometric to one of this type are said to be \emph{hyperbolic}. If $\mydim{W} = d$, then $\mathbb{H}(W) \simeq d \cdot \mathbb{H}$, where $\mathbb{H}: = \mathbb{H}(F) \simeq [0,0]$ is the \emph{hyperbolic plane}. Witt decomposition says that if $\varphi$ is a quadratic form over $F$, then there exists an anisotropic quadratic form $\anispart{\varphi}$ over $F$ and nonnegative integers $\windex{\varphi}$, $\mathfrak{i}_{\mathrm{d}}(\varphi)$ such that $\varphi \simeq \anispart{\varphi} \perp \windex{\varphi} \cdot \mathbb{H} \perp \mathfrak{i}_{\mathrm{d}}(\varphi) \cdot \form{0}$. The form $\anispart{\varphi}$ is unique up to isometry and is called the \emph{anisotropic part of $\varphi$}. The integers $\windex{\varphi}$ and $\witti{\mathrm{d}}{\varphi}$ are also unique, and called the \emph{Witt index of $\varphi$} and \emph{defect index of $\varphi$}, respectively. The \emph{isotropy index of $\varphi$}, denoted $\witti{0}{\varphi}$, is defined as the sum of $\windex{\varphi}$ and $\witti{\mathrm{d}}{\varphi}$. Alternatively, $\witti{0}{\varphi}$ is the maximal dimension of a totally isotropic subspace of $V_\varphi$. Note that $\witti{\mathrm{d}}{\varphi}$ coincides with the isotropy index of $\mathrm{ql}(\varphi)$. If this integer is nonzero (i.e., if $\mathrm{ql}(\varphi)$ is isotropic), then we say that $\varphi$ is \emph{defective}. If $\witti{\mathrm{d}}{\varphi} = 0$ and $\mathrm{ql}(\varphi)$ has dimension at most $1$, then we shall say that $\varphi$ is \emph{nondegenerate}. Thus, a nondegenerate quadratic form over $F$ is one isometric to $\perp_{i=1}^r [a_i,b_i]$ or $\left(\perp_{i=1}^r [a_i,b_i]\right) \perp \form{c}$ for some nonnegative integer $r$ and elements $a_i,b_i \in F$, $c \in F^\times$. In the even-dimensional case, the term \emph{nonsingular} is also sometimes used to indicate nondegeneracy. If $\psi$ is another quadratic form over $F$, then we say that $\varphi$ and $\psi$ are \emph{Witt equivalent}, and write $\varphi \sim \psi$, if $\anispart{\varphi} \simeq \anispart{\psi}$. Witt equivalence has the properties of an equivalence relation, and nondegenerate quadratic forms of even dimension are Witt equivalent if and only if they represent the same element of the quadratic Witt group $W_q(F)$.  For nondefective forms whose quasilinear parts have equal dimension, we have:

\begin{lemma} \label{LEMWittequivalenceofnondefective} Let $\varphi$ and $\psi$ be nondefective quadratic forms over $F$ of types $(r,s)$ and $(r',s)$ respectively.  Then the following are equivalent:
\begin{enumerate} \item $\varphi \sim \psi$;
\item $\varphi \perp \psi \sim \mathrm{ql}(\varphi)$;
\item $\witti{0}{\varphi \perp \psi} = r+r' + s$;
\item $\windex{\varphi \perp \psi} = r + r'$ and $\mathrm{ql}(\varphi) \simeq \mathrm{ql}(\psi)$. \end{enumerate}
\begin{proof} Note that if $\sigma$ is any quadratic form over $F$, then the following hold:
\begin{itemize} \item[(i)] $\sigma \perp \mathrm{ql}(\sigma) \sim \sigma$;
\item[(ii)] $\sigma \perp \sigma \sim \mathrm{ql}(\sigma)$. \end{itemize}
Indeed, for the first point, it suffices to show that $\mathrm{ql}(\varphi) \perp \mathrm{ql}(\varphi) \sim \mathrm{ql}(\varphi)$. But if $V$ denotes the underlying vector space of $\mathrm{ql}(\varphi)$, then the automorphism of $V \oplus V$ mapping each pair $(v,w)$ to $(v+w,w)$ gives an isometry from $\mathrm{ql}(\varphi) \perp \mathrm{ql}(\varphi)$ onto $\mathrm{ql}(\varphi) \perp \mathrm{dim}(V)\langle 0 \rangle$.  As for the second point, the first point reduces us to the case where $\sigma$ is nondegenerate of even dimension. But in this case, $\sigma \perp \sigma$ is a nondegenerate form that evidently admits a totally isotropic subspace of dimension equal to half the dimension of its underlying subspace, and is hence hyperbolic. We now prove the desired equivalences:

$(1) \Rightarrow (2)$: We may assume that $\varphi$ and $\psi$ are anisotropic, and hence that $\varphi \simeq \psi$.  The claim then follows from statement (ii) above.

$(2) \Rightarrow (3)$: Since $\mathrm{ql}(\varphi)$ is anisotropic, this is clear.

$(3) \Rightarrow (4)$: Note that $\varphi \perp \psi$ has type $(r+r',2s)$ and quasilinear part $\mathrm{ql}(\varphi) \perp \mathrm{ql}(\psi)$. If $\mathfrak{i}_{\mathrm{d}}(\varphi \perp \psi)$ were greater than $s$, then $\mathrm{ql}(\varphi)$ would be isotropic (being a codimension-$s$ subform of $\mathrm{ql}(\varphi) \perp \mathrm{ql}(\psi)$). Since this is not the case, (3) then implies that $\windex{\varphi \perp \psi} = r+r'$ and $\mathfrak{i}_{\mathrm{d}}(\varphi \perp \psi) = s$. Let $V_1$ and $V_2$ be the underlying vector spaces of $\mathrm{ql}(\varphi)$ and $\mathrm{ql}(\psi)$, respectively. Since $\mathfrak{i}_{\mathrm{d}}(\varphi \perp \psi) = s$ every subform of $\mathrm{ql}(\varphi) \perp \mathrm{ql}(\psi)$ of codimension $<s$ is isotropic. In particular, if $v \in V_1$, then there exists a vector $f(v) \in V_2$ such that $\varphi(v) = \psi(f(v))$. If $f(v)$ were not unique, then $\mathrm{ql}(\psi)$ would be isotropic, contrary to our hypothesis. Thus, $f(v)$ is unique, and the quasilinearity of $\mathrm{ql}(\varphi)$ and $\mathrm{ql}(\psi)$ then implies that $v \mapsto f(v)$ defines an isometry from the former onto the latter. Thus, (4) holds. 

$(4) \Rightarrow (1)$: Since $\varphi \perp \psi$ has type $(r+r',2s)$ and Witt index $r+r'$, we have $\varphi \perp \psi \sim \mathrm{ql}(\varphi \perp \psi) \sim \mathrm{ql}(\varphi) \perp \mathrm{ql}(\psi)$. Since $\mathrm{ql}(\varphi) \simeq \mathrm{ql}(\psi)$, statements (i) and (ii) from the beginning of the proof then gives that 
$$ \varphi \sim \varphi \perp \mathrm{ql}(\varphi) \perp \mathrm{ql}(\psi) \sim \varphi \perp \varphi \perp \psi \sim \mathrm{ql}(\varphi) \perp \psi \sim \mathrm{ql}(\psi) \perp \psi \sim \psi, $$
as desired.  \end{proof} \end{lemma}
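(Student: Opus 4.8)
The plan is to establish the cyclic chain $(1)\Rightarrow(2)\Rightarrow(3)\Rightarrow(4)\Rightarrow(1)$. The workhorse will be two elementary absorption identities, valid for an \emph{arbitrary} quadratic form $\sigma$ over $F$: (i)~$\sigma\perp\mathrm{ql}(\sigma)\sim\sigma$, and (ii)~$\sigma\perp\sigma\sim\mathrm{ql}(\sigma)$. Using the Witt-type decomposition $\sigma\simeq\tau\perp\mathrm{ql}(\sigma)$ with $\tau$ nondegenerate of even dimension, each of these splits into a claim about the quasilinear form $\mathrm{ql}(\sigma)$ and a claim about $\tau$. On the $\tau$-side, $\tau\perp\tau$ carries its diagonal as a totally isotropic subspace of half the dimension, so it is hyperbolic. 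On the quasilinear side, the decisive feature of characteristic $2$ is additivity, $q(v+w)=q(v)+q(w)$ for $q$ quasilinear; the substitution $(v,w)\mapsto(v+w,w)$ then exhibits $\mathrm{ql}(\sigma)\perp\mathrm{ql}(\sigma)$ as isometric to $\mathrm{ql}(\sigma)\perp\mathrm{dim}(V)\langle 0\rangle$, whence $\mathrm{ql}(\sigma)\perp\mathrm{ql}(\sigma)\sim\mathrm{ql}(\sigma)$. Together with the (easily verified) stability of Witt equivalence under orthogonal sum, these give (i) and (ii).

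For the three ``soft'' implications I would argue as follows. For $(1)\Rightarrow(2)$: passing to anisotropic parts changes neither side of (2), since the forms are nondefective (so $\mathrm{ql}$ is unaffected) and $\sim$ is stable under $\perp$; we may thus assume $\varphi\simeq\psi$, and then $\varphi\perp\psi\simeq\varphi\perp\varphi\sim\mathrm{ql}(\varphi)$ by (ii). For $(2)\Rightarrow(3)$: nondefectiveness makes $\mathrm{ql}(\varphi)$ anisotropic, so (2) identifies $\anispart{(\varphi\perp\psi)}$ with the quasilinear form $\mathrm{ql}(\varphi)$ of dimension $s$; comparing $\mathrm{dim}(\varphi\perp\psi)=2(r+r')+2s$ with the general identity $\mathrm{ql}(\varphi\perp\psi)=\mathrm{ql}(\anispart{(\varphi\perp\psi)})\perp\witti{\mathrm{d}}{\varphi\perp\psi}\langle 0\rangle$ then forces $\witti{\mathrm{d}}{\varphi\perp\psi}=s$ and $\windex{\varphi\perp\psi}=r+r'$, hence $\witti{0}{\varphi\perp\psi}=r+r'+s$. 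For $(4)\Rightarrow(1)$: the hypothesis $\windex{\varphi\perp\psi}=r+r'$ says precisely that the nonsingular part of $\varphi\perp\psi$ (which has dimension $2(r+r')$) is hyperbolic, so $\varphi\perp\psi\sim\mathrm{ql}(\varphi)\perp\mathrm{ql}(\psi)$; feeding this, the isometry $\mathrm{ql}(\varphi)\simeq\mathrm{ql}(\psi)$, and (i) and (ii) into the chain $\varphi\sim\varphi\perp\mathrm{ql}(\varphi)\perp\mathrm{ql}(\psi)\sim\varphi\perp\varphi\perp\psi\sim\mathrm{ql}(\varphi)\perp\psi\sim\mathrm{ql}(\psi)\perp\psi\sim\psi$ gives (1).

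The implication carrying the real content is $(3)\Rightarrow(4)$, and this is where I expect the only genuine work. First I would split the total isotropy index into its Witt and defect components. Since $\varphi\perp\psi$ has type $(r+r',2s)$, one has $\windex{\varphi\perp\psi}\le r+r'$; and since $\mathrm{ql}(\varphi)$ is an anisotropic subform of dimension $s$ of the dimension-$2s$ quasilinear form $\mathrm{ql}(\varphi\perp\psi)=\mathrm{ql}(\varphi)\perp\mathrm{ql}(\psi)$, a dimension count on the intersection of a maximal totally isotropic subspace of the latter with $V_{\mathrm{ql}(\varphi)}$ gives $\witti{\mathrm{d}}{\varphi\perp\psi}\le s$; hypothesis (3) thus forces both inequalities to equalities, $\windex{\varphi\perp\psi}=r+r'$ and $\witti{\mathrm{d}}{\varphi\perp\psi}=s$. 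It remains to extract $\mathrm{ql}(\varphi)\simeq\mathrm{ql}(\psi)$ from $\witti{\mathrm{d}}{\varphi\perp\psi}=s$. The point is that this equality forces every subform of $\mathrm{ql}(\varphi)\perp\mathrm{ql}(\psi)$ of codimension $<s$ to be isotropic; applying this to the subform $\langle\mathrm{ql}(\varphi)(v)\rangle\perp\mathrm{ql}(\psi)$ (codimension $s-1$) for each $v\in V_{\mathrm{ql}(\varphi)}$, and using anisotropy of $\mathrm{ql}(\psi)$ to see that the first coordinate of any isotropic vector must be nonzero, produces for each $v$ a vector $f(v)\in V_{\mathrm{ql}(\psi)}$ with $\mathrm{ql}(\psi)(f(v))=\mathrm{ql}(\varphi)(v)$; anisotropy of $\mathrm{ql}(\psi)$ together with additivity in characteristic $2$ makes $f(v)$ unique, hence $f$ additive and $F$-homogeneous, that is, an injective $F$-linear map, hence --- as $\mathrm{dim}\,V_{\mathrm{ql}(\varphi)}=\mathrm{dim}\,V_{\mathrm{ql}(\psi)}=s$ --- an isometry $\mathrm{ql}(\varphi)\simeq\mathrm{ql}(\psi)$. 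Beyond this argument, I anticipate that the only point needing care is the bookkeeping of the three indices $\mathfrak{i}_0$, $\mathfrak{i}_W$ and $\mathfrak{i}_{\mathrm{d}}$ and the repeated reductions to statements purely about quasilinear forms; the remainder is formal manipulation with Witt equivalence.
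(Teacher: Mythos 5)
Your proposal is correct and follows essentially the same route as the paper: the same two absorption identities (i) and (ii) proved by the same substitution $(v,w)\mapsto(v+w,w)$ and the diagonal totally isotropic subspace, the same splitting of $\witti{0}{\varphi\perp\psi}$ into Witt and defect components in $(3)\Rightarrow(4)$, and the same construction of the isometry $v\mapsto f(v)$ from isotropy of codimension-$(s-1)$ subforms of $\mathrm{ql}(\varphi)\perp\mathrm{ql}(\psi)$. The extra detail you supply (e.g.\ the explicit subform $\form{\mathrm{ql}(\varphi)(v)}\perp\mathrm{ql}(\psi)$ and the dimension count for $\witti{\mathrm{d}}{\varphi\perp\psi}\le s$) merely spells out steps the paper leaves implicit.
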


If $\mathfrak{b}$ and $\varphi$ are symmetric bilinear and quadratic forms over $F$, respectively, then we may consider the tensor product quadratic form $\mathfrak{b} \otimes \varphi$ as defined on \cite[P. 51]{EKM}. If $\varphi$ is nondegenerate, then the same is true of $\mathfrak{b} \otimes \varphi$, and the construction then equips the quadratic Witt group $W_q(F)$ with the structure of a $W(F)$-module, where $W(F)$ is the Witt ring of symmetric bilinear forms over $F$. Finally, if $K$ is a field extension of $F$, then we shall write $\varphi_K$ for the quadratic form over $K$ induced by $\varphi$. Note that we have $\mathrm{ql}(\varphi_K) \simeq \mathrm{ql}(\varphi)_K$. We shall repeatedly use the following basic facts:
\begin{itemize} \item Anisotropic quadratic forms remain anisotropic under purely transcendental extensions (\cite[Lem. 7.15]{EKM}) and finite extensions of odd degree (Springer's theorem, \cite[Cor. 18.5]{EKM});
\item Anisotropic quasilinear quadratic forms remain anisotropic under separable extensions (\cite[Prop. 5.3]{Hoffmann2}). \end{itemize}

\subsection{Subforms and Domination} \label{SUBSECdomination} Let $\psi$ be a quadratic form of type $(r,s)$ over $F$. If $\psi$ is isometric to an orthogonal summand of a quadratic form $\varphi$ over $F$, then we shall say that \emph{$\psi$ is a subform of $\varphi$}, and write $\psi \subset \varphi$. More generally, if $\psi$ is isometric to the restriction of $\varphi$ onto a subspace of $V_{\varphi}$, then we shall say that \emph{$\psi$ is dominated by $\varphi$}, and write $\psi \prec \varphi$. If $\varphi$ is nondegenerate of even dimension, then it is shown in \cite[\S 3]{HoffmannLaghribi1} that $\psi \prec \varphi$ if and only if there exist quadratic forms $\psi_r, \tau, \sigma$ over $F$, and elements $c_1,d_1,\hdots,c_s,d_s \in F$ such that
\begin{itemize} \item $\psi_r$ and $\tau$ are nondegenerate and of even dimension;
\item $\psi \simeq \psi_r \perp \form{c_1,\hdots,c_s}$ (in particular, $\mathrm{ql}(\psi) \simeq \form{c_1,\hdots,c_s}$);
\item $\varphi \simeq \psi_r \perp \tau \perp [c_1,d_1] \perp \cdots \perp [c_s,d_s]$. \end{itemize}
While the individual forms appearing here are not uniquely determined, one readily observes that the form $\psi_{\varphi}^c: = \tau \perp \form{c_1,\hdots,c_s} \simeq \tau \perp \mathrm{ql}(\psi)$ satisfies $\psi_{\varphi}^c \sim \psi \perp \varphi$, and hence only depends on the pair $(\psi, \varphi)$. We call it the \emph{complementary form of $\psi$ in $\varphi$}. Note that $\mydim{\varphi} - \mydim{\psi} = \mydim{\psi_{\varphi}^c} \geq s$, and that $(\psi_{\varphi}^c)^c \simeq \psi$. The first inequality implies in particular that $\mydim{\varphi} \geq 2(r+s)$. If equality holds here, then we say that \emph{$\varphi$ is a nonsingular completion of $\psi$}. In this case, we have $\psi_{\varphi}^c \simeq \mathrm{ql}(\psi)$. 

\subsection{Projective Quadrics} \label{SUBSECquadrics} If $\varphi$ is a quadratic form over $F$, then we shall write $X_{\varphi}$ for the quadric hypersurface in $\mathbb{P}(V_{\varphi})$ defined by the vanishing of $\varphi$ (when $\mydim{\varphi} \leq 1$, this means that $X_{\varphi} = \emptyset$). By (the proof of) \cite[ Prop. 22.1]{EKM}, the singular (i.e., nonsmooth) locus of $X_{\varphi}$ is the closed subscheme defined by the vanishing of the quasilinear part $\mathrm{ql}(\varphi)$. In particular, $X_{\varphi}$ is smooth precisely when $\varphi$ is nondegenerate, and generically smooth (i.e., has nonempty smooth locus) precisely when $\varphi$ is nonquasilinear. We shall say that $X_{\varphi}$ is isotropic (resp. anisotropic, nondefective, quasilinear) if $\varphi$ is.

\subsection{Function Fields of Quadrics, the Knebusch Splitting Tower and the Izhboldin Dimension} \label{SUBSECfunctionfieldsandsplittingpatterns} Let $\varphi$ be a quadratic form over $F$. If the quadric $X_{\varphi}$ is nonempty and integral, then we write $F(\varphi)$ for its function field. Otherwise, we set $F(\varphi): = F$. It is easy to see that $X_{\varphi}$ is nonempty and integral if and only if $\witti{0}{\varphi} \leq \mydim{\varphi} - 2$, so the second case is exceptional. As $X_{\varphi}$ has an $F(\varphi)$-point, the form $\varphi_{F(\varphi)}$ is isotropic. 

\begin{lemma} \label{LEMfunctionfields} Let $\varphi$ be a nonzero quadratic form of dimension $\geq 2$ over $F$ such that $\witti{0}{\varphi} \leq \mydim{\varphi} - 2$ $($so that $X_{\varphi}$ is nonempty and integral$)$. Then:
\begin{enumerate} \item $F(\varphi)/F$ is separable if and only if $\varphi$ is nonquasilinear;
\item $F(\varphi)/F$ is purely transcendental if and only if $\windex{\varphi}>0$. 
\end{enumerate}
\begin{proof} (1) The function field of an $F$-variety $X$ is a separable extension of $F$ precisely when $X$ is generically smooth, and the quadric $X_{\varphi}$ is generically smooth precisely when $\varphi$ is nonquasilinear (\S \ref{SUBSECquadrics}). 

(2) Suppose first that $F(\varphi)/F$ is purely transcendental. Since $\varphi_{F(\varphi)}$ is isotropic, $\varphi$ must then be isotropic. If $\windex{\varphi}=0$, we then have that $\varphi \simeq \anispart{\varphi} \perp i \cdot \form{0}$ for some $i \geq 1$, and so $X_{\varphi}$ is a cone over $X_{\anispart{\varphi}}$. In particular, $F(\anispart{\varphi})$ is $F$-isomorphic to a subfield of $F(\varphi)$. Since $F(\varphi)/F$ is purely transcendental, $\anispart{\varphi}$ must then remain anisotropic over $F(\anispart{\varphi})$. But implies that $\mydim{\anispart{\varphi}} \leq 1$, contradicting the integrality of $X_{\varphi}$. We must therefore have that $\windex{\varphi} > 0$ in this case. Conversely, if $\windex{\varphi}>0$, then $X_{\varphi}$ is isomorphic to a projective hypersurface of equation $xy = p(z_1,\hdots,z_n)$ for some homogeneous quadratic polynomial $p(z_1,\hdots,z_n) \in F[z_1,\hdots,z_n]$. The locus where $x \neq 0$ then constitutes a rational open subvariety of $X_{\varphi}$, and so $F(\varphi)/F$ is purely transcendental.  \end{proof} \end{lemma}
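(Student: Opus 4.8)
The plan is to treat the two equivalences separately, relying on the description of the singular locus of $X_{\varphi}$ from \S\ref{SUBSECquadrics} and on the two basic facts about anisotropy under field extensions recorded at the end of \S\ref{SUBSECquadraticforms}. Part (1) should be immediate: for any $F$-variety $X$, the extension $F(X)/F$ is separable precisely when $X$ is generically smooth over $F$ (equivalently, geometrically reduced), and since $X_{\varphi}$ is a variety by the standing hypothesis and is generically smooth exactly when $\varphi$ is nonquasilinear by \S\ref{SUBSECquadrics}, there is nothing further to check.

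For the ``if'' direction of (2), I would suppose $\windex{\varphi} > 0$ and write $\varphi \simeq \mathbb{H} \perp \psi$ with $n := \mydim{\psi} = \mydim{\varphi} - 2$, noting $n \geq 1$ since $\mydim{\varphi} \geq \witti{0}{\varphi} + 2 \geq \windex{\varphi} + 2 \geq 3$. Choosing a basis adapted to this decomposition and using $\mathbb{H} \simeq [0,0]$, the quadric $X_{\varphi} \subseteq \mathbb{P}^{n+1}$ is cut out by $xy = \psi(z_1,\dots,z_n)$. The open subscheme $\{x \neq 0\}$ is nonempty (it contains $[1:0:\cdots:0]$) and is isomorphic to $\mathbb{A}^n_F$ via $(z_1,\dots,z_n) \mapsto [1:\psi(z):z_1:\cdots:z_n]$, so $F(\varphi)$ is the purely transcendental extension $F(z_1,\dots,z_n)$ of $F$.

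For the ``only if'' direction of (2), I would argue contrapositively: assume $\windex{\varphi} = 0$ and show $F(\varphi)/F$ is not purely transcendental. Since $X_{\varphi}$ is nonempty and integral, $F(\varphi)$ is a genuine transcendental extension of $F$ and $\varphi_{F(\varphi)}$ is isotropic. If $\varphi$ were anisotropic it would remain anisotropic over the purely transcendental extension $F(\varphi)$ by the first basic fact of \S\ref{SUBSECquadraticforms}, a contradiction; hence $\witti{\mathrm{d}}{\varphi} = i \geq 1$ and $\varphi \simeq \anispart{\varphi} \perp i\cdot\form{0}$. The hypothesis $\witti{0}{\varphi} = i \leq \mydim{\varphi} - 2 = \mydim{\anispart{\varphi}} + i - 2$ forces $\mydim{\anispart{\varphi}} \geq 2$, so $X_{\anispart{\varphi}}$ is a nonempty integral quadric and $\anispart{\varphi}$ is isotropic over $F(\anispart{\varphi})$. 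Since $X_{\varphi}$ is the projective cone over $X_{\anispart{\varphi}}$ with vertex a linear subspace $\cong \mathbb{P}^{i-1}$, linear projection away from that vertex is a dominant rational map $X_{\varphi} \dashrightarrow X_{\anispart{\varphi}}$ and so induces an $F$-embedding $F(\anispart{\varphi}) \hookrightarrow F(\varphi)$. If $F(\varphi)/F$ were purely transcendental, then $\anispart{\varphi}$, being anisotropic over $F$, would be anisotropic over $F(\varphi)$ and hence over the subfield $F(\anispart{\varphi})$, contradicting its isotropy there. Therefore $\windex{\varphi} > 0$.

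The only genuinely substantive part is this last direction. The main points requiring care are the two uses of the standing hypothesis $\witti{0}{\varphi} \leq \mydim{\varphi} - 2$ (first to know $F(\varphi)$ is transcendental over $F$, then to guarantee $\mydim{\anispart{\varphi}} \geq 2$, so that $F(\anispart{\varphi})$ is defined and carries an isotropic scalar extension of $\anispart{\varphi}$) and the identification of $F(\anispart{\varphi})$ as a subfield of $F(\varphi)$ via the cone structure; everything else is formal, with the real work done by the preservation of anisotropy under purely transcendental extensions.
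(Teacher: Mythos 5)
Your proposal is correct and follows essentially the same route as the paper: part (1) via the generic-smoothness criterion of \S \ref{SUBSECquadrics}, the ``if'' direction of (2) via the rational open locus $x \neq 0$ on the hypersurface $xy = \psi(z)$, and the ``only if'' direction via the cone structure over $X_{\anispart{\varphi}}$ together with preservation of anisotropy under purely transcendental extensions. Your version is in fact slightly more careful than the paper's in making explicit that the hypothesis $\witti{0}{\varphi} \leq \mydim{\varphi} - 2$ forces $\mydim{\anispart{\varphi}} \geq 2$, so that $F(\anispart{\varphi})$ is a genuine function field over which $\anispart{\varphi}$ becomes isotropic.
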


Following a classical construction of Knebusch, we associate to $\varphi$ a finite sequence of pairs $(F_i,\varphi_i)$ consisting of an extension $F_i$ of $F$ and an anisotropic quadratic form $\varphi_i$ over $F_i$ as follows: Given a pair $(F_{i-1},\varphi_{i-1}$) with $\mydim{\varphi_{i-1}} \geq 2$, we define $F_i$ to be the field $F_{i-1}(\varphi_{i-1})$ and $\varphi_i$ to be the form $\anispart{(\varphi_{F_i})}$. The process is initiated by setting $(F_0,\varphi_0) : = (F, \anispart{\varphi})$, and terminates at the first integer $h$ for which $\mydim{\varphi_h} \leq 1$. The tower $F = F_0 \subseteq F_1 \subseteq \cdots \subseteq F_h$ is called the \emph{Knebusch splitting tower of $\varphi$}, the form $\varphi_i$ is called the \emph{$i$th anisotropic kernel form of $\varphi$}, and the integer $h$ is called the \emph{Knebusch height of $\varphi$}. For all $0 \leq t \leq h$, we set $\wittj{t}{\varphi} : = \witti{0}{\varphi_{F_t}}$. When $t \geq 1$, we also set $\witti{t}{\varphi} := \wittj{t}{\varphi} - \wittj{t-1}{\varphi}$. The set $\lbrace \wittj{t}{\varphi}\;|\; 0 \leq t \leq h \rbrace$ is called the \emph{Knebusch splitting pattern of $\varphi$}. In the sequel, we will be interested in the case where $\varphi$ is nondefective, and here will shall only deal with a certain piece of the Knebusch splitting tower:

\begin{lemma} \label{LEMnondefectiveheight} Suppose, in the above situation, that $\varphi$ is nondefective. If $\varphi$ has type $(r,s)$, then there exists a unique integer $0 \leq h_{\mathrm{nd}} \leq h$ for which $\wittj{h_{\mathrm{nd}}}{\varphi} = r$. Moreover, we have
\begin{alignat*}{3} \lbrace \wittj{t}{\varphi}\;|\; 0 \leq t \leq h_{\mathrm{nd}} \rbrace \quad & = \quad && \lbrace \windex{\varphi_K}\;|\; K/F \text{ a separable extension} \rbrace \\ \quad & = \quad && \lbrace \witti{0}{\varphi_K}\;|\; K/F \text{ an extension with } \witti{\mathrm{d}}{\varphi_K} = 0 \rbrace.  \end{alignat*}
\begin{proof} Let $h_{\mathrm{nd}}$ be the smallest integer for which $\varphi_{h_{\mathrm{nd}}}$ is quasilinear. The Witt index of $\varphi$ over $F_{h_{\mathrm{nd}}}$ is then $r$, and so $\wittj{h_{\mathrm{nd}}}{\varphi} \geq r$. To prove that equality holds, we have to show that $\varphi$ remains nondefective over $F_{h_{\mathrm{nd}}}$. But the extension $F_{h_{\mathrm{nd}}}$ is separable by repeated application of Lemma \ref{LEMfunctionfields} (1), and so the claim follows from the fact that anisotropic quasilinear quadratic forms remain anisotropic under separable extensions. This proves the first statement. Moreover, the preceding discussion also gives the inclusions
\begin{alignat*}{3} \lbrace \wittj{t}{\varphi}\;|\; 0 \leq t \leq h_{\mathrm{nd}} \rbrace \quad & \subseteq \quad && \lbrace \windex{\varphi_K}\;|\; K/F \text{ a separable extension} \rbrace \\ \quad & \subseteq \quad && \lbrace \witti{0}{\varphi_K}\;|\; K/F \text{ an extension with } \witti{\mathrm{d}}{\varphi_K} = 0 \rbrace,  \end{alignat*}
so to prove the second statement, we just have to show that if $K/F$ is a field extension with $\witti{\mathrm{d}}{\varphi} = 0$, then $\witti{0}{\varphi_K} = \wittj{t}{\varphi}$ for some $0 \leq t \leq h_{\mathrm{nd}}$. Since $\witti{\mathrm{d}}{\varphi} = 0$, we have $\witti{0}{\varphi_K} \leq r$, and so there exists a smallest integer $0 \leq t \leq h_{\mathrm{nd}}$ such that $\windex{\varphi_K} \leq \wittj{t}{\varphi}$. We claim that equality holds here. Since anisotropic quadratic forms remain anisotropic under purely transcendental extensions, it suffices to show that the compositum $K \cdot F_t$ is a purely transcendental extension of $K$. But since $\windex{\varphi_K} = \witti{0}{\varphi_K} > \wittj{t-1}{\varphi}$, $K \subseteq K \cdot F_1 \subseteq \cdots \subseteq K \cdot F_t$ is a tower of function fields of integral quadrics defined by quadratic forms of positive Witt index, and the claim then holds by Lemma \ref{LEMfunctionfields} (2).  \end{proof}
\end{lemma}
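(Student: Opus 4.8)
The plan is to take $h_{\mathrm{nd}}$ to be the first stage of the Knebusch splitting tower at which the anisotropic kernel form becomes quasilinear, and then to establish (i) that $\wittj{h_{\mathrm{nd}}}{\varphi} = r$ and that this equality pins down $h_{\mathrm{nd}}$ uniquely, and (ii) the two displayed equalities of sets, where the real content is closing a chain of inclusions.

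For (i), let $h_{\mathrm{nd}}$ be the least index with $\varphi_{h_{\mathrm{nd}}}$ quasilinear. Since $\varphi_0, \dots, \varphi_{h_{\mathrm{nd}}-1}$ are anisotropic and nonquasilinear, iterating Lemma \ref{LEMfunctionfields} (1) shows $F_{h_{\mathrm{nd}}}/F$ is separable; as $\varphi$ is nondefective, $\mathrm{ql}(\varphi)$ is anisotropic, and it stays so over the separable extension $F_{h_{\mathrm{nd}}}$, so $\witti{\mathrm{d}}{\varphi_{F_{h_{\mathrm{nd}}}}} = 0$ and $\witti{0}{\varphi_{F_{h_{\mathrm{nd}}}}} = \windex{\varphi_{F_{h_{\mathrm{nd}}}}}$. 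Being quasilinear, the kernel $\varphi_{h_{\mathrm{nd}}}$ coincides with $\mathrm{ql}(\varphi_{F_{h_{\mathrm{nd}}}}) \simeq \mathrm{ql}(\varphi)_{F_{h_{\mathrm{nd}}}}$, which has dimension $s$; a dimension count then forces $\windex{\varphi_{F_{h_{\mathrm{nd}}}}} = r$. Uniqueness follows from the observation that $\wittj{0}{\varphi} < \wittj{1}{\varphi} < \cdots < \wittj{h}{\varphi}$ is strictly increasing, each function-field step raising the isotropy index by at least one because $\varphi_i$ acquires a rational point over $F_{i+1}$.

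For (ii), I would establish both ``$\subseteq$'' inclusions and then close the loop. For $t \le h_{\mathrm{nd}}$ the field $F_t$ is separable over $F$ and $\varphi$ is nondefective there, so $\windex{\varphi_{F_t}} = \witti{0}{\varphi_{F_t}} = \wittj{t}{\varphi}$, giving the first inclusion; the second is clear since a separable extension keeps $\mathrm{ql}(\varphi)$ anisotropic, so $\witti{\mathrm{d}}{\varphi_K}=0$ and $\windex{\varphi_K}=\witti{0}{\varphi_K}$. For the containment closing the loop, let $K/F$ be an arbitrary extension with $\witti{\mathrm{d}}{\varphi_K} = 0$; then $\witti{0}{\varphi_K} = \windex{\varphi_K} \le r$, since $\varphi_K$ has type $(r,s)$ with anisotropic quasilinear part. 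Let $t \le h_{\mathrm{nd}}$ be the largest index with $\wittj{t}{\varphi} \le \witti{0}{\varphi_K}$. For each $i < t$ one has $\windex{(\varphi_i)_{K \cdot F_i}} = \windex{\varphi_{K \cdot F_i}} - \wittj{i}{\varphi} \ge \witti{0}{\varphi_K} - \wittj{i}{\varphi} > 0$, so each step of $K \subseteq K \cdot F_1 \subseteq \cdots \subseteq K \cdot F_t$ is the function field of a quadric with positive Witt index and hence purely transcendental by Lemma \ref{LEMfunctionfields} (2); thus $K \cdot F_t/K$ is purely transcendental and $\witti{0}{\varphi_{K \cdot F_t}} = \witti{0}{\varphi_K}$. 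If $\witti{0}{\varphi_K}$ were strictly larger than $\wittj{t}{\varphi}$, then maximality of $t$ would force $t < h_{\mathrm{nd}}$ and $\witti{0}{\varphi_K} < \wittj{t+1}{\varphi}$, and the same reasoning gives one further purely transcendental step, whence $\witti{0}{\varphi_K} = \witti{0}{\varphi_{K \cdot F_{t+1}}} \ge \witti{0}{\varphi_{F_{t+1}}} = \wittj{t+1}{\varphi} > \witti{0}{\varphi_K}$, a contradiction. Hence $\witti{0}{\varphi_K} = \wittj{t}{\varphi}$ lies in the desired set.

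The step I expect to be the main obstacle is this last one. It rests on two elementary but essential book-keeping points — strict monotonicity of the isotropy index along the Knebusch tower, and the fact that for a nondefective form the Witt index is bounded by the number of nonsingular binary blocks (yielding $\witti{0}{\varphi_K} \le r$) — and then on the correct identification of the maximal index $t$ with $K \cdot F_t/K$ purely transcendental, which is where Lemma \ref{LEMfunctionfields} (2) together with the persistence of nondefectiveness under the purely transcendental extension $K \cdot F_t/K$ does the work. Once the compositum $K \cdot F_t$ is in hand, the conclusion is immediate from the stability of anisotropy under purely transcendental extensions.
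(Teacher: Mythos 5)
Your proposal is correct and follows essentially the same route as the paper: define $h_{\mathrm{nd}}$ as the first stage where the kernel form becomes quasilinear, use separability of $F_{h_{\mathrm{nd}}}/F$ (via Lemma \ref{LEMfunctionfields}(1)) to preserve nondefectiveness, and close the chain of inclusions by showing the compositum $K\cdot F_t$ is purely transcendental over $K$ via Lemma \ref{LEMfunctionfields}(2). You merely spell out a few details the paper leaves implicit (the dimension count giving $\windex{\varphi_{F_{h_{\mathrm{nd}}}}}=r$, uniqueness via strict monotonicity of the $\mathfrak{j}_t$, and the explicit contradiction at the final step, where you correctly take the \emph{largest} admissible $t$ where the paper's wording says ``smallest'').
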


In the situation of the lemma, the integer $h_{\mathrm{nd}}$ will be called the \emph{nondefective height of $\varphi$}. In turn, the truncated tower $F = F_0 \subseteq F_1 \subseteq \cdots \subseteq F_{h_{\mathrm{nd}}}$ will be called the \emph{nondefective splitting tower of $\varphi$}, and the set $\lbrace \wittj{t}{\varphi}\;|\; 0 \leq t \leq h_{\mathrm{nd}} \rbrace$ the \emph{nondefective splitting pattern of $\varphi$}. If we need to emphasize the dependence on $\varphi$, we will write $h_{\mathrm{nd}}(\varphi)$ instead of $h_{\mathrm{nd}}$. 

\begin{remarks} In the above situation, the \emph{full splitting pattern} of $\varphi$ is defined as the set consisting of the isotropy indices attained by $\varphi$ over \emph{all} possible extensions of $F$. When $\varphi$ is nondegenerate, Lemma \ref{LEMnondefectiveheight} shows that this coincides with the Knebusch splitting pattern of $\varphi$ (which is the same thing as nondefective splitting pattern of $\varphi$ in this case). In general, however, the Knebusch splitting pattern will only constitute a subset of the full splitting pattern. For example, one readily checks that if $X$, $Y$ and $Z$ are indeterminates, then the quasilinear quadratic form $\form{1,X,Y,XY,Z}$ over $F(X,Y,Z)$ has Knebusch splitting pattern $\lbrace 0,1,3,4 \rbrace$, but full splitting pattern $\lbrace 0,1,2,3,4 \rbrace$. The point is that if we only consider extensions that do not alter the defect index of $\varphi$ (e.g., separable extensions), then Knebusch's construction gives a ``universal'' partial splitting tower for $\varphi$. If we wish to allow $\mathrm{ql}(\varphi)$ to split, however, then this universality is lost. 
\end{remarks}

Although the Knebusch splitting tower does not in general recover all possible isotropy indices attained by $\varphi$ over extensions of $F$, one can nevertheless show that $\wittj{1}{\varphi}$ is the second smallest integer in the full splitting pattern of $\varphi$ when defined. In other words, if $\varphi$ is anisotropic of dimension $\geq 2$, then $\witti{1}{\varphi}$ is the smallest positive element of the set $\lbrace \witti{0}{\varphi_K}\;|\; K/F \text{ an extension} \rbrace$ (see \cite[\S 0]{Karpenko2}). The following theorem gives a nontrivial restriction on the possible values of $\witti{1}{\varphi}$ in this case. The analogous statement over fields of characteristic not 2 was first proved by Karpenko in \cite{Karpenko1}. In our setting, the case where $\varphi$ is quasilinear was treated in \cite{Scully1}, and the nonquasilinear cases were more recently treated by Primozic in \cite{Primozic} (nondegenerate case) and Karpenko in \cite{Karpenko2} (general case). 

\begin{theorem} \label{THMi1} Let $\varphi$ be an anisotropic quadratic form of dimension $\geq 2$ over $F$, and let $u$ be the largest integer for which $\Izhdim{\varphi}$ is divisible by $2^u$. Then $\witti{1}{\varphi} \leq 2^u$. 
\end{theorem}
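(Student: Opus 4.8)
The plan is to split the statement into the two cases that have already been settled in the literature and to recall the strategy of each; the machinery of \cite{Karpenko2}, extended in the body of this paper, is precisely what lets the harder (nonquasilinear) case run in arbitrary characteristic $2$. If $\varphi$ is quasilinear, then $X_\varphi$ has no smooth points and the intersection-theoretic tools above are unavailable; this case is treated instead in \cite{Scully1} via the theory of quasilinear forms, using the combinatorics of the $\mathbb{F}_2$-subspace of $F$ spanned by the values of $\varphi$ together with the behaviour of $\Izhdim{\cdot}$ under function-field extensions of quasilinear quadrics. So I would assume henceforth that $\varphi$ is nonquasilinear, in which case $X := X_\varphi$ is generically smooth with nonempty smooth locus $U$, and the formalism of rational cycles modulo $2$ on powers of $X$ --- as in \cite{Karpenko2} and in \S\S\ref{SECnumericaltriviality}--\ref{SECisotropicreduction} in the sequel --- applies.

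\textbf{Strategy for the nonquasilinear case (following Karpenko).} Set $i := \witti{1}{\varphi}$ and $N := \Izhdim{\varphi}$ (for nondegenerate $\varphi$ one has $N = \mydim{\varphi} - \witti{1}{\varphi}$), and recall that $i$ is the smallest positive value of $\witti{0}{\varphi_K}$ as $K$ runs over field extensions of $F$. Assume for contradiction that $i \geq 2^u + 1$, equivalently $i > 2^{v_2(N)}$ with $v_2$ the $2$-adic valuation. The steps are: (1) \emph{Isotropic reduction.} Over $K := F(X)$ the form $\varphi$ becomes isotropic of isotropy index exactly $i$, so $X_K$ contains a rational linear subspace of dimension $i-1$ and the classes $\ell_0,\dots,\ell_{i-1}$ of linear subspaces of $\overline{X}$ of dimensions $0,\dots,i-1$ become rational over $K$; since $X_K$ is the generic fibre of a projection $X \times X \to X$ and $\mathrm{Ch}(X \times X) \twoheadrightarrow \mathrm{Ch}(X_K)$, these lift to rational cycles $\rho_0,\dots,\rho_{i-1} \in \ratchow{X \times X}$ with leading terms $h^0\times \ell_0,\dots,h^0\times\ell_{i-1}$ over $\overline{F}$ (modulo terms of positive $h$-degree in the first factor). (2) \emph{Steenrod operations.} Since $X$ is generically smooth, $\ratchow{X\times X}$ inherits from the smooth variety $U\times U$ its $\mathbb{F}_2$-algebra structure and the action of the cohomological-type Steenrod operations of \cite{Primozic}, computed over $\overline{F}$ by the standard formulas on the Chow ring of the split quadric ($\mathrm{Sq}$ sends the hyperplane class $h$ to $h+h^2$ and acts on the $\ell_k$ through binomial coefficients modulo $2$); the composition law of the Introduction is also at hand. (3) \emph{Combinatorial heart.} Feeding the $\rho_k$ through suitably chosen Steenrod operations and compositions, and using that the outputs remain rational while the anisotropy of $\varphi$ forbids every linear-subspace class of $\overline{X}$ below the middle dimension --- and correspondingly many cycles on $\overline{X\times X}$ --- from being rational, one shows that rationality propagates far enough to force $\ell_i$ to be rational over $K$ as well (so that $\witti{0}{\varphi_K} > i$, which is absurd) unless $2^{\lceil \log_2 i\rceil}$ divides $N$, the divisibility emerging from Lucas' theorem applied to the binomial coefficients in the Steenrod formulas. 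As the standing assumption $i > 2^{v_2(N)}$ rules out this divisibility, we reach a contradiction, so $i \leq 2^u$.

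\textbf{Main obstacle.} The genuine difficulty is step (3): the precise choice of Steenrod operations and the inductive bookkeeping that turns ``$i$ rational linear-subspace classes'' into the $2$-adic divisibility of $N$. This is the technical core of Karpenko's original characteristic-not-$2$ argument and of its characteristic-$2$ reincarnations in \cite{Primozic} (nondegenerate case) and \cite{Karpenko2} (general case). A secondary, structural point proper to the degenerate setting is that the Steenrod action on $\ratchow{X\times X}$ is only defined after scalar extension to $\overline{F}$ --- there is no descent along $U\times U$ in general --- but the argument above invokes that action only on the already scalar-extended group, so this causes no trouble here. Lastly, one must verify that $N = \Izhdim{\varphi}$ is the correct normalization in the degenerate case --- this is where $\mydim{\mathrm{ql}(\varphi)}$ intervenes, and is part of the reason the quasilinear part must be peeled off before the intersection theory on $U$ can be brought to bear.
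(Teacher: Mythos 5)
The paper offers no proof of this theorem: it is quoted from the literature, with exactly the attributions you give (the quasilinear case to \cite{Scully1}, the nondegenerate case to \cite{Primozic}, and the general nonquasilinear case to \cite{Karpenko2}, all going back to \cite{Karpenko1} in characteristic not $2$). Your case division and citations therefore match the paper's treatment precisely, and your sketch of Karpenko's strategy --- isotropic reduction over $F(X)$, lifting the linear-subspace classes to rational cycles on a power of $X$, applying the cohomological-type Steenrod operations through the smooth locus, and extracting the $2$-adic divisibility of $\Izhdim{\varphi}$ via Lucas' theorem --- is a fair account of the cited arguments. Be aware, though, that as written this is not a self-contained proof: your step (3), which you correctly identify as the technical core, is deferred entirely to \cite{Karpenko2} and \cite{Primozic}, just as the paper defers the whole statement. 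One small correction: the paper defines $\Izhdim{\varphi} := \mydim{\varphi} - \witti{1}{\varphi}$ uniformly for all anisotropic forms of dimension $\geq 2$, so there is no separate ``normalization'' issue in the degenerate case.
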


When $\varphi$ is nondegenerate or quasilinear, all values of the integer $\witti{1}{\varphi}$ permitted by Theorem \ref{THMi1} can be realized. In the case where $\varphi$ is neither nondegenerate nor quasilinear, however, we will see in \S \ref{SECexcellentconnections} below that the type of $\varphi$ imposes some further restrictions. When $\varphi$ is anisotropic of dimension $\geq 2$, we shall define the \emph{Izhboldin dimension of $\varphi$} to be the integer $\Izhdim{\varphi}: = \mydim{\varphi} - \witti{1}{\varphi}$. Theorem \ref{THMi1} then says that $\Izhdim{\varphi}$ is divisible by the smallest $2$-power bounding $\witti{1}{\varphi}$ from above. In particular, we have the following result due to Hoffmann and Laghribi (\cite[Lem. 4.1]{HoffmannLaghribi2}):

\begin{corollary} \label{CORmaxsplitting} Let $\varphi$ be an anisotropic quadratic form of dimension $2^n + m$ for some non-negative integer $n$ and integer $1 \leq m \leq 2^n$. Then $\Izhdim{\varphi} \geq 2^n$.
\end{corollary}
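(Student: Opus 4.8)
The plan is to deduce this directly from Theorem \ref{THMi1} together with the defining identity $\Izhdim{\varphi} = \mydim{\varphi} - \witti{1}{\varphi}$. Put $D := \Izhdim{\varphi} = 2^n + m - \witti{1}{\varphi}$. Note first that $D \geq 1$: since $\varphi$ is anisotropic of dimension $\geq 2$ we have $1 \leq \witti{1}{\varphi} < \mydim{\varphi}$, the upper bound holding because the anisotropic kernel of $\varphi_{F(\varphi)}$ is nonzero (for nonsingular $\varphi$ one even has $\witti{1}{\varphi} \leq \tfrac{1}{2}\mydim{\varphi}$). In particular the integer $u$ of Theorem \ref{THMi1} — the largest exponent with $2^u \mid D$ — is defined, and $\witti{1}{\varphi} \leq 2^u$. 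I would then argue by contradiction, assuming $D \leq 2^n - 1$, equivalently $\witti{1}{\varphi} \geq m+1$.

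Under this assumption $D$ is a positive multiple of $2^u$ lying strictly below $2^n$. This forces $2^u \leq D < 2^n$, hence $u \leq n-1$, and moreover $D \leq 2^n - 2^u$, since $2^n - 2^u$ is the largest multiple of $2^u$ smaller than $2^n = 2^{n-u}\cdot 2^u$. On the other hand, combining $\Izhdim{\varphi} = 2^n + m - \witti{1}{\varphi}$ with $\witti{1}{\varphi} \leq 2^u$ gives
$$ D \;=\; 2^n + m - \witti{1}{\varphi} \;\geq\; 2^n + m - 2^u. $$
Comparing the two estimates for $D$ yields $2^n + m - 2^u \leq 2^n - 2^u$, i.e. $m \leq 0$, contradicting the hypothesis $m \geq 1$. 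Therefore $D \geq 2^n$, which is the assertion.

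I do not expect any genuine obstacle here: the only inputs are Theorem \ref{THMi1}, the definition of $\Izhdim{\varphi}$, and the elementary fact that a positive multiple of $2^u$ below $2^n$ is at most $2^n - 2^u$. The single point meriting a word of care is the remark that $D \geq 1$, which is what makes the exponent $u$ meaningful; this is immediate from the anisotropy of $\varphi$, and everything else is a short arithmetic comparison.
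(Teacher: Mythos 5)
Your argument is correct and is precisely the deduction the paper intends, namely combining Theorem \ref{THMi1} with the identity $\Izhdim{\varphi}=\mydim{\varphi}-\witti{1}{\varphi}$ and the observation that a positive multiple of $2^u$ below $2^n$ is at most $2^n-2^u$ (the paper leaves this arithmetic implicit after the remark that $\Izhdim{\varphi}$ is divisible by the smallest $2$-power bounding $\witti{1}{\varphi}$). One cosmetic point: the justification for $\witti{1}{\varphi}<\mydim{\varphi}$ should be that a nonzero quadratic form never has isotropy index equal to its dimension, rather than that the anisotropic kernel of $\varphi_{F(\varphi)}$ is nonzero (which fails, e.g., for a $2$-dimensional nonsingular form); this does not affect the proof.
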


If equality holds in the conclusion of the corollary (equivalently, if $\witti{1}{\varphi} = m$), then we shall say that \emph{$\varphi$ has maximal splitting}. Now, another key result on the Izhboldin dimension is the following theorem which is due to Karpenko and Merkurjev in the nondegenerate case (\cite[Thm. 76.5]{EKM}) and Totaro in the degenerate case (\cite[Thm. 5.2]{Totaro}):

\begin{theorem} \label{THMKM} Let $\varphi$ and $\psi$ be anisotropic quadratic forms of dimension $\geq 2$ over $F$. If $\varphi_{F(\psi)}$ is isotropic, then $\Izhdim{\psi} < \mydim{\varphi}$.
\end{theorem}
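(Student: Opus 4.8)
The result is due to Karpenko and Merkurjev in the nondegenerate case \cite[Thm.~76.5]{EKM} and to Totaro in general \cite[Thm.~5.2]{Totaro}, so in practice I would simply cite it; the following is how I would organise a proof.

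\emph{Reduction to the generically smooth case.} If $\psi$ is quasilinear, one cannot pass through a smooth model and the statement has to be treated inside the quasilinear theory --- this is exactly what Totaro does (and it is also accessible via the quasilinear methods of \cite{Scully1}). If $\psi$ is not quasilinear, then $F(\psi)/F$ is separable by Lemma~\ref{LEMfunctionfields}(1); since anisotropic quasilinear forms remain anisotropic over separable extensions, the hypothesis that $\varphi_{F(\psi)}$ is isotropic forces $\varphi$ to be nonquasilinear as well. So both $X_\varphi$ and $X_\psi$ are generically smooth and the intersection-theoretic machinery of the preceding sections applies. Moreover $\witti{1}{\psi}$ then coincides with $\windex{\psi_{F(\psi)}}$, because the defect index does not grow over the separable extension $F(\psi)/F$; since $\Izhdim{\psi} = \mydim{\psi} - \witti{1}{\psi}$, the assertion becomes the claim that $\varphi_{F(\psi)}$ isotropic implies $\windex{\psi_{F(\psi)}} \geq \mydim{\psi} - \mydim{\varphi} + 1$.

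\emph{Main argument.} I would imitate the argument from the smooth case. First replace $\varphi$ by a subform of least possible dimension that still becomes isotropic over $F(\psi)$; this can only decrease $\mydim{\varphi}$, and for such a minimal form the correspondence obtained below is as tight as possible. An $F(\psi)$-rational point of $X_\varphi$ is the same datum as the closure of the graph of a rational map $X_\psi \dashrightarrow X_\varphi$, so it yields a distinguished class $\alpha \in \ratchow{X_\psi \times X_\varphi}$ whose push-forward to $X_\psi$ is the fundamental class. Composing $\alpha$ with its transpose $\alpha^{\mathrm{t}}$ via the composition of rational cycles produces a rational endomorphism of $X_\psi$ which, after base change to $\overline{F}$, must ``see'' the first higher Witt index $\witti{1}{\psi}$ through the known structure of the rational cycles on $X_\psi \times X_\psi$. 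Since this endomorphism factors through $X_\varphi$, whose split motive involves Tate twists only in the range $0, \dots, \mydim{\varphi}$, one extracts the inequality $\mydim{\varphi} \geq \mydim{\psi} - \witti{1}{\psi} + 1$, i.e.\ $\Izhdim{\psi} < \mydim{\varphi}$. An essentially equivalent way to package this is to prove directly that rational domination of $X_\varphi$ by $X_\psi$ forces $\Izhdim{\psi} \leq \Izhdim{\varphi}$, and then to conclude from $\Izhdim{\varphi} = \mydim{\varphi} - \witti{1}{\varphi} \leq \mydim{\varphi} - 1$, which holds because $\varphi$ is anisotropic of dimension $\geq 2$.

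\emph{Main obstacle.} The delicate step is the detection of $\witti{1}{\psi}$: in the degenerate setting the action of the cohomological Steenrod operations on the groups of rational cycles is not known to be defined over $F$, so the motivic mechanism used in the smooth case is not directly available. One has to replace it by an argument relying only on the intrinsically $F$-rational structure, in the spirit of Karpenko's treatment of the $\witti{1}$ invariant itself (Theorem~\ref{THMi1}) and of Totaro's proof of the present theorem; this substitution --- not the formal set-up --- is where the real work lies.
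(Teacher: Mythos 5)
The paper does not prove this theorem either: it states it as a known result, citing Karpenko--Merkurjev (\cite[Thm.~76.5]{EKM}) for the nondegenerate case and Totaro (\cite[Thm.~5.2]{Totaro}) for the degenerate case, which is exactly what you do. Your citation is therefore the ``same approach'' as the paper's, and your accompanying sketch is a fair outline of how the smooth-case argument runs (and of why the quasilinear case needs separate treatment), though, as you yourself note in the final paragraph, it is a roadmap rather than a self-contained proof.
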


When combined with Corollary \ref{CORmaxsplitting}, this gives the following ``separation theorem'', again due to Hoffmann and Laghribi (\cite[Thm. 1.1]{HoffmannLaghribi2}):

\begin{corollary} \label{CORseparation} Let $\varphi$ and $\psi$ be anisotropic quadratic forms of dimension $\geq 2$ over $F$. If $\mydim{\varphi} \leq 2^n < \mydim{\psi}$ for some non-negative integer $n$, then $\varphi_{F(\psi)}$ is anisotropic. 
\end{corollary}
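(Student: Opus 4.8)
The plan is to derive this directly from Theorem~\ref{THMKM} and Corollary~\ref{CORmaxsplitting} by a short argument by contradiction; no new machinery is needed, only a little bookkeeping with powers of $2$.

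Suppose, for contradiction, that $\varphi_{F(\psi)}$ is isotropic. Since $\psi$ is anisotropic of dimension $\geq 2$, the quadric $X_\psi$ is nonempty and integral, so $F(\psi)$ is its genuine function field and Theorem~\ref{THMKM} applies. It gives $\Izhdim{\psi} < \mydim{\varphi}$, and combining this with the hypothesis $\mydim{\varphi} \leq 2^n$ yields $\Izhdim{\psi} < 2^n$.

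Now I would use the other hypothesis $\mydim{\psi} > 2^n$ to position $\mydim{\psi}$ among the powers of $2$. Let $N$ be the unique non-negative integer with $2^N < \mydim{\psi} \leq 2^{N+1}$, and write $\mydim{\psi} = 2^N + m$ with $1 \leq m \leq 2^N$. Since $2^n < \mydim{\psi}$ while $2^N$ is the largest power of $2$ lying strictly below $\mydim{\psi}$, we get $2^n \leq 2^N$, i.e.\ $N \geq n$. Corollary~\ref{CORmaxsplitting}, applied to $\psi$ with exponent $N$, then gives $\Izhdim{\psi} \geq 2^N \geq 2^n$, contradicting the inequality of the previous step. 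Hence $\varphi_{F(\psi)}$ must be anisotropic.

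I do not anticipate a genuine obstacle here: the two cited results slot together once the exponent $N$ is chosen correctly. The only point needing care is not to assume $\mydim{\psi} \leq 2^{n+1}$ — the form $\psi$ may be much larger than $\varphi$ — which is why one selects $N$ with $2^N < \mydim{\psi} \leq 2^{N+1}$ rather than working with $n+1$ directly; after that, $N \geq n$ is immediate and the argument closes.
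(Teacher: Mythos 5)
Your proof is correct and is precisely the argument the paper intends: the corollary is stated as an immediate combination of Theorem \ref{THMKM} and Corollary \ref{CORmaxsplitting}, and your contradiction argument (isotropy forces $\Izhdim{\psi} < \mydim{\varphi} \leq 2^n$, while the dimension bound $\mydim{\psi} > 2^n$ forces $\Izhdim{\psi} \geq 2^N \geq 2^n$) is exactly how those two results slot together. The care you take in choosing $N$ rather than $n+1$ is the right bookkeeping.
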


\subsection{Stable Birational Equivalence} \label{SUBSECstb} Let $\varphi$ and $\psi$ be anisotropic quadratic forms of dimension $\geq 2$ over $F$. The following are then known to be equivalent:

\begin{enumerate} \item $X_{\varphi}$ and $X_{\psi}$ are stably birational as varieties over $F$;
\item For every field extension $K/F$, $\varphi_K$ is isotropic if and only if $\psi_K$ is isotropic;
\item $\varphi_{F(\psi)}$ and $\psi_{F(\varphi)}$ are isotropic. \end{enumerate}
Indeed, since anisotropic quadratic forms remain anisotropic under purely transcendental extensions, the implications $(1)\Rightarrow (2) \Rightarrow (3)$ are clear. For $(3) \Rightarrow (1)$, the case where neither $\varphi$ nor $\psi$ are quasilinear is a straightforward consequence of Lemma \ref{LEMfunctionfields} (2), and the case where at least one of $\varphi$ and $\psi$ is quasilinear is a result due to Totaro (\cite{Totaro}). In fact, if $\varphi$ is quasilinear and (3) holds, then $\psi$ must also be quasilinear by Lemma \ref{LEMfunctionfields} (2) and the fact that anisotropic quasilinear quadratic forms remain anisotropic under separable extensions. Under the assumption that both $\varphi$ and $\psi$ are quasilinear, however, the implication $(3) \Rightarrow (1)$ follows from \cite[Thms. 5.2 and 6.5]{Totaro}, and so the equivalence holds in all cases. If (1), (2) and (3) hold, we shall say that $\varphi$ and $\psi$ are \emph{stably birationally equivalent}, and write $\varphi \stb \psi$. The following important result, which extends Theorem \ref{THMKM}, is again due to Karpenko and Merkurjev in the nondegenerate case (\cite[Thm. 76.5]{EKM}) and Totaro in the degenerate case (\cite[Thm. 5.2]{Totaro}):

\begin{theorem} \label{THMstb} Let $\varphi$ and $\psi$ be anisotropic quadratic forms of dimension $\geq 2$ over $F$. Then $\varphi \stb \psi$ if and only if $\varphi_{F(\psi)}$ is isotropic and $\Izhdim{\varphi} = \Izhdim{\psi}$. 
\end{theorem}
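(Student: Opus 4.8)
The plan is to reduce to the case where $\varphi$ and $\psi$ are both nonquasilinear and then prove the two implications by means of a sharpened form of Theorem \ref{THMKM}. For the reduction: if $\varphi$ is quasilinear and $\varphi \stb \psi$, then $\psi$ is quasilinear by \S\ref{SUBSECstb}; and if $\varphi$ is quasilinear while $\varphi_{F(\psi)}$ is isotropic, then $\psi$ must again be quasilinear, since otherwise $F(\psi)/F$ would be separable (Lemma \ref{LEMfunctionfields}(1)) and $\varphi$ would stay anisotropic over it. In the purely quasilinear case the assertion is \cite[Thm.~5.2]{Totaro}, so assume henceforth that $\varphi$ and $\psi$ are nonquasilinear. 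Then all function fields occurring below are separable over their bases, defect indices are preserved under the relevant extensions, and $\Izhdim{\rho} = \mydim{\anispart{\rho_{F(\rho)}}} + \witti{1}{\rho}$ for every nonquasilinear anisotropic $\rho$ of dimension $\geq 2$.

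For ``$\Rightarrow$'', assume $\varphi \stb \psi$. The isotropy of $\varphi_{F(\psi)}$ is immediate from the equivalence of (1) and (3) in \S\ref{SUBSECstb}. For the equality of Izhboldin dimensions, I would establish the following sharpening of Theorem \ref{THMKM}, which underlies \cite[Thm.~76.5]{EKM}: \emph{if $\sigma,\tau$ are anisotropic of dimension $\geq 2$ and $\sigma_{F(\tau)}$ is isotropic, then $\Izhdim{\tau} \leq \Izhdim{\sigma}$}. Applying this to $(\sigma,\tau)=(\varphi,\psi)$ and, using \S\ref{SUBSECstb}(3), to $(\sigma,\tau)=(\psi,\varphi)$ yields $\Izhdim{\psi} \leq \Izhdim{\varphi} \leq \Izhdim{\psi}$. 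The sharpening itself is proved by analysing how $\witti{1}{\sigma}$ changes as one ascends the Knebusch splitting tower of $\tau$, applying Theorem \ref{THMKM} at each stage and using Theorem \ref{THMi1} and Corollary \ref{CORmaxsplitting} to settle the extremal configurations; the argument uses only facts available for nonquasilinear anisotropic forms in characteristic $2$, so it applies here.

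For ``$\Leftarrow$'', assume $\varphi_{F(\psi)}$ is isotropic and $\Izhdim{\varphi} = \Izhdim{\psi}$. By the equivalence of (1) and (3) in \S\ref{SUBSECstb} it suffices to show $\psi_{F(\varphi)}$ is isotropic, and this follows from the companion sharpening (also underlying \cite[Thm.~76.5]{EKM}): \emph{if $\sigma,\tau$ are anisotropic of dimension $\geq 2$, $\sigma_{F(\tau)}$ is isotropic, and $\Izhdim{\tau} \geq \Izhdim{\sigma}$, then $\tau_{F(\sigma)}$ is isotropic}. Applied to $(\sigma,\tau)=(\varphi,\psi)$ — whose hypotheses hold since $\Izhdim{\psi}=\Izhdim{\varphi}$ — this gives $\psi_{F(\varphi)}$ isotropic. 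This second sharpening is again proved by induction on dimension, passing to anisotropic kernels over function fields and invoking Theorem \ref{THMKM}; alternatively one argues by contradiction, supposing $\psi_{F(\varphi)}$ anisotropic and deriving a numerical impossibility by tracking, via Theorem \ref{THMKM} applied inside the Knebusch tower of $\psi$ (over which $\varphi$ remains isotropic at every stage), the Izhboldin dimensions of the successive kernel forms $\psi_1,\psi_2,\dots$ against $\mydim{\anispart{\varphi_{F(\psi)}}} \leq \Izhdim{\varphi} = \Izhdim{\psi}$.

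I expect the main obstacle to be precisely the two sharpenings of Theorem \ref{THMKM} above — upgrading the bound $\Izhdim{\tau} < \mydim{\sigma}$ to the tight $\Izhdim{\tau} \leq \Izhdim{\sigma}$, and its dual governing when $\tau_{F(\sigma)}$ is isotropic. Proving these requires bookkeeping of first Witt indices, not merely dimensions, along Knebusch splitting towers, together with care over low-dimensional kernel forms and forms of small nondefective height; for genuinely nondegenerate forms this is exactly the analysis of \cite[\S76]{EKM}, and the function of the elementary reduction above is to make that analysis applicable once its inputs — Theorems \ref{THMi1} and \ref{THMKM}, the characterisations of \S\ref{SUBSECstb}, and \cite{Totaro} for the quasilinear extreme — are in hand, which in characteristic $2$ they now are.
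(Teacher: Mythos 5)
The paper does not prove this statement: it is quoted in \S 2 as a known result, due to Karpenko--Merkurjev in the nondegenerate case (\cite[Thm.~76.5]{EKM}) and to Totaro in all degenerate cases (\cite[Thm.~5.2]{Totaro}), the latter covering quasilinear forms, degenerate nonquasilinear forms, and the mixed situations alike. Measured against that, your proposal has a genuine gap. The two ``sharpenings'' you isolate are not auxiliary lemmas but are, taken together, precisely the theorem to be proved (they constitute the literal statement of \cite[Thm.~76.5]{EKM}), so the entire burden rests on your claim that the analysis of \cite[\S 76]{EKM} carries over to degenerate nonquasilinear forms once Theorems \ref{THMi1} and \ref{THMKM} are in hand. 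That claim is not correct as stated: the proof of \cite[Thm.~76.5]{EKM} does not proceed by splitting-tower bookkeeping from those two theorems, but rests on the theory of rational cycles and odd-multiplicity correspondences on products of \emph{smooth} quadrics developed in \cite[\S\S 72--75]{EKM}. Extending that machinery to generically smooth but non-smooth quadrics is exactly the content of \S\S 3--6 of the present paper and is decidedly nontrivial; it is not a consequence of Theorems \ref{THMi1} and \ref{THMKM} alone. The intended justification is simply to invoke \cite[Thm.~5.2]{Totaro} for every case in which at least one of the forms is degenerate, not only for the quasilinear extreme.

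A secondary issue: your reduction to the nonquasilinear case is not symmetric. For the ``if'' direction you must also dispose of the configuration in which $\psi$ is quasilinear but $\varphi$ is not: there $F(\psi)/F$ is inseparable, so $\varphi_{F(\psi)}$ may well be isotropic, whereas $\psi_{F(\varphi)}$ cannot be (since $F(\varphi)/F$ is separable and anisotropic quasilinear forms stay anisotropic under separable extensions). One must therefore show that the hypotheses force this configuration to be vacuous; this again is part of what \cite[Thm.~5.2]{Totaro} delivers and is not covered by your argument.
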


We then have the following basic fact:

\begin{lemma} \label{LEMstbsubforms} Let $\varphi$ and $\psi$ be anisotropic quadratic forms of dimension $\geq 2$ over $F$ with $\psi \prec \varphi$. If $\mydim{\psi} > \Izhdim{\varphi}$, then $\varphi \stb \psi$. In particular, $\Izhdim{\varphi} = \Izhdim{\psi}$. 
\begin{proof} We can assume that $\psi$ is the restriction of $\varphi$ to a subspace $U$ of $V_{\varphi}$. It is clear that $\varphi_{F(\psi)}$ is clearly isotropic. Now, by definition, the $F(\varphi)$-vector space $V_{\varphi} \otimes_F F(\varphi)$ admits a totally isotropic subspace of dimension $\witti{1}{\varphi}$. Since $\mydim{\psi} > \Izhdim{\varphi}$, this subspace must intersect $U \otimes_F F(\varphi)$ nontrivially, and so $\psi_{F(\varphi)}$ is also isotropic. This proves the first statement, and the second then follows by Theorem \ref{THMstb}.
\end{proof}
\end{lemma}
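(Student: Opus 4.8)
The plan is to pass through the list of equivalent characterizations of stable birational equivalence recalled in \S\ref{SUBSECstb}: it suffices to show that $\varphi_{F(\psi)}$ and $\psi_{F(\varphi)}$ are both isotropic, since this is the third of those conditions and hence yields $\varphi\stb\psi$; the equality $\Izhdim{\varphi}=\Izhdim{\psi}$ then drops out of Theorem \ref{THMstb}. Since $\psi\prec\varphi$, I would start by identifying $V_\psi$ with a subspace $U\subseteq V_\varphi$ so that $\psi$ is literally the restriction of $\varphi$ to $U$. All the function fields below are honest: an anisotropic form of dimension $\geq 2$ has isotropy index $0$, which is $\leq\mydim{\varphi}-2$ (resp. $\leq\mydim{\psi}-2$), the condition from \S\ref{SUBSECfunctionfieldsandsplittingpatterns} ensuring that the associated quadric is nonempty and integral.

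The isotropy of $\varphi_{F(\psi)}$ is free and does not use the dimension hypothesis: $X_\psi$ has an $F(\psi)$-point, so $\psi_{F(\psi)}$ is isotropic, and an isotropic vector for $\psi_{F(\psi)}$ lying in $U\otimes_F F(\psi)$ is a fortiori isotropic for $\varphi_{F(\psi)}$. The hypothesis $\mydim{\psi}>\Izhdim{\varphi}$ enters only for the isotropy of $\psi_{F(\varphi)}$. Writing $K:=F(\varphi)$, the form $\varphi_K$ has isotropy index $\witti{1}{\varphi}$ (by definition $\witti{1}{\varphi}=\witti{0}{\varphi_{F_1}}$ with $F_1=F(\varphi)$), so $V_\varphi\otimes_F K$ contains a totally isotropic subspace $W$ with $\dim_K W=\witti{1}{\varphi}$. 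Since $\mydim{\psi}>\Izhdim{\varphi}=\mydim{\varphi}-\witti{1}{\varphi}$, we have $\dim_K(U\otimes_F K)+\dim_K W=\mydim{\psi}+\witti{1}{\varphi}>\mydim{\varphi}=\dim_K(V_\varphi\otimes_F K)$, so the two subspaces meet in a nonzero vector; such a vector is isotropic for $\varphi_K$ and lies in $U\otimes_F K$, hence is isotropic for the restriction $\psi_K$ of $\varphi_K$ to that subspace.

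With both isotropy statements in hand, \S\ref{SUBSECstb} gives $\varphi\stb\psi$, proving the first assertion, and Theorem \ref{THMstb} then forces $\Izhdim{\varphi}=\Izhdim{\psi}$. I do not anticipate any real obstacle: the content is a single pigeonhole dimension count dressed up in the stable-birationality formalism. The only points that call for a little care are checking that $F(\varphi)$ and $F(\psi)$ are genuine function fields and identifying $\witti{1}{\varphi}$ with $\witti{0}{\varphi_{F(\varphi)}}$, so that the dimension count precisely reads off $\mydim{\varphi}-\Izhdim{\varphi}$.
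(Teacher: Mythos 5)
Your argument is correct and is essentially the paper's own proof: the same identification of $\psi$ with the restriction of $\varphi$ to a subspace $U\subseteq V_\varphi$, the same observation that $\varphi_{F(\psi)}$ is trivially isotropic, and the same pigeonhole count forcing a totally isotropic subspace of dimension $\witti{1}{\varphi}$ in $V_\varphi\otimes_F F(\varphi)$ to meet $U\otimes_F F(\varphi)$, followed by Theorem \ref{THMstb}. The extra care you take in checking that the function fields are genuine and that $\witti{1}{\varphi}=\witti{0}{\varphi_{F(\varphi)}}$ is sound but left implicit in the paper.
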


\subsection{Pfister Neighbours and (Strongly) Excellent Forms} \label{SUBSECexcellentforms} Let $a_1,\hdots,a_n\in F^\times$. If $a_{n+1} \in F$, then we write $\qpfister{a_1,\hdots}{a_{n+1}}$ for the $(n+1)$-fold Pfister form $\pfister{a_1,\hdots,a_{n}}_b \otimes [1,a_{n+1}]$.  Any such form $\pi$ is \emph{round}, in the sense that $a\pi \simeq \pi$ for all $a \in F^\times$ represented by $\pi$ (\cite[Cor. 9.9]{EKM}). By a \emph{general} \emph{$(n+1)$-fold} \emph{Pfister form}, we shall mean any form similar to (i.e., isometric to a scalar multiple of) an $(n+1)$-fold Pfister form. Any such form is nondegenerate of dimension $2^{n+1}$. An isotropic general Pfister form is hyperbolic, i.e., has Witt index equal to half its dimension. In particular, if $\pi$ is an anisotropic general $(n+1)$-fold Pfister form, then $\witti{1}{\pi} = 2^{n}$, i.e., $\pi$ has maximal splitting. Now, let $\varphi$ be a nonzero quadratic form over $F$, and let $n$ be the unique integer for which $2^{n} < \mydim{\varphi} \leq 2^{n+1}$. If $\varphi$ is is dominated by a general $(n+1)$-fold Pfister form $\pi$, then we shall say that $\varphi$ is a \emph{Pfister neighbour}. In this case, the form $\pi$ is uniquely determined up to isometry, and we refer to it as the \emph{ambient general Pfister form of $\varphi$}. The complementary form $\varphi_{\pi}^c$ (see \S \ref{SUBSECdomination}), which has dimension $2^{n+1} - \mydim{\varphi}$, will simply be denoted $\varphi^c$. We thus have that $\varphi \perp \pi \sim \varphi^c$. In the case where $\varphi$ is anisotropic, it follows from the Cassels-Pfister subform theorem (\cite[Thm. 22.5]{EKM}) and Lemma \ref{LEMstbsubforms} that $\varphi$ is a Pfister neighbour if and only if it is stably birationally equivalent to some anisotropic Pfister form (which is then similar to the ambient general Pfister form of $\varphi$). We note the following:

\begin{lemma} \label{LEMPneighbours} Let $\varphi$ be an anisotropic quadratic form of type $(r,s)$ over $F$, and $n$ the unique integer for which $2^n < \mydim{\varphi} \leq 2^{n+1}$. If $\varphi$ is a Pfister neighbour, then:
\begin{enumerate} \item $r+s \leq 2^n$;
\item $\varphi_1 \simeq (\varphi^c)_{F(\varphi)}$.
\item $\varphi$ has maximal splitting, i.e., $\Izhdim{\varphi} = 2^n$;
\end{enumerate}
\begin{proof} Let $\pi$ be the ambient general Pfister form of $\varphi$. 

(1) Per the discussion in \S \ref{SUBSECdomination}, any nondegenerate form of even dimension that dominates $\varphi$ has dimension at least $2(r+s)$. Since $\mydim{\pi} = 2^{n+1}$, (1) then follows.

(2) Since $\varphi \stb \pi$, $\pi_{F(\varphi)}$ is isotropic, and hence hyperbolic. Since $\varphi \perp \pi \sim \varphi^c$, it follows that $\varphi_1 \sim (\varphi^c)_{F(\varphi)}$. But $\mydim{\varphi^c} = 2^{n+1} - \mydim{\varphi} < 2^n$, and so $(\varphi^c)_{F(\varphi)}$ is anisotropic by the separation theorem (Corollary \ref{CORseparation}). As $\varphi_1$ is also anisotropic, we then have that $\varphi_1 \simeq (\varphi^c)_{F(\varphi)}$.

(3) Implicit in (2). 
\end{proof}
\end{lemma}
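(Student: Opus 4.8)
The plan is to prove the three assertions in turn, with (2) doing the real work and (1) and (3) following from it by dimension bookkeeping. Throughout, let $\pi$ denote the ambient general $(n+1)$-fold Pfister form of $\varphi$, so $\pi$ is nondegenerate of dimension $2^{n+1}$; since $\varphi$ is an anisotropic Pfister neighbour, $\varphi$ is stably birationally equivalent to an anisotropic Pfister form similar to $\pi$ (as recalled in \S\ref{SUBSECexcellentforms}), so $\pi$ is itself anisotropic, $\varphi \stb \pi$, and $\varphi \perp \pi \sim \varphi^c$. For (1): $\varphi$ has type $(r,s)$ and is dominated by the nondegenerate even-dimensional form $\pi$, so the dimension estimate at the end of \S\ref{SUBSECdomination} gives $2^{n+1} = \mydim{\pi} \geq 2(r+s)$, i.e.\ $r + s \leq 2^n$. (In particular $\varphi$ is nonquasilinear, for otherwise $r = 0$ and $s = \mydim{\varphi} > 2^n$.)

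For (2): since $\varphi \stb \pi$, the form $\pi_{F(\varphi)}$ is isotropic, hence hyperbolic as it is a general Pfister form; extending $\varphi \perp \pi \sim \varphi^c$ to $F(\varphi)$ therefore yields $\varphi_{F(\varphi)} \sim (\varphi^c)_{F(\varphi)}$, so $\varphi_1$ is the anisotropic part of $(\varphi^c)_{F(\varphi)}$. It remains to see that $(\varphi^c)_{F(\varphi)}$ is already anisotropic. For this, note first that $\varphi^c$ is $F$-anisotropic, being a restriction of the anisotropic form $\pi$ to a subspace of its underlying space (immediate from the explicit description of domination and complementary forms in \S\ref{SUBSECdomination}); then $\mydim{\varphi^c} = 2^{n+1} - \mydim{\varphi} < 2^n < \mydim{\varphi}$, so the separation theorem (Corollary \ref{CORseparation}) gives the anisotropy of $(\varphi^c)_{F(\varphi)}$ when $\mydim{\varphi^c} \geq 2$, the cases $\mydim{\varphi^c} \leq 1$ being trivial. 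Hence $\varphi_1 \simeq (\varphi^c)_{F(\varphi)}$.

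For (3): by (1), $\varphi$ is nonquasilinear, so $F(\varphi)/F$ is separable (Lemma \ref{LEMfunctionfields}(1)) and $\mathrm{ql}(\varphi)$ stays anisotropic over $F(\varphi)$; thus $\varphi_{F(\varphi)}$ has trivial defect index and $\witti{1}{\varphi} = \windex{\varphi_{F(\varphi)}} = \tfrac12\bigl(\mydim{\varphi} - \mydim{\varphi_1}\bigr)$. Inserting $\mydim{\varphi_1} = \mydim{\varphi^c} = 2^{n+1} - \mydim{\varphi}$ from (2) gives $\witti{1}{\varphi} = \mydim{\varphi} - 2^n$, whence $\Izhdim{\varphi} = 2^n$. (Alternatively one may bypass (2): $\varphi \stb \pi$ and Theorem \ref{THMstb} give $\Izhdim{\varphi} = \Izhdim{\pi} = 2^n$ directly, since anisotropic general Pfister forms have maximal splitting.)

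The only step needing genuine care is the anisotropy of $(\varphi^c)_{F(\varphi)}$ in (2): one must first recognize that $\varphi^c$ is $F$-anisotropic before the separation theorem applies, and have the dimension inequality $\mydim{\varphi^c} \leq 2^n < \mydim{\varphi}$ in the right direction. Everything else is an application of the already-established facts on Pfister forms and stable birational equivalence together with elementary dimension counting, so I do not anticipate further obstacles.
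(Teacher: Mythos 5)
Your proof is correct and follows essentially the same route as the paper's: the domination dimension bound for (1), hyperbolicity of $\pi_{F(\varphi)}$ plus the separation theorem for (2), and dimension counting for (3). The extra care you take (verifying that $\varphi^c$ is anisotropic over $F$ before invoking Corollary \ref{CORseparation}, and that $F(\varphi)/F$ is separable so the defect index does not grow) fills in details the paper leaves implicit, but does not change the argument.
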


A quadratic form $\varphi$ over $F$ is said to be \emph{excellent} if for every field extension $K/F$ with $\witti{0}{\varphi_K}> \witti{0}{\varphi}$, there exists a quadratic form $\tau$ over $F$ such that $\anispart{(\varphi_K)} \simeq \tau_K$. By \cite[Lem. 5.1]{Hoffmann2}, all quasilinear quadratic forms are excellent. For nonquasilinear forms, the situation is more complicated. A source of examples is provided by the following:

\begin{lemma} \label{LEMstrongexcellence} Let $\varphi$ be a nonquasilinear quadratic form over $F$ with Knebusch splitting tower $F = F_0 \subseteq F_1 \subseteq \cdots $. Suppose that there exist a positive integer $h$ and Pfister neighbours $\psi_0,\psi_1,\hdots,\psi_{h-1}$ over $F$ such that:
\begin{enumerate} \item $\psi_0 = \anispart{\varphi}$;
\item $\psi_i \simeq \psi_{i-1}^c$ for all $1 \leq i < h$;
\item $\psi_{h-1}^c \simeq \anispart{(\mathrm{ql}(\varphi))}$. \end{enumerate}
Then $\varphi$ is excellent of nondefective height $h$, and $\varphi_i \simeq (\psi_i)_{F_i}$ for all $0 \leq i < h$. 
\begin{proof} By part (2) of Lemma \ref{LEMPneighbours}, we only have to prove the excellence of $\varphi$. To this end, let $K$ be an extension of $F$ with $\witti{0}{\varphi_K}> \witti{0}{\varphi}$. There then exists a largest integer $i <h$ with $(\psi_i)_K$ is isotropic. For each $0 \leq j < h$, let $\pi_j$ be the ambient general Pfister form of $\psi_j$. Then $(\pi_j)_K$ is hyperbolic for all $j \leq i$, and so 
$$\varphi_K \sim (\psi_0)_K \sim (\psi_0^c)_K \simeq (\psi_1)_K \sim \cdots \sim (\psi_{i-1}^c)_K \simeq (\psi_i)_K \sim (\psi_i^c)_K = (\psi_{i+1})_K$$
by (1) and (2). If $i \neq h-1$, we then have that $\anispart{(\varphi_K)} \simeq (\psi_{i+1})_K$. If $i = h-1$, then (3) gives that $\anispart{(\varphi_K)} \simeq \anispart{(\mathrm{ql}(\varphi)_K)}$. By the remarks preceding the statement, however, we then have that $\anispart{(\varphi_K)} \simeq \tau_K$ for some quasilinear form $\tau$ over $K$, and so we're done.
\end{proof}
\end{lemma}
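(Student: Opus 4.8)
The plan is to establish the two conclusions separately: first the identification $\varphi_i \simeq (\psi_i)_{F_i}$ of the anisotropic kernel forms (from which the value of the nondefective height will drop out), and then the excellence of $\varphi$, which is where the genuine content lies.

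For the first part, I would prove by induction on $i$ that $\varphi_i \simeq (\psi_i)_{F_i}$ for all $0 \le i < h$, the case $i = 0$ being hypothesis (1). For the step from $i$ to $i+1$ (so $i \le h-2$): by the inductive hypothesis $\psi_i$ is an anisotropic Pfister neighbour over $F$ that remains anisotropic over $F_i$, so its scalar extension to $F_i$ is an anisotropic Pfister neighbour over $F_i$ whose ambient general Pfister form and complementary form are the scalar extensions of the corresponding forms over $F$; in particular the complementary form of $(\psi_i)_{F_i}$ is $(\psi_i^c)_{F_i} \simeq (\psi_{i+1})_{F_i}$ by hypothesis (2). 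Since $\varphi_i \simeq (\psi_i)_{F_i}$, the field $F_{i+1} = F_i(\varphi_i)$ is the function field over $F_i$ of the quadric defined by $\psi_i$, so applying Lemma \ref{LEMPneighbours}(2) with base field $F_i$ gives $\varphi_{i+1} = \anispart{((\varphi_i)_{F_{i+1}})} \simeq (\psi_{i+1})_{F_{i+1}}$, together with the anisotropy of this form, which closes the induction. Next, by Lemma \ref{LEMPneighbours}(1) a quasilinear anisotropic form cannot be a Pfister neighbour, so each $\varphi_i$ with $i < h$ is nonquasilinear, and hence $F = F_0 \subseteq \cdots \subseteq F_h$ is a tower of separable extensions by Lemma \ref{LEMfunctionfields}(1); in particular $\anispart{(\mathrm{ql}(\varphi))}$ stays anisotropic over $F_h$. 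Running the same argument over $F_{h-1}$ and using hypothesis (3) then shows $\varphi_h \simeq (\psi_{h-1}^c)_{F_h} \simeq \anispart{(\mathrm{ql}(\varphi)_{F_h})}$, a quasilinear form; as $\varphi_h$ is thus the first quasilinear anisotropic kernel form of $\varphi$, we conclude that $h_{\mathrm{nd}}(\varphi) = h$.

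It remains to prove that $\varphi$ is excellent. Fix an extension $K/F$ with $\witti{0}{\varphi_K} > \witti{0}{\varphi}$. I would first observe that $(\psi_0)_K = (\anispart{\varphi})_K$ must be isotropic: otherwise it would coincide with $\anispart{(\varphi_K)}$, and --- using that $\mathrm{ql}$ commutes with scalar extension and that the quasilinear part of an anisotropic form is anisotropic --- a short computation would give $\witti{0}{\varphi_K} = \witti{0}{\varphi}$, a contradiction. Hence there is a largest index $i$ with $0 \le i < h$ and $(\psi_i)_K$ isotropic. Writing $\pi_j$ for the ambient general Pfister form of $\psi_j$, we have the relations $\psi_j \perp \pi_j \sim \psi_j^c = \psi_{j+1}$ coming from hypotheses (1) and (2), together with the inclusions $\psi_{j+1} \prec \pi_j$; a downward induction on $j$ based on these shows that $(\psi_j)_K$, and hence also $(\pi_j)_K$, is isotropic for all $0 \le j \le i$, so each $(\pi_j)_K$ with $j \le i$ is in fact hyperbolic. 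Telescoping the relations over $j = 0, \dots, i$ now gives $\varphi_K \sim (\psi_0)_K \sim (\psi_{i+1})_K$ when $i < h-1$; here $(\psi_{i+1})_K$ is anisotropic by maximality of $i$, so $\anispart{(\varphi_K)} \simeq (\psi_{i+1})_K$ is defined over $F$. When $i = h-1$, the telescoping instead yields $\varphi_K \sim (\psi_{h-1}^c)_K \simeq (\anispart{(\mathrm{ql}(\varphi))})_K$, so $\anispart{(\varphi_K)}$ is the anisotropic part of a scalar extension of a quasilinear form over $F$; since quasilinear forms are excellent by \cite[Lem. 5.1]{Hoffmann2}, it is again defined over $F$. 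This establishes the excellence of $\varphi$.

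The one genuinely substantial ingredient here is Lemma \ref{LEMPneighbours}(2) --- that the complementary form of a Pfister neighbour stays anisotropic over the function field of the neighbour, which itself rests on the separation theorem --- and this has already been proved. Relative to it, the two points that need a little care are: (i) that Lemma \ref{LEMPneighbours}(2) is legitimately applied over the intermediate fields $F_i$ and not merely over $F$, which is valid once the induction identifies $F_{i+1} = F_i(\varphi_i)$ with the function field over $F_i$ of the quadric of $\psi_i$; and (ii) the treatment of the terminal rung $i = h-1$ in the excellence argument, where one must fall back on the separately known excellence of quasilinear forms rather than the Pfister-neighbour machinery.
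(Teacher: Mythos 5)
Your proof is correct and follows essentially the same route as the paper: the identification $\varphi_i \simeq (\psi_i)_{F_i}$ and the value of $h_{\mathrm{nd}}$ are reduced to Lemma \ref{LEMPneighbours}(2) applied along the splitting tower, and excellence is obtained by taking the largest $i$ with $(\psi_i)_K$ isotropic, telescoping the Witt equivalences $\psi_j \perp \pi_j \sim \psi_{j+1}$, and invoking the excellence of quasilinear forms in the terminal case. You in fact spell out two small points the paper leaves implicit --- that $(\psi_0)_K$ must be isotropic so the maximal index exists, and the downward induction showing $(\pi_j)_K$ is hyperbolic for all $j \le i$ --- both of which are handled correctly.
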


If $\varphi$ is as in Lemma \ref{LEMstrongexcellence}, then we shall say that $\varphi$ is \emph{strongly excellent}. By a result essentially due to Knebusch, all nondegenerate excellent forms are strongly excellent (see \cite[Thm. 28.3]{EKM}). In general, however, excellence is weaker. For instance, anisotropic forms of type $(1,s)$ are readily seen to be excellent, but such a form cannot be strongly excellent unless $s = 2^n - 1$ for some integer $n$ by Lemma \ref{LEMPneighbours} (1). Nevertheless, it is expected that an excellent quadratic form of type $(r,s)$ is strongly excellent when $s$ is ``sufficiently small''. This can be made precise via a conjecture of Hoffmann and Laghribi on degenerate Pfister neighbours that will be discussed in \S \ref{SECPfisterneighbourconjecture} below. Here, we note that anisotropic strongly excellent forms can be described more explicitly as follows:

\begin{proposition} Let $\varphi$ be an anisotropic nonquasilinear quadratic form over $F$, and let $h$ be a positive integer. Then the following are equivalent:
\begin{enumerate} \item $\varphi$ is strongly excellent of nondefective height $h$;
\item There exist anisotropic Pfister forms $\pi_0,\pi_1,\hdots,\pi_{h-1}$ over $F$, an anisotropic quasilinear quadratic form $\tau$ over $F$, and a scalar $a \in F^\times$ such that:
\begin{itemize} \item[$\mathrm{(i)}$] $\varphi \sim a(\pi_0 \perp \pi_1 \perp \cdots \perp \pi_{h-1} \perp \tau)$;
\item[$\mathrm{(ii)}$] For all $1 \leq i < h$, $\pi_i$ is a proper subform of $\pi_{i-1}$;
\item[$\mathrm{(iii)}$] $\tau \prec \pi_{h-1}$ and $\mydim{\tau} < \frac{\mydim{\pi_{h-1}}}{2}$;
\item[$\mathrm{(iv)}$] If $\tau = 0$, then $\mydim{\pi_{h-2}} > 2 \mydim{\pi_{h-1}}$. 
\end{itemize} \end{enumerate}
\begin{proof} $(1) \Rightarrow (2)$: Suppose that $\varphi$ is strongly excellent of nondefective height $h$, and let $\psi_0,\hdots,\psi_{h-1}$ be as in the statement of Lemma \ref{LEMstrongexcellence}. For each $0 \leq i \leq h-1$, let $\rho_{i}$ be the ambient general Pfister form of $\psi_i$, and write $\rho_i = a_i\pi_i$ for some Pfister form $\pi_i$ over $F$ and scalar $a_i \in F^\times$. Since $\psi_{h-1}^c = \mathrm{ql}(\varphi)$, the anisotropic quasilinear quadratic form $\tau: = a_{h-1}\mathrm{ql}(\varphi)$ satisfies condition (iii) in (2). Moreover, since $\psi_{i-1} \perp \rho_{i-1} \sim \psi_{i-1}^c \simeq \psi_{i}$ for all $1 \leq i < h$, we have that
\begin{alignat*}{3} \varphi \perp (a_0\pi_0 \perp \cdots \perp a_{h-1}\pi_{h-1}) \quad & \simeq \quad && (\psi_0 \perp \rho_0) \perp (\rho_1 \perp \cdots \perp \rho_{h-1}) \\
\quad & \sim \quad && (\psi_1 \perp \rho_1) \perp (\rho_2 \perp \cdots \perp \rho_{h-1}) \\
\quad & \hspace{.2cm} \vdots \quad && \hspace{2cm} \vdots \\
\quad & \sim \quad && \psi_{h-1} \perp \rho_{h-1} \\
\quad & \sim \quad && \psi_{h-1}^c \\
\quad & \simeq \quad && \mathrm{ql}(\varphi) \\
\quad &  \simeq \quad && a_{h-1}\tau. \end{alignat*}
Since $a_0\pi_0 \perp \cdots \perp a_{h-1}\pi_{h-1}$ is nondegenerate, we then have that $\varphi \sim a_0\pi_0 \perp \cdots \perp a_{h-1}\pi_{h-1} \perp a_{h-1}\tau$ (Lemma \ref{LEMWittequivalenceofnondefective}). To show that conditions (i) and (ii) in (2) are satisfied, it now remains to show that for each $1 \leq i < h$, we have $\pi_i \subset \pi_{i-1}$ and $a_{i-1}\pi_{i-1} \simeq a_i \pi_i$ (then (i) will hold with $a = a_{h-1}$). For any such $i$, however, we have $\pi_i \stb \psi_i = \psi_{i-1}^c \prec \rho_{i-1} = a_{i-1}\pi_{i-1}$, and so $\pi_{i-1}$ becomes isotropic over $F(\pi_i)$. Since $\pi_{i-1}$ is Pfister, it follows that $(\pi_{i-1})_{F(\pi_i)}$ is hyperbolic, and so $\pi_{i} \subset \pi_{i-1}$ by the Cassels-Pfister subform theorem (\cite[Thm. 22.5]{EKM}). Since $a_{i-1}\pi_{i-1} $ and $a_i \pi_i =\rho_i$ both dominate $\psi_i$, it then follows that $\pi_{i-1}$ represents elements $x,y \in F^\times$ such that $a_{i-1}x = a_i y$. By the roundness of Pfister forms, we then have that $a_i\pi_{i-1} \simeq a_ixy^{-1}\pi_{i-1} \simeq a_i\pi_{i-1}$, as desired. Finally, if $\tau = 0$, then $\psi_{h-1} = a_{h-1}\pi_{h-1}$. Since $\mydim{\psi_{h-1}} = \mydim{\psi_{h-2}^c} =  \mydim{\pi_{h-2}} - \mydim{\psi_{h-2}} < \frac{\mydim{\pi_{h-2}}}{2}$, we have that $\mydim{\pi_{h-2}} > 2 \mydim{\pi_{h-1}}$ in this case, i.e. (iv) is satisfied.

$(2) \Rightarrow (1)$: Let $\pi_0,\hdots,\pi_{h-1},\tau$ and $a$ be as in (2). For each $0 \leq i <h$, set $\psi_i := \anispart{a(\pi_i \perp \pi_{i+1} \perp \cdots \perp \pi_{h-1} \perp \tau)}$. By hypothesis, $\psi_0 \simeq a(\pi_0 \perp \pi_1 \perp \cdots \perp \pi_{h-1} \perp \tau)_{\mathrm{an}} \simeq \varphi$. Since the form $a(\pi_0 \perp \pi_1 \perp \cdots \perp \pi_{h-1})$ is nondegenerate, and since $\tau$ is anisotropic, we then also have that $\mathrm{ql}(\varphi) \simeq a\tau$. To prove the claim, it therefore suffices to show the following:
\begin{itemize} \item For each $1 \leq i < h$, $\psi_{i-1}$ is a Pfister neighbour with ambient general Pfister form $a\pi_{i-1}$ and complementary form $\psi_i$;
\item $\psi_{h-1}$ is a Pfister neighbour with $\psi_{h-1}^c \simeq a\tau$. 
\end{itemize}
We start with the second claim. By hypothesis, $\tau$ is dominated by $\pi_{h-1}$. If we let $\tau^c$ be the complementary form, then $\psi_{h-1} = \anispart{a(\pi_{h-1} \perp \tau)} \sim a\tau^c$. Since $\pi_{h-1}$ is anisotropic, the same is true of $\tau^c$, and so $\psi_{h-1} \simeq a\tau^c$. But since $\mydim{\tau} < \frac{\mydim{\pi_{h-1}}}{2}$, we have $\mydim{a\tau^c} > \frac{\mydim{\pi_{h-1}}}{2}$, and so $\psi_{h-1}$ is a Pfister neighbour with complementary form $a\tau$. For the first claim, let $1 \leq i < h$. By definition, we have $\psi_{i-1} \simeq \anispart{(a\pi_{i-1} \perp \psi_{i})}$. Arguing by induction on $i$ (and using the preceding discussion for $i= h-1$), we can assume that $\psi_i$ is dominated by $a\pi_i$, and hence by $a\pi_{i+1}$. Since $\pi_i$ is anisotropic, it follows that $\psi_{i-1} \simeq \psi_i^c$. To prove the claim, it now only remains to check that $\mydim{\psi_{i-1}} > \mydim{\psi_i}$. Suppose, for the sake of contradiction, that this is not the case. Since $\psi_i = \psi_{i-1}^c$, we have $\mydim{\psi_{i-1}} = \mydim{\pi_{i-1}} - \mydim{\psi_i}$. Since $\psi_i \prec a \pi_i$, and since $\mydim{\pi_i} \leq \frac{\mydim{\pi_{i-1}}}{2}$, the only possibility here is that $\psi_i \simeq a\pi_i$ and $\mydim{\pi_i} = \frac{\mydim{\pi_{i-1}}}{2}$. But the first point implies that $i = h-1$ and $\tau = 0$, and the second then contradicts assumption (iv). Thus, the first claim also holds, and so the proposition is proved.
\end{proof} \end{proposition}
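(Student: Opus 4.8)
The plan is to prove the two implications separately, each time using Lemma~\ref{LEMstrongexcellence} to translate ``$\varphi$ strongly excellent of nondefective height $h$'' into the data of Pfister neighbours $\psi_0,\dots,\psi_{h-1}$ over $F$ with $\psi_0=\anispart{\varphi}=\varphi$, $\psi_i\simeq\psi_{i-1}^c$ for $1\le i<h$, and $\psi_{h-1}^c\simeq\anispart{(\mathrm{ql}(\varphi))}=\mathrm{ql}(\varphi)$ (the last two identifications using that $\varphi$, and hence $\mathrm{ql}(\varphi)$, is anisotropic). Condition~(2) will then be a repackaging of this chain together with its sequence of ambient general Pfister forms $\rho_0,\dots,\rho_{h-1}$: (i) will come from telescoping the Witt-equivalences $\psi_{i-1}\perp\rho_{i-1}\sim\psi_{i-1}^c\simeq\psi_i$; (ii) amounts to the assertion that the $\rho_i$ can be realised as a common scalar multiple of a strictly descending chain of Pfister forms; (iii) records that $\psi_{h-1}^c=\mathrm{ql}(\varphi)$ is a scaling of a subform of $\pi_{h-1}$ of dimension below half; and (iv) is an auxiliary condition forced by the fact that, along a genuine Pfister-neighbour chain, a neighbour cannot coincide with its own ambient Pfister form.

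For $(1)\Rightarrow(2)$ I would write $\rho_i=a_i\pi_i$ with $\pi_i$ a Pfister form and $a_i\in F^\times$, and telescope to $\varphi\perp(a_0\pi_0\perp\cdots\perp a_{h-1}\pi_{h-1})\sim\psi_{h-1}^c\simeq\mathrm{ql}(\varphi)$. Since the parenthesised summand is nondegenerate of even dimension and $\mathrm{ql}(\varphi)$ is anisotropic quasilinear, a Witt-cancellation argument (Lemma~\ref{LEMWittequivalenceofnondefective}) gives $\varphi\sim a_0\pi_0\perp\cdots\perp a_{h-1}\pi_{h-1}\perp a_{h-1}\mathrm{ql}(\varphi)$; put $\tau:=a_{h-1}\mathrm{ql}(\varphi)$ and $a:=a_{h-1}$. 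It then remains to straighten out the similarity factors and produce the subform chain. For each $i$ one has $\pi_i\stb\psi_i=\psi_{i-1}^c\prec\rho_{i-1}=a_{i-1}\pi_{i-1}$, so $\pi_{i-1}$ is isotropic --- hence hyperbolic, being Pfister --- over $F(\pi_i)$, and the Cassels--Pfister subform theorem (\cite[Thm.~22.5]{EKM}) yields $\pi_i\subset\pi_{i-1}$; this is proper because $\mydim{\pi_i}/2\le\mydim{\psi_i}=\mydim{\pi_{i-1}}-\mydim{\psi_{i-1}}<\mydim{\pi_{i-1}}/2$, using that $\psi_{i-1}$ is a Pfister neighbour of $a_{i-1}\pi_{i-1}$. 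As $a_{i-1}\pi_{i-1}$ and $a_i\pi_i$ both dominate $\psi_i$, the Pfister form $\pi_{i-1}$ represents scalars $x,y\in F^\times$ with $a_{i-1}x=a_iy$, and the roundness of general Pfister forms then gives $a_i\pi_{i-1}\simeq a_{i-1}\pi_{i-1}$; iterating this lets all $a_i$ be replaced by $a_{h-1}=a$, which establishes (i) and (ii). Finally, if $\tau=0$ then $\psi_{h-1}=a\pi_{h-1}$ and $\mydim{\psi_{h-1}}=\mydim{\pi_{h-2}}-\mydim{\psi_{h-2}}<\mydim{\pi_{h-2}}/2$, giving (iv).

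For $(2)\Rightarrow(1)$ I would set $\psi_i:=\anispart{a(\pi_i\perp\pi_{i+1}\perp\cdots\perp\pi_{h-1}\perp\tau)}$ for $0\le i<h$; then $\psi_0\simeq\varphi$ by (i) and anisotropy, and $\mathrm{ql}(\varphi)\simeq a\tau$ since $a(\pi_0\perp\cdots\perp\pi_{h-1})$ is nondegenerate of even dimension and $\tau$ is anisotropic. By Lemma~\ref{LEMstrongexcellence} it suffices to verify that each $\psi_i$ is a Pfister neighbour with $\psi_{i-1}^c\simeq\psi_i$ for $1\le i<h$, and that $\psi_{h-1}^c\simeq a\tau$. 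The base case $i=h-1$ uses (iii): letting $\tau^c$ be the complement of $\tau$ in $\pi_{h-1}$, one gets $\psi_{h-1}=\anispart{a(\pi_{h-1}\perp\tau)}\simeq a\tau^c$, which is anisotropic (as $\pi_{h-1}$ is) and of dimension $>\mydim{\pi_{h-1}}/2$, so $\psi_{h-1}$ is a Pfister neighbour of $a\pi_{h-1}$ with $\psi_{h-1}^c\simeq a\tau$. One then runs a downward induction: assuming $\psi_i\prec a\pi_i\subset a\pi_{i-1}$, the anisotropy of $\pi_{i-1}$ gives $\psi_{i-1}\simeq(\psi_i)_{a\pi_{i-1}}^c$, and it only remains to check $\mydim{\psi_{i-1}}>\mydim{\pi_{i-1}}/2$, equivalently $\mydim{\psi_i}<\mydim{\pi_{i-1}}/2$. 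If this failed, then since $\psi_i\prec a\pi_i$ and $\mydim{\pi_i}\le\mydim{\pi_{i-1}}/2$ one would be forced to have $\psi_i\simeq a\pi_i$ and $\mydim{\pi_i}=\mydim{\pi_{i-1}}/2$; but $\psi_i\simeq a\pi_i$ makes the complementary form of $\psi_i$ in $a\pi_i$ vanish, which forces $a(\pi_{i+1}\perp\cdots\perp\pi_{h-1}\perp\tau)$ to be Witt-trivial --- impossible for a strictly descending chain of anisotropic Pfister forms --- unless $i=h-1$ and $\tau=0$, and then $\mydim{\pi_{h-1}}=\mydim{\pi_{h-2}}/2$ contradicts~(iv).

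I expect the main obstacle to lie in the $(1)\Rightarrow(2)$ direction: Lemma~\ref{LEMstrongexcellence} only supplies the ambient Pfister forms $\rho_i$ up to similarity, with no a priori containment relations among them, so the heart of the matter is to simultaneously normalise all the similarity factors to a single scalar $a$ and to extract the honest subform chain $\pi_{h-1}\subsetneq\cdots\subsetneq\pi_0$ --- which is where the Cassels--Pfister subform theorem and the roundness of Pfister forms do the work. In the converse direction the only delicate point is ruling out premature termination of the chain (i.e.\ that no $\psi_i$ with $i<h-1$ is similar to its own ambient Pfister form), which is precisely what condition~(iv) is designed to prevent.
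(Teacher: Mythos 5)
Your proposal is correct and follows essentially the same route as the paper's proof: the same telescoping of $\varphi\perp(a_0\pi_0\perp\cdots\perp a_{h-1}\pi_{h-1})$ down to $a_{h-1}\mathrm{ql}(\varphi)$ followed by cancellation via Lemma \ref{LEMWittequivalenceofnondefective}, the same use of Cassels--Pfister and roundness to extract the subform chain and normalise the scalars to a single $a$, and the same downward induction with condition (iv) ruling out the degenerate terminal case in the converse. The only differences are minor elaborations (the explicit dimension count for properness of $\pi_i\subset\pi_{i-1}$, and spelling out why $\psi_i\simeq a\pi_i$ forces $i=h-1$ and $\tau=0$) that the paper leaves implicit.
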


\begin{remarks} \label{REMSexcellent} \begin{enumerate}[leftmargin=*] \item Again, in the nondegenerate case, this is essentially due to Knebusch.
\item In the statement of the proposition, the forms $\pi_0,\hdots,\pi_{h-1},\tau$ in (2) are uniquely determined up to isometry.  Indeed, as the proof shows, $\tau \simeq \mathrm{ql}(\varphi)$, $a\pi_0$ is the ambient general Pfister form of $\varphi$, $a\pi_1$ is the ambient general Pfister form of $\varphi^c$, and so on. \end{enumerate}
\end{remarks}

\section{Preliminaries on Cycles} \label{SECnumericaltriviality}

Recall that we use the notation $\mathrm{Ch}$ for Chow groups modulo $2$. We record here some preliminary facts on the latter.

\subsection{Degree Homomorphisms and Numerical Triviality} \label{SUBSECnumericaltriviality} Let $Y$ be a scheme over $F$. If $Y$ is projective, then we have the degree homomorphism $\mathrm{deg} \colon \mathrm{Ch}(Y) \rightarrow \mathbb{F}_2$ defined as pushforward along the structure morphism $Y \rightarrow \mathrm{Spec}(F)$. If $Y$ is smooth, then we shall say that an element $\alpha \in \mathrm{Ch}(Y)$ is \emph{numerically trivial} if $\mathrm{deg}(\alpha \beta) = 0$ for all $\beta \in \mathrm{Ch}(Y)$. The subset of $\mathrm{Ch}(Y)$ consisting of all numerically trivial elements is an ideal in $\mathrm{Ch}(Y)$ which we shall denote $N_0(Y)$. In the sequel, we shall have occasion to consider a slightly more general situation where these remarks still apply. More precisely, suppose now that $Y$ is an open subscheme of a projective $F$-scheme $Y'$ for which the (reduced scheme of the) complement $Y' \setminus Y$ has no closed point of odd degree. In this case, the localization sequence for the closed embedding $Y' \setminus Y \rightarrow Y'$ shows that the the degree homomorphism for $Y'$ factors through $\mathrm{Ch}(Y)$ by restriction. Abusing notation, we shall also denote the induced homomorphism $\mathrm{Ch}(Y) \rightarrow \mathbb{F}_2$ by $\mathrm{deg}$. By definition, it sends the class of any closed point $y \in Y$ to the degree of the residue field extension $F(y)/F$, and sends the classes of all other closed subvarieties of $Y$ to $0$. In particular, it is independent of $Y'$, and hence intrinsic to $Y$.  Let us describe this situation by saying that  \emph{$Y$ admits a degree homomorphism}.  If $Y$ is also smooth, we can then define the ideal $N_0(Y)$ of numerically trivial elements in $\mathrm{Ch}(Y)$ exactly as before. We will need the following observations: 

\begin{lemma} \label{LEMtrivialpushforward} Let $f \colon X \rightarrow Y$ be a proper morphism of schemes over $F$. Assume that $Y$ is smooth and admits a degree homomorphism $($in the sense above$)$. 
\begin{enumerate} \item If $X$ has no closed point of odd degree, then the image of the pushforward $f_* \colon \mathrm{Ch}(X) \rightarrow \mathrm{Ch}(Y)$ lies in $N_0(Y)$. In particular, if $N_0(Y)=0$, then $f_*$ is the zero map.
\item Suppose that $X$ is also smooth and admits a degree homomorphism $($in the sense above$)$. If $f$ is a closed embedding, then $f^*(N_0(Y)) \subseteq N_0(X)$. \end{enumerate}
\begin{proof} (1) Since $\mathrm{Ch}(X)$ is generated by classes of closed subvarieties of $X$, it suffices to show that $\alpha: = f_*([X])$ lies in $N_0(Y)$. But if $\beta \in \mathrm{Ch}(Y)$, then $\alpha\beta = f_*(f^*(\beta))$ by \cite[Prop. 56.11]{EKM}, and so $\mathrm{deg}(\alpha\beta) = 0$ on account of $X$ having no closed point of odd degree.

(2) Let $\alpha \in N_0(Y)$ and $\beta \in \mathrm{Ch}(X)$. We have to show that $\mathrm{deg}(f^*(\alpha)\beta) =0$. Since $f$ is a closed embedding, we have $\mathrm{deg}(\gamma) = \mathrm{deg}(f_*(\gamma))$ for all $\gamma \in \mathrm{Ch}(X)$. By the projection formula (\cite[Prop. 56.9]{EKM}),  however, we have $f_*(f^*(\alpha)\beta) = \alpha f_*(\beta)$, and so $\mathrm{deg}(f^*(\alpha)\beta) = \mathrm{deg}(\alpha f_*(\beta)) = 0$ by the numerical triviality of $\alpha$. 
\end{proof} \end{lemma}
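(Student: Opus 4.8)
\emph{Approach.} The plan is to deduce both parts from the projection formula together with the fact that the degree homomorphism only ``sees'' the $0$-cycle part of a class, where it records residue-field degrees. For part (1), I would first note that $\mathrm{Ch}(X)$ is spanned by the classes $[Z]$ of closed subvarieties $Z \subseteq X$, that for each such $Z$ the restriction $f|_Z \colon Z \to Y$ is again proper, and that $Z$ inherits the hypothesis of having no closed point of odd degree; hence it suffices to treat the case $X = Z$, i.e. to show that $\alpha := f_*([X])$ lies in $N_0(Y)$. For $\beta \in \mathrm{Ch}(Y)$, the projection formula $\alpha\beta = f_*([X]) \cdot \beta = f_*(f^*(\beta))$ (\cite[Prop. 56.11]{EKM}) reduces this to checking that $\mathrm{deg}(f_*(\gamma)) = 0$ for every $\gamma \in \mathrm{Ch}(X)$. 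Expanding $\gamma$ in closed subvarieties $Z$ and splitting on $\dim f(Z)$: if $\dim f(Z) < \dim Z$ then $f_*[Z] = 0$; if $\dim Z = \dim f(Z) > 0$ then $f_*[Z]$ is a multiple of a positive-dimensional subvariety, on which $\mathrm{deg}$ vanishes; and if $Z$ is a closed point, then $f(Z)$ is a closed point of $Y$ and $\mathrm{deg}(f_*[Z]) = [F(Z):F(f(Z))]\cdot[F(f(Z)):F] = [F(Z):F]$, which is even by hypothesis. Thus $\mathrm{deg}(f_*(\gamma)) = 0$, so $\alpha \in N_0(Y)$, and the final assertion of part (1) is then immediate.

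\emph{Part (2).} Here I would fix $\alpha \in N_0(Y)$ and $\beta \in \mathrm{Ch}(X)$ and show $\mathrm{deg}(f^*(\alpha)\beta) = 0$. Since $f$ is a closed embedding it preserves residue fields of closed points, so $f_*$ merely reinterprets a cycle on $X$ as a cycle on $Y$ with unchanged coefficients and residue-field degrees; consequently $\mathrm{deg}(\gamma) = \mathrm{deg}(f_*(\gamma))$ for all $\gamma \in \mathrm{Ch}(X)$. Taking $\gamma = f^*(\alpha)\beta$ and applying the projection formula $f_*(f^*(\alpha)\beta) = \alpha\, f_*(\beta)$ (\cite[Prop. 56.9]{EKM}) gives $\mathrm{deg}(f^*(\alpha)\beta) = \mathrm{deg}(\alpha\, f_*(\beta)) = 0$, the last equality by the numerical triviality of $\alpha$.

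\emph{Main obstacle.} The computations themselves are routine; the one point requiring care is that $X$ and $Y$ are assumed merely to be open subschemes of projective $F$-schemes rather than projective, so that $\mathrm{deg}$ is the homomorphism introduced via the localization sequence. What must be checked is that $\mathrm{deg}$ remains compatible with $f_*$ in this generality --- equivalently, that the residue-degree description of $\mathrm{deg} \circ f_*$ used above remains valid --- which one settles by unwinding that definition and using that $\mathrm{deg}$ is intrinsic, i.e.\ independent of the chosen compactification. Once this is granted, both parts are immediate applications of the projection formula.
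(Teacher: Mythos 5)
Your proposal is correct and follows essentially the same route as the paper: part (1) reduces to $\alpha = f_*([X])$ and applies the projection formula $\alpha\beta = f_*(f^*(\beta))$, and part (2) combines degree-invariance under closed embeddings with the projection formula $f_*(f^*(\alpha)\beta) = \alpha f_*(\beta)$. Your extra verification that $\mathrm{deg}\circ f_*$ vanishes (by splitting on $\dim f(Z)$ and computing residue-field degrees of closed points) and your remark on compatibility of $\mathrm{deg}$ with the chosen compactification are just explicit elaborations of steps the paper leaves implicit.
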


\begin{remark} \label{REMN=0forproductsofprojectivespaces} If $Y$ is a product of projective spaces, then $N_0(Y) = 0$ by the projective bundle formula.
\end{remark}

\subsection{Scalar Extension to an Algebraic Closure and Rationality} \label{SUBSECrationality} Let $\overline{F}$ be an algebraic closure of $F$. Given a scheme $X$ over $F$, we set $\overline{X}: = X_{\overline{F}}$, and write $\ratchow{X}$ for the image of the scalar extension homomorphism $\mathrm{Ch}(X) \rightarrow \chowbar{X}$ (equipped with its induced gradings by dimension and codimension). The elements of $\chowbar{X}$ that lie in $\ratchow{X}$ will be said to be \emph{$F$-rational}. If $\alpha$ is an element of $\mathrm{Ch}(X)$, then its image in $\ratchow{X}$ shall be denoted $\overline{\alpha}$. We need the following:

\begin{lemma} \label{LEMvanishingofrationalcycles} Let $f \colon X \rightarrow Y$ be a proper morphism of schemes over $F$. Suppose that $Y$ is smooth and projective, and that $N_0(Y) = 0$.  If $X$ has no closed point of odd degree, and the pushforward homomorphism $f_* \colon \chowbar{X} \rightarrow \chowbar{Y}$ is injective, then $\ratchow{X} = 0$. 
\begin{proof} Let $\alpha \in \mathrm{Ch}(X)$. We have to show that $\overline{\alpha} = 0$. By hypothesis, it suffices to show that $f_*(\overline{\alpha}) = 0$. But $f_*(\overline{\alpha}) = \overline{f_*(\alpha)}$, so the desired assertion holds by Lemma \ref{LEMtrivialpushforward} (1).
\end{proof} \end{lemma}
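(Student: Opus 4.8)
The plan is to deduce the vanishing of $\ratchow{X}$ directly from the hypothesis $N_0(Y) = 0$ by pushing cycles forward along $f$. By definition $\ratchow{X}$ consists precisely of the classes $\overline{\alpha} \in \chowbar{X}$ with $\alpha$ ranging over $\mathrm{Ch}(X)$, so it suffices to fix an arbitrary $\alpha \in \mathrm{Ch}(X)$ and show that $\overline{\alpha} = 0$.

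First I would use the assumed injectivity of $f_* \colon \chowbar{X} \to \chowbar{Y}$ to replace the goal $\overline{\alpha} = 0$ by the a priori weaker statement $f_*(\overline{\alpha}) = 0$. Then, invoking the compatibility of proper pushforward with scalar extension of the base field (a standard fact in Chow theory), I would rewrite $f_*(\overline{\alpha}) = \overline{f_*(\alpha)}$, thereby reducing everything to a statement about the single class $f_*(\alpha) \in \mathrm{Ch}(Y)$ downstairs over $F$.

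At this point the remaining hypotheses finish the argument with no real work: since $X$ has no closed point of odd degree and $Y$ is smooth and projective, Lemma \ref{LEMtrivialpushforward}(1) places $f_*(\alpha)$ in the ideal $N_0(Y)$ of numerically trivial classes; and $N_0(Y) = 0$ by assumption, so $f_*(\alpha) = 0$, hence $\overline{f_*(\alpha)} = f_*(\overline{\alpha}) = 0$, and the injectivity of $f_*$ on $\chowbar{X}$ then forces $\overline{\alpha} = 0$. The proof is thus essentially a diagram chase; the only genuinely external inputs are Lemma \ref{LEMtrivialpushforward}(1) --- which itself rests on the projection formula --- and the functoriality of pushforward under base change, both already in hand. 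I do not anticipate any serious obstacle: the one point worth a moment's thought is simply confirming that the setup (a degree homomorphism and the associated $N_0$) is legitimately in force for $Y$, which is automatic here since $Y$ is smooth and projective.
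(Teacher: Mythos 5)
Your proposal is correct and follows exactly the same route as the paper's proof: reduce to $f_*(\overline{\alpha})=0$ via injectivity, identify $f_*(\overline{\alpha})$ with $\overline{f_*(\alpha)}$ by compatibility of proper pushforward with scalar extension, and conclude from Lemma \ref{LEMtrivialpushforward}(1) together with $N_0(Y)=0$. No issues.
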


\subsection{An Observation on Products of Quasilinear Quadrics} Let $\varphi$ be an anisotropic quasilinear quadratic form of dimension $\geq 2$ over $F$. Set $V: = V_{\varphi}$, and let $i \colon X_{\varphi} \rightarrow \mathbb{P}(V)$ be the canonical embedding.

\begin{lemma} \label{LEMsplittinglemmaforquasilinearquadrics} Let $Z$ be a scheme over $F$. In the above situation, there is then a natural $\mathbb{F}_2$-vector space homomorphism $\pi \colon \chowbar{\mathbb{P}(V) \times Z} \rightarrow \chowbar{X_{\varphi} \times Z}$ satisfying $(i \times \mathrm{id})_* \circ \pi = \mathrm{id}$. In particular, $(i \times \mathrm{id})_*$ is injective.
\begin{proof} Let $W$ be the set of isotropic vectors in $V \otimes_F \overline{F}$. Since $\varphi$ is quasilinear, $W$ is an $\overline{F}$-linear subspace of $V\otimes_F \overline{F}$. Since $\overline{F}$ is algebraically closed, $W$ has codimension $1$ in $V \otimes_F \overline{F}$ and $\mathbb{P}(W)$ is the reduced scheme of $\overline{X_{\varphi}}$. In particular, if we let $j \colon \mathbb{P}(W) \rightarrow \overline{X_{\varphi}}$ be the canonical embedding, then the pushforward $j_* \colon \mathrm{Ch}(\mathbb{P}(W) \times \overline{Z}) \rightarrow \chowbar{X_{\varphi} \times Z}$ is an isomorphism. By the projective bundle formula, there is a natural $\mathbb{F}_2$-vector space homomorphism $\pi' \colon \chowbar{\mathbb{P}(V) \times Z} \rightarrow \mathrm{Ch}(\mathbb{P}(W) \times \overline{Z})$ satisfying $(i \circ j \times \mathrm{id})_* \circ \pi' = \mathrm{id}$. The composite map $\pi: = j \circ \pi'$ then has the desired property. \end{proof}
\end{lemma}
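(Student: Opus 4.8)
The plan is to reduce the statement to the projective bundle formula by exploiting the fact that an anisotropic quasilinear quadric becomes, over $\overline F$, a single hyperplane counted with multiplicity $2$. First I would pin down $\overline{X_\varphi}$: since $\varphi$ is anisotropic quasilinear we may write $\varphi \simeq \form{a_1,\ldots,a_n}$ with $a_i \in F^\times$ and $n = \mydim{\varphi} \geq 2$, and since $\overline F$ is perfect of characteristic $2$ the defining polynomial becomes $\sum_i a_i x_i^2 = \bigl(\sum_i \sqrt{a_i}\,x_i\bigr)^2$, the square of a nonzero linear form. Hence $\overline{X_\varphi} \subseteq \mathbb{P}(V)_{\overline F}$ is a hyperplane with multiplicity $2$, whose underlying reduced scheme is $\mathbb{P}(W)$ for $W \subseteq V \otimes_F \overline F$ the codimension-$1$ subspace of isotropic vectors. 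After multiplying by $Z$, the reduced scheme underlying $\overline{X_\varphi \times Z} = \overline{X_\varphi} \times_{\overline F} \overline Z$ is $\mathbb{P}(W) \times_{\overline F} (\overline Z)_{\mathrm{red}}$ (a product of reduced schemes over an algebraically closed field is reduced), and since mod-$2$ Chow groups are insensitive to nilpotents the closed embedding $j \colon \mathbb{P}(W) \hookrightarrow \overline{X_\varphi}$ induces an isomorphism $j_* \colon \mathrm{Ch}(\mathbb{P}(W) \times \overline Z) \xrightarrow{\ \sim\ } \chowbar{X_\varphi \times Z}$.

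Next I would feed the composite $i \circ j \colon \mathbb{P}(W) \hookrightarrow \mathbb{P}(V)_{\overline F}$, which on reduced schemes is the inclusion of the hyperplane cut out by the hyperplane class $h$, into the projective bundle formula. Writing $\bar h := (i\circ j)^*(h)$ for the hyperplane class of $\mathbb{P}(W)$, this gives $\chowbar{\mathbb{P}(V)\times Z} = \bigoplus_{k=0}^{n-1} h^k \cdot \mathrm{Ch}(\overline Z)$ and $\mathrm{Ch}(\mathbb{P}(W)\times\overline Z) = \bigoplus_{k=0}^{n-2} \bar h^k \cdot \mathrm{Ch}(\overline Z)$, while the projection formula yields $(i\circ j\times\mathrm{id})_*(\bar h^k\cdot z) = h^{k+1}\cdot z$. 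Thus $(i\circ j\times\mathrm{id})_*$ identifies $\mathrm{Ch}(\mathbb{P}(W)\times\overline Z)$ with the direct summand $\bigoplus_{k=1}^{n-1} h^k\cdot\mathrm{Ch}(\overline Z)$ of $\chowbar{\mathbb{P}(V)\times Z}$; projecting onto that summand and then inverting the identification defines a canonical $\mathbb{F}_2$-linear map $\pi' \colon \chowbar{\mathbb{P}(V)\times Z} \to \mathrm{Ch}(\mathbb{P}(W)\times\overline Z)$ with $\pi'\circ(i\circ j\times\mathrm{id})_* = \mathrm{id}$. Setting $\pi := j_* \circ \pi'$ and using $(i\circ j\times\mathrm{id})_* = (i\times\mathrm{id})_*\circ j_*$, a one-line diagram chase gives $\pi\circ(i\times\mathrm{id})_* = \mathrm{id}$, so in particular $(i\times\mathrm{id})_*$ is split injective. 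Naturality in $Z$ would be inherited from that of the projective bundle and projection formulas, and $\pi$ is independent of the chosen diagonalization of $\varphi$ because $W$ is intrinsic.

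The only genuinely delicate point, and the place I would be most careful, is the bookkeeping in the first step: one must check that forming underlying reduced schemes commutes with the product with $Z$ and that the resulting identification is exactly what makes $j_*$ an isomorphism of mod-$2$ Chow groups. Here it matters that $\overline Z$ itself may fail to be reduced when $F(Z)/F$ is inseparable, which is why one should phrase everything in terms of $(\overline Z)_{\mathrm{red}}$. Once that is in place, the remainder is a formal consequence of the projective bundle and projection formulas.
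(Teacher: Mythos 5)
Your proof is correct and follows essentially the same route as the paper: identify $\mathbb{P}(W)$ as the reduced scheme of $\overline{X_{\varphi}}$, use that Chow groups ignore nilpotents to make $j_*$ an isomorphism, and split off the image of $(i \circ j \times \mathrm{id})_*$ inside $\bigoplus_k h^k \cdot \mathrm{Ch}(\overline{Z})$ via the projective bundle and projection formulas. Note that the retraction identity you establish, $\pi \circ (i \times \mathrm{id})_* = \mathrm{id}$, is the one actually needed for the injectivity conclusion and is the correct reading of the lemma; the composition order displayed in the statement is evidently a slip, since $(i \times \mathrm{id})_*$ misses the summand $h^0 \cdot \mathrm{Ch}(\overline{Z})$ and so cannot admit a right inverse.
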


This leads to the following, which will be a basic point in the sequel:

\begin{proposition} \label{PROPvanishingofrationalcyclesforquasilinear} Let $X$ be a variety over $F$ which is a product of projective spaces and positively many anisotropic quasilinear quadrics. Then $\chowbar{X} = 0$.  
\begin{proof} By the projective bundle formula, we can assume that $X$ is a product of positively many anisotropic quasilinear quadrics. By Springer's theorem, none of these quadrics have a closed point of odd degree, and so the same is true of $X$. Let $\varphi_1,\hdots,\varphi_m$ be anisotropic quasilinear quadratic forms of dimension $\geq 2$ over $F$ such that $X = X_{\varphi_1} \times \cdots \times X_{\varphi_m}$. For each $1 \leq t \leq m$, set $V_t: = V_{\varphi_t}$, and let $i_t \colon X_{\varphi_t} \rightarrow \mathbb{P}(V_i)$ be the canonical embedding. Set $Y: = \mathbb{P}(V_1) \times \cdots \times \mathbb{P}(V_m)$ and $i: = i_1 \times \cdots \times i_m$. Then $N_0(Y) = 0$ (Remark \ref{REMN=0forproductsofprojectivespaces}), and repeated application of Lemma \ref{LEMsplittinglemmaforquasilinearquadrics} shows that $i_* \colon \chowbar{X} \rightarrow \chowbar{Y}$ is injective. The claim then follows from Lemma \ref{LEMvanishingofrationalcycles}. \end{proof}
\end{proposition}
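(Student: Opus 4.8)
The plan is to remove the projective-space factors, embed the remaining product of quadrics into a product of projective spaces $Y$, and then combine three facts established above: the pushforward along such an embedding carries $F$-rational cycles into the ideal of numerically trivial cycles (Lemma \ref{LEMtrivialpushforward}), which vanishes for a product of projective spaces (Remark \ref{REMN=0forproductsofprojectivespaces}); yet this pushforward becomes injective after scalar extension to $\overline{F}$, because over $\overline{F}$ an anisotropic quasilinear quadric is a thickened hyperplane (Lemma \ref{LEMsplittinglemmaforquasilinearquadrics}). The conclusion $\ratchow{X} = 0$ will then be exactly Lemma \ref{LEMvanishingofrationalcycles}.

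First I would peel off the projective-space factors. If $X = \mathbb{P}^{N} \times X'$, then $\overline{X}$ is a projective bundle over $\overline{X'}$, the relative hyperplane class is $F$-rational, and the projective bundle formula is compatible with scalar extension; hence $\ratchow{X} = 0$ as soon as $\ratchow{X'} = 0$. Iterating, we may assume $X = X_{\varphi_1} \times \cdots \times X_{\varphi_m}$ with $m \geq 1$ and each $\varphi_t$ anisotropic quasilinear of dimension $\geq 2$. Next I would check that $X$ has no closed point of odd degree: each $X_{\varphi_t}$ has none by Springer's theorem (a closed point of odd degree would make $\varphi_t$ isotropic over an odd-degree extension of $F$, hence over $F$), and the projections $X \to X_{\varphi_t}$ send closed points to closed points with residue field of the image dividing that of the point, so the same holds for $X$. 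Setting $V_t := V_{\varphi_t}$ and letting $i_t \colon X_{\varphi_t} \hookrightarrow \mathbb{P}(V_t)$ be the canonical closed embedding, I would then put $Y := \mathbb{P}(V_1) \times \cdots \times \mathbb{P}(V_m)$ and $i := i_1 \times \cdots \times i_m \colon X \hookrightarrow Y$; here $Y$ is smooth and projective with $N_0(Y) = 0$.

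It remains to check that the base-changed pushforward $i_* \colon \chowbar{X} \rightarrow \chowbar{Y}$ is injective, since then Lemma \ref{LEMvanishingofrationalcycles} applies verbatim and yields $\ratchow{X} = 0$. For the injectivity I would factor $i$ (up to the canonical isomorphisms permuting factors) as a composition of maps of the form $i_t \times \mathrm{id}_{Z}$, one quadric at a time, where $Z$ is built from the remaining factors, and apply Lemma \ref{LEMsplittinglemmaforquasilinearquadrics} to each: that lemma supplies a section of $(i_t \times \mathrm{id}_{Z})_*$ after base change, so each such map is a split injection; composing, $i_*$ is injective over $\overline{F}$. (The lemma is stated for $Z$ a variety, but its proof only uses that $\mathbb{P}(W) \times \overline{Z}$ is a nilpotent thickening of the reduced subscheme of $\overline{X_{\varphi_t} \times Z}$, together with the projective bundle formula, so it applies to an arbitrary $Z$; alternatively, the intermediate products occurring here are themselves integral, a factor of a nonempty integral product over $F$ being integral by faithfully flat descent.) The substance of the argument is this injectivity, which rests on the structural input packaged into Lemma \ref{LEMsplittinglemmaforquasilinearquadrics} --- that the reduced subscheme of an anisotropic quasilinear quadric over $\overline{F}$ is a linear subspace of the ambient projective space; everything else is bookkeeping with proper pushforwards, the projection formula and the projective bundle formula, and I anticipate no genuine obstacle beyond tracking that all the hypotheses of Lemma \ref{LEMvanishingofrationalcycles} --- smoothness of $Y$, $N_0(Y) = 0$, absence of odd-degree closed points on $X$, and injectivity of $i_*$ over $\overline{F}$ --- are simultaneously in force.
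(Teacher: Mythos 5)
Your proposal is correct and follows essentially the same route as the paper's proof: strip the projective-space factors by the projective bundle formula, embed the remaining product of quasilinear quadrics into a product of projective spaces, obtain injectivity of the base-changed pushforward by iterating Lemma \ref{LEMsplittinglemmaforquasilinearquadrics}, and conclude with Lemma \ref{LEMvanishingofrationalcycles} together with Remark \ref{REMN=0forproductsofprojectivespaces} and Springer's theorem. Your parenthetical care about applying Lemma \ref{LEMsplittinglemmaforquasilinearquadrics} when the auxiliary factor is itself a product is a welcome refinement of a step the paper passes over silently, but it does not change the argument.
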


\section{Rational Cycles on Products of Generically Smooth Nondefective Quadrics} \label{SECrationalcycles}

Let $\overline{F}$ be an algebraic closure of $F$. In this section, we consider the group $\ratchow{X}$ (as defined in \S \ref{SUBSECrationality} above) in the case where $X$ is a product of generically smooth projective quadrics over $F$, each of which is nondefective (i.e., has anisotropic singular locus). For products of several copies of a single quadric of this type, the results stated here were previously established by Karpenko in \cite{Karpenko2}, and the extension to the more general situation considered here only requires minor adaptation of the arguments found in the latter. For the remainder of this section, we fix nonquasilinear quadratic forms $\varphi_1,\hdots,\varphi_m$ of dimension $\geq 2$ over $F$, each of which is assumed to be nondefective. For each $1 \leq t \leq m$, we let $(r_t,s_t)$ be the type of $\varphi_t$, and we set $V_i: = V_{\varphi_i}$ and $X_i: = X_{\varphi_t}$. Finally, we set $X: = X_1 \times \cdots \times X_m$. Since the $\varphi_i$ are nonquasilinear, the integers $r_t$ are positive and $X$ is generically smooth. We start by formulating the statements we wish to establish.

\subsection{Statements} \label{SUBSECstatementsonrationalcycles} Let $1 \leq t \leq m$. Since $\overline{F}$ is algebraically closed, we have $\windex{(\varphi_t)_{\overline{F}}} = r_t$. Let $U_t$ be a $(2r_t)$-dimensional subspace of $V_t \otimes_F \overline{F}$ such that $\varphi|_{U_t}$ is hyperbolic, and let $W_t$ be an $r_t$-dimensional totally isotropic subspace of $U_t$. For each $0 \leq i< r_t$, let us define $h^i \in \chowbar{X_i}$ to be the class of a codimension $i$-subquadric of $X_t$, and $l_i \in \chowbar{X_i}$ to be the class of any $i$-dimensional projective linear subspace of $\mathbb{P}(W_t)$ (viewed as a closed subvariety of $\overline{X_t}$). Note that we are suppressing the dependency on $t$ in order to avoid overcomplicating our notation. The element $h^i$ clearly lies in $\ratchow{X_t}$ and depends only on $\varphi_t$ (being the pullback of the unique nontrivial element of $\mathrm{Ch}^j(\mathbb{P}(V_t))$ along the canonical embedding of $X_t$ into $\mathbb{P}(V_t)$). The element $l_i$ need not be $F$-rational. While it a priori depends on the choice of $W_t$, it again does not depend on the choice of $i$-dimensional projective linear subspace of $\mathbb{P}(W_t)$. Below, we shall see that:

\begin{lemma} \label{LEMlinearindependence} $\lbrace l_i,h^i \rbrace_{0 \leq i < r_t}$ is an $\mathbb{F}_2$-linearly independent subset of $\chowbar{X_t}$. \end{lemma}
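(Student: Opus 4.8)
The plan is to reduce everything to the well-understood structure of $\chowbar{X_t}$ for a single quadric, which is classical for smooth quadrics and carries over verbatim here because $X_t$ is generically smooth. Recall that over $\overline{F}$, the mod-$2$ Chow group of the $d_t$-dimensional quadric $\overline{X_t}$ (with $d_t = \mydim{\varphi_t} - 2$) has a standard $\mathbb{F}_2$-basis: in each codimension $i$ with $0 \leq i < r_t$ one has the single class $h^i$ (restriction of the hyperplane class), in each codimension $i$ with $r_t \leq i \leq d_t - r_t$ again a single class $h^i$, and in the ``linear range'' $d_t - r_t < i \leq d_t$ there are two classes. Dually, indexed by dimension, for $0 \leq i < r_t$ the dimension-$i$ part of $\chowbar{X_t}$ is two-dimensional, spanned by $h^{d_t - i}$ and $l_i$, while for $r_t \leq i \leq d_t$ it is one-dimensional. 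This description follows from \cite[\S 68]{EKM} once one knows $\overline{X_t}$ is a split quadric over the algebraically closed field $\overline{F}$; the only subtlety is that $X_t$ need not be smooth over $F$, but $\overline{X_t}$ is obtained by base change and its reduced structure is a split quadric of the same dimension, so the computation of $\chowbar{X_t}$ is unaffected. (Here I am using that $\varphi_t$ is nondefective, so $\mathrm{ql}(\varphi_t)$ is anisotropic of dimension $\geq 1$ but the quadric $X_{\varphi_t}$ has the expected dimension $\mydim{\varphi_t}-2$; in fact for the linear independence statement we only need to know the ambient split-quadric Chow group over $\overline F$.)

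First I would fix $t$ and write $d := d_t = \mydim{\varphi_t} - 2$, $r := r_t$, working entirely inside $\chowbar{X_t}$. Next I would recall the explicit $\mathbb{F}_2$-basis of $\chowbar{X_t}$ above: for each $0 \leq i < r$ the dimension-$i$ homogeneous component is spanned by $\lbrace h^{d-i}, l_i \rbrace$, these two classes are distinct and nonzero, and the component is genuinely $2$-dimensional. The classes $h^{d-i}$ and $l_i$ are linearly independent within that single graded piece because they are two of the basis elements of $\chowbar{X_t}$ (in the linear range a split quadric has exactly two Schubert-type classes per dimension, the power of the hyperplane class and the class of a linear subspace of the maximal linear subvariety). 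Then, since $\chowbar{X_t} = \bigoplus_i \mathrm{Ch}_i(\overline{X_t})$ is a direct sum over distinct dimensions, any $\mathbb{F}_2$-linear relation among $\lbrace l_i, h^i \rbrace_{0 \leq i < r}$ must hold separately in each graded component; a relation supported in dimension $i$ reads $a\, l_i + b\, h^{d-i} = 0$ — wait, one must be careful that in our indexing $h^i$ denotes the codimension-$i$, hence dimension-$(d-i)$, class, so the classes appearing in dimension $i$ are $l_i$ together with $h^{d-i}$; in any case each homogeneous piece contains exactly one $l$-class and exactly one $h$-class from our list, and they are independent there. Hence the whole set is independent.

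The last point needing care is matching up the indexing in the lemma with the basis: the lemma writes $\lbrace l_i, h^i \rbrace_{0\le i<r_t}$, and $l_i$ lives in dimension $i$ while $h^i$ lives in dimension $d-i$; since $0 \le i < r_t$ forces $d - i > d - r_t \ge r_t - 1$... actually one should just check that the $2r_t$ classes $l_0,\dots,l_{r_t-1},h^0,\dots,h^{r_t-1}$ are pairwise distinct and occupy graded pieces in such a way that no cancellation across pieces is possible; this is immediate from the grading by dimension once one observes $l_i$ has dimension $i \in [0, r_t-1]$ and $h^i$ has dimension $d - i \in [d-r_t+1, d]$, and these two ranges are disjoint because $d = \mydim{\varphi_t} - 2 \ge 2r_t - 1 > 2(r_t - 1)$ as $\varphi_t$ has dimension $\ge 2r_t + 1$ being nondefective nonquasilinear of type $(r_t, s_t)$ with $s_t \ge 1$. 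So in fact all $2r_t$ classes lie in pairwise distinct graded components, and each is individually nonzero (no power of $h$ below the middle vanishes on a split quadric, and no linear class $l_i$ with $i < r_t \le d/2$ vanishes). Therefore the set is $\mathbb{F}_2$-linearly independent.

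The main obstacle, such as it is, is purely bookkeeping: being scrupulous about the two indexing conventions (by dimension for the $l_i$, by codimension for the $h^i$) and confirming that on the split quadric $\overline{X_t}$ the relevant classes are nonzero and the graded pieces have the asserted dimensions — i.e., that we are genuinely in the ``two classes per dimension'' range for $0 \le i < r_t$. There is no geometric difficulty beyond citing the standard computation of the Chow ring of a split projective quadric, valid here because passing to $\overline F$ produces a split quadric of the correct dimension regardless of whether $X_t$ was smooth over $F$.
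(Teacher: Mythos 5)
There is a genuine gap at the central step. You base the whole computation on the claim that $\overline{X_t}$ ``is obtained by base change and its reduced structure is a split quadric of the same dimension, so the computation of $\chowbar{X_t}$ is unaffected,'' and you then import the standard basis of a smooth split quadric from \cite[\S 68]{EKM}. This is false whenever $s_t \geq 2$: over $\overline{F}$ the quasilinear part becomes $\langle 1 \rangle \perp (s_t-1)\langle 0 \rangle$ after a linear change of coordinates, so $\overline{X_t}$ is an integral but singular quadric, namely a cone with vertex a linear space of dimension $s_t - 2$ over a smooth split quadric of dimension $2r_t - 1$. Its Chow groups are genuinely different from those of a smooth split quadric of dimension $d_t$ (already for type $(1,2)$ one gets $\mathrm{Ch}_1(\overline{X_t}) \cong \mathbb{F}_2$, not $\mathbb{F}_2^2$): in particular the graded pieces in dimensions $0 \leq i < r_t$ are one-dimensional, spanned by $l_i$ alone, so your first two paragraphs describe a basis that does not exist. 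The conclusion you need --- that each $l_i$ and each $h^j$ ($j < r_t$) is nonzero and that they sit in the right degrees --- is true, but it is exactly the content of the partial cell decomposition of Proposition \ref{PROPcelldecomposition} (the affine-bundle stratification $X_{\varphi} \supset Y \supset \mathbb{P}(W)$ over the quasilinear quadric), which cannot be replaced by a citation of the smooth case. The nonvanishing of $h^j$ in $\chowbar{X_t}$ in particular requires an argument (its pushforward to $\mathbb{P}(V_t)$ has even degree, so the easy pushforward argument that works for $l_i$ does not apply); the paper gets it from the decomposition, or equivalently from the identity $h^j l_j = l_0$ on the smooth locus.

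Two smaller points. First, you restrict to $s_t \geq 1$ in your final paragraph (``being nondefective nonquasilinear \dots with $s_t \geq 1$''), but the setup of \S\ref{SECrationalcycles} allows nondegenerate forms of even dimension, where $s_t = 0$, $d_t = 2r_t - 2$, and the dimension ranges of the $l_i$ and the $h^j$ are \emph{not} disjoint: $l_{r_t-1}$ and $h^{r_t-1}$ share the middle degree, and their independence there is the classical two-rulings computation (which you do mention earlier, so this case is recoverable, but your dichotomy should be stated as: disjoint degrees when $s_t \geq 1$, the smooth split computation when $s_t = 0$). Second, for comparison, the paper's route is: reduce to $\windex{\varphi_t} = r_t$ by passing to the separable closure (Lemma \ref{LEMreductiontomaximalWittindex}), apply Proposition \ref{PROPcelldecomposition} with $Z = \mathrm{Spec}(F)$ to split $\mathrm{Ch}(X_t)$ as $A \oplus B$ with $A$ free on $\lbrace l_i, h^i \rbrace$, note the decomposition is compatible with scalar extension and $\overline{B} = 0$, and then show every nonzero $\alpha \in A$ has $\overline{\alpha} \neq 0$ by producing $\beta$ with $\alpha\beta = l_0$ and $\overline{l_0} \neq 0$. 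Your degree-separation idea is a reasonable shortcut for the bookkeeping, but only after that decomposition (or an equivalent computation of $\mathrm{Ch}(\overline{X_t})$ for the cone) is in place.
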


Let $R_t$ be the $2r_t$-dimensional $\mathbb{F}_2$-linear subspace of $\chowbar{X_t}$ generated by $\lbrace l_i,h^i\rbrace_{0 \leq i < r_t}$. We can equip $R_t$ with the structure of a commutative ring by defining the multiplication as follows:
$$ h^i h^j: = \begin{cases} h^{i+j} & \text{if } i+j < r_t \\ 0 & \text{otherwise} \end{cases}; \hspace{.5cm} h^i  l_j : = \begin{cases} l_{j-i} & \text{if } i \leq j \\ 0 & \text{otherwise}; \end{cases}; $$
$$ l_i l_j: = \begin{cases} l_0 & \text{if } \mydim{\varphi_t} \equiv 2 \pmod{4} \text{ and } i= j = \frac{\mydim{\varphi_t}-2}{2} \\ 0 & \text{otherwise}. \end{cases} $$
Note that the identity element is $h^0$. For each integer $j \geq 0$, we can also define an $\mathbb{F}_2$-vector space homomorphism $S^j \colon R_t \rightarrow R_t$ by setting
$$ S^j(h^i) : = \begin{cases} \binom{i}{j}h^{i+j} & \text{if } j < r_t-i \\ 0 & \text{otherwise} \end{cases} \;\;\; \text{and} \;\;\; S^j(l_i): = \begin{cases} \binom{\mydim{\varphi} - i- 1}{j} l_{i-j} & \text{if } j \leq i \\ 0 & \text{otherwise}. \end{cases} $$
Using the binomial identity $\binom{a+b}{c}=\sum_{d}\binom{a}{d}\binom{b}{c-d}$, one readily checks that $S^j(\alpha \beta) = \sum_{j_1 + j_2 = j}S^{j_1}(\alpha)S^{j_2}(\beta)$ for all $\alpha,\beta \in R_t$. Now the proof of Lemma \ref{LEMlinearindependence} (below) also yields:

\begin{lemma} \label{LEMinjectivityofexternalproduct} The external product homomorphism $R_1 \otimes_{\mathbb{F}_2} \cdots \otimes_{\mathbb{F}_2} R_m \rightarrow \chowbar{X}$ is injective. \end{lemma}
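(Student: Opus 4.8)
The plan is to reduce the injectivity of the external product $R_1 \otimes_{\mathbb{F}_2} \cdots \otimes_{\mathbb{F}_2} R_m \to \chowbar{X}$ to a statement about $\chowbar{X_1} \otimes \cdots \otimes \chowbar{X_m} \to \chowbar{X}$ being injective on a suitable graded piece, and then to exhibit a dual pairing that detects the basis elements $l_{i_1} \otimes \cdots \otimes l_{i_m}$, $h^{j_1} \otimes \cdots \otimes h^{j_m}$ and their mixed analogues. First I would recall that over the algebraically closed field $\overline{F}$ the reduced scheme of $\overline{X_t}$ is a split quadric (or, when $\varphi_t$ is degenerate, a cone over one), and that by the standard computation of Chow groups of split quadrics together with the projective bundle formula for the cone structure, $\chowbar{X_t}$ has an explicit $\mathbb{F}_2$-basis in which $\{l_i, h^i\}_{0 \le i < r_t}$ appears as a linearly independent subset — this is exactly the content of Lemma \ref{LEMlinearindependence}, which I am allowed to assume. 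Since the external product $\chowbar{X_1} \otimes_{\mathbb{F}_2} \cdots \otimes_{\mathbb{F}_2} \chowbar{X_m} \to \chowbar{X_1 \times \cdots \times X_m}$ is an isomorphism (the quadrics are cellular over $\overline{F}$ up to the nilpotent cone structure, so the Künneth formula holds for Chow groups with $\mathbb{F}_2$-coefficients after passing to reduced schemes and using the projective bundle formula), the map in question is the restriction of an isomorphism to the subspace $R_1 \otimes \cdots \otimes R_m$, hence injective.

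More carefully, I would organize the argument as follows. Step one: establish the Künneth isomorphism $\bigotimes_t \chowbar{X_t} \xrightarrow{\sim} \chowbar{X}$. Because each $\overline{X_t}$ (reduced) is, up to a linear-space cone factor handled by the projective bundle formula, a split projective quadric, and split quadrics have a cellular decomposition, the external product is an isomorphism by \cite[Cor. 66.6]{EKM} (or the cellular Künneth formula) applied iteratively. Step two: observe that $R_t \subseteq \chowbar{X_t}$ is a subspace, so $R_1 \otimes_{\mathbb{F}_2} \cdots \otimes_{\mathbb{F}_2} R_m$ injects into $\chowbar{X_1} \otimes_{\mathbb{F}_2} \cdots \otimes_{\mathbb{F}_2} \chowbar{X_m}$ (tensor product over a field is exact). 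Step three: compose the two to conclude that $R_1 \otimes \cdots \otimes R_m \to \chowbar{X}$ is injective, since it equals the composite of an injection followed by an isomorphism.

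The only subtlety — and the point the paper flags by saying the lemma follows from "the proof of Lemma \ref{LEMlinearindependence}" — is that one should not invoke a black-box Künneth theorem but instead see concretely that the external products of the basis elements $l_\bullet$ and $h^\bullet$ of the individual $R_t$ remain linearly independent in $\chowbar{X}$. The clean way to do this is to produce, for each multi-index, a test cycle in $\chowbar{X}$ (a product of linear subspaces and subquadrics over $\overline{F}$) against which the degree pairing $\chowbar{X} \times \chowbar{X} \to \mathbb{F}_2$, $(\alpha,\beta) \mapsto \deg(\alpha\beta)$, is nonzero on exactly that external product and vanishes on all others of complementary dimension; this is precisely the pairing computation underlying the proof of Lemma \ref{LEMlinearindependence} carried out factor by factor, and it multiplies over the product by the projection formula. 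I expect the main obstacle to be purely bookkeeping: keeping straight the two gradings (by dimension and codimension), the parity constraints (the $l_i l_j$ product being nonzero only when $\mydim{\varphi_t} \equiv 2 \bmod 4$ and $i = j = (\mydim{\varphi_t}-2)/2$), and ensuring the duality pairing is set up so that each external monomial is detected by a unique complementary-dimensional external monomial — once the single-quadric case (Lemma \ref{LEMlinearindependence}) is in hand, there is no new geometric input, and the extension to the product is formal via the projection formula and the Künneth isomorphism over $\overline{F}$.
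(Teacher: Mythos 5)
Your argument is correct, and both routes you sketch ultimately rest on the same geometric input as the paper, namely the partial cell decomposition of a generically smooth quadric (Proposition \ref{PROPcelldecomposition}: cells indexed by the $l_i$ and $h^i$, plus an affine bundle over the quasilinear quadric $X_{\mathrm{ql}(\varphi_t)}$, whose reduced scheme over $\overline{F}$ is a projective space). The difference is one of packaging. The paper first reduces to $F_{\mathrm{sep}}$, iterates the cell decomposition there to get $\mathrm{Ch}(X)=A\oplus B$ with $A$ freely spanned by the external products, and then must still show that no nonzero element of $A$ dies under $\mathrm{Ch}(X)\to\chowbar{X}$; this last descent step is carried out with the degree pairing on the smooth locus $U$ and the ideal $N_0(U)$ (it is needed there because the paper proves the rationality statements of Proposition \ref{PROPbasicresultonrationalcycles} at the same time). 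Your primary route applies the decomposition directly over $\overline{F}$, where the statement actually lives, so the external products appear as part of a basis of $\chowbar{X}$ and injectivity is immediate with no descent or pairing required -- for this particular lemma that is the cleaner organization, and it is what the paper's remark that the decomposition is ``compatible with scalar extension'' implicitly delivers. Two small cautions: the description of $\overline{X_t}$ as ``a cone over a split quadric'' is not accurate (the singular locus is a non-reduced quasilinear quadric, not an honest vertex; the correct cellular structure is exactly the one in Proposition \ref{PROPcelldecomposition}), and if you do fall back on the duality pairing $(\alpha,\beta)\mapsto\deg(\alpha\beta)$, note that the intersection product is not defined on the singular variety $\overline{X}$ itself -- you must route the pairing through the smooth locus as the paper does via $N_0$, at which point you have essentially reproduced the paper's proof.
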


Let $R_X$ be the image of $R_1 \otimes_{\mathbb{F}_2} \cdots \otimes_{\mathbb{F}_2} R_m$ in $\chowbar{X}$, equipped with the $\mathbb{F}_2$-algebra structure it inherits from the $\mathbb{F}_2$-algebra structures on $R_1,\hdots,R_m$ described above, and the gradings by dimension and codimension it inherits from $\chowbar{X}$. For each integer $j \geq 0$, we then have a unique $\mathbb{F}_2$-vector space homomorphism $S^j \colon R_X \rightarrow R_X$ with the property that $S^j(\alpha_1 \times \cdots \times \alpha_m) = \sum_{j_1+\cdots + j_m=j}S^{j_1}(\alpha_1) \times \cdots \times S^{j_m}(\alpha_m)$ for all $\alpha_1 \in R_1,\hdots,\alpha_m \in R_m$. We then have that $S^j(\alpha \beta) = \sum_{j_1 + j_2 = j}S^{j_1}(\alpha)S^{j_2}(\beta)$ for all $\alpha,\beta \in R_X$. Now, if $U$ is the smooth locus of $X$, then the group $\mathrm{Ch}(U)$ has a canonical ring structure (with multiplication given by the intersection product), as well as the action of the cohomological-type Steenrod operations $S_U^j$ of Primozic (\cite{Primozic}). The statements we need are the following:

\begin{proposition} \label{PROPbasicresultonrationalcycles} Let $U$ be the smooth locus of $X$, and $\iota \colon \colon U \rightarrow X$ the canonical open embedding. 
\begin{enumerate} \item The $\mathbb{F}_2$-algebra $R_X$ depends only on $\varphi_1,\hdots,\varphi_m$ $($and not on $W_1,\hdots,W_m)$;
\item $\ratchow{X}$ is a subring of $R_X$ and there is a unique surjective ring homomorphism $\theta \colon \mathrm{Ch}(U) \rightarrow \ratchow{X}$ with the property that $\theta(\iota^*(\alpha)) = \overline{\alpha}$ for all $\alpha \in \mathrm{Ch}(X)$. 
\item For each integer $j \geq 0$, $\ratchow{X}$ is stable under the map $S^j$, and we have $S^j(\overline{\alpha}) = \theta(S_U^j(\iota^*(\alpha)))$ for all $\alpha \in \mathrm{Ch}(X)$. 
\end{enumerate} \end{proposition}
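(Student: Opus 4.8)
The plan is to follow Karpenko's argument in \cite{Karpenko2} for powers of a single quadric, the only new ingredient being the bookkeeping needed to accommodate distinct factors $X_1,\hdots,X_m$. By Lemma \ref{LEMinjectivityofexternalproduct} the external product identifies the abstract $\mathbb{F}_2$-algebra $R_1 \otimes_{\mathbb{F}_2} \cdots \otimes_{\mathbb{F}_2} R_m$ with the subspace $R_X \subseteq \chowbar{X}$. For statement (1) I would first verify on a single factor $X_t$ that the displayed multiplication rules for $\lbrace h^i, l_i \rbrace_{i < r_t}$ agree with the restriction of the intersection product of $\chowbar{X_t}$: over $\overline{F}$ the quadric $\overline{X_t}$ is the projective cone over the smooth split quadric $Q_{2r_t-1}$ with vertex the reduced scheme of $\mathbb{P}(V_{\mathrm{ql}(\varphi_t)} \otimes_F \overline{F})$, and the classes $h^i$ and the linear subspace classes $l_i$ with $i < r_t$ all come from the smooth quadric $Q_{2r_t-1}$, so the products and the Steenrod action are the standard ones of \cite[\S\S 68, 78]{EKM}. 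Since the intersection product and the Steenrod action on $\chowbar{X}$ are the external products of those on the $\chowbar{X_t}$, this shows that $R_X$ is closed under the intersection product and carries exactly the stated structure; as the intersection product is intrinsic and each $R_t$ is independent of $W_t$ (the group $\mathrm{Ch}_i(\overline{X_t})$ being one-dimensional for $i<r_t$ unless $\varphi_t$ is nondegenerate of even dimension, in which case the two maximal linear subspace classes differ by $h^{r_t-1} \in R_t$), statement (1) follows.

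For (2), the first point is that the scalar extension $\mathrm{Ch}(X) \to \chowbar{X}$ factors through $\iota^* \colon \mathrm{Ch}(X) \to \mathrm{Ch}(U)$; since $\iota^*$ is surjective by the localization sequence, such a factorization is automatically unique, and its existence amounts to the vanishing in $\chowbar{X}$ of the image of $\mathrm{Ch}(X \setminus U)$. Now $X \setminus U$ is the union of the subschemes $X_1 \times \cdots \times \Sigma_t \times \cdots \times X_m$, where $\Sigma_t := X_{\mathrm{ql}(\varphi_t)}$ is the quasilinear quadric cut out by the quasilinear part of $\varphi_t$, anisotropic because $\varphi_t$ is nondefective, so it suffices to show $\overline{[W]} = 0$ for every subvariety $W$ of such a factor. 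Any such $W$ maps onto a subvariety $\Sigma'$ of some $\Sigma_t$; since anisotropic quasilinear forms remain anisotropic over separable extensions, $\Sigma_t$ has no closed point with separable residue field, hence neither does $\Sigma'$, so $\Sigma'$ is not generically smooth over $F$ and $F(\Sigma')/F$ is inseparable. Then $F(W)/F$ is inseparable as well, so $W_{\overline{F}}$ is everywhere non-reduced of even multiplicity and $\overline{[W]} = 0$ in $\chowbar{X}$. This yields the factorization, hence a unique group homomorphism $\theta \colon \mathrm{Ch}(U) \to \chowbar{X}$ with $\theta \circ \iota^*$ equal to scalar extension; its image is $\ratchow{X}$ by surjectivity of $\iota^*$.

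The main obstacle is the remaining content of (2): that $\ratchow{X} \subseteq R_X$ and that $\theta$ is a ring homomorphism. Restriction to $U$ gives a surjection $\ratchow{X} \twoheadrightarrow \ratchow{U}$, and the crucial claim, which I would establish as in \cite{Karpenko2}, is that it is also injective, i.e. that no nonzero $F$-rational cycle on $\overline{X}$ is supported on the singular locus $\overline{Z} = \overline{X} \setminus \overline{U}$ — this is where the anisotropy and nondefectiveness of the $\varphi_t$ enter, since such a cycle would, through the localization sequence together with the triviality of $\ratchow{\Sigma_t}$ (from Proposition \ref{PROPvanishingofrationalcyclesforquasilinear}) and Springer's theorem, be forced to vanish. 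Granting this, $\ratchow{X}$ embeds into $\mathrm{Ch}(\overline{U}) = \bigotimes_t \mathrm{Ch}(\overline{U_t})$; each $\overline{U_t}$ is an affine bundle over the smooth split quadric $Q_{2r_t-1}$, so $\mathrm{Ch}(\overline{U_t})$ is governed by the nondefective core $R_t$, and one deduces that the unique $R_X$-representative of the restriction of a rational cycle is again rational, whence $\ratchow{X} \subseteq R_X$. The ring-homomorphism property of $\theta$ then follows: $\iota^*$ is multiplicative into the ring $\mathrm{Ch}(U)$, and for $u,v \in \mathrm{Ch}(U)$ with lifts $\alpha,\beta \in \mathrm{Ch}(X)$ the elements $\theta(uv)$ and $\overline{\alpha}\cdot\overline{\beta}$ both lie in $\ratchow{X}$ and have equal restriction to $\overline{U}$ (by naturality of the intersection product), hence coincide by the injectivity just used.

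For statement (3) one sets $S^j(\overline{\alpha}) := \theta(S_U^j(\iota^*\alpha))$. This is well defined with values in $\ratchow{X}$: by naturality of Brosnan's operations the kernel of scalar extension $\mathrm{Ch}(U) \to \mathrm{Ch}(\overline{U})$ is stable under $S_U^j$, and this kernel coincides with $\ker\theta$ by the injectivity of $\ratchow{X} \to \ratchow{U}$. Restricting to $\overline{U}$ and using naturality again, $S^j(\overline{\alpha})|_{\overline{U}} = S^j_{\overline{U}}(\overline{\alpha}|_{\overline{U}})$, and on the smooth split variety $\overline{U}$ the operation $S^j_{\overline{U}}$ is the external product of the standard binomial formulas on the $Q_{2r_t-1}$ (\cite[\S 78]{EKM}), which is precisely the abstract operation $S^j$ on $R_X$; since $\overline{\alpha} \in R_X$ by (2) and restriction to $\overline{U}$ is injective on $\ratchow{X}$, this identifies $S^j(\overline{\alpha})$ with the image of $\overline{\alpha}$ under the abstract operation. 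The step requiring the most care throughout, and the one I expect to be the genuine obstacle, is the injectivity of the restriction $\ratchow{X} \to \ratchow{U}$.
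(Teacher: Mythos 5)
Your construction of $\theta$ as a well-defined surjective group homomorphism is correct, and the inseparability argument you give for the vanishing of $\overline{[W]}$ for subvarieties $W \subseteq X\setminus U$ is a clean alternative to the paper's route through Proposition \ref{PROPvanishingofrationalcyclesforquasilinear} and the cell decomposition of $\mathrm{Ch}(X\setminus U)$.

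Everything after that, however, hangs on your ``crucial claim'' that restriction to the smooth locus is injective on $\ratchow{X}$, and that claim is false. As you yourself note, for $\varphi_t$ of type $(r_t,s_t)$ the open set $\overline{U_t}$ is an affine bundle of rank $s_t-1$ over a split smooth quadric of dimension $2r_t-1$; by homotopy invariance $\mathrm{Ch}_i(\overline{U_t})=0$ for all $i<s_t-1$, so whenever $s_t\geq 2$ the classes $l_0,\hdots,l_{s_t-2}$ --- although represented by linear subspaces lying entirely in the smooth locus --- restrict to $0$ on $\overline{U_t}$. Such classes do occur in nonzero rational cycles: if $\varphi_t$ is isotropic (which the standing hypotheses of \S \ref{SECrationalcycles} permit), then $l_0\in\ratchow{X_t}$ already does the job, since $\mathrm{deg}(l_0)=1$ forces $l_0\neq 0$; and for $\varphi$ \emph{anisotropic} of type $(1,2)$ the $F$-rational class $h^0\times l_0+l_0\times h^0\in\ratchow{X_\varphi^2}$ of Lemma \ref{LEMclassofthediagonal} (the essential part of the diagonal) is nonzero, yet both of its terms restrict to $0$ on the smooth locus of $X_\varphi^2$. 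So the embedding $\ratchow{X}\hookrightarrow\chowbar{U}$ on which you base the inclusion $\ratchow{X}\subseteq R_X$, the multiplicativity of $\theta$, and all of part (3), does not exist.

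The confusion is between ``numerically trivial over $F$'' and ``killed by restriction to $\overline{U}$'': over $\overline{F}$ the class $l_i$ ($i\leq s_t-2$) becomes rationally equivalent to a cycle supported on the vertex of the cone, but over $F$ it pairs to degree $1$ and is very far from numerically trivial. Accordingly, the paper never passes through $\chowbar{U}$. It first reduces to maximal Witt index over $F_{\mathrm{sep}}$ (Lemma \ref{LEMreductiontomaximalWittindex}), so that the partial cell decomposition $\mathrm{Ch}(X)=A\oplus B$ with $A\cong R_X$ is available; it then identifies $\mathrm{Ker}(\theta)$ with the numerically trivial ideal $N_0(U)=\iota^*(B)$ by a degree computation over the base field (every nonzero homogeneous $\alpha\in A$ satisfies $\iota^*(\alpha)\beta=\iota^*(l_0\times\cdots\times l_0)$ for a suitable $\beta$, and $\mathrm{deg}(l_0\times\cdots\times l_0)=1$); and it verifies the product and Steenrod formulas in $\mathrm{Ch}(U)$ modulo this ideal (Lemmas \ref{LEMformulasinChU} and \ref{LEMSteenrodformula}). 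For the same reason, your justification of (1) via ``the intersection product on $\chowbar{X}$'' cannot be made sense of as written: $\overline{X}$ is singular and $\chowbar{X}$ carries no product. The algebra structure on $R_X$ is the one imposed by the displayed formulas, and relating it to actual intersections is precisely the content of part (2).
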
 

In light of (3), we refer to $S^j \colon R_X \rightarrow R_X$ as the \emph{$j$th cohomological-type Steenrod operation on $R_X$}.

\begin{remark}[Orientations and standard bases] \label{REMorientations} In the preceding discussion, we have described an explicit basis of $R_X$ (as an $\mathbb{F}_2$-vector space) consisting of the external products $\alpha_1 \times \cdots \times \alpha_m$ with $\alpha_t \in \lbrace l_i,h^i \rbrace_{0 \leq i <r_t}$ for each $1 \leq t \leq m$. It is clear from the first part of the proposition that, one exception aside, the elements of the set $\lbrace l_i,h^i \rbrace_{0 \leq i <r_t} \subset \chowbar{X_t}$ are independent of the choice of the linear subspace $W_t$. The exception concerns the element $l_{r_t - 1}$ in the case where $\mathrm{ql}(\varphi_t)$ is trivial (i.e., $\varphi_t$ is nondegenerate of even dimension). Here, it is well known that there are two non-equivalent classes of $(r_t-1)$-dimensional projective linear subspaces of $\overline{X_t}$ in $\chowbar{X_t}$.  The two classes are exchanged by any reflection automorphism of $\overline{X_t}$ and their sum is the element $h^{r_t-1}$ (see \cite[P.  308]{EKM}). Following \cite{EKM}, we refer to the choice of one of these classes as an \emph{orientation of $X_t$}.  An \emph{orientation of $X$} is then the choice of an orientation for each of the factors $X_1,\hdots,X_m$ for which one is required. 
By fixing an orientation of $X$ (when required), we obtain as above a basis for $R_X$ that we shall refer to as \emph{the standard basis}. We shall then say that an element of $R_X$ \emph{involves} a given element of the standard basis if that element appears in its decomposition as a sum of the standard basis elements.\end{remark}

We now prove the above statements, following the arguments of \cite{Karpenko2}. We first note:

\begin{lemma} \label{LEMreductiontomaximalWittindex} To prove the statements above, we can assume that each of the forms $\varphi_1,\hdots,\varphi_m$ has maximal Witt index, i.e., that $\windex{\varphi_t} = r_t$ for all $1 \leq t \leq m$. 
\begin{proof} Let $F_{\mathrm{sep}}$ be the separable closure of $F$ in $\overline{F}$. Since anisotropic quasilinear quadratic forms remain anisotropic under separable extensions, each of the forms $\varphi_t$ remains nondefective over $F_{\mathrm{sep}}$. On the other hand, we also have that $\windex{(\varphi_t)_{F_{\mathrm{sep}}}} = r_t$ for all $1 \leq t \leq m$ by Lemma \ref{LEMnondefectiveheight}. What we are then claiming is that in order to prove the statements of interest, we can replace $F$ with $F_{\mathrm{sep}}$. But this is clear in the case of Lemma \ref{LEMlinearindependence}, Lemma \ref{LEMinjectivityofexternalproduct} and Proposition \ref{PROPbasicresultonrationalcycles} (1), while for parts (2) and (3) Proposition \ref{PROPbasicresultonrationalcycles} (2) and (3) we only need to appeal to the commutativity of the diagram
$$ \xymatrix{\mathrm{Ch}(X)  \ar[d]_{i^*}  \ar[r] & \mathrm{Ch}(X_{F_{\mathrm{sep}}}) \ar[d]_{i^*} \ar[r] & \chowbar{X} \ar[d]_{i^*}\\
\mathrm{Ch}(U) \ar[r] & \mathrm{Ch}(U_{F_{\mathrm{sep}}}) \ar[r] & \chowbar{U}} $$
(in which the horizontal maps are given by scalar extension) and the fact that the bottom horizontal maps are ring homomorphisms that commute with the action of the cohomological-type Steenrod operations.
\end{proof} \end{lemma}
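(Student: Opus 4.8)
The approach is to pass from $F$ to the separable closure $F_{\mathrm{sep}}$ of $F$ inside $\overline{F}$, which is a separable algebraic extension of $F$ with the same algebraic closure $\overline{F}$. Consequently the groups $\chowbar{X_t}$ and $\chowbar{X}$, the distinguished classes $h^i$ and $l_i$, the subspaces $R_t$ and $R_X$, and the combinatorially-defined operations $S^j$ on $R_X$ are all literally unchanged if we replace $F$ by $F_{\mathrm{sep}}$. Since Lemma \ref{LEMlinearindependence}, Lemma \ref{LEMinjectivityofexternalproduct} and part (1) of Proposition \ref{PROPbasicresultonrationalcycles} are assertions purely about these objects, they follow over $F$ once they are known over $F_{\mathrm{sep}}$. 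The first thing I would record is that over $F_{\mathrm{sep}}$ the forms $\varphi_t$ satisfy the maximal-Witt-index hypothesis: each remains nondefective because $\mathrm{ql}(\varphi_t)$ stays anisotropic under separable extensions, and $\windex{(\varphi_t)_{F_{\mathrm{sep}}}} = r_t$ by Lemma \ref{LEMnondefectiveheight} (equivalently, the even-dimensional nondegenerate part of $\varphi_t$ becomes hyperbolic over the separably closed field $F_{\mathrm{sep}}$ while its quasilinear part stays anisotropic).

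For parts (2) and (3) of Proposition \ref{PROPbasicresultonrationalcycles} the underlying geometry genuinely changes, so I would run a diagram chase through the commutative square of scalar-extension and restriction maps relating $\mathrm{Ch}(X) \to \mathrm{Ch}(X_{F_{\mathrm{sep}}}) \to \chowbar{X}$ to $\mathrm{Ch}(U) \to \mathrm{Ch}(U_{F_{\mathrm{sep}}}) \to \chowbar{U}$, compatibly with the pullbacks $\iota^*$ along $U \hookrightarrow X$. Granting (2) and (3) over $F_{\mathrm{sep}}$, with a surjective ring homomorphism $\theta_{F_{\mathrm{sep}}} \colon \mathrm{Ch}(U_{F_{\mathrm{sep}}}) \to \ratchow{X_{F_{\mathrm{sep}}}} \subseteq R_X$, I would define $\theta \colon \mathrm{Ch}(U) \to R_X$ to be the composite of $\theta_{F_{\mathrm{sep}}}$ with the scalar-extension homomorphism $\mathrm{Ch}(U) \to \mathrm{Ch}(U_{F_{\mathrm{sep}}})$. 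Because the latter is a ring homomorphism commuting with the Steenrod operations $S^j_U$, and because $\iota^* \colon \mathrm{Ch}(X) \to \mathrm{Ch}(U)$ is surjective by the localization sequence, one checks directly that $\theta$ is a ring homomorphism, that $\theta(\iota^*\alpha) = \overline{\alpha}$ for every $\alpha \in \mathrm{Ch}(X)$, and hence that its image is exactly $\ratchow{X}$; this makes $\ratchow{X}$ a subring of $R_X$, furnishes the required $\theta$ (unique since $\iota^*$ is onto), and the relations $S^j(\overline{\alpha}) = \theta(S^j_U(\iota^*\alpha))$ together with the $S^j$-stability of $\ratchow{X}$ then drop out of the same chase.

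The step I expect to demand the most care is the claim that over $F_{\mathrm{sep}}$ every $\varphi_t$ has maximal Witt index $r_t$: this is precisely where the nondefectiveness hypothesis is indispensable, since a genuinely quasilinear form need not become isotropic at all over $F_{\mathrm{sep}}$, and one must combine the separable-extension stability of anisotropy for quasilinear forms with the splitting of nondegenerate forms over separably closed fields (Lemma \ref{LEMnondefectiveheight}). On the formal side, the only other point needing attention before the diagram chases in (2) and (3) go through is the compatibility of the intersection-product ring structure on $\mathrm{Ch}(U)$ and of Brosnan's operations $S^j_U$ (\cite{Brosnan}) with scalar extension, which is standard but should be stated explicitly.
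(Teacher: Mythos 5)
Your proposal is correct and follows essentially the same route as the paper: pass to $F_{\mathrm{sep}}$, note that nondefectivity is preserved and maximal Witt index is attained there (via Lemma \ref{LEMnondefectiveheight}), observe that the statements about $\chowbar{X}$ and $R_X$ are insensitive to this replacement, and for parts (2) and (3) of Proposition \ref{PROPbasicresultonrationalcycles} chase the commutative square of scalar-extension and restriction maps, using that scalar extension on $\mathrm{Ch}(U)$ is a ring homomorphism commuting with the Steenrod operations. Your explicit construction of $\theta$ as $\theta_{F_{\mathrm{sep}}}$ composed with scalar extension, together with the surjectivity of $\iota^*$, is just a spelled-out version of the diagram argument the paper invokes.
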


For the remainder of this section, we therefore assume that $\windex{\varphi_t} = r_t$ for all $1 \leq t \leq m$. With this assumption, we can make use of standard partial cell decompositions of the quadrics $X_1,\hdots,X_m$, which we now discuss.

\subsection{Partial Cell Decomposition of a Generically Smooth Quadric with Maximal Witt Index} \label{SUBSECpartialcelldec} Let $\varphi$ be a nonquasilinear quadratic form of dimension $\geq 2$ over $F$ with underlying vector space $V$. Let $(r,s)$ be the type of $\varphi$, and assume that $\windex{\varphi} = r$. As in \S \ref{SUBSECstatementsonrationalcycles} above, we can then find a $r$-dimensional totally isotropic subspace $W$ of $V$ which is a maximal totally isotropic subspace for a hyperbolic subform of $V$. For each $0 \leq i <r$, let us again write $l_i$ for the class of an $i$-dimensional projective linear subspace of $\mathbb{P}(W)$ in $\mathrm{Ch}_i(X_{\varphi})$, and $h^i$ for the class of a codimension $i$ subquadric of $X_{\varphi}$ in $\mathrm{Ch}^i(X_{\varphi})$ (again, $h^i$ does not depend on the choice of subquadric). Set $U:= W + V_{\mathrm{ql}(\varphi)}$, and let $Y$ be the reduced closed subscheme of $X_{\varphi}$ given by the intersection of $X_{\varphi}$ and $\mathbb{P}(U)$. Note that we can (and do) view the classes $l_i$ as elements of $\mathrm{Ch}(Y)$. Let us now fix a scheme $Z$ over $F$. By the projective bundle theorem, the localization sequence for the canonical embedding $\mathbb{P}(W) \times Z \rightarrow Y \times Z$ is split exact, and yields a split exact sequence
$$ 0 \rightarrow \bigoplus_{i=0}^{r-1} \mathrm{Ch}_{*-i}(Z) \rightarrow \mathrm{Ch}_*(Y \times Z) \rightarrow \mathrm{Ch}_*((X_{\varphi} \setminus Y) \times Z) \rightarrow 0 $$
where the first map sends $(\alpha_0,\hdots,\alpha_{r-1})$ to $\sum_{i=0}^{r-1} l_i \times \alpha_i$. At the same time, since $W \cap V_{\mathrm{ql}(\varphi)} = 0$, we have a canonical projection $\mathbb{P}(U)\setminus \mathbb{P}(W) \rightarrow \mathbb{P}(V_{\mathrm{ql}(\varphi)})$ that induces a rank-$r$ affine bundle $f \colon Y \setminus \mathbb{P}(W) \rightarrow X_{\mathrm{ql}(\varphi)}$. By homotopy invariance, the above split exact sequence then yields an $\mathbb{F}_2$-vector space isomorphism 
$$ \left(\bigoplus_{i=0}^{r-1} \mathrm{Ch}_{* - i}(Z)\right) \bigoplus \mathrm{Ch}_{*-r}(X_{\mathrm{ql}(\varphi)} \times Z) \rightarrow \mathrm{Ch}(Y \times Z), $$
where again the map on the component in parentheses is the external product map $(\alpha_0,\hdots,\alpha_{r-1}) \mapsto \sum_{i=0}^{r-1} l_i \times \alpha_i$. While we don't have a similarly concrete description of the map on the other component, we can at least say that its image lies in the image of the canonical pushforward $\mathrm{Ch}(X_{\mathrm{ql}(\varphi)} \times Y \times Z) \rightarrow \mathrm{Ch}(Y \times Z)$ by \cite[Lem. B.1]{Karpenko2}. Now, the canonical projection $\mathbb{P}(V)\setminus \mathbb{P}(U) \rightarrow \mathbb{P}(V/U)$ also induces a rank-$(r+s)$ affine bundle $g \colon X_{\varphi} \setminus Y \rightarrow \mathbb{P}(V/U)$. Note that the preimage of a codimension $i$ projective linear subspace of $\mathbb{P}(V/U)$ under $g$ is the intersection of a codimension-$i$ subquadric of $X_{\varphi}$ and $X_{\varphi} \setminus Y$. By \cite[Thm. 66.2]{EKM} and the projective bundle formula, we then have an $\mathbb{F}_2$-vector space isomorphism
$$ \left(\bigoplus_{i=0}^{r-1} \mathrm{Ch}_{*-\mydim{\varphi} +i +2}(Z)\right) \bigoplus \mathrm{Ch}_*(Y \times Z) \rightarrow \mathrm{Ch}_*(X_{\varphi} \times Z),$$
where the map on the component in parentheses is the external product map $(\alpha_0,\hdots,\alpha_{r-1}) \mapsto \sum_{i=0}^{r-1} h^i \times \alpha_i$, and the map on the other component is pushforward along the canonical embedding $Y \times Z \rightarrow X_{\varphi} \times Z$.
Putting everything together, we get:

\begin{proposition} \label{PROPcelldecomposition} If $Z$ is a scheme over $F$, then we have an $\mathbb{F}_2$-vector space decomposition $\mathrm{Ch}_*(X_{\varphi} \times Z) = A \bigoplus B$, where
\begin{itemize} \item $A$ is isomorphic to $\bigoplus_{i=0}^{r-1}\left(\mathrm{Ch}_{* - i}(Z) \bigoplus \mathrm{Ch}_{* - \mydim{\varphi} + i + 2}(Z)\right)$ via the map
$$ \bigoplus_{i=0}^{r-1}\left(\mathrm{Ch}_{* - i}(Z) \bigoplus \mathrm{Ch}_{* - \mydim{\varphi} + i + 2}(Z)\right) \rightarrow \mathrm{Ch}_*(X_{\varphi} \times Z) $$
that sends $\left((\alpha_0,\beta_0),\hdots,(\alpha_{r-1},\beta_{r-1})\right)$ to $\sum_{i=0}^{r-1} (l_i \times \alpha_i + h^i \times \beta_i)$;
\item $B$ is isomorphic to $\mathrm{Ch}_{*- r}(X_{\mathrm{ql}(\varphi)} \times Z)$, and lies in the image of the canonical pushforward $\mathrm{Ch}_*(X_{\mathrm{ql}(\varphi)} \times X_{\varphi} \times Z) \rightarrow \mathrm{Ch}_*(X_{\varphi} \times Z)$. 
\end{itemize} \end{proposition}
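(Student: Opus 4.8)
The plan is simply to assemble the two $\mathbb{F}_2$-vector space isomorphisms produced in the discussion preceding the statement, together with a short functoriality argument for the last clause. First I would record the isomorphism
$$ \left(\bigoplus_{i=0}^{r-1} \mathrm{Ch}_{*-i}(Z)\right) \bigoplus \mathrm{Ch}_{*-r}(X_{\mathrm{ql}(\varphi)} \times Z) \;\xrightarrow{\;\sim\;}\; \mathrm{Ch}_*(Y \times Z), $$
obtained from the split localization sequence for $\mathbb{P}(W)\times Z \hookrightarrow Y\times Z$ (projective bundle theorem) together with homotopy invariance for the rank-$r$ affine bundle $f\colon Y\setminus\mathbb{P}(W)\to X_{\mathrm{ql}(\varphi)}$, where the first summand is carried in by $(\alpha_0,\dots,\alpha_{r-1})\mapsto\sum_i l_i\times\alpha_i$. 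Second I would record the isomorphism
$$ \left(\bigoplus_{i=0}^{r-1} \mathrm{Ch}_{*-\mydim{\varphi}+i+2}(Z)\right) \bigoplus \mathrm{Ch}_*(Y \times Z) \;\xrightarrow{\;\sim\;}\; \mathrm{Ch}_*(X_{\varphi} \times Z), $$
obtained from the split localization sequence for $Y\times Z\hookrightarrow X_\varphi\times Z$ together with \cite[Thm. 66.2]{EKM} and the projective bundle formula for the rank-$(r+s)$ affine bundle $g\colon X_\varphi\setminus Y\to\mathbb{P}(V/U)$, where the first summand is carried in by $(\beta_0,\dots,\beta_{r-1})\mapsto\sum_i h^i\times\beta_i$ and the second by pushforward along the closed embedding $j\colon Y\times Z\to X_\varphi\times Z$.

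Next I would substitute the first isomorphism into the $\mathrm{Ch}_*(Y\times Z)$-summand of the second, keeping track of the grading shifts. Applying $j_*$ to the classes $l_i\times\alpha_i$ gives the classes $l_i\times\alpha_i$ in $\mathrm{Ch}_*(X_\varphi\times Z)$ (recall the $l_i$ were defined as classes on $X_\varphi$, pushed forward from $Y$), so the $l$-part of the first isomorphism and the $h$-part of the second together yield precisely the subspace $A$ of the statement, with the asserted isomorphism $A\cong\bigoplus_{i=0}^{r-1}\bigl(\mathrm{Ch}_{*-i}(Z)\oplus\mathrm{Ch}_{*-\mydim{\varphi}+i+2}(Z)\bigr)$ via $((\alpha_i,\beta_i))_i\mapsto\sum_i(l_i\times\alpha_i+h^i\times\beta_i)$. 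Meanwhile $j_*$ carries $\mathrm{Ch}_{*-r}(X_{\mathrm{ql}(\varphi)}\times Z)$ isomorphically onto the subspace $B$. Since both displayed maps are isomorphisms, their composite exhibits $\mathrm{Ch}_*(X_\varphi\times Z)=A\oplus B$, so directness and exhaustiveness of the decomposition come for free.

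The only clause requiring slightly more than this bookkeeping is the last one, that $B$ lies in the image of the canonical pushforward $q_*\colon\mathrm{Ch}_*(X_{\mathrm{ql}(\varphi)}\times X_\varphi\times Z)\to\mathrm{Ch}_*(X_\varphi\times Z)$. Here I would use \cite[Lem. B.1]{Karpenko2}, by which the copy of $\mathrm{Ch}_{*-r}(X_{\mathrm{ql}(\varphi)}\times Z)$ inside $\mathrm{Ch}_*(Y\times Z)$ lies in the image of the projection pushforward $p_*\colon\mathrm{Ch}_*(X_{\mathrm{ql}(\varphi)}\times Y\times Z)\to\mathrm{Ch}_*(Y\times Z)$. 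The square relating $p$, the closed embedding $\mathrm{id}_{X_{\mathrm{ql}(\varphi)}}\times j$, the projection $q$, and $j$ commutes, so functoriality of proper pushforward gives $j_*\circ p_* = q_*\circ(\mathrm{id}_{X_{\mathrm{ql}(\varphi)}}\times j)_*$, whence $B=j_*\bigl(\mathrm{Im}\,p_*\bigr)\subseteq\mathrm{Im}\,q_*$, as required.

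I do not anticipate a genuine obstacle: the proposition is essentially a repackaging of constructions already carried out, and the work is confined to (i) correctly matching the grading shifts between the two isomorphisms and (ii) the short functoriality argument just described; the substantive inputs — the splitting of the relevant localization sequences, \cite[Thm. 66.2]{EKM}, and \cite[Lem. B.1]{Karpenko2} — are taken as given.
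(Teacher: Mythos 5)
Your proposal is correct and follows exactly the paper's route: the proposition is obtained by splicing the two split-exact-sequence isomorphisms from the preceding discussion (the one for $\mathrm{Ch}_*(Y\times Z)$ via the projective bundle theorem and homotopy invariance for $f$, and the one for $\mathrm{Ch}_*(X_\varphi\times Z)$ via \cite[Thm. 66.2]{EKM} and the affine bundle $g$), with the final clause on $B$ coming from \cite[Lem. B.1]{Karpenko2} and functoriality of proper pushforward. The paper itself gives no further argument beyond ``putting everything together,'' so your bookkeeping of the grading shifts and the commuting square for the pushforwards is precisely what is needed.
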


Note that the decomposition obtained here is clearly compatible with scalar extension. We can now justify the statements in \S \ref{SUBSECstatementsonrationalcycles}. We first consider the case of a single quadric. Note that this case is covered by \cite{Karpenko2}, but we include it here for completeness (and to clarify some technicalities in our presentation of the needed results).

\subsection{The Case Of a Single Quadric} Suppose here that $m=1$. For ease of notation, set $\varphi: = \varphi_1$, $V := V_1$ and $(r,s) : = (r_1,s_1)$. In order to prove the desired assertions, Witt's extension theorem (\cite[Thm. 8.3]{EKM}) allows us to assume that the totally isotropic subspace $W_1$ is equal to $W \otimes_F \overline{F}$ for some $F$-linear subspace $W$ of $V$. As in the previous subsection, we can then introduce elements $l_i,h^i \in \mathrm{Ch}(X)$ $(0 \leq i < r)$ restricting to the corresponding elements of $\chowbar{X}$ introduced in \S \ref{SUBSECstatementsonrationalcycles} (we make no notational distinction here). Following the statement of Proposition \ref{PROPbasicresultonrationalcycles}, let $U$ be the smooth locus of $X$, and let $\iota \colon U \rightarrow X$ be the canonical open embedding. By Proposition \ref{PROPvanishingofrationalcyclesforquasilinear}, we have $\ratchow{X_{\mathrm{ql}(\varphi)}} = 0$. By the localization sequence for the canonical embedding $X_{\mathrm{ql}(\varphi)} \rightarrow X$, it follows that there exists a surjective $\mathbb{F}_2$-vector space homomorphism $\theta \colon \mathrm{Ch}(U) \rightarrow \chowbar{X}$ such that $\theta(\iota^*(\alpha)) = \overline{\alpha}$ for all $\alpha \in \mathrm{Ch}(X)$. Now, by Proposition \ref{PROPcelldecomposition} (applied with $Z = \mathrm{Spec}(F)$), the set $\lbrace l_i,h^i \rbrace$ is an $\mathbb{F}_2$-linearly independent subset of $\mathrm{Ch}(X)$. Moreover, if we let $A$ denote its $\mathbb{F}_2$-linear span, then we have an $\mathbb{F}_2$-vector space decomposition $\mathrm{Ch}(X) = A \bigoplus B$, where $B$ is isomorphic to $\mathrm{Ch}(X_{\mathrm{ql}(\varphi)})$ and lies in the image of the pushforward $\mathrm{Ch}(X_{\mathrm{ql}(\varphi)} \times X) \rightarrow \mathrm{Ch}(X)$.

\begin{lemma} \label{LEMBisinthekernel} In the above situation:
\begin{enumerate} \item $\overline{\alpha} = 0$ for all $\alpha \in B$. In particular, $\iota^*(B) \subseteq \mathrm{Ker}(\theta)$;
\item For all $0 \leq i < r_t$, $\overline{l_i} \neq 0$. In particular, $\theta(\iota^*(l_i)) \neq 0$.  \end{enumerate}
\begin{proof} (1) Since $B$ lies in the image of the pushforward $\mathrm{Ch}(X_{\mathrm{ql}(\varphi)} \times X) \rightarrow \mathrm{Ch}(X)$, it suffices to show that $\ratchow{X_{\mathrm{ql}(\varphi)} \times X} = 0$. By the $Z = X_{\mathrm{ql}(\varphi)}$ case of Proposition \ref{PROPcelldecomposition}, however, $\mathrm{Ch}(X_{\varphi} \times X)$ is isomorphic to a direct sum of copies of $\mathrm{Ch}(X_{\mathrm{ql}(\varphi)} \times X_{\mathrm{ql}(\varphi)})$ and $\mathrm{Ch}(X_{\mathrm{ql}(\varphi)})$, and this decomposition is compatible with scalar extension. The desired assertion therefore follows from Lemma \ref{LEMvanishingofrationalcycles}. 

(2) Clear from the injectivity of the canonical pushforward $\chowbar{\mathbb{P}(W)} \rightarrow \chowbar{\mathbb{P}(V)}$. 
\end{proof} \end{lemma}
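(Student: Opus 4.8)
The plan is to treat the two assertions separately, in each case reducing to the vanishing of rational cycles on products of anisotropic quasilinear quadrics that is already at our disposal.

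For part (1), recall that $B$ lies in the image of the pushforward $\mathrm{Ch}(X_{\mathrm{ql}(\varphi)} \times X) \to \mathrm{Ch}(X)$ along a proper morphism. Since pushforward commutes with scalar extension, for any $\alpha \in B$ the class $\overline{\alpha}$ lies in the image of $\ratchow{X_{\mathrm{ql}(\varphi)} \times X} \to \chowbar{X}$, so it suffices to prove that $\ratchow{X_{\mathrm{ql}(\varphi)} \times X} = 0$. (If $\dim \mathrm{ql}(\varphi) \le 1$ then $X_{\mathrm{ql}(\varphi)} = \emptyset$ and $B = 0$, so we may assume that $\mathrm{ql}(\varphi)$ is an anisotropic quasilinear form of dimension $\ge 2$.) To obtain this vanishing I would apply Proposition \ref{PROPcelldecomposition} with $Z = X_{\mathrm{ql}(\varphi)}$: it realizes $\mathrm{Ch}_*(X_{\varphi} \times X_{\mathrm{ql}(\varphi)})$ as an $\mathbb{F}_2$-vector space direct sum of shifted copies of $\mathrm{Ch}(X_{\mathrm{ql}(\varphi)})$ (the $A$-part, embedded via external products with the classes $l_i$ and $h^i$, which here are $F$-rational) and a shifted copy of $\mathrm{Ch}(X_{\mathrm{ql}(\varphi)} \times X_{\mathrm{ql}(\varphi)})$ (the $B$-part), and this decomposition is compatible with scalar extension to $\overline{F}$. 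Taking images under the maps $\mathrm{Ch}(-) \to \chowbar{-}$ therefore distributes across the summands, so $\ratchow{X_{\varphi} \times X_{\mathrm{ql}(\varphi)}}$ is assembled from finitely many copies of $\ratchow{X_{\mathrm{ql}(\varphi)}}$ and one copy of $\ratchow{X_{\mathrm{ql}(\varphi)} \times X_{\mathrm{ql}(\varphi)}}$; both vanish by Proposition \ref{PROPvanishingofrationalcyclesforquasilinear}, being rational Chow groups of products of positively many anisotropic quasilinear quadrics. Hence $\overline{\alpha} = 0$ for all $\alpha \in B$, and the ``in particular'' clause follows from $\theta(\iota^*(\alpha)) = \overline{\alpha}$.

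For part (2), recall that in the present single-quadric situation Witt's extension theorem has allowed us to take $W$ defined over $F$, so that for $0 \le i < r$ the class $l_i$ lifts the class in $\mathrm{Ch}_i(\overline{X_{\varphi}})$ of an $i$-dimensional projective linear subspace $P$ of $\mathbb{P}(W_1) = \mathbb{P}(W)_{\overline{F}} \subset \overline{X_{\varphi}}$. I would factor the closed embedding $\mathbb{P}(W) \hookrightarrow \mathbb{P}(V)$ through $X_{\varphi}$ and pass to $\overline{F}$: by the projective bundle formula the pushforward $\mathrm{Ch}(\mathbb{P}(W_1)) \to \chowbar{\mathbb{P}(V)}$ sends the generator in each degree $i$ with $0 \le i < r$ to the nonzero element of $\mathrm{Ch}_i(\mathbb{P}(V)_{\overline{F}})$, hence is injective; since it factors through $\chowbar{X_{\varphi}}$, the pushforward $\mathrm{Ch}(\mathbb{P}(W_1)) \to \chowbar{X_{\varphi}}$ is injective as well. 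As $\overline{l_i}$ is the image of the nonzero class $[P] \in \mathrm{Ch}_i(\mathbb{P}(W_1))$ under this last map, we get $\overline{l_i} \neq 0$, and then $\theta(\iota^*(l_i)) = \overline{l_i} \neq 0$.

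I do not expect a genuine obstacle. The only point requiring care is invoking the scalar-extension-compatibility of the decomposition in Proposition \ref{PROPcelldecomposition} precisely enough that the passage to rational subgroups can be performed summand by summand, since that is exactly the step through which the already-established vanishing for quasilinear quadrics is transmitted to the mixed product $X_{\varphi} \times X_{\mathrm{ql}(\varphi)}$. Everything else --- the projection and localization arguments, the projective bundle formula, and the defining property of $\theta$ --- is routine.
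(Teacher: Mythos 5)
Your proof is correct and follows essentially the same route as the paper's: part (1) reduces via Proposition \ref{PROPcelldecomposition} with $Z = X_{\mathrm{ql}(\varphi)}$ to the vanishing of rational cycles on products of anisotropic quasilinear quadrics, and part (2) is the injectivity of the pushforward $\chowbar{\mathbb{P}(W)} \rightarrow \chowbar{\mathbb{P}(V)}$. The only cosmetic difference is that you invoke Proposition \ref{PROPvanishingofrationalcyclesforquasilinear} for the vanishing where the paper cites Lemma \ref{LEMvanishingofrationalcycles}; these are interchangeable here.
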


Next, we have:

\begin{lemma} \label{LEMformulasinChU}  In the ring $\mathrm{Ch}(U)$, the following identities hold modulo $\mathrm{Ker}(\theta)$:
$$ \iota^*(h^i)\iota^*(h^j): = \begin{cases} \iota^*(h^{i+j}) & \text{if } i+j < r_t \\ 0 & \text{otherwise} \end{cases}; \hspace{.5cm} \iota^*(h^i)\iota_t^*(l_j) : = \begin{cases} \iota^*(l_{j-i}) & \text{if } i \leq j \\ 0 & \text{otherwise}; \end{cases}; $$
$$ \iota^*(l_i)\iota^*( l_j): = \begin{cases} \iota^*(l_0) & \text{if } \mydim{\varphi_t} \equiv 2 \pmod{4} \text{ and } i= j = \frac{\mydim{\varphi_t}-2}{2} \\ 0 & \text{otherwise}. \end{cases} $$
\begin{proof} For the sake of legibility, we shall drop $\iota^*$ from our notation in what follows. Consider the canonical closed embeddings $\mathbb{P}(W) \xrightarrow{v} U \xrightarrow{w} \mathbb{P}(V) \setminus X_{\mathrm{ql}(\varphi)}$. Since $X$ is a quadric, the composition $w_* \circ w^*$ is zero. Now, since $w^*$ is a ring homomorphism, $h^ih^j$ is the pullback of the class of a codimension-$(i+j)$ projective linear subspace of $\mathbb{P}(V)$. In particular, if $i+j < r_t$, then $h^ih^j = h^{i+j}$. Suppose now that $i+j \geq r_t$. To prove the first identity in the statement, we have to show that $h^ih^j \in \mathrm{Ker}(\theta)$. By Lemma \ref{LEMBisinthekernel} (1), it suffices to show that $h^ih^j \in B$. If $k: = \mydim{\varphi} +2 - (i+j)$ is greater than or equal to $r_t$, this is clear. Suppose now that $k<r_t$, and that $h^ih^j \notin B$. Then $h^ih^j \equiv l_k \pmod{B}$. Since $w_* \circ w^* = 0$, it then follows that $\overline{w_*(l_k)} = 0$. But since $\ratchow{X_{\mathrm{ql}(\varphi)}} = 0$, the scalar extension homomorphism $\mathrm{Ch}(\mathbb{P}(V)\setminus X_{\mathrm{ql}(\varphi)}) \rightarrow \chowbar{\mathbb{P}(V)\setminus X_{\mathrm{ql}(\varphi)}}$ is injective, and hence $w_*(l_k) = 0$. As an element of $\mathrm{Ch}(X)$, $l_k$ then lies in the image of the canonical pushforward $\mathrm{Ch}(X_{\mathrm{ql}(\varphi)}) \rightarrow \mathrm{Ch}(X)$. But Lemma \ref{LEMvanishingofrationalcycles} then implies that $\overline{l_k} = 0$, contradicting part (2) of Lemma \ref{LEMBisinthekernel}. Thus, the first identity in the statement holds. For the second identity, the case where $i>j$ is clear, so assume that $i \leq j$. One can then clearly choose a codimension-$j$ projective linear subspace of $\mathbb{P}(W)$ and an $i$-dimensional projective linear subspace of $\mathbb{P}(W)$ whose scheme-theoretic intersection is a ($j-i)$-dimensional projective linear subspace of $\mathbb{P}(W)$. By \cite[Prop. 57.21]{EKM}, the identity $h^il_j = l_{j-i}$ then holds. Finally, for the third identity, dimension reasons give that $l_il_j = 0$ unless $s = 0$ and $i = j = \frac{\mydim{\varphi} -2}{2}$. But in this case, $X$ is smooth and the stated identity is a standard computation (see, e.g., \cite[P. 308]{EKM}).  \end{proof}
\end{lemma}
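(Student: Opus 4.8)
\noindent\emph{Proof strategy.}

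The plan is to suppress $\iota^*$ from the notation and to treat the three families of products in turn, in each case reducing to something that can be read off from the partial cell decomposition $\mathrm{Ch}(X) = A \oplus B$ of Proposition \ref{PROPcelldecomposition} (with $Z = \mathrm{Spec}(F)$). The two facts that carry the argument are that $\theta$ annihilates the classes coming from $B$ (Lemma \ref{LEMBisinthekernel}(1)) and that $\overline{l_k} \neq 0$ for $0 \leq k < r$ (Lemma \ref{LEMBisinthekernel}(2)).

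For the products $h^i h^j$: the smooth locus $U$ sits as a closed subscheme $w\colon U \hookrightarrow \mathbb{P}(V)\setminus X_{\mathrm{ql}(\varphi)}$ (the trace there of the closed subscheme $X$ of $\mathbb{P}(V)$), and $w^*$ is a ring homomorphism sending the $i$-th power of the hyperplane class to $h^i$; hence $h^i h^j$ is the restriction of a codimension-$(i+j)$ linear section of $X$, which for $i+j < r$ is by construction $h^{i+j}$. For $i+j \geq r$ I would show $\theta(h^ih^j) = \overline{h^{i+j}} = 0$ by working over $\overline{F}$. There $\overline{X}$ is a projective cone over a split smooth quadric $Q$ of dimension $2r-1$, with vertex the reduced singular locus of $\overline{X}$, a linear subspace of dimension $d_X - 2r$ (empty when $X$ is smooth); moreover, linear projection away from the vertex realizes $\overline{X}$ minus the vertex as an affine bundle over $Q$ on which $\mathcal{O}(1)$ is the pullback of $\mathcal{O}_Q(1)$. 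Hence the restriction of $\overline{h^{i+j}}$ to $\overline{X}$ minus the vertex is the pullback of $h_Q^{i+j}$, which vanishes since $i+j \geq r > \lfloor(2r-1)/2\rfloor$; so $\overline{h^{i+j}}$ is supported on the vertex, and must therefore vanish once its dimension $d_X - (i+j)$ exceeds $d_X - 2r$, i.e. for $r \leq i+j < 2r$. For $i+j \geq 2r$ one then obtains $\overline{h^{i+j}}$ from $\overline{h^r}$ by cutting with $i+j-r$ further general hyperplanes, so it is $0$ as well. (This is a geometric recasting of Karpenko's argument in the single-quadric case, where one instead uses that $X$ is a quadric to get $w_* \circ w^* = 0$ and combines this with $\ratchow{X_{\mathrm{ql}(\varphi)}} = 0$ to rule out a nonzero $l_k$-component of $h^ih^j$ in $A$.)

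The remaining two identities are routine. For $h^i l_j$, if $i > j$ the product lands in a negative-dimensional Chow group, hence is $0$; if $i \leq j$, one picks linear subspaces of $\mathbb{P}(W)$ in general position whose scheme-theoretic intersection is a $(j-i)$-dimensional linear space and applies \cite[Prop.~57.21]{EKM} to conclude $h^i l_j = l_{j-i}$. For $l_i l_j$, a dimension count forces the product to be $0$ unless $s = 0$ (so $X$ is a smooth quadric of even dimension) and $i = j = \frac{\mydim{\varphi}-2}{2}$, in which case the asserted value --- $l_0$ when $\mydim{\varphi} \equiv 2 \pmod 4$ and $0$ otherwise --- is the standard self-intersection number of a maximal linear subspace on a split quadric, recorded in \cite[p.~308]{EKM}.

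The one genuinely delicate step is the case $i+j \geq r$ of the first identity: this is the sole place where the non-smoothness of $X$ enters, as $h^ih^j$ is a priori controlled only on the smooth locus $U$ and the required vanishing has to be transported back to $\chowbar{X}$. What makes it work is precisely the cone structure of $\overline{X}$ --- equivalently, the vanishing $\ratchow{X_{\mathrm{ql}(\varphi)}} = 0$ --- and throughout one must keep products formed in $\mathrm{Ch}(U)$ distinct from those in $\mathrm{Ch}(X)$ and track which classes are restrictions of classes on $\mathbb{P}(V)\setminus X_{\mathrm{ql}(\varphi)}$.
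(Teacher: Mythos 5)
Your proposal is correct, and for the one nontrivial step --- the vanishing of $h^ih^j$ modulo $\mathrm{Ker}(\theta)$ when $i+j \geq r_t$ --- it takes a genuinely different route from the paper. The paper stays over $F$: it uses that $w_* \circ w^* = 0$ (because $X$ is a degree-$2$ hypersurface), writes $h^ih^j$ against the decomposition $\mathrm{Ch}(X) = A \oplus B$, and derives a contradiction with $\overline{l_k} \neq 0$ (Lemma \ref{LEMBisinthekernel}~(2)) if the class had a nonzero $l_k$-component; the conclusion is the integral statement $h^ih^j \in B$. You instead pass to $\overline{F}$ and compute the image class directly: $\theta(h^ih^j)$ is the class of a codimension-$(i+j)$ linear section of $\overline{X}$, which you kill by restricting to the complement of the vertex of the cone (an affine bundle over the smooth split quadric $Q$, where $h_Q^{i+j} \equiv 0 \bmod 2$ for $i+j \geq r$) and then observing that a class of dimension $d_X - (i+j) > \dim(\text{vertex})$ supported on the vertex must vanish. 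This is exactly what the lemma needs ($h^ih^j \in \mathrm{Ker}(\theta)$, i.e.\ vanishing in $\chowbar{X}$), and it trades the paper's formal contradiction argument for a transparent geometric computation; the paper's version buys the slightly stronger rationality statement $h^ih^j \in B$ over $F$, which however is not used elsewhere. Two small points of care: the identification $\theta(h^ih^j) = \overline{[L \cap X]}$ rests on choosing the linear subspaces generically so that $\iota^*(h^i) = w^*(\bar H^i)$ and the intersections are proper --- you flag this, and it is standard; and your uniform description of $\overline{X}$ as a cone over a quadric of dimension $2r-1$ is literally correct only for $s \geq 1$ (for $s=0$ the quadric $\overline{X}$ is itself smooth split of dimension $2r-2$, where $h^{i+j} \equiv 2l_{d_X-i-j} \equiv 0$ for $i+j \geq r$ anyway, so nothing breaks). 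The second and third identities are handled exactly as in the paper.
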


Recall now that $\mathrm{Ch}(U)$ contains the ideal $N_0(U)$ of numerically trivial elements (see \S \ref{SUBSECnumericaltriviality}). With the above, we can now see the following:

\begin{lemma} $\mathrm{Ker}(\theta) = N_0(U) = \iota^*(B)$. 
\begin{proof} We have already noted in Lemma \ref{LEMBisinthekernel} (1) that $\iota^*(B) \subseteq \mathrm{Ker}(\theta)$. At the same time, since $\iota^*(B)$ lies in the image of the pushforward $\mathrm{Ch}(X_{\mathrm{ql}(\varphi)} \times U) \rightarrow \mathrm{Ch}(U)$, it is also lies in $N_0(U)$ by Lemma \ref{LEMtrivialpushforward} ($X_{\mathrm{ql}(\varphi)}$ has no closed point of odd degree by Springer's theorem and the anisotropy of $\mathrm{ql}(\varphi)$). To complete the proof, it now suffices to show that if $\alpha$ is an nonzero element of $A$, then $\iota^*(\alpha)$ is neither an element of $\mathrm{Ker}(\theta)$ nor $N_0(U)$. To prove this, we may assume that $\alpha \in \mathrm{Ch}_k(X)$ for some integer $k$. By the multiplicative identities given in Lemma \ref{LEMformulasinChU}, there then exists an element $\beta \in \mathrm{Ch}^k(U)$ such that $\iota^*(\alpha)\beta = \iota^*(l_0)$. By Lemma \ref{LEMBisinthekernel} (2), this shows that $\iota^*(\alpha) \notin \mathrm{Ker}(\theta)$. At the same time, we have $\mathrm{deg}(\iota^*(l_0)) = 1$, and so the same identity shows that $\alpha \notin N_0(U)$. This proves the lemma.
\end{proof} \end{lemma}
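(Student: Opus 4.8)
The plan is to prove that the three sets coincide by establishing the inclusions $\iota^*(B) \subseteq \mathrm{Ker}(\theta)$, $\iota^*(B) \subseteq N_0(U)$, and then the reverse inclusions $\mathrm{Ker}(\theta) \subseteq \iota^*(B)$ and $N_0(U) \subseteq \iota^*(B)$ simultaneously. The inclusion $\iota^*(B) \subseteq \mathrm{Ker}(\theta)$ is precisely Lemma \ref{LEMBisinthekernel}(1). For $\iota^*(B) \subseteq N_0(U)$, one uses that $B$, hence $\iota^*(B)$, lies in the image of the pushforward along the projection $X_{\mathrm{ql}(\varphi)} \times U \to U$; since $\mathrm{ql}(\varphi)$ is anisotropic, Springer's theorem ensures that $X_{\mathrm{ql}(\varphi)}$, and so $X_{\mathrm{ql}(\varphi)} \times U$, has no closed point of odd degree, so Lemma \ref{LEMtrivialpushforward}(1) applies.

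For the reverse inclusions, note first that the localization sequence for $X_{\mathrm{ql}(\varphi)} \hookrightarrow X$ makes $\iota^* \colon \mathrm{Ch}(X) \to \mathrm{Ch}(U)$ surjective, so $\mathrm{Ch}(U) = \iota^*(A) + \iota^*(B)$, and that $\mathrm{Ker}(\theta)$ and $N_0(U)$ are graded subgroups of $\mathrm{Ch}(U)$ containing $\iota^*(B)$. It therefore suffices to prove that for every nonzero homogeneous $\alpha \in A$ one has $\iota^*(\alpha) \notin \mathrm{Ker}(\theta)$ and $\iota^*(\alpha) \notin N_0(U)$: given this, an element $\gamma$ of $\mathrm{Ker}(\theta)$ (resp. $N_0(U)$), decomposed into homogeneous parts and each part written as $\iota^*(\alpha) + \iota^*(b)$ with $\alpha \in A$ homogeneous and $b \in B$, satisfies $\iota^*(\alpha) = \gamma - \iota^*(b) \in \mathrm{Ker}(\theta)$ (resp. $N_0(U)$), forcing $\alpha = 0$, so that $\gamma \in \iota^*(B)$.

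Now fix $\alpha \in \mathrm{Ch}_k(X) \cap A$ nonzero. By Proposition \ref{PROPcelldecomposition}, $\alpha$ is one of $l_k$, $h^{d_X - k}$, or --- possible only when $s = 0$ and $k = r-1$ --- the class $l_{r-1} + h^{r-1}$. Using the multiplication table of Lemma \ref{LEMformulasinChU}, one produces an element $\beta \in \mathrm{Ch}^k(U)$, namely $\iota^*(h^k)$, $\iota^*(l_{d_X-k})$, or $\iota^*(h^{r-1})$ respectively, with $\iota^*(\alpha)\cdot \beta \equiv \iota^*(l_0) \pmod{\mathrm{Ker}(\theta)}$; in the third case the correction term is $h^{r-1}h^{r-1} = h^{2r-2}$, which lies in $\mathrm{Ker}(\theta)$ as soon as $r \geq 2$, while the residual case $r = 1$ (where $X$ is a split $0$-dimensional quadric and $\alpha = l_0 + h^0$) is checked directly. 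Since the degree homomorphism vanishes on $\mathrm{Ker}(\theta)$ --- each element there is $\iota^*(\tilde\gamma)$ with $\overline{\tilde\gamma} = 0$, and the degree of a $0$-cycle is preserved under both scalar extension and restriction to $U$ (the latter using Springer's theorem for $X_{\mathrm{ql}(\varphi)}$) --- we get $\mathrm{deg}(\iota^*(\alpha)\cdot\beta) = \mathrm{deg}(\iota^*(l_0)) = 1$, so $\iota^*(\alpha) \notin N_0(U)$. Passing to $\overline{F}$, where scalar extension is a ring homomorphism because $\overline{U}$ is smooth, and invoking $\overline{l_0} \neq 0$ from Lemma \ref{LEMBisinthekernel}(2) together with the fact that the relevant closed point lies in $U$ (so $\overline{l_0}$ restricts nontrivially to $\overline{U}$), we get $\overline{\iota^*(\alpha)}\cdot\overline{\beta} \neq 0$ in $\mathrm{Ch}(\overline{U})$, hence $\overline{\iota^*(\alpha)} \neq 0$, i.e. $\iota^*(\alpha) \notin \mathrm{Ker}(\theta)$.

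The main obstacle is the bookkeeping in this last step: one must choose $\beta$ so that the product $\iota^*(\alpha)\cdot\beta$ reduces to exactly $\iota^*(l_0)$, not to $0$, modulo $\mathrm{Ker}(\theta)$ --- that is, so that no cancellation occurs between the contributions of the (at most two) standard basis elements that $\alpha$ involves. This is the only delicate point; once the multiplication rules of Lemma \ref{LEMformulasinChU} and the non-vanishing in Lemma \ref{LEMBisinthekernel}(2) are in hand, everything else is routine verification.
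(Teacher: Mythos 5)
Your argument coincides with the paper's for most of the proof: the two forward inclusions, the reduction to nonzero homogeneous elements of $A$, the explicit choice of $\beta$ with $\iota^*(\alpha)\beta \equiv \iota^*(l_0) \pmod{\mathrm{Ker}(\theta)}$ (your case analysis here, including the $s=0$, $k=r-1$ case, is more careful than the paper's one-line assertion), and the degree computation showing $\iota^*(\alpha) \notin N_0(U)$ are all correct. The gap is in the very last step, where you deduce $\iota^*(\alpha) \notin \mathrm{Ker}(\theta)$ by passing to $\mathrm{Ch}(\overline{U})$ and asserting that $\overline{l_0}$ restricts nontrivially to $\overline{U}$ because the underlying point lies in $U$. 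That inference is invalid, and the conclusion is in fact false whenever $s \geq 2$, i.e., in every genuinely degenerate case: the restriction of $\overline{l_0}$ to $\overline{U}$ is zero. Indeed, over $\overline{F}$ the quasilinear part is isotropic once $s \geq 2$; if $w$ is a nonzero isotropic vector of $\mathrm{ql}(\varphi)_{\overline{F}}$ and $u$ an isotropic vector of the hyperbolic part, then $\langle w,u \rangle$ is totally isotropic (as $w$ lies in the radical of the polar form), so the line $\mathbb{P}(\langle w,u \rangle)$ lies in $\overline{X}$ and makes the class of the smooth point $[u]$ rationally equivalent to that of the singular point $[w]$. Hence $\overline{l_0}$ lies in the image of $\mathrm{Ch}_0(\overline{X_{\mathrm{ql}(\varphi)}}) \rightarrow \mathrm{Ch}_0(\overline{X})$ and dies under restriction to $\overline{U}$ by the localization sequence. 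Your identity $\overline{\iota^*(\alpha)}\cdot\overline{\beta} = \overline{\iota^*(l_0)}$ in $\mathrm{Ch}_0(\overline{U})$ therefore reads $0=0$ and yields nothing: a cycle supported on an open subscheme can perfectly well become rationally equivalent to one supported on the closed complement.

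What you actually need is that $\overline{\alpha} \neq 0$ in $\chowbar{X}$ (not in $\chowbar{U}$) for every nonzero $\alpha \in A$, and this must be read off upstairs. Either follow the paper: combine the congruence $\iota^*(\alpha)\beta \equiv \iota^*(l_0) \pmod{\mathrm{Ker}(\theta)}$ with Lemma \ref{LEMBisinthekernel} (2), whose proof detects $\overline{l_0} \neq 0$ via the pushforward $\chowbar{\mathbb{P}(W)} \rightarrow \chowbar{\mathbb{P}(V)}$ rather than via restriction to the smooth locus. Or, more directly for this half of the claim, observe that the classes $\lbrace \overline{l_i}, \overline{h^i} \rbrace$ are $\mathbb{F}_2$-linearly independent in $\chowbar{X}$ because Proposition \ref{PROPcelldecomposition} applies verbatim over $\overline{F}$ (where $\windex{\varphi_{\overline{F}}} = r$) and the resulting decomposition is compatible with scalar extension; no product is needed at all for the $\mathrm{Ker}(\theta)$ part. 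With either repair the rest of your argument goes through.
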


With the preceding lemmas, we have the following: If $\alpha$ is a nonzero element of $A$, then $\overline{\alpha} \neq 0$. In particular, the set $\lbrace l_i,h^i \rbrace_{0 \leq i < r}$ is $\mathbb{F}_2$-linearly independent in $\chowbar{X}$. Moreover, $R_X = \mathrm{Span}_{\mathbb{F}_2}\lbrace l_i,h^i \rbrace_{0 \leq i < r}$ is precisely the the image of the map $\theta \colon \mathrm{Ch}(U) \rightarrow \chowbar{X}$, and the map $\theta \colon \mathrm{Ch}(U) \rightarrow R_X$ is a ring homomorphism by Lemma \ref{LEMformulasinChU} (and the definition of the multiplication on $R_X$ given in \S \ref{SUBSECstatementsonrationalcycles}). This proves all statements in \S \ref{SUBSECstatementsonrationalcycles} (with $m=1$) with the exception of part (3) of Proposition \ref{PROPbasicresultonrationalcycles}. This follows from:

\begin{lemma} \label{LEMSteenrodformula} Let $j$ be a positive integer. In $\mathrm{Ch}(U)$, the following identities hold modulo $\mathrm{Ker}(\theta)$:
$$ S_U^j(\iota^*(h^i)) : = \begin{cases} \binom{i}{j}\iota^*(h^{i+j}) & \text{if } j < r-i \\ 0 & \text{otherwise} \end{cases};\;\;\; S_U^j(\iota^*(l_i)): = \begin{cases} \binom{\mydim{\varphi} - i- 1}{j} \iota^*{(l_{i-j})} & \text{if } j \leq i \\ 0 & \text{otherwise}. \end{cases} $$
\begin{proof} For the sake of legibility, we shall again drop $\iota^*$ from our notation in what follows. By Lemma \ref{LEMformulasinChU}, the first identity says that $S_U^j(h^i) = \binom{i}{j} h^ih^j$. Since the cohomological-type Steenrod operations for smooth schemes commute with pullbacks, proving this identity then amounts to showing that if $\alpha$ and $\beta$ are the classes of codimension-$i$ and codimension-$j$ projective linear subspaces in $\mathbb{P}(V)$, respectively then $S_{\mathbb{P}(V)}^j(\alpha) = \binom{i}{j}\alpha\beta$. This is well known, however (see \cite[Ex. 6.16]{EKM}; the argument given there is valid in any characteristic). For the second identity, we can assume that $j \leq i$. Let $\alpha$ be the class of an $i$-dimensional projective linear subspace in $\mathbb{P}(W)$. If $v \colon \mathbb{P}(W) \rightarrow U$ is the canonical closed embedding, then we have that $S_U^j(l_i) = v_*(c_j(N_v)(\alpha))$, where $N_v$ is the normal bundle of $v$ (Wu formula). Now an argument essentially identical to that found in \cite[Lem. 78.1, Cor. 78.2]{EKM} shows that $c_j(N_v)$ is multiplication by $\binom{\mydim{\varphi} -i-1}{j}\beta$, where $\beta$ is the unique element of $\mathrm{Ch}^j(\mathbb{P}(W))$. Since $\alpha\beta$ is the class of an $(i-j)$-dimensional subspace of $\mathbb{P}(W)$, we then get that $S_U^j(l_i) = v_*(\binom{\mydim{\varphi}-i-1}{j}\alpha\beta) = \binom{\mydim{\varphi}-i-1}{j}l_{i-j}$, as desired. \end{proof}
\end{lemma}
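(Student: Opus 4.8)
The plan is to establish the two identities separately, in each case transferring the computation onto a projective space where the Steenrod operations are completely understood. The two facts about $S^{\bullet}$ that I would use are its contravariant functoriality for morphisms of smooth schemes and the Wu formula for pushforward along a regular closed embedding of smooth schemes; both are part of the package of cohomological-type operations and, thanks to Primozic's construction, are available in characteristic $2$. Throughout I suppress $\iota^{*}$, as in the companion lemmas.

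For the classes $h^{i}$, put $P := \mathbb{P}(V) \setminus X_{\mathrm{ql}(\varphi)}$, a smooth open subvariety of $\mathbb{P}(V)$, so that $U = X_{\varphi} \cap P$ is a smooth hypersurface in $P$; let $g \colon U \to \mathbb{P}(V)$ be the composite of the closed embedding $U \hookrightarrow P$ with the open immersion $P \hookrightarrow \mathbb{P}(V)$. By construction $h = g^{*}(H)$ in $\mathrm{Ch}(U)$, with $H$ the hyperplane class on $\mathbb{P}(V)$, so functoriality gives $S_{U}^{j}(h^{i}) = g^{*}\bigl(S_{\mathbb{P}(V)}^{j}(H^{i})\bigr)$. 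The standard computation on projective space yields $S_{\mathbb{P}(V)}^{j}(H^{i}) = \binom{i}{j}H^{i+j}$ (\cite[Ex. 6.16]{EKM}; the argument there works in any characteristic), hence $S_{U}^{j}(h^{i}) = \binom{i}{j}h^{i+j}$, where $h^{i+j} = g^{*}(H^{i+j}) = h^{i}h^{j}$. Since $j < r-i$ is the same as $i+j < r$, it now suffices to invoke the first identity of Lemma \ref{LEMformulasinChU}: $h^{i}h^{j} \equiv \iota^{*}(h^{i+j})$ modulo $\mathrm{Ker}(\theta)$ when $i+j < r$, and $h^{i}h^{j} \equiv 0$ modulo $\mathrm{Ker}(\theta)$ when $i+j \geq r$. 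This is exactly the asserted formula.

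For the classes $l_{i}$, recall that $W \cap V_{\mathrm{ql}(\varphi)} = 0$, so $\mathbb{P}(W) \subseteq U$; let $v \colon \mathbb{P}(W) \to U$ be the resulting closed embedding (regular, as source and target are smooth) and let $\alpha \in \mathrm{Ch}(\mathbb{P}(W))$ be the class of an $i$-dimensional linear subspace, so that $l_{i} = v_{*}(\alpha)$. The Wu formula expresses $S_{U}^{j}(v_{*}(\alpha))$ as $v_{*}$ applied to $\sum_{a+b=j} c_{a}(N_{v}) \cdot S_{\mathbb{P}(W)}^{b}(\alpha)$, where $N_{v}$ is the normal bundle of $v$. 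Since $\mathbb{P}(W) \cong \mathbb{P}^{r-1}$, its Chow ring is generated by the hyperplane class $\beta$ with $\beta^{r}=0$, we have $\alpha = \beta^{r-1-i}$ and hence $S_{\mathbb{P}(W)}^{b}(\alpha) = \binom{r-1-i}{b}\beta^{r-1-i+b}$, so everything reduces to computing $c(N_{v})$. Here I would run the normal-bundle argument of \cite[Lem. 78.1, Cor. 78.2]{EKM}, now with $P$ in the role of the ambient projective space: from the conormal exact sequence attached to $\mathbb{P}(W) \subseteq U \subseteq P$, together with $N_{\mathbb{P}(W)/P} \cong \mathcal{O}(1)^{\oplus(\mydim{\varphi}-r)}$ and $N_{U/P} \cong \mathcal{O}_{U}(2)$ — the latter having trivial total Chern class modulo $2$ — one reads off $c(N_{v}) = (1+\beta)^{\mydim{\varphi}-r}$, i.e.\ $c_{a}(N_{v}) = \binom{\mydim{\varphi}-r}{a}\beta^{a}$. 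Substituting into the Wu formula, every summand carries the factor $\beta^{r-1-i+j}$, and Vandermonde's identity $\sum_{a+b=j}\binom{\mydim{\varphi}-r}{a}\binom{r-1-i}{b} = \binom{\mydim{\varphi}-i-1}{j}$ collapses the sum to $\binom{\mydim{\varphi}-i-1}{j}\beta^{r-1-i+j}$. When $j \leq i$ this is $\binom{\mydim{\varphi}-i-1}{j}$ times the class of an $(i-j)$-dimensional linear subspace, whose $v_{*}$-image is $l_{i-j}$; when $j > i$ the exponent $r-1-i+j$ is $\geq r$, so the term vanishes. This is the asserted formula for $S_{U}^{j}(l_{i})$.

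The part requiring the most care is the transfer of the normal-bundle computation of \cite[Lem. 78.1, Cor. 78.2]{EKM}, carried out there for a genuinely smooth quadric, to the present generically smooth setting. I expect this to be routine: the objects involved — the smooth hypersurface $U$, the linear subspace $\mathbb{P}(W)$, and their normal bundles — all live inside the smooth locus $U$, and $\mathrm{ql}(\varphi)$ enters only through the removal of $X_{\mathrm{ql}(\varphi)}$ from $\mathbb{P}(V)$, so the cited argument goes through verbatim over $P$. A secondary point worth flagging is that the case distinction in the $h^{i}$-identity is forced not by the Steenrod calculation itself — which always produces $\binom{i}{j}h^{i}h^{j}$ — but by the passage to $\overline{\mathrm{Ch}}(X_{\varphi})$, i.e.\ by reduction modulo $\mathrm{Ker}(\theta)$; this is why Lemma \ref{LEMformulasinChU} is needed as the final input.
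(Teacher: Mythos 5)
Your proof is correct and follows essentially the same route as the paper: reduce the $h^i$-identity to the standard computation on $\mathbb{P}(V)$ via compatibility of the operations with pullback, and compute $S^j_U(l_i)$ via the Wu formula for the embedding of $\mathbb{P}(W)$ together with the normal-bundle calculation of \cite[Lem. 78.1, Cor. 78.2]{EKM}. The only difference is cosmetic: you write out the full Wu formula (including the $S^b_{\mathbb{P}(W)}$ factors and the Vandermonde collapse to $\binom{\mydim{\varphi}-i-1}{j}$), which the paper absorbs into its citation of EKM.
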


\subsection{The General Case} We now consider the general case. For each $1 \leq t \leq m$, we can again assume that $W_t$ is obtained from a totally isotropic subspace of $V_t$ by scalar extension, and can introduce the elements $l_i,h^i \in \mathrm{Ch}(X_t)$ ($0 \leq i < r_t$) restricting to the corresponding elements of $\chowbar{X_t}$ introduced in \S \ref{SUBSECstatementsonrationalcycles}. Let $A$ be the $\mathbb{F}_2$-linear subspace of $\mathrm{Ch}(X)$ generated by all external products $\alpha_1 \times \cdots \times \alpha_m$ with $\alpha_t \in \lbrace l_i,h^i \rbrace_{0 \leq i < r_t}$ for all $t$. By repeated application of Proposition \ref{PROPcelldecomposition}, we get that these external products are $\mathbb{F}_2$-linearly independent, and that $\mathrm{Ch}(X) = A \bigoplus B$, where $B$ is:
\begin{enumerate} \item isomorphic to a direct sum of $\mathbb{F}_2$-vector spaces of the form $\mathrm{Ch}(X_{\mathrm{ql}(\varphi_{a_1})} \times \cdots \times X_{\mathrm{ql}(\varphi_{a_l})})$ for integers $1 \leq a_1 \leq \cdots \leq a_l \leq m$;
\item contained in the image of the pushforward $\mathrm{Ch}(X_{\mathrm{ql}(\varphi_1)} \times \cdots \times X_{\mathrm{ql}(\varphi_m)} \times X) \rightarrow \mathrm{Ch}(X)$. \end{enumerate}
Following the statement of Proposition \ref{PROPbasicresultonrationalcycles}, let $U$ be the smooth locus of $X$, and let $\iota \colon U \rightarrow X$ be the canonical open embedding. Let $Y = X \setminus U$ be the singular locus of $X$. Explicitly, $Y$ is the union of the closed subschemes $X_1 \times \cdots \times X_{t-1} \times X_{\mathrm{ql}(\varphi_t)} \times X_{t+1} \times \cdots \times X_m$ with $1 \leq t \leq m$. By further repeated application of Proposition \ref{PROPcelldecomposition}, $\mathrm{Ch}(Y)$ is then also isomorphic to a direct sum of $\mathbb{F}_2$-vector spaces of the form $\mathrm{Ch}(X_{\mathrm{ql}(\varphi_{a_1})} \times \cdots \times X_{\mathrm{ql}(\varphi_{a_l})})$ for integers $1 \leq a_1 \leq \cdots \leq a_l \leq m$. By Lemma \ref{LEMvanishingofrationalcycles}, we then have that $\ratchow{Y} = 0$, and so there exists a unique surjective $\mathbb{F}_2$-vector space homomorphism $\theta \colon \mathrm{Ch}(U) \rightarrow \chowbar{X}$ such that $\theta(\iota^*(\alpha)) = \overline{\alpha}$ for all $\alpha \in \mathrm{Ch}(X)$. As in the previous subsection, we have:

\begin{lemma} \label{LEMcomputationofthekernel} $\mathrm{Ker}(\theta) = N_0(U) = \iota^*(B)$. 
\begin{proof} Recall that $B$ lies in the image of the pushforward $\mathrm{Ch}(X_{\mathrm{ql}(\varphi_1)} \times \cdots \times X_{\mathrm{ql}(\varphi_m)} \times X) \rightarrow \mathrm{Ch}(X)$. In particular $\iota^*(B)$ lies in the image of the pushforward $\mathrm{Ch}(X_{\mathrm{ql}(\varphi_1)} \times \cdots \times X_{\mathrm{ql}(\varphi_m)} \times U) \rightarrow \mathrm{Ch}(U)$, and hence lies in $N_0(U)$ by Lemma \ref{LEMtrivialpushforward} (again, the $X_{\mathrm{ql}(\varphi_t)}$ have no closed points of odd degree by Springer's theorem). Moreover, to show that $\iota^*(B) \subseteq \mathrm{Ker}(\theta)$, it suffices to show that $\ratchow{X_{\mathrm{ql}(\varphi_1)} \times \cdots \times X_{\mathrm{ql}(\varphi_m)} \times X} = 0$. But repeated application of Proposition \ref{PROPcelldecomposition} shows that this group is isomorphic to a direct sum of groups of the form $\ratchow{Z}$, where $Z$ is a direct product of the quasilinear quadrics $X_{\mathrm{ql}(\varphi_1)},\hdots,X_{\mathrm{ql}(\varphi_m)}$ (with possibly repeated factors). The desired assertion then holds by Lemma \ref{LEMvanishingofrationalcycles}. To complete the proof of the lemma, it now only remains to show that if $\alpha$ is a nonzero element of $A$, then $\iota^*(\alpha)$ is neither an element of $\mathrm{Ker}(\theta)$ nor $N_0(U)$. We can assume here that $\alpha \in \mathrm{Ch}_k(X)$ for some integer $k$. Since the multiplication in $\mathrm{Ch}(U)$ is compatible with the formation of external products, Lemma \ref{LEMformulasinChU} then shows that there exists a $\beta \in \mathrm{Ch}^k(U)$ such that $\iota^*(\alpha)\beta = \iota^*(l_0 \times \cdots \times l_0)$. Since $\mathrm{deg}(l_0 \times \cdots \times l_0) = 1$, the claims follow.  \end{proof} \end{lemma}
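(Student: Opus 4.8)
The plan is to repeat the single-quadric argument given above, now relying on three things: the decomposition $\mathrm{Ch}(X) = A \oplus B$ recalled just before the statement, the multiplicative formulas of Lemma \ref{LEMformulasinChU} applied factorwise (using that the intersection product on $\mathrm{Ch}(U) = \mathrm{Ch}(U_1 \times \cdots \times U_m)$ is compatible with external products), and the vanishing of $\ratchow{-}$ for products of anisotropic quasilinear quadrics (Proposition \ref{PROPvanishingofrationalcyclesforquasilinear}). Note first that $N_0(U)$ is defined in the sense of \S\ref{SUBSECnumericaltriviality}: the complement $Y = X \setminus U$ is a union of products each having a quasilinear factor, hence has no closed point of odd degree by Springer's theorem, so $U$ admits a degree homomorphism.

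The first step is the inclusions $\iota^*(B) \subseteq N_0(U)$ and $\iota^*(B) \subseteq \mathrm{Ker}(\theta)$. The first holds because $\iota^*(B)$ lies in the image of the pushforward along the smooth variety $X_{\mathrm{ql}(\varphi_1)} \times \cdots \times X_{\mathrm{ql}(\varphi_m)} \times U \to U$, the quadrics $X_{\mathrm{ql}(\varphi_t)}$ having no closed point of odd degree, so Lemma \ref{LEMtrivialpushforward}(1) applies. For the second, by the description of $B$ it suffices that $\ratchow{X_{\mathrm{ql}(\varphi_1)} \times \cdots \times X_{\mathrm{ql}(\varphi_m)} \times X} = 0$; repeated use of Proposition \ref{PROPcelldecomposition} exhibits this group as a direct sum of copies of $\ratchow{Z}$ for products $Z$ of the $X_{\mathrm{ql}(\varphi_t)}$ (repetitions allowed), and each such group vanishes by Proposition \ref{PROPvanishingofrationalcyclesforquasilinear}.

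The crux is the reverse inclusion: since $\iota^*$ is surjective and $\mathrm{Ch}(X) = A \oplus B$, it is enough to show that a nonzero $\alpha \in A$ satisfies $\iota^*(\alpha) \notin \mathrm{Ker}(\theta)$ and $\iota^*(\alpha) \notin N_0(U)$, for this forces $\mathrm{Ker}(\theta) = N_0(U) = \iota^*(B)$. Reducing to homogeneous $\alpha \in \mathrm{Ch}_k(X) \cap A$ and fixing an orientation of $X$, $\alpha$ is an $\mathbb{F}_2$-linear combination of external products of standard basis elements of total dimension $k$. The point is to find $\beta \in \mathrm{Ch}^k(U)$ such that $\iota^*(\alpha)\cdot\beta$ equals, modulo $\mathrm{Ker}(\theta)$, a $0$-dimensional class $\iota^*(\gamma)$ with $\gamma \in A$ involving $l_0 \times \cdots \times l_0$. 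Because, modulo $\mathrm{Ker}(\theta)$, the product of external products of standard basis elements is computed factorwise by Lemma \ref{LEMformulasinChU} — in particular $h^i$ and $l_i$ are paired in $R_t$, their product being $l_0$, with a further self-pairing $l_{r_t-1}l_{r_t-1} = l_0$ available when $\mydim{\varphi_t} \equiv 2 \pmod{4}$ — the bilinear form on $R_X$ sending $(\mu,\nu)$ to the coefficient of $l_0 \times \cdots \times l_0$ in $\mu\nu$ is a tensor product of perfect forms, hence perfect; respecting the grading by dimension, it restricts to a perfect pairing between the dimension-$k$ and codimension-$k$ parts of $R_X$, and this produces $\beta$. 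One then concludes as in the single-quadric case: $\theta(\iota^*(\alpha)\cdot\beta) = \overline{\gamma} \neq 0$ (the standard basis elements remain $\mathbb{F}_2$-independent in $\chowbar{X}$, the decomposition of Proposition \ref{PROPcelldecomposition} being compatible with scalar extension), so $\iota^*(\alpha) \notin \mathrm{Ker}(\theta)$; and $\mathrm{deg}(\iota^*(\alpha)\cdot\beta) = \mathrm{deg}(\gamma) = 1$, using that $\iota^*(B) \subseteq N_0(U) \subseteq \mathrm{Ker}(\mathrm{deg})$ absorbs the $B$-component and that every $0$-dimensional standard basis element other than $l_0 \times \cdots \times l_0$ has even degree, since it involves the fundamental class of a binary nondegenerate factor.

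The main obstacle is entirely one of bookkeeping: Lemma \ref{LEMformulasinChU} holds only modulo $\mathrm{Ker}(\theta)$, so the existence of the dual cycle $\beta$ and the identity $\theta(\iota^*(\alpha)\cdot\beta) = \overline{\gamma}$ must be phrased at the level of the ring $R_X$, and one must check that the various ambiguities — the error terms in Lemma \ref{LEMformulasinChU}, the orientation choice for $l_{r_t-1}$ when $\mydim{\varphi_t} \equiv 2 \pmod{4}$, and the extra $0$-dimensional terms that appear when $\alpha$ is a sum of several basis elements — never interfere with the two concluding non-vanishing assertions. Fixing an orientation of $X$ and working throughout with the standard basis of Remark \ref{REMorientations} keeps this manageable, and otherwise the argument is a verbatim transcription of the single-quadric case treated above.
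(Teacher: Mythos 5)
Your proposal is correct and follows the paper's proof essentially verbatim: the two inclusions for $\iota^*(B)$ are obtained exactly as in the text (Lemma \ref{LEMtrivialpushforward}, Proposition \ref{PROPcelldecomposition} and the vanishing of $\ratchow{-}$ for products of anisotropic quasilinear quadrics), and the converse is reduced to exhibiting, for each nonzero homogeneous $\alpha \in A$, a complementary cycle $\beta$ with $\iota^*(\alpha)\beta$ equal modulo $\iota^*(B)$ to a $0$-cycle of odd degree whose image in $\chowbar{X}$ is nonzero. Your perfect-pairing justification for the existence of $\beta$, and your remark that all $0$-dimensional standard basis elements other than $l_0 \times \cdots \times l_0$ have even degree, are accurate elaborations of the paper's terser appeal to Lemma \ref{LEMformulasinChU} and the compatibility of multiplication with external products.
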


With this, it follows that the map $\theta$ factors through $R_X$. Since $\mathrm{Ker}(\theta) = N_0(U)$ is an ideal in $\mathrm{Ch}(U)$, and since the multiplication in $\mathrm{Ch}(U)$ is compatible with the formation of external products, it then follows from Lemma \ref{LEMformulasinChU} (and the definition of the multiplication on $R_X$ given in \S \ref{SUBSECstatementsonrationalcycles}) that the map $\theta \colon \mathrm{Ch}(U) \rightarrow R_X$ is a ring homomorphism. In particular, $\ratchow{X}$ is a subring of $R_X$, and $\theta$ is the unique surjective ring homomorphism $\mathrm{Ch}(U) \rightarrow \ratchow{X}$ with $\theta(\iota^*(\alpha)) = \overline{\alpha}$ for all $\alpha \in \mathrm{Ch}(X)$. Since the cohomological-type Steenrod operations on $\mathrm{Ch}(U)$ satisfy the Cartan formula, Lemma \ref{LEMSteenrodformula} (together with the definition of the operations $S^j \colon R_X \rightarrow R_X$ given in \S \ref{SUBSECstatementsonrationalcycles}) also gives that $S^j(\overline{\alpha}) = \theta(S^j_U(\iota^*(\alpha)))$ for all integers $j \geq 0$ and $\alpha \in \mathrm{Ch}(X)$. Since we have already seen that $R_X$ depends only on $\varphi_1,\hdots,\varphi_m$, this proves all the statements in \S \ref{SUBSECstatementsonrationalcycles}. 

\section{Composition Law for Rational Correspondences} \label{SECcorrespondences} If $X$ is a product of generically smooth nondefective quadrics over $F$, we have introduced in \S \ref{SECrationalcycles} above the $\mathbb{F}_2$-algebra $R_X \subseteq \chowbar{X}$. If $Y$ is another variety over $F$ of the same type, then the $\mathbb{F}_2$-algebra $R_{X \times Y}$ is canonically identified with $R_X \otimes_{\mathbb{F}_2} R_Y$ via the external product homomorphism $\chowbar{X} \otimes_{\mathbb{F}_2} \chowbar{Y} \rightarrow \chowbar{X \times Y}$. 

\subsection{Definitions} \label{SUBSECcompositiondefinition} Let $X$, $Y$ and $Z$ be products of generically smooth nondefectivequadrics over $F$. Let $\pi_{XY} \colon X \times Y \times Z \rightarrow X \times Y$, $\pi_{XZ} \colon X \times Y \times Z \rightarrow X \times Z$ and $\pi_{YZ} \colon X \times Y \times Z \rightarrow Y \times Z$ be the canonical (flat and proper) projections. On mod-$2$ Chow groups, the pullbacks $\pi_{XY}^*$ and $\pi_{YZ}^*$ are given by the assignments $\alpha \mapsto \alpha \times h^0$ and $\alpha \mapsto h^0 \times \alpha$, respectively. In particular, $\pi_{XY}^*(R_{X \times Y}) \subseteq R_{X \times Y \times Z}$ and $\pi_{YZ}^*(R_{Y \times Z}) \subseteq R_{X \times Y \times Z}$. Similarly, if $\alpha \in \chowbar{X}$, $\beta \in \chowbar{Y}$ and $\gamma \in \chowbar{Z}$, then $(\pi_{XZ})_*(\alpha \times \beta \times \gamma) = \mathrm{deg}(\beta)\alpha \times \gamma$, and so $(\pi_{XZ})_*(R_{X \times Y \times Z}) \subset R_{X \times Z}$. We therefore have an $\mathbb{F}_2$-vector space homomorphism 
$$ R_{X \times Y} \otimes_{\mathbb{F}_2} R_{Y \times Z} \rightarrow R_{X \times Z}, \;\;\; \alpha \otimes \beta \mapsto (\pi_{XZ})_*\big(\pi_{XY}^*(\alpha)\pi_{YZ}^*(\beta)\big).  $$
We write $\beta \circ \alpha$ for the image of $\alpha \otimes \beta$ under this map. By the preceding discussion, we have the following explicit description of this composition law:

\begin{lemma} \label{LEMcompositionformulas} If $\alpha \in R_X$, $\beta,\beta' \in R_Y$ and $\gamma \in R_Z$, then $(\beta' \times \gamma) \circ (\alpha \times \beta) = \mathrm{deg}(\beta\beta')\alpha \times \gamma$.
\end{lemma}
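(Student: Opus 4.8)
The plan is to compute $(\beta' \times \gamma) \circ (\alpha \times \beta)$ directly from the definition of the composition law given just above the statement, tracking the three maps $\pi_{XY}^*$, $\pi_{YZ}^*$, and $(\pi_{XZ})_*$ through the external-product bases of the relevant $R$-algebras. First I would apply the formulas recorded in \S\ref{SUBSECcompositiondefinition}: since $\pi_{XY}^*(\sigma) = \sigma \times h^0$ and $\pi_{YZ}^*(\tau) = h^0 \times \tau$ (where in each case the displayed $h^0$ is the identity element of the $R$-algebra attached to the omitted factor), we have $\pi_{XY}^*(\alpha \times \beta) = \alpha \times \beta \times h^0$ and $\pi_{YZ}^*(\beta' \times \gamma) = h^0 \times \beta' \times \gamma$ inside $R_{X \times Y \times Z}$. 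Multiplying these two classes in the ring $R_{X \times Y \times Z} \cong R_X \otimes_{\mathbb{F}_2} R_Y \otimes_{\mathbb{F}_2} R_Z$, the external product is multiplicative factor-by-factor, so the product is $(\alpha h^0) \times (\beta \beta') \times (h^0 \gamma) = \alpha \times (\beta\beta') \times \gamma$, using that $h^0$ is the multiplicative identity of each $R$-algebra (Proposition \ref{PROPbasicresultonrationalcycles}(2) guarantees $R_{X\times Y \times Z}$ is a ring with this external-product structure).

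Next I would push forward along $\pi_{XZ}$. The formula recorded in \S\ref{SUBSECcompositiondefinition} states $(\pi_{XZ})_*(a \times b \times c) = \mathrm{deg}(b)\, a \times c$ for $a \in \chowbar{X}$, $b \in \chowbar{Y}$, $c \in \chowbar{Z}$; this is just the identity $(\mathrm{pr})_*(a \times b \times c) = \deg(b)(a \times c)$ for the projection that forgets the middle factor, which holds because pushforward along the structure morphism $Y \to \mathrm{Spec}(F)$ sends $b$ to $\deg(b)$ and external products are compatible with proper pushforward in each variable (\cite[Prop. 56.?]{EKM}-type formulas freely used here). Applying this with $a = \alpha$, $b = \beta\beta'$, $c = \gamma$ yields $(\pi_{XZ})_*\big(\alpha \times (\beta\beta') \times \gamma\big) = \mathrm{deg}(\beta\beta')\, \alpha \times \gamma$, which is exactly the claimed value of $\beta' \circ \alpha$ — wait, more precisely of $(\beta' \times \gamma)\circ(\alpha \times \beta)$.

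There is essentially no obstacle here: the statement is a bookkeeping consequence of the three bullet-point formulas already established in the paragraph preceding it, and the only points requiring a word of care are (i) that the external product is a ring homomorphism $R_X \otimes R_Y \otimes R_Z \to R_{X\times Y\times Z}$ so that the middle multiplication $\beta \cdot \beta'$ can be carried out coordinatewise, and (ii) that the various $h^0$'s are genuine identity elements, both of which are contained in Proposition \ref{PROPbasicresultonrationalcycles}. If anything deserves to be called the "main step," it is simply making sure the pullback, multiplication, and pushforward are performed in the correct order and that the degree of the middle factor lands as the coefficient $\mathrm{deg}(\beta\beta')$ rather than, say, $\mathrm{deg}(\beta)\mathrm{deg}(\beta')$ — these coincide only if $\deg$ were multiplicative on $R_Y$, which it is not in general, so one must keep $\beta\beta'$ intact as a single class whose degree is then taken. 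This is the only place a careless computation could go wrong.
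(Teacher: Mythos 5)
Your computation is correct and is exactly the argument the paper intends: the lemma is stated with no separate proof, being justified only by the phrase ``By the preceding discussion,'' i.e.\ by combining the three formulas for $\pi_{XY}^*$, $\pi_{YZ}^*$ and $(\pi_{XZ})_*$ with the coordinatewise multiplication on $R_{X\times Y\times Z}\cong R_X\otimes_{\mathbb{F}_2}R_Y\otimes_{\mathbb{F}_2}R_Z$, precisely as you do. Your closing caution that the coefficient is $\mathrm{deg}(\beta\beta')$ rather than $\mathrm{deg}(\beta)\mathrm{deg}(\beta')$ is a worthwhile observation, but the proof is the same.
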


Note that our definition and the subsequent discussion remains valid if we take $X$ or $Z$ to be $\mathrm{Spec}(F)$. We therefore also permit this possibility in what follows. For a fixed $\alpha \in R_{X \times Y}$, we write $\alpha^*$ (resp. $\alpha_*$) for the $\mathbb{F}_2$-vector space homomorphism $R_{Y \times Z} \rightarrow R_{X \times Z}$ (resp. $R_{Z \times X} \rightarrow R_{Z \times Y}$) that sends $\beta$ to $\beta \circ \alpha$ (resp. $\alpha \circ \beta$). 

By Proposition \ref{PROPbasicresultonrationalcycles}, our composition law restricts to a composition law
$$ \ratchow{X \times Y} \otimes_{\mathbb{F}_2} \ratchow{Y \times Z} \rightarrow \ratchow{X \times Z} , \;\;\; \alpha \otimes \beta \mapsto \beta \circ \alpha. $$
In particular, if $\alpha \in \ratchow{X \times Y}$, then $\alpha^*$ (resp. $\alpha_*$) induces a homomorphism from $\ratchow{Y \times Z}$ to $\ratchow{X \times Z}$ (resp. $\ratchow{Z \times X}$ to $\ratchow{Z \times Y}$). 

\subsection{Graphs} \label{SUBSECgraphs} Let $X$ and $Y$ be products of generically smooth nondefective quadrics. Let $U_X$ and $U_Y$ be the smooth loci of $X$ and $Y$, respectively, and let $\iota_X \colon U_X \rightarrow X$ and $\iota_Y \colon U_Y \rightarrow Y$ be the canonical open embeddings. Let $f \colon X \rightarrow Y$ be a morphism with the property that $f(U_X) \subseteq U_Y$, and let $\Gamma_f$ be its graph in $X \times Y$. Note that $f$ is proper.  The following lemma extends basic facts on morphisms of smooth schemes to our situation:

\begin{proposition} \label{PROPgraphformulas} Let $Z$ be $\mathrm{Spec}(F)$ or a product of generically smooth nondefective quadrics over $F$. Then:
\begin{enumerate} \item $\overline{[\Gamma_f]}_* \colon R_{Z \times X} \rightarrow R_{Z \times Y}$ coincides with the pushforward $(\mathrm{id} \times f)_*$;
\item If $f$ is flat or a regular closed embedding of constant relative dimension, or if $Y \times Z$ is smooth, then $\overline{[\Gamma_f]}^* \colon R_{Y \times Z} \rightarrow R_{X \times Z}$ coincides with the pullback $(f \times \mathrm{id})^*$.  \end{enumerate}
\begin{proof} In view of Lemma \ref{LEMcompositionformulas} (and the remarks at the beginning of this section), it suffices to consider the case where $Z = \mathrm{Spec}(F)$. As in the proof of Lemma \ref{LEMreductiontomaximalWittindex}, we may also replace $F$ with its separable closure in $\overline{F}$, and hence assume that $R_X = \ratchow{X}$, $R_{Y} = \ratchow{Y}$ and $R_{X \times Y} = \ratchow{X \times Y}$ (see \S \ref{SUBSECpartialcelldec}). Let $\pi_X \colon X \times Y \rightarrow X$ and $\pi_Y \colon X \times Y \rightarrow Y$ be the canonical projections. Set $U: = U_X \times U_Y$ and $\iota = \iota_X \times \iota_Y \colon U \rightarrow X \times Y$. Since $f(U_X) \subseteq U_Y$, we then have the Cartesian diagram
\begin{equation} \label{diagram1} \xymatrix{U_X \ar[d]_{\iota_X}  \ar[r]^{h}  &  U \ar[d]^{\iota} \\
X \ar[r]^{g} &   X \times Y} \vspace{.5 \baselineskip}\end{equation}
where $g = \mathrm{id} \times f$ and $h$ is the restriction of $g$ to $U_X$. In particular, $\iota^* \circ g_* = h_* \circ \iota_X^*$. Under our assumption, Proposition \ref{PROPbasicresultonrationalcycles} (2) tells us that there is a unique surjective ring homomorphism $\theta \colon \mathrm{Ch}(U) \rightarrow R_{X \times Y}$ such that $\theta(\iota^*(\alpha)) = \overline{\alpha}$ for all $\alpha \in \mathrm{Ch}(X \times Y)$. 

(1) Let $\alpha \in \mathrm{Ch}(X)$.  Since $R_X = \ratchow{X}$, proving the claim amounts to showing that $\overline{[\Gamma_f]}_*(\overline{\alpha}) = \overline{f_*(\alpha)}$. By definition, $\overline{[\Gamma_f]}_*(\overline{\alpha})$ is the image of the product $\overline{\pi_X^*(\alpha)} \cdot \overline{[\Gamma_f]} \in R_{X \times Y}$ under the pushforward $(\pi_Y)_*$. Since $f_* = (\pi_Y \circ g)_* = (\pi_Y)_* \circ g_*$, proving the claim thus reduces to showing that $\overline{\pi_X^*(\alpha)} \cdot \overline{[\Gamma_f]} = \overline{g_*(\alpha)}$. Since the multiplication on $R_{X \times Y}$ is induced by that on $\mathrm{Ch}(U)$ via $\theta$, it suffices here to show that $\iota^*\left(\pi_X^*(\alpha)\right)\cdot \iota^*([\Gamma_f]) = \iota^*\left(g_*(\alpha)\right)$ in $\mathrm{Ch}(U)$. But since $[\Gamma_f] = g_*([X])$, the projection formula gives that 
\begin{alignat*}{3} \iota^*\left(\pi_X^*(\alpha)\right)\cdot \iota^*([\Gamma_f]) \quad & = \quad && \iota^*\left(\pi_X^*(\alpha)\right) \cdot h_*([U_X])\\
\quad & = \quad && h_*\left(h^*\left(\iota^*\left(\pi_X^*(\alpha)\right)\right)\right) \\ \quad &  = \quad && h_*\left((\pi_X \circ \iota \circ h)^*(\alpha)\right) \\
\quad &  = \quad &&  h_*\left(\iota_X^*(\alpha)\right) \\
\quad &  = \quad &&  \iota^*\left(g_*(\alpha)\right), \end{alignat*}
and so the claim holds.

(2) Let $\alpha \in \mathrm{Ch}(Y)$.  Since $R_Y = \ratchow{Y}$, proving the claim amounts to showing that $\overline{[\Gamma_f]}^*(\overline{\alpha}) = \overline{f^*(\alpha)}$. By definition, $\overline{[\Gamma_f]}^*(\overline{\alpha})$ is the image of the product $\overline{\pi_Y^*(\alpha)} \cdot \overline{[\Gamma_f]} \in R_{X \times Y}$ under the pushforward $(\pi_X)_*$. We first compute the product, which is the image of $\iota^*\left(\pi_Y^*(\alpha)\right) \cdot \iota^*([\Gamma_f]) \in \mathrm{Ch}(U)$ under $\theta$. By the projection formula, we have 
\begin{alignat*}{3} \iota^*\left(\pi_Y^*(\alpha)\right)\iota^*([\Gamma_f]) \quad & = \quad && \iota^*\left(\pi_Y^*(\alpha)\right)h_*([U_X])\\
\quad & = \quad && h_*\left(h^*\left(\iota^*\left(\pi_Y^*(\alpha)\right)\right)\right) \\ \quad &  = \quad && h_*\left((\pi_Y \circ \iota \circ h)^*(\alpha)\right) \\
\quad & = \quad && h_*\left((f \circ \iota_X)^*(\alpha)\right) \\
\quad & = \quad && h_*\left(\iota_X^*\left(f^*(\alpha)\right)\right) \\
\quad & = \quad && \iota^*\left(g_*\left(f^*(\alpha)\right)\right), \end{alignat*}
and so $\overline{\pi_Y^*(\alpha)} \cdot \overline{[\Gamma_f]} = \overline{g_*\left(f^*(\alpha)\right)}$ by the defining property of $\theta$. Since $(\pi_X)_* \circ g_* = (\pi_X \circ g)_* = (\mathrm{id}_X)_*$ is the identity, applying $(\pi_X)_*$ then gives the desired result. \end{proof} \end{proposition}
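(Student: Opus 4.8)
The plan is to reduce both assertions to the case $Z=\mathrm{Spec}(F)$ and then carry out the computation after restricting to the smooth loci, where ordinary intersection theory is available. First I would dispose of the reduction: by Lemma~\ref{LEMcompositionformulas}, the operations $\overline{[\Gamma_f]}_*$ and $\overline{[\Gamma_f]}^*$ act on an external product by leaving the $R_Z$-factor untouched and applying the $Z=\mathrm{Spec}(F)$ version of the operation to the remaining factor, and the same is plainly true of $(\mathrm{id}\times f)_*$ and $(f\times\mathrm{id})^*$; so it suffices to treat $Z=\mathrm{Spec}(F)$, in which case $\overline{[\Gamma_f]}$ acts between $R_X$ and $R_Y$.

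Next I would set things up on the smooth loci. Exactly as in the proof of Lemma~\ref{LEMreductiontomaximalWittindex}, one may replace $F$ by its separable closure in $\overline{F}$; after this replacement Proposition~\ref{PROPbasicresultonrationalcycles}(2) gives $R_X=\ratchow{X}$, $R_Y=\ratchow{Y}$, $R_{X\times Y}=\ratchow{X\times Y}$ together with a surjective ring homomorphism $\theta\colon\mathrm{Ch}(U)\to R_{X\times Y}$ with $\theta\circ\iota^*$ equal to scalar extension, where $U=U_X\times U_Y$ and $\iota=\iota_X\times\iota_Y$. The hypothesis $f(U_X)\subseteq U_Y$ yields a Cartesian square with vertical maps $\iota_X\colon U_X\to X$ and $\iota\colon U\to X\times Y$ and horizontal maps $g=\mathrm{id}\times f\colon X\to X\times Y$ and its restriction $h\colon U_X\to U$; hence $\iota^*\circ g_*=h_*\circ\iota_X^*$, and since $[\Gamma_f]=g_*([X])$ one gets $\iota^*([\Gamma_f])=h_*([U_X])$.

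For part (1), unwinding the composition law gives $\overline{[\Gamma_f]}_*(\overline{\alpha})=(\pi_Y)_*\bigl(\overline{\pi_X^*(\alpha)}\cdot\overline{[\Gamma_f]}\bigr)$ for $\alpha\in\mathrm{Ch}(X)$, so since $f_*=(\pi_Y)_*\circ g_*$ it is enough to prove $\overline{\pi_X^*(\alpha)}\cdot\overline{[\Gamma_f]}=\overline{g_*(\alpha)}$ in $R_{X\times Y}$; as the product on $R_{X\times Y}$ is transported from $\mathrm{Ch}(U)$ via $\theta$, this in turn reduces to the identity $\iota^*(\pi_X^*(\alpha))\cdot\iota^*([\Gamma_f])=\iota^*(g_*(\alpha))$ in $\mathrm{Ch}(U)$, which I would verify by substituting $\iota^*([\Gamma_f])=h_*([U_X])$ and invoking the projection formula, getting $h_*\bigl((\pi_X\circ\iota\circ h)^*(\alpha)\bigr)=h_*(\iota_X^*(\alpha))=\iota^*(g_*(\alpha))$ from $\pi_X\circ\iota\circ h=\iota_X$ and the square. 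Part (2) runs in parallel: for $\alpha\in\mathrm{Ch}(Y)$ the same projection-formula computation gives $\iota^*(\pi_Y^*(\alpha))\cdot\iota^*([\Gamma_f])=h_*\bigl((f\circ\iota_X)^*(\alpha)\bigr)=h_*(\iota_X^*(f^*(\alpha)))=\iota^*(g_*(f^*(\alpha)))$, hence $\overline{\pi_Y^*(\alpha)}\cdot\overline{[\Gamma_f]}=\overline{g_*(f^*(\alpha))}$; applying $(\pi_X)_*$ and using $(\pi_X)_*\circ g_*=(\pi_X\circ g)_*=\mathrm{id}$ then yields $\overline{[\Gamma_f]}^*(\overline{\alpha})=\overline{f^*(\alpha)}$.

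I expect the only real subtleties to lie in part (2). First, the three hypotheses on $f$ are precisely what is needed for the pullback $f^*$ (equivalently $(f\times\mathrm{id})^*$) to be defined in the first place. Second, and more substantively, one must know that $f^*$ is compatible with passage to the smooth locus, i.e.\ that $(f\circ\iota_X)^*=\iota_X^*\circ f^*$ as maps $\mathrm{Ch}(Y)\to\mathrm{Ch}(U_X)$; this is functoriality of the appropriate flavour of pullback (flat pullback, or refined Gysin pullback) along the relevant Cartesian square, and while standard it requires a short case analysis according to which of the three hypotheses is in force. Granting these compatibilities, the rest is a formal chase with the projection formula, base change, and the defining property of $\theta$.
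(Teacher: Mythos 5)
Your proposal is correct and follows essentially the same route as the paper's proof: reduction to $Z=\mathrm{Spec}(F)$ via Lemma \ref{LEMcompositionformulas}, passage to the separable closure so that $R_{X\times Y}=\ratchow{X\times Y}$, and then the projection-formula computation in $\mathrm{Ch}(U)$ transported through $\theta$, with the same Cartesian square giving $\iota^*\circ g_*=h_*\circ\iota_X^*$. Your closing remark correctly identifies the one point the paper leaves implicit, namely that the hypotheses in part (2) are exactly what make $f^*$ defined and compatible with restriction to the smooth locus.
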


\subsection{The Algebra of Rational Correspondences} As above, let $X$ be a product of generically smooth nondefective quadrics over $F$. Taking $Y = Z = X$ in \S \ref{SUBSECcompositiondefinition}, we get a composition operation $\circ \colon R_{X^2} \times R_{X^2} \rightarrow R_{X^2}$. If we let $\Delta_X$ be the image of the diagonal embedding $X \rightarrow X \times X$, then we have:

\begin{proposition} \label{PROPalgebrastructureforX^2} The operation $\circ$ equips $R_{X^2}$ with the structure of an $\mathbb{F}_2$-algebra with identity $[\overline{\Delta_X}]$. Moreover, with this structure, $R_{X^2}$ admits $\ratchow{X^2}$ as an $\mathbb{F}_2$-subalgebra. 
\begin{proof} It is clear from Lemma \ref{LEMcompositionformulas} that $\circ$ is associative and $\mathbb{F}_2$-linear.  Since $\Delta_X$ is the graph of the identity morphism $X \rightarrow X$, the neutrality of $[\overline{\Delta_X}]$ for $\circ$ follows from Proposition \ref{PROPgraphformulas}. This proves the first statement. At the same time, we have already noted that $\circ$ preserves $F$-rationality of cycles, and since $[\overline{\Delta_X}]$ is $F$-rational, the second statement follows. \end{proof} \end{proposition}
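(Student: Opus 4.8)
The plan is to verify that $\circ$ makes $R_{X^2}$ into an $\mathbb{F}_2$-algebra with identity $[\overline{\Delta_X}]$ by checking $\mathbb{F}_2$-bilinearity, associativity, and the identity axiom in turn, and then to deduce the statement about $\ratchow{X^2}$ from facts already recorded in \S\ref{SUBSECcompositiondefinition}. Bilinearity of $\circ$ is immediate, since by construction it is assembled from the $\mathbb{F}_2$-linear maps $\pi_{XY}^*$, $\pi_{YZ}^*$, the (bilinear) product on $R_{X\times Y\times Z}$, and the pushforward $(\pi_{XZ})_*$. For associativity I would reduce to the standard basis: since $R_{X^2}$ is canonically identified with $R_X\otimes_{\mathbb{F}_2}R_X$, it is $\mathbb{F}_2$-spanned by the external products $a_1\times a_2$ with $a_1,a_2$ standard basis elements of $R_X$, so by bilinearity it suffices to check $(\gamma\circ\beta)\circ\alpha=\gamma\circ(\beta\circ\alpha)$ when $\alpha=a_1\times a_2$, $\beta=b_1\times b_2$ and $\gamma=c_1\times c_2$ with all factors in $R_X$. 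Three applications of Lemma \ref{LEMcompositionformulas} then show that both sides equal $\mathrm{deg}(a_2b_1)\,\mathrm{deg}(b_2c_1)\,(a_1\times c_2)$, which gives associativity. (Nothing in this computation depends on the coefficients being $\mathbb{F}_2$.)

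For the identity element I would invoke Proposition \ref{PROPgraphformulas} with $f=\mathrm{id}_X$. The diagonal $\Delta_X$ is the graph $\Gamma_{\mathrm{id}_X}$, and $\mathrm{id}_X$ trivially sends the smooth locus $U_X$ into itself and is flat, so both parts of that proposition apply. Part (1), taken with $Z=X$, identifies $\overline{[\Delta_X]}_*\colon R_{X^2}\to R_{X^2}$ with $(\mathrm{id}\times\mathrm{id})_*=\mathrm{id}$, i.e. $[\overline{\Delta_X}]\circ\beta=\beta$ for all $\beta\in R_{X^2}$; part (2), whose flatness hypothesis is satisfied, identifies $\overline{[\Delta_X]}^*$ with $(\mathrm{id}\times\mathrm{id})^*=\mathrm{id}$, i.e. $\beta\circ[\overline{\Delta_X}]=\beta$ for all $\beta\in R_{X^2}$. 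Hence $[\overline{\Delta_X}]$ is a two-sided identity for $\circ$. (One could instead expand $\overline{[\Delta_X]}$ in the standard basis and verify neutrality directly from Lemma \ref{LEMcompositionformulas}, but routing through Proposition \ref{PROPgraphformulas} avoids a diagonal-class computation.)

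For the last assertion, recall from \S\ref{SUBSECcompositiondefinition} that, by Proposition \ref{PROPbasicresultonrationalcycles}, $\circ$ restricts to a map $\ratchow{X^2}\otimes_{\mathbb{F}_2}\ratchow{X^2}\to\ratchow{X^2}$, so $\ratchow{X^2}$ is closed under $\circ$; moreover $[\overline{\Delta_X}]\in\ratchow{X^2}$, being the image of the integral class $[\Delta_X]\in\mathrm{Ch}(X^2)$ under scalar extension. Hence $\ratchow{X^2}$ is an $\mathbb{F}_2$-subalgebra of $R_{X^2}$ containing the identity, which completes the argument. I do not anticipate a genuine obstacle here; the only steps that warrant care are keeping the conventions for $\alpha^*$ versus $\alpha_*$ straight and checking the hypotheses of Proposition \ref{PROPgraphformulas}(2) when deducing that $[\overline{\Delta_X}]$ is a two-sided identity.
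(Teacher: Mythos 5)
Your proposal is correct and follows essentially the same route as the paper: associativity and linearity via Lemma \ref{LEMcompositionformulas} (you just spell out the three-fold application on standard basis elements, which the paper leaves as "clear"), neutrality of $[\overline{\Delta_X}]$ via Proposition \ref{PROPgraphformulas} applied to the graph of the identity, and the subalgebra claim from the rationality-preservation of $\circ$ together with the $F$-rationality of the diagonal class.
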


\section{Isotropic Reduction} \label{SECisotropicreduction} Let $\varphi_1,\hdots,\varphi_m$ be nonquasilinear nondefective quadratic forms over $F$. For each $1 \leq t \leq m$, we set $X_t: = X_{\varphi_t}$. We are interested in the product $X: = X_1 \times \cdots \times X_m$. If necessary, we fix an orientation of $X$ (see Remark \ref{REMorientations}). We may then consider the standard basis of the $\mathbb{F}_2$-vector space $R_X$ consisting of certain external products of classes of projective linear subspaces and plane sections of the quadrics $X_t$ (again, see Remark \ref{REMorientations}).

\subsection{A Computation for a Single Quadric} Let us suppose here that $m = 1$. For ease of notation, set $\varphi := \varphi_1$ and $V: = V_{\varphi}$. Suppose that $\varphi$ has type $(r,s)$. We assume that $r-\windex{\varphi} \geq 1$, i.e., that the anisotropic part of $\varphi$ is not quasilinear.  Set $Y: = X_{\anispart{\varphi}}$. Again, we choose an orientation of $Y$ if necessary, and consider the standard basis of the $\mathbb{F}_2$-vector space $R_Y$. We then have the following:

\begin{lemma} \label{LEMclassofthediagonal} If $j = \windex{\varphi}$, then the element
$$ \alpha: = \sum_{i=0}^{r - j-1} (h^{i + j} \times l_i) + (l_{i +j} \times h^i) \in R_{X \times Y}$$
is $F$-rational. 
\begin{proof} let $W$ be a totally isotropic $j$-dimensional subspace of $V$. We can then identify $Y$ with the variety of totally isotropic $(j+1)$-dimensional subspaces of $V$ containing $W$.  Under this identification, let $Z$ be the closed subscheme of $X \times Y$ consisting of all pairs $(L,U)$ with $L \subset U$.  Note that the canonical projection $Z \rightarrow Y$ is a projective bundle of rank $j$ ($Z$ is the projectivization of the restriction of the tautological vector bundle on the Grassmannian of $(j+1)$-dimensional subspaces of $V$ to $Y$). In particular, $Z$ is integral of dimension $\mathrm{dim}(Y) + a = \mathrm{dim}(X) - a$.  We state the following: \vspace{.5 \baselineskip}

\noindent {\bf Claim.} For each $0 \leq i < r-j$, we have $\overline{[Z]}(h^{i+j} \times l_i) = \overline{[Z]}(l_{i+j} \times h^i) = l_0 \times l_0$ in $R_{X \times Y}$. \vspace{.5 \baselineskip}

Given the claim, let us show that $\alpha$ is $F$-rational. Suppose first that $s \neq 0$, i.e., that $\mathrm{ql}(\varphi)$ is non-trivial. Since $\mydim{Z} = \mydim{X} - j$, we have scalars $a_i,b_i \in \mathbb{F}_2$ such that 
$$ \overline{[Z]} = \sum_{i=0}^{r-j-1} a_i (h^{i+j} \times l_i) + b_i(l_{i+j} \times h^i)$$
in $R_{X \times Y}$.  For each $0 \leq i < r-j$, the claim then gives that
$$ l_0 \times l_0 = \left(\sum_{i=0}^{r-j-1} a_i (h^{i+j} \times l_i) + b_i (l_{i+j} \times h^i)\right)(h^{i+j} \times l_i) = a_il_0 \times l_0 $$
and
$$ l_0 \times l_0 = \left(\sum_{i=0}^{r-j-1} a_i (h^{i+j} \times l_i) + b_i (l_{i+j} \times h^i)\right)(l_{i+j} \times h^i) = b_il_0 \times l_0, $$
and so $a_i = b_i = 1$. Thus, in this case, we have $\alpha = \overline{[Z]} \in \ratchow{X \times Y}$.  Suppose now that $s = 0$, i.e., that $\varphi$ is nondegenerate of even dimension. Then 
$$ \overline{[Z]} = \left( \sum_{i=0}^{r-j-2}a_i(h^{i+j} \times l_i) + b_i(l_{i+j} \times h^i)\right) + \left(h^{r-1} \times \alpha\right) + \left(l_{r-1} \times \beta\right)$$
for some $a_i,b_i \in \mathbb{F}_2$ and $\alpha,\beta \in \mathbb{F}_2h^{r-1} + \mathbb{F}_2l_{r-1} \subset R_Y$.  Using the claim as above, we get that $a_i = b_i = 1$ for all $0 \leq i < r-j-1$, and that $\alpha h^{r-j-1} = l_0 = \beta l_{r-j-1}$ in $R_Y$.  The equality $\alpha h^{r-j-1}$ gives that $\alpha = l_{r-j-1} + ah^{r-j-1}$ for some $a \in \mathbb{F}_2$. Let $b,c \in \mathbb{F}_2$ be such that $\beta = bh^{r-j-1} + cl_{r-j-1}$.  Then $\overline{[Z]}(h^{r-1} \times h^{r-j-1}) = cl_0 \times l_0$ in $R_{X \times Y}$. Since $h^{r-1} \times h^{r-j-1}$ is $F$-rational, it follows that $cl_0 \times l_0 \in \ratchow{X \times Y}$, so $X \times Y$ admits a $0$-cycle of degree $c$. In particular, $Y$ admits a $0$-cycle of degree $c$. Since $Y$ has no $F$-rational points, Springer's theorem then tells us that $c = 0$, and so $\beta = bh^{r-j-1}$. Since $\beta l_{r-j-1} = l_0$, we must then have that $b =1$, and so
\begin{alignat*}{3} \overline{[Z]} \quad & = \quad &&  \left( \sum_{i=0}^{r-j-2}(h^{i+j} \times l_i) + (l_{i+j} \times h^i)\right) + \left(h^{r-1} \times (l_{r-j-1} + ah^{r-j-1}) \right) + \left(l_{r-1} \times h^{r-j-1}\right) \\ \quad & = \quad && \alpha + \left(ah^{r-1} \times h^{r-j-1}\right). \end{alignat*}
Since $h^{r-1} \times h^{r-j-1}$ is $F$-rational, the same is then true of $\alpha = \overline{[Z]} + \left(ah^{r-1} \times h^{r-j-1}\right)$.

It now only remains to prove the claim.  Let $0 \leq i < r-j-1$. To prove the desired identities, we are free to pass from $F$ to its separable closure in $\overline{F}$. As in the proof of Lemma \ref{LEMreductiontomaximalWittindex}, this preserves the nondefectivity of $\varphi$ while allowing us to assume that $\windex{\varphi} = r$. Let $W'$ be an $(i+j+1)$-dimensional totally isotropic subspace of $V$ containing $W$. In the case where $s = 0$ and $i = r-j-1$, we choose $W'$ so that its restriction to $\overline{F}$ represents the class $l_{r-j}$ in $R_Y$ (this can always be achieved by Remark \ref{REMorientations}). Choose also an $(i+j)$-codimensional subspace $V'$ of $V$ such that $V' \cap W'$ is a $1$-dimensional subspace of $V$ not contained in $W$.  Let $S$ be the closed subscheme of $X \times Y$ consisting of all pairs $(L,U)$ with $L \subset V'$ and $U \subset W'$.  The (scheme-theoretic) intersection $Z \cap S$ is then the reduced subscheme of $X \times Y$ given by the $F$-rational point $(V' \cap W', W + (V' \cap W'))$. Since $\varphi$ is nondefective, this point lies in the smooth locus of $X \times Y$.  By Proposition \ref{PROPbasicresultonrationalcycles} (2), it then follows that $\overline{[Z]} \cdot \overline{[S]}$ is the class of an $\overline{F}$-rational point in $\overline{X \times Y}$.  Since $\overline{[S]} = h^{i+j} \times l_i$ by construction, this means that $\overline{[Z]}(h^{i+j} \times l_i) = l_0 \times l_0$ in $R_{X \times Y}$. This proves the first of the desired identities, and the other may be obtained by analogous considerations.
\end{proof} \end{lemma}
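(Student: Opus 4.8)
The plan is to realize $\alpha$, up to a correction term that will turn out to be harmless, as the class of an explicit $F$-defined incidence variety. Fix a $j$-dimensional totally isotropic subspace $W\subset V$ which is a maximal totally isotropic subspace of some hyperbolic subform of $\varphi$, so that the quadratic form induced on $W^\perp/W$ is $\anispart{\varphi}$ and $Y$ is canonically identified with the variety of totally isotropic $(j{+}1)$-dimensional subspaces of $V$ containing $W$. Let $Z\subset X\times Y$ be the incidence locus $\lbrace(L,U): L\subseteq U\rbrace$. Projection to $Y$ makes $Z$ a $\mathbb{P}^j$-bundle over $Y$, so $Z$ is an integral $F$-variety with $\mydim{Z}=\mydim{Y}+j=\mydim{X}-j$; in particular $\overline{[Z]}$ is an $F$-rational class lying in the graded piece of $R_{X\times Y}$ of dimension $\mydim{X}-j$, whose standard basis (after fixing orientations when $s=0$) consists precisely of the monomials $h^{i+j}\times l_i$ and $l_{i+j}\times h^i$ with $0\leq i<r-j$ appearing in $\alpha$, together with the two extra classes $h^{r-1}\times h^{r-j-1}$ and $l_{r-1}\times l_{r-j-1}$ in the case $s=0$.

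The computational core is the pairing identity
$$ \overline{[Z]}\cdot(h^{i+j}\times l_i)=\overline{[Z]}\cdot(l_{i+j}\times h^i)=l_0\times l_0\qquad(0\leq i<r-j) $$
in $R_{X\times Y}$. I would prove it as follows. Fixing $i$, one first reduces to the case $\windex{\varphi}=r$ by extending scalars to the separable closure of $F$: this is legitimate because both sides are computed on $\overline{X\times Y}$ and $Z$ base-changes compatibly, and over the separable closure $X$ and $Y$ both acquire maximal Witt index, so that all the classes $l_i$ become honestly rational (cf.\ the proof of Lemma~\ref{LEMreductiontomaximalWittindex}). Now choose an $(i{+}j{+}1)$-dimensional totally isotropic subspace $W'\supseteq W$ (chosen, when $s=0$ and $i=r-j-1$, so that $\mathbb{P}(W'/W)$ represents the orientation class $l_{r-j-1}$ of $Y$) and a codimension-$(i{+}j)$ subspace $V'\subseteq V$ with $V'\cap W'$ a line $\langle v\rangle$ not contained in $W$, and put $S=(X\cap\mathbb{P}(V'))\times\lbrace U: W\subseteq U\subseteq W'\rbrace$, a product cycle of class $h^{i+j}\times l_i$. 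A direct check shows that the scheme-theoretic intersection $Z\cap S$ is the single reduced point $(\langle v\rangle,\, W+\langle v\rangle)$; since $\varphi$ is nondefective this point lies in the smooth locus of $X\times Y$, so by Proposition~\ref{PROPbasicresultonrationalcycles}(2), which realizes the ring structure of $\ratchow{X\times Y}$ through the intersection product on that smooth locus, the product $\overline{[Z]}\cdot\overline{[S]}$ is the class of a point, i.e.\ $l_0\times l_0$. The second identity is obtained by the symmetric construction, interchanging the roles of linear sections and plane sections in the two factors.

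Granting the pairing identity, the rest is bookkeeping in the standard basis. Writing $\overline{[Z]}=\sum_{i=0}^{r-j-1}a_i(h^{i+j}\times l_i)+b_i(l_{i+j}\times h^i)$, plus $c\,(h^{r-1}\times h^{r-j-1})+d\,(l_{r-1}\times l_{r-j-1})$ when $s=0$, the multiplication rules of \S\ref{SUBSECstatementsonrationalcycles} show that pairing $\overline{[Z]}$ with $h^{i+j}\times l_i$ (resp.\ with $l_{i+j}\times h^i$) isolates $b_i\,(l_0\times l_0)$ (resp.\ $a_i\,(l_0\times l_0)$), so the identity forces $a_i=b_i=1$ for all $i$. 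When $s\neq0$ this already gives $\alpha=\overline{[Z]}$, so $\alpha$ is $F$-rational. When $s=0$ one is left with the orientation-sensitive top term: pairing the remaining part of $\overline{[Z]}$ against $h^{r-1}\times h^{r-j-1}$ and against $l_{r-1}\times h^{r-j-1}$, and invoking Springer's theorem (via the absence of $F$-points on $Y$) to discard a spurious degree-one $0$-cycle, one finds the leftover equals exactly $a\,(h^{r-1}\times h^{r-j-1})$ for some $a\in\mathbb{F}_2$; since $h^{r-1}\times h^{r-j-1}$ is visibly $F$-rational, $\alpha=\overline{[Z]}+a\,(h^{r-1}\times h^{r-j-1})$ is $F$-rational too.

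I expect the $s=0$ case to be the main obstacle: it demands careful tracking of the orientation of $Y$ (the two families of middle-dimensional linear subspaces), a correct choice of $W'$ realizing the intended class in $R_Y$ when $i=r-j-1$, and the Springer-type argument needed to kill the one basis vector the pairing cannot control on its own. A secondary point requiring care is the transversality and multiplicity-one statement for $Z\cap S$; this is why the reduction to $\windex{\varphi}=r$ (which makes the intersection point rational over the base field and lets the intersection product be computed on the smooth locus via Proposition~\ref{PROPbasicresultonrationalcycles}) is carried out before the intersection calculation.
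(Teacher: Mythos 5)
Your proposal is correct and follows essentially the same route as the paper's proof: the same incidence variety $Z\subset X\times Y$, the same pairing identity $\overline{[Z]}\cdot(h^{i+j}\times l_i)=\overline{[Z]}\cdot(l_{i+j}\times h^i)=l_0\times l_0$ established via the auxiliary product cycle $S$ after reducing to maximal Witt index, and the same basis bookkeeping (including the orientation and Springer's-theorem argument in the $s=0$ case). The only difference is cosmetic: your pairing computation correctly records which of $a_i,b_i$ each product isolates (the paper's display has these swapped), and this does not affect the conclusion.
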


\subsection{Isotropic Reduction} \label{SUBSECisotropicreduction} We now return to the general situation considered at the beginning of this section.  For each $1 \leq t \leq m$, let $(r_t,s_t)$ be the type of $\varphi_t$. We assume that $r_t > \windex{\varphi_t}$, i.e., that the anisotropic part of $\varphi_t$ is nonquasilinear. We set $Y: = X_{\anispart{(\varphi_1)}} \times \cdots \times X_{\anispart{(\varphi_m)}}$.  We fix an orientation of $Y$ if needed, and consider the standard basis of the $\mathbb{F}_2$ vector space $R_Y$.  

\begin{proposition} \label{PROPisotropicreduction} There are unique $\mathbb{F}_2$-vector space homomorphisms $f \colon R_X \rightarrow R_Y$ and $g \colon R_Y \rightarrow R_X$ such that:
\begin{enumerate} \item $f \circ g = \mathrm{id}$;
\item $f(\ratchow{X}) = \ratchow{Y}$ and $g(\ratchow{Y}) \subseteq f(\ratchow{X})$;
\item If $ \alpha = \alpha_1 \times \cdots \times \alpha_m$ is a standard basis element of $R_X$, then $f(\alpha) = \beta_1 \times \cdots \times \beta_m$, where for each $1 \leq t \leq m$, we have
$$ \beta_t = \begin{cases} l_{i -\windex{\varphi_t}} & \text{if }\alpha_t = l_i \text{ for some } \windex{\varphi_t} \leq i<r_t \\
h^{i - \windex{\varphi_t}} & \text{if } \alpha_t = h^i \text{ for some }\windex{\varphi_t} \leq i<r_t \\ 0 & \text{otherwise.}  \end{cases} $$
\item  If $ \beta = \beta_1 \times \cdots \times \beta_m$ is a standard basis element of $R_Y$, then $g(\beta) = \alpha_1 \times \cdots \times \alpha_m$, where for each $1 \leq t \leq m$, we have
$$ \alpha_t = \begin{cases} l_{i + \windex{\varphi_t}} & \text{if }\beta_t = l_i \text{ for some } 0 \leq i < r_t - \windex{\varphi_t} \\ h^{i +\windex{\varphi_t}} & \text{if } \beta = h^i \text{ for some } 0 \leq i < r_t - \windex{\varphi_t}. \end{cases} $$ \end{enumerate}
\begin{proof} Note that the conditions in (3) and (4) determine $f$ and $g$ uniquely, and imply that they satisfy (1). It therefore suffices to construct a pair $(f,g)$ satisfying (2), (3) and (4).  By the discussion of \S \ref{SUBSECcompositiondefinition}, it will be enough to exhibit an $F$-rational correspondence $\alpha \in \ratchow{X \times Y}$ such that the pair $(f = \alpha_*,g = \alpha^*)$ satisfies (3) and (4). Since the obvious external product map $\bigotimes_{i=1}^m R_{X_i \times Y_i} \rightarrow R_{X \times Y}$ is an isomorphism of $\mathbb{F}_2$-algebras (Lemma \ref{LEMinjectivityofexternalproduct}), we may assume that $m=1$. For ease of notation, set $\varphi : = \varphi_1$ and $j: = \windex{\varphi}$. If $(r,s)$ is the type of $\varphi$, then $r-j \geq 1$ by hypothesis, and Lemma \ref{LEMclassofthediagonal} gives that
$$ \alpha: = \sum_{i=0}^{r - j-1} (h^{i + j} \times l_i) + (l_{i +j} \times h^i) \in R_{X \times Y}$$
is $F$-rational.  Using Lemma \ref{LEMcompositionformulas}, one readily checks that $\alpha$ has the desired properties.\end{proof} \end{proposition}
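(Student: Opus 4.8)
The plan is to first dispatch uniqueness and reduce the existence part to producing a single $F$-rational correspondence. Conditions (3) and (4) prescribe $f$ and $g$ on the standard basis vectors of $R_X$ and $R_Y$, hence determine them on all of $R_X$, $R_Y$ by $\mathbb{F}_2$-linearity; and since $g$ sends a standard basis vector $\beta_1 \times \cdots \times \beta_m$ of $R_Y$ to the standard basis vector of $R_X$ obtained by adding $\windex{\varphi_t}$ to the $l$- or $h$-index in the $t$-th slot, after which $f$ subtracts it again, the relation $f \circ g = \mathrm{id}$ of (1) holds automatically. So uniqueness is immediate, and it remains to exhibit one pair $(f,g)$ satisfying (2), (3), (4). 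For this I would take an $F$-rational cycle $\alpha \in \ratchow{X \times Y}$ and set $f := \alpha_*$, $g := \alpha^*$, the maps $R_X \rightarrow R_Y$ and $R_Y \rightarrow R_X$ induced by $\alpha$ as in \S\ref{SUBSECcompositiondefinition} with $Z = \mathrm{Spec}(F)$. Granting that these satisfy (3) and (4), part (2) is then automatic: by Proposition \ref{PROPbasicresultonrationalcycles}(2) the composition law preserves $F$-rationality, so $\alpha_*(\ratchow{X}) \subseteq \ratchow{Y}$ and $\alpha^*(\ratchow{Y}) \subseteq \ratchow{X}$; and if $\beta \in \ratchow{Y}$ then $\beta = f(g(\beta))$ with $g(\beta)$ $F$-rational, whence $\ratchow{Y} \subseteq f(\ratchow{X})$ and equality holds.

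Next I would reduce to the case $m = 1$. The external product map $\bigotimes_{t=1}^{m} R_{X_t \times Y_t} \rightarrow R_{X \times Y}$ is an isomorphism of $\mathbb{F}_2$-algebras by Lemma \ref{LEMinjectivityofexternalproduct}, and the composition law of \S\ref{SUBSECcompositiondefinition} is compatible with it: this is precisely what Lemma \ref{LEMcompositionformulas} expresses, applied slotwise to standard basis vectors. Hence it suffices to find, for each $t$, a cycle $\alpha_t \in \ratchow{X_t \times Y_t}$ whose induced maps satisfy the single-quadric versions of (3) and (4), and then take $\alpha = \alpha_1 \times \cdots \times \alpha_m$. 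For a single nonquasilinear nondefective form $\varphi$ of type $(r,s)$ with $j := \windex{\varphi} < r$, Lemma \ref{LEMclassofthediagonal} furnishes exactly the cycle we want, namely
$$ \alpha := \sum_{i=0}^{r-j-1} \left(h^{i+j} \times l_i\right) + \left(l_{i+j} \times h^i\right) \in \ratchow{X_\varphi \times X_{\anispart{\varphi}}}. $$

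It then remains to verify the single-quadric analogues of (3) and (4) by a direct computation with Lemma \ref{LEMcompositionformulas}. For a standard basis vector $\gamma$ of $R_{X_\varphi}$ one expands $\alpha_*(\gamma)$ as $\sum_{i} \deg(\gamma \cdot h^{i+j})\, l_i + \deg(\gamma \cdot l_{i+j})\, h^i$ in $R_{X_{\anispart{\varphi}}}$, and the multiplication and degree rules for $R_{X_\varphi}$ recorded in \S\ref{SUBSECstatementsonrationalcycles} show that only the index $i = k - j$ contributes when $\gamma = h^k$ or $\gamma = l_k$ with $k \geq j$ (giving $h^{k-j}$, resp.\ $l_{k-j}$) and that nothing survives when $k < j$; the computation of $g = \alpha^*$ on standard basis vectors of $R_{X_{\anispart{\varphi}}}$ is entirely symmetric. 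The one point requiring care — and the place I expect the actual work to be — is the edge case of a nondegenerate even-dimensional factor $\varphi_t$ with $\mydim{\varphi_t} \equiv 2 \pmod 4$: there $l_k \cdot l_k$ can be nonzero, so the naive expansion of $\alpha_*(l_{r_t-1})$ picks up an extra $h^{r_t-1-\windex{\varphi_t}}$-term, reflecting the fact that the top-dimensional ruling class on $X_t$ and on $Y_t$ is only pinned down up to adding $h^{\mathrm{top}}$. Getting the output to match (3) and (4) on the nose therefore requires choosing the orientations of $X$ and $Y$ compatibly — exactly the coordinated choice of distinguished ruling class made inside the proof of Lemma \ref{LEMclassofthediagonal} — after which the identities hold, and the remaining verifications are routine index-chasing with the formulas of \S\ref{SUBSECstatementsonrationalcycles}.
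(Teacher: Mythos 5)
Your proposal is correct and follows the paper's proof essentially verbatim: uniqueness and (1) come from the explicit formulas, (2) reduces to exhibiting an $F$-rational correspondence inducing (3) and (4), the external product isomorphism reduces to $m=1$, Lemma \ref{LEMclassofthediagonal} supplies the cycle $\alpha$, and Lemma \ref{LEMcompositionformulas} gives the final index-chasing. Your remark about the case $s_t=0$ with $\mydim{\varphi_t}\equiv 2 \pmod 4$ --- where the term $l_{r_t-1}\times h^{r_t-\windex{\varphi_t}-1}$ of $\alpha$ makes $\alpha_*(l_{r_t-1})$ pick up an extra $h$-summand unless the orientations of $X_t$ and $X_{\anispart{(\varphi_t)}}$ are coordinated --- is a genuine subtlety that the paper's one-line verification passes over in silence.
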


\subsection{Isotropy Restrictions on the Ring of Rational Cycles} Let us now relax the assumption on the anisotropic parts of the $\varphi_t$ from the previous subsection.  Recall (Remark \ref{REMorientations}) that we say an element of $R_X$ \emph{involves} a given element of the standard basis if that element appears in its decomposition as a sum of the standard basis elements. With Proposition \ref{PROPisotropicreduction}, we can establish the following restriction on the subring $\ratchow{X} \subseteq R_X$:

\begin{proposition} \label{PROPisotropyrestrictions} Let $\alpha$ be an element of $\ratchow{X}$ involving the standard basis element $h^{i_1} \times \cdots \times h^{i_k} \times l_{i_{k+1}} \times \cdots \times l_{i_{m}}$ for some non-negative integers $k < m$ and $i_1,\hdots,i_m$.  Suppose that $K$ is an extension of $F$ with the following properties:
\begin{itemize} \item[$\mathrm{(i)}$] $\windex{(\varphi_t)_K} > i_t$ for all $1 \leq t \leq k$;
\item[$\mathrm{(ii)}$] $(\varphi_t)_K$ is nondefective for all $1 \leq t \leq m$. \end{itemize}
Then $\windex{(\varphi_t)_K} > i_t$ for all $k<t \leq m$. 
\begin{proof} In view of (ii), we can replace $F$ with $K$ and $\overline{F}$ with $\overline{K}$ to reduce to the case where $K = F$.  Let $k < t \leq m$, and set $X': = \prod_{1 \leq i \leq m, i \neq t}X_{\varphi_i}$.  By (i), the element $\beta: = l_{i_1} \times \cdots \times l_{i_k} \times h^{i_{k+1}} \times \cdots \times h^{i_{t-1}} \times h^{i_{t+1}} \times \cdots \times h^{i_m} \in R_{X'}$ lies in $\ratchow{X'}$. If we view $\alpha$ as a correspondence from $X'$ to $X_{\varphi_t}$, then $\alpha_*(\beta) = l_{i_t} \in R_{X_{\varphi_t}}$. Since $\alpha$ and $\beta$ are $F$-rational, it follows that $l_{i_t} \in \ratchow{X_{\varphi_t}}$. Suppose, for the sake of contradiction, that $\windex{\varphi_t} \leq i_t$.  If we set $Y: = X_{\anispart{(\varphi_t)}}$, Proposition \ref{PROPisotropicreduction} then implies that $l_i \in \ratchow{Y}$ for some $i \geq 0$. Since $\ratchow{Y}$ is a subring of $R_Y$, $l_0 = h^il_i$ then lies in $\ratchow{Y}$, and so $Y$ admits a $0$-cycle of degree $1$. By Springer's theorem, however, this implies that $\anispart{(\varphi_t)}$ is isotropic, which is impossible. The desired assertion therefore holds.  \end{proof} \end{proposition}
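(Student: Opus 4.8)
The plan is to follow the correspondence-theoretic strategy underlying Proposition~\ref{PROPisotropicreduction}: manufacture an $F$-rational cycle on $X_{\varphi_t}$ that reads off the $l_{i_t}$-factor of the given standard basis element from $\alpha$, and then obstruct the existence of such a cycle via isotropic reduction and Springer's theorem. First I would reduce to the case $K = F$. Hypothesis~(ii) says $(\varphi_t)_K$ is nondefective, and $(\varphi_t)_K$ remains nonquasilinear (it retains a summand $[a,b]$, so its type still has positive first coordinate); hence $X_K$ is again a product of generically smooth nondefective quadrics, the standard basis and the fixed orientation base-change, and neither the hypotheses nor the conclusion are affected. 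So we may replace $F$ by $K$ and $\overline{F}$ by $\overline{K}$.

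Fix now $k < t \le m$ and set $X' := \prod_{1 \le i \le m,\ i \ne t} X_{\varphi_i}$. The main construction is the standard basis element $\beta \in R_{X'}$ obtained from $h^{i_1} \times \cdots \times h^{i_k} \times l_{i_{k+1}} \times \cdots \times l_{i_m}$ by deleting the $t$-th factor and interchanging the roles of $h$ and $l$ everywhere else, i.e.\ $\beta$ has $l_{i_s}$ in positions $s \le k$ and $h^{i_s}$ in positions $s > k$ with $s \ne t$. I would check $\beta \in \ratchow{X'}$: the factors $h^{i_s}$ are pullbacks of linear sections from the ambient projective spaces and hence $F$-rational, while for $1 \le s \le k$ hypothesis~(i) gives $\windex{\varphi_s} > i_s$, so $\varphi_s$ has an $F$-rational totally isotropic subspace of dimension $> i_s$, whence $l_{i_s} \in \ratchow{X_{\varphi_s}}$ (in the sole exceptional case, $\varphi_s$ nondegenerate of even dimension with $i_s = r_s-1$, one may only obtain the opposite-orientation class $l_{i_s} + h^{r_s-1}$, but $h^{r_s-1}$ is $F$-rational, so $l_{i_s}$ is $F$-rational regardless). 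An external product of $F$-rational cycles being $F$-rational, $\beta \in \ratchow{X'}$.

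Next, viewing $\alpha$ as a correspondence from $X'$ to $X_{\varphi_t}$, the composition law preserves $F$-rationality (see \S\ref{SUBSECcompositiondefinition} and Proposition~\ref{PROPbasicresultonrationalcycles}), so $\alpha_*(\beta) \in \ratchow{X_{\varphi_t}}$. After replacing $\alpha$ by its homogeneous (by dimension) component that involves $h^{i_1} \times \cdots \times l_{i_m}$, a dimension count shows $\alpha_*(\beta)$ is homogeneous of dimension $i_t$ and therefore lies in $\mathbb{F}_2\, l_{i_t} + \mathbb{F}_2\, h^{d_{X_{\varphi_t}} - i_t}$. Using Lemma~\ref{LEMcompositionformulas} together with the explicit multiplication and degree formulas on the rings $R_{X_{\varphi_s}}$, one computes $\deg\!\big(\alpha_*(\beta)\cdot h^{i_t}\big) = \deg_X\!\big(\alpha \cdot (\beta \times h^{i_t})\big)$, and the right-hand side equals the coefficient of $h^{i_1} \times \cdots \times l_{i_m}$ in $\alpha$, namely $1$. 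Hence $\alpha_*(\beta)$ involves $l_{i_t}$, so $\alpha_*(\beta)$ equals $l_{i_t}$ up to the $F$-rational class $h^{d_{X_{\varphi_t}}-i_t}$, and we conclude $l_{i_t} \in \ratchow{X_{\varphi_t}}$. I expect this degree computation --- controlling which standard basis elements $\alpha$ can involve, and disposing of the harmless orientation and self-intersection exceptions entering the bracketed cases above --- to be the main technical point.

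Finally, suppose for contradiction that $\windex{\varphi_t} \le i_t$. Since $i_t < r_t$, the anisotropic form $\anispart{(\varphi_t)}$ has type $(r_t - \windex{\varphi_t}, s_t)$ with $r_t - \windex{\varphi_t} \ge 1$, so it is nonquasilinear, and Proposition~\ref{PROPisotropicreduction} applies to $Y := X_{\anispart{(\varphi_t)}}$, carrying $l_{i_t} \in \ratchow{X_{\varphi_t}}$ to $l_{i_t - \windex{\varphi_t}} \in \ratchow{Y}$. As $\ratchow{Y}$ is a subring of $R_Y$ by Proposition~\ref{PROPbasicresultonrationalcycles}(2), it contains $l_0 = h^{i_t - \windex{\varphi_t}}\, l_{i_t - \windex{\varphi_t}}$, so $Y$ carries a $0$-cycle of degree $1$; by Springer's theorem $\anispart{(\varphi_t)}$ would then be isotropic, a contradiction. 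Therefore $\windex{(\varphi_t)_K} > i_t$ for all $k < t \le m$, as required.
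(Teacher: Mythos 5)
Your proof is correct and follows essentially the same route as the paper's: reduce to $K=F$, form the dual standard basis element $\beta = l_{i_1}\times\cdots\times l_{i_k}\times h^{i_{k+1}}\times\cdots$ on $X'$ (rational by hypothesis (i)), apply $\alpha$ as a correspondence to extract $l_{i_t}\in\ratchow{X_{\varphi_t}}$, and then contradict Springer's theorem via Proposition~\ref{PROPisotropicreduction}. Your extra care with the homogeneous component, the degree computation identifying the coefficient of $l_{i_t}$, and the orientation edge case only makes explicit what the paper leaves implicit.
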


Additional (and more subtle) restrictions on $\ratchow{X}$ may be obtained by considering the action of the Steenrod operations of \S \ref{SUBSECstatementsonrationalcycles} (see Proposition \ref{PROPbasicresultonrationalcycles} (3)). We will take these into account in the next section, where we restrict to the case where $X$ is the product of two copies of a single generically smooth nondefective quadric. 

\section{The $\mathrm{MDT}$ Invariant} \label{SECMDT}

In this section, we fix a nondefective quadratic form $\varphi$ of type $(r,s)$ over $F$. We assume that $r \geq 1$, i.e., that $\varphi$ is not quasilinear.  We let $h_{\mathrm{nd}}$ be the nondefective height and $F = F_0 \subseteq F_1 \subseteq \cdots \subseteq F_{h_{\mathrm{nd}}}$ the nondefective splitting tower of $\varphi$ (see \S \ref{SUBSECfunctionfieldsandsplittingpatterns}). Recall that for each $0 \leq t < h_{\mathrm{nd}}$, we write $\varphi_t$ for the anisotropic part of $\varphi_{F_t}$, which is still nondefective and nonquasilinear. For each $0 \leq t \leq h_{\mathrm{nd}}$, we set $\mathfrak{j}_t : = \wittj{t}{\varphi}$ and $\mathfrak{i}_t : = \witti{t}{\varphi}$.  We also set $X: = X_{\varphi}$ and $X_t: = X_{\varphi_t}$ for $t \neq h_{\mathrm{nd}}$. In the case where $s = 0$, we choose orientations of $X$ and each $X_t$ (Remark \ref{REMorientations}). For each positive integer $m$, we choose the orientation of $X^m$ (resp. $X_t^m$) by assigning the given orientation of $X$ (resp. $X_t$) to each factor. This being fixed, we can then consider in all cases the standard basis of $R_{X^m}$ (resp. $R_{X_t}^m$) consisting of $m$-fold external products of elements in the set $\lbrace h^i,l_i \rbrace_{0 \leq i <r}$ (resp. $\lbrace h^i,l_i \rbrace_{0 \leq i < r - \mathfrak{j}_t}$) (Lemma \ref{LEMinjectivityofexternalproduct}). We will also have occasion to pass from $F$ to some larger field $K$ preserving the nondefectivity of $\varphi$. When doing so, we shall give $X_K$ the orientation compatible with that on $X$ (i.e., if $\pi \colon X_K \rightarrow X$ is the canonical projection, then we choose the orientation of $X_K$ for which $\pi^*(l_{r-1}) = l_{r-1}$). This also applies to powers of $X_K$.  In what follows, we are mainly interested in studying the rationality of cycles on $X^2$.  However, this also necessitates consideration of higher powers of $X$. We start with some generalities. 

\subsection{Notation, Terminology and Preliminary Facts} \label{SUBSECMDTpreliminaries} Fix a positive integer $m$. Following \cite[\S 72]{EKM}, we say that an element of $R_{X^m}$ is \emph{essential} if it doesn't involve any standard basis elements of the form $h^{i_1} \times \cdots \times h^{i_m}$ for some integers $0 \leq i_t <r$. If $\mathrm{Ess}(R_{X^m})$ is the $\mathbb{F}_2$-linear subspace of $R_{X^m}$ consisting of these elements, then there is a unique $\mathbb{F}_2$-vector space homomorphism $\mathrm{ess} \colon R_{X^m} \rightarrow \mathrm{Ess}(R_{X^m})$ that fixes each essential element of the standard basis and sends the others to $0$.  Since the elements $h^{i_1} \times \cdots \times h^{i_m}$ are $F$-rational, $\mathrm{ess}$ preserves $\ratchow{X^m}$. For all non-negative integers $k_1,\hdots,k_m$, we let $D^{k_1,\hdots,k_m}$ be the $\mathbb{F}_2$-linear endomorphism of $R_{X^m}$ given by multiplication by the element $h^{k_1}\times \cdots \times h^{k_m}$. Since the latter is $F$-rational, $D^{k_1,\hdots,k_m}$ preserves $\ratchow{X^m}$. Note that if $\alpha \in R_{X^m}$ is homogeneous of degree $j$, then $D^{k_1,\hdots,k_m}(\alpha)$ is homogeneous of degree $j-\sum_{i=1}^mk_i$. If $\alpha,\beta \in R_{X^m}$, then we write $\alpha \cap \beta$ for the sum of all the standard basis elements of $R_{X^m}$ involved in both $\alpha$ and $\beta$. If $\alpha$ and $\beta$ are both essential, then the same is true of $\alpha \cap \beta$.  If $\alpha \cap \beta = 0$, then we say that $\alpha$ and $\beta$ are \emph{disjoint}. If $\alpha \cap \beta = \beta$, then we write $\beta \subset \alpha$. An $F$-rational element $\beta \in \ratchow{X^m}$ is said to be \emph{indecomposable} (or \emph{minimal}) if for any $\beta \in \ratchow{X^m}$ with $\beta \subset \alpha$, we have $\beta = \alpha$. Any such element not of the form $h^{k_1}\times \cdots \times h^{k_m}$ is essential.

We shall need two further technical tools beyond those already presented. First, let $\mathrm{mult} \colon R_{X^{2m}} \rightarrow R_{X^m}$ be the $\mathbb{F}_2$-linear map obtained by composing the inverse of the external product isomorphism $R_{X^m} \otimes_{\mathbb{F}_2} R_{X^m} \rightarrow R_{X^{2m}}$ (Lemma \ref{LEMinjectivityofexternalproduct}) and the multiplication map $R_{X^m} \otimes_{\mathbb{F}_2} R_{X^m} \rightarrow R_{X^m}$.  For any nonnegative integer $j$, we may then get a product homomorphism $\mathrm{mult} \otimes \mathrm{id} \colon R_{X^{2m+j}} \rightarrow R_{X^{m+j}}$ by identifying $R_{X^{i+j}}$ with $R_{X^i} \otimes_{\mathbb{F}_2} R_{X^j}$ for each $i \in \lbrace 2m,m \rbrace$ (Lemma \ref{LEMinjectivityofexternalproduct}). We then have the following:

\begin{proposition} \label{PROPpullbackalongdiagonal} For any nonnegative integer $j$, the map $\mathrm{mult} \otimes \mathrm{id} \colon R_{X^{2m+j}} \rightarrow R_{X^{m+j}}$ sends $\ratchow{X^{2m+j}}$ to $\ratchow{X^{m+j}}$.  
\begin{proof} The given map factors as $R_{X^{2m+j}} \xrightarrow{\pi^*} R_{X^{2(m+j)}} \xrightarrow{\mathrm{mult}} R_{X^{m+j}}$, where $\pi \colon X^{2(m+j)} \rightarrow X^{2m + j}$ is the obvious projection. To prove the claim, we may therefore assume that $j=0$. Let $U$ be the smooth locus of $X$, and let $\Delta \colon U^{m} \rightarrow U^{2m}$ be the diagonal embedding (here we are concretely identifying $U^{2m}$ with $U^m \times U^m$). Since the powers of $U$ are smooth, we have the pullback $\Delta^* \colon \mathrm{Ch}(U^{2m}) \rightarrow \mathrm{Ch}(U^m)$.  We may then consider the diagram
$$ \xymatrix{\mathrm{Ch}(U^{2m})  \ar[d] \ar[r]^{\Delta^*} & \mathrm{Ch}(U^{m}) \ar[d] \\
R_{X^{2m}} \ar[r]^{\mathrm{mult}} & R_{X^{m}}} $$
where the vertical maps are the canonical ring homomorphisms described in Proposition \ref{PROPbasicresultonrationalcycles} (2).  Since the images of these homomorphisms are $\ratchow{X^{2m}}$ and $\ratchow{X^{m}}$ respectively, it suffices to show that this diagram is commutative. For this, we are free to replace $F$ with its separable closure in $\overline{F}$ and hence assume that $\windex{\varphi} = r$ (see the proof Lemma \ref{LEMreductiontomaximalWittindex}).  In this case, the map $\theta \colon \mathrm{Ch}(U) \rightarrow R_X$ of Proposition \ref{PROPbasicresultonrationalcycles} is surjective (see \S \ref{SUBSECpartialcelldec}), and so each element of $\mathrm{Ch}(U^m)$ can be expressed as the sum of an element in the image of the external product homomorphism $\mathrm{Ch}(U^{m}) \otimes_{\mathbb{F}_2}  \mathrm{Ch}(U^{m}) \rightarrow \mathrm{Ch}(U^{2m})$ and an element in the kernel of the left-vertical map in the diagram. If $\alpha,\beta \in \mathrm{Ch}(U^m)$, then $\alpha\beta = \Delta^*(\alpha \times \beta)$ by definition. Since the left-vertical map in our diagram is a ring homomorphism, it follows from the definition of $\mathrm{mult}$ that our diagram commutes if we replace $\mathrm{Ch}(U^{2m})$ with the image of the aforementioned external product homomorphism. To prove commutativity of the diagram in general, it then suffices to show that $\Delta^*$ sends elements in the kernel of the left-vertical map to elements in the kernel of the right-vertical map. But $U^{2m}$ and $U^m$ are smooth varieties admitting a degree function (in the sense of \S \ref{SUBSECnumericaltriviality}), and the kernels in question are the numerically trivial ideals $N_0(U^{2m})$ and $N_0(U^m)$ by Lemma \ref{LEMcomputationofthekernel}. An application of Lemma \ref{LEMtrivialpushforward} (2) with $f = \Delta$ then gives the desired assertion. \end{proof} \end{proposition}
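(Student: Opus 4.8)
The plan is to recognize $\mathrm{mult}$ as the trace on rational cycles of pullback along a diagonal embedding of the smooth loci, and then to import the two functoriality properties of the numerically trivial ideal recorded in Lemma \ref{LEMtrivialpushforward}. First I would dispose of the parameter $j$. The map $\mathrm{mult}\otimes\mathrm{id}\colon R_{X^{2m+j}}\to R_{X^{m+j}}$ factors as $R_{X^{2m+j}}\xrightarrow{\pi^*}R_{X^{2(m+j)}}\xrightarrow{\mathrm{mult}}R_{X^{m+j}}$, where $\pi\colon X^{2(m+j)}\to X^{2m+j}$ is the projection duplicating the last $j$ factors: at the level of $R$, $\pi^*$ simply inserts copies of $h^0$ in the new factors, and since $h^0$ is the multiplicative identity the composite is indeed $\mathrm{mult}\otimes\mathrm{id}$ after the evident identifications via Lemma \ref{LEMinjectivityofexternalproduct}. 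As $\pi^*$ preserves $F$-rationality (rationality being defined through scalar extension, which commutes with pullback), it suffices to treat the case $j=0$, i.e.\ the map $\mathrm{mult}\colon R_{X^{2m}}\to R_{X^m}$.

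For $j=0$, let $U$ be the smooth locus of $X$ and $\Delta\colon U^m\to U^{2m}$ the diagonal embedding, where we identify $U^{2m}$ with $U^m\times U^m$. Since $U^m$ is smooth, $\Delta^*\colon\mathrm{Ch}(U^{2m})\to\mathrm{Ch}(U^m)$ is defined and satisfies $\Delta^*(\alpha\times\beta)=\alpha\cdot\beta$. I would then prove that the square with $\Delta^*$ along the top, $\mathrm{mult}$ along the bottom, and the canonical surjective ring homomorphisms $\theta$ of Proposition \ref{PROPbasicresultonrationalcycles}(2) as the two vertical maps is commutative. Because the images of these vertical maps are exactly $\ratchow{X^{2m}}$ and $\ratchow{X^m}$, commutativity of the square immediately gives $\mathrm{mult}(\ratchow{X^{2m}})\subseteq\ratchow{X^m}$, which is what we want.

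To verify commutativity I would first pass to the separable closure of $F$ in $\overline F$, exactly as in the proof of Lemma \ref{LEMreductiontomaximalWittindex}, so that $\theta$ becomes surjective (see \S\ref{SUBSECpartialcelldec}). Then $R_{X^{2m}}=R_{X^m}\otimes_{\mathbb F_2}R_{X^m}$ is spanned by external products of classes in the image of $\theta\colon\mathrm{Ch}(U^m)\to R_{X^m}$, so any class in $\mathrm{Ch}(U^{2m})$ can be written as $y+z$ with $y$ in the image of the external product map $\mathrm{Ch}(U^m)\otimes_{\mathbb F_2}\mathrm{Ch}(U^m)\to\mathrm{Ch}(U^{2m})$ and $z\in\ker\theta$. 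On $y=\sum_i\alpha_i\times\beta_i$ the square commutes by $\theta(\Delta^*(y))=\sum_i\theta(\alpha_i)\theta(\beta_i)=\mathrm{mult}(\theta(y))$, using that $\theta$ is a ring homomorphism and the definition of $\mathrm{mult}$. For $z$, Lemma \ref{LEMcomputationofthekernel} identifies $\ker\theta$ over $U^{2m}$ and over $U^m$ with the numerically trivial ideals $N_0(U^{2m})$ and $N_0(U^m)$, so it remains to show $\Delta^*(N_0(U^{2m}))\subseteq N_0(U^m)$; this is precisely Lemma \ref{LEMtrivialpushforward}(2) applied to the closed embedding $f=\Delta$, once one checks that $U^{2m}$ and $U^m$ are smooth and admit degree homomorphisms in the sense of \S\ref{SUBSECnumericaltriviality} — their complements in the projective varieties $X^{2m}$, $X^m$ are unions of products involving anisotropic quasilinear quadrics, hence by Springer's theorem have no closed point of odd degree.

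The one genuinely non-formal ingredient is the final point, the stability of numerical triviality under $\Delta^*$, which is exactly where Lemma \ref{LEMtrivialpushforward}(2) is used; everything else is bookkeeping with the external product isomorphism of Lemma \ref{LEMinjectivityofexternalproduct} and with the fact that the maps $\theta$ are ring homomorphisms compatible with external products. The only place where care is needed is in the reduction to $j=0$, where the identifications $R_{X^{2(m+j)}}=R_{X^{m+j}}\otimes R_{X^{m+j}}$ and $R_{X^{m+j}}=R_{X^m}\otimes R_{X^j}$ must be applied consistently so that $\mathrm{mult}\circ\pi^*$ really is $\mathrm{mult}\otimes\mathrm{id}$; I do not anticipate any obstacle there beyond a routine check.
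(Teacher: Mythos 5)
Your proposal is correct and follows essentially the same route as the paper's proof: reduce to $j=0$ via the projection, set up the commutative square relating $\Delta^*$ on $\mathrm{Ch}(U^{2m})$ to $\mathrm{mult}$ on $R_{X^{2m}}$ through the homomorphisms $\theta$, pass to the separable closure to get surjectivity, split cycles into external products plus kernel elements, and invoke Lemma \ref{LEMtrivialpushforward}(2) together with Lemma \ref{LEMcomputationofthekernel} for the kernel part. Your added remarks on why $\pi^*$ preserves rationality and why the smooth loci admit degree homomorphisms are correct details the paper leaves implicit.
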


\begin{remark} When $\varphi$ is nondegenerate (so that $X$ is smooth), the proof of the proposition shows that $\mathrm{mult}$ is given by pullback along the diagonal embedding $X^{m} \rightarrow X^{2m}$. Such pullbacks play a basic role in the treatment of nondegenerate forms found in \cite[Ch. XIII]{EKM}, and the above will allow us to extend the relevant arguments to our setting. \end{remark}

Another basic tool in what follows will be extension of scalars to a field in the nondefective splitting tower of $\varphi$. Here, Proposition \ref{PROPisotropicreduction} gives the following:

\begin{proposition} \label{PROPisotropicreductionalongsplittingtower} For each $0 \leq t < h_{\mathrm{nd}}$, there exist unique $\mathbb{F}_2$-vector space homomorphisms $f_t \colon R_{X^m} \rightarrow R_{X_t^m}$ and $g_t \colon R_{X_t^m} \rightarrow R_{X_{F_t}^m}$ such that:
\begin{enumerate} \item $f_t(\ratchow{X^m}) \subseteq \ratchow{X_t^m}$ and $g_t(\ratchow{X_t^m}) \subseteq \ratchow{X_{F_t}^m}$;
\item If $\alpha = \alpha_1 \times \cdots \times \alpha_m$ is a standard basis element of $R_{X^m}$, then $f_t(\alpha) = \beta_1 \times \cdots \times \beta_m$, where for each $1 \leq s \leq m$, we have  
$$\beta_s  = \begin{cases} l_{i - \mathfrak{j}_t} & \text{if } \alpha_s = l_i \text{ for some }\mathfrak{j}_t \leq i\\
h^{i+\mathfrak{j}_t} & \text{if }\alpha_s = h^i \text{ for some } i< r - \mathfrak{j}_t\\
0 & \text{otherwise.} \end{cases} $$
\item If $\beta = \beta_1 \times \cdots \times \beta_m$ is a standard basis element of $R_{X_t^m}$, then $g_t(\beta) = \alpha_1 \times \cdots \times \alpha_m$, where for each $1 \leq s \leq m$, we have
$$\alpha_s  = \begin{cases} l_{i + \mathfrak{j}_t} & \text{if } \beta_s = l_i \text{ for some }0 \leq i < r - \mathfrak{j}_t\\
h^{i-\mathfrak{j}_t} & \text{if }\beta_s = h^i \text{ for some } i< r - \mathfrak{j}_t. \end{cases} $$ \end{enumerate} 
\begin{proof} Properties (2) and (3) characterize $f_t$ and $g_t$ uniquely, so it suffices to show that maps satisfying (1), (2) and (3) exist.  But if $\pi \colon X_{F_t} \rightarrow X$ is the canonical projection, then we can take $f_t = f \circ \pi^*$ and $g_t = g$, where $f \colon R_{X_{F_t}^m} \rightarrow R_{X_t^m}$ and $g \colon R_{X_t^m} \rightarrow R_{X_{F_t}^m}$ are the maps from Proposition \ref{PROPisotropicreduction} (see the statement of the latter). \end{proof} \end{proposition}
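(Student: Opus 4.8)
The uniqueness assertion is formal: conditions (2) and (3) prescribe the values of $f_t$ and $g_t$ on the standard basis of $R_{X^m}$, respectively $R_{X_t^m}$, and reading off those prescriptions one checks at once that any such pair satisfies $f_t \circ g_t = \mathrm{id}$. So the real content is the existence of $\mathbb{F}_2$-linear maps with properties (2), (3) and the rationality statement (1). The plan is to realize $f_t$ and $g_t$ as the composite of scalar extension from $F$ to $F_t$ with the isotropic-reduction maps of Proposition \ref{PROPisotropicreduction} applied over $F_t$.

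In more detail, I would argue in two steps. First, since $F_t/F$ is separable (Lemma \ref{LEMfunctionfields}) and $\varphi$ is nondefective, the form $\varphi_{F_t}$ is again nondefective of type $(r,s)$, with $\windex{\varphi_{F_t}} = \mathfrak{j}_t$ and anisotropic part $\varphi_t$; in particular both $\varphi_{F_t}$ and $\varphi_t = \anispart{(\varphi_{F_t})}$ are nonquasilinear (here one uses $\mathfrak{j}_t < r$, valid because $t < h_{\mathrm{nd}}$). Let $\pi \colon X_{F_t} \to X$ be the canonical flat projection and $\pi^m$ its $m$th power. I claim $(\pi^m)^*$ restricts to an $\mathbb{F}_2$-algebra map $R_{X^m} \to R_{X_{F_t}^m}$ fixing every standard basis element and carrying $\ratchow{X^m}$ into $\ratchow{X_{F_t}^m}$: the classes $h^i$ are pullbacks of classes from a product of projective spaces, the classes $l_i$ are base changes of their $\overline{F}$-analogues (the orientations being chosen compatibly in \S\ref{SECMDT}), and $(\pi^m)^*$ composed with scalar extension to an algebraic closure of $F_t$ is just the base-change map out of $\chowbar{X^m}$. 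Second, apply Proposition \ref{PROPisotropicreduction} over the field $F_t$ to the $m$-tuple consisting of $m$ copies of $\varphi_{F_t}$ — legitimate by the previous sentence — to obtain maps $f \colon R_{X_{F_t}^m} \to R_{X_t^m}$ and $g \colon R_{X_t^m} \to R_{X_{F_t}^m}$ that preserve $F_t$-rationality and whose effect on standard basis elements is the $\mathfrak{j}_t$-shift recorded in parts (3)--(4) of that proposition. Setting $f_t := f \circ (\pi^m)^*$ and $g_t := g$, the composite of the (trivial) action of $\pi^*$ with this shift is exactly the prescription in (2), while $g_t = g$ is exactly the prescription in (3); and property (1) is then immediate, since $f_t(\ratchow{X^m}) \subseteq f(\ratchow{X_{F_t}^m}) \subseteq \ratchow{X_t^m}$ and $g_t(\ratchow{X_t^m}) = g(\ratchow{X_t^m}) \subseteq \ratchow{X_{F_t}^m}$.

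There is no substantial obstacle here; the argument is essentially a matter of feeding Proposition \ref{PROPisotropicreduction} (over $F_t$) through the scalar-extension map, and the proposition for a single factor ($m=1$) already contains all the geometric input via Lemma \ref{LEMclassofthediagonal}. The only points that require care rather than mere quotation are the compatibility of the standard bases of $R_{X^m}$ and $R_{X_{F_t}^m}$ under $\pi^*$ and the matching of the orientation of $X_t$ used in Proposition \ref{PROPisotropicreduction} with the one fixed in \S\ref{SECMDT}; both are handled by the conventions already in place, with any residual orientation discrepancy absorbed by a reflection automorphism of $\overline{X_t}$ (cf.\ Remark \ref{REMorientations}).
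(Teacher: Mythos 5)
Your construction is exactly the paper's: the paper also takes $f_t = f \circ \pi^*$ and $g_t = g$, where $f$ and $g$ are the maps of Proposition \ref{PROPisotropicreduction} applied over $F_t$ and $\pi \colon X_{F_t} \to X$ is the canonical projection. The extra details you supply (nondefectivity of $\varphi_{F_t}$ under the separable extension $F_t/F$, the behaviour of $\pi^*$ on the standard basis, orientation compatibility) are correct elaborations of what the paper leaves implicit.
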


\subsection{Rational Cycles on $X^2$} \label{SUBSECcyclesonX^2} Our goal now is to study the $\mathbb{F}_2$-vector space $\overline{\mathrm{Ch}}_{d_X}(X^2) \subseteq (R_{X^2})_{d_X}$. When $\varphi$ is nondegenerate, this gives important structural information about the class of the quadric $X$ in the category of Chow motives with $\mathbb{F}_2$-coefficients (which remembers a great deal about $\varphi$ itself -- see \cite[Ch. XVII]{EKM}).  In \cite[Chs. XIII, XV]{EKM}, various results are obtained in the nondegenerate case using some of the basic tools of Chow theory for smooth varieties (in particular, pullbacks, intersection products and composition of correspondences). Using the results of \S \S 4--6, as well as Proposition \ref{PROPpullbackalongdiagonal}, we can now adapt some of the arguments to our more general setting. 

As observed by Karpenko in the nondegenerate case (see \cite[Ch. XIII]{EKM}), it is convenient to extend the discussion to a study of the larger space $\overline{\mathrm{Ch}}_{\geq d_X}(X^2) \subseteq (R_{X^2})_{\geq d_X}$. By Proposition \ref{PROPalgebrastructureforX^2}, the composition law introduced in \S \ref{SECcorrespondences} extends the $\mathbb{F}_2$-vector space structure on $(R_{X^2})_{\geq d_X}$ to an $\mathbb{F}_2$-algebra structure. Note also that $(R_{X^2})_{d_X}$ is an $\mathbb{F}_2$-subalgebra of this algebra, and the same is true of $\overline{\mathrm{Ch}}_{\geq d_X}(X^2)$ and $\overline{\mathrm{Ch}}_{d_X}(X^2)$ (again, see Proposition \ref{PROPalgebrastructureforX^2}). Observe now that, one exceptional case aside, the essential standard basis elements in $(R_{X^2})_{\geq d_X}$ are those of the form $h^i \times l_{i+j}$ or $l_{i+j} \times l_i$ for some integers $j \geq 0$ and $0 \leq i < r-j$. The exceptional case is where $s =0$, in which case $l_{r-1} \times l_{r-1}$ is an essential standard basis element in $(R_{X^2})_{d_X}$.  By Proposition \ref{PROPisotropyrestrictions}, however, this can only be involved in an element of $\overline{\mathrm{Ch}}_{d}(X^2)$ when $s=0$ and $\varphi$ is hyperbolic, and will therefore be effectively irrelevant for our considerations. We can also exclude additional elements of higher degree by taking the non-defective splitting pattern of $\varphi$ into account:

\begin{lemma} \label{LEMshells} Let $i$ and $j$ be integers with $j \geq 1$ and $0 \leq i < r-j$. If an element of $\overline{\mathrm{Ch}}_{>d_X}(X^2)$ involves either of the standard basis elements $h^i \times l_{i+j}$ or $l_{i+j} \times h^i$, then there exists an integer $0 \leq t <h_{\mathrm{nd}}$ such that $\mathfrak{j}_t \leq i < \mathfrak{j}_{t+1} - j$. 
\begin{proof} Since $i<r$, there is a smallest integer $0 \leq t < h_{\mathrm{nd}}$ such that $\mathfrak{j}_{t+1} >i$. Since $\windex{\varphi_{F_{t+1}}} = \mathfrak{j}_{t+1}$, Proposition \ref{PROPisotropyrestrictions} then tells us that we in fact have $\mathfrak{j}_{t+1}>i+j$, and so the claim holds. \end{proof} \end{lemma}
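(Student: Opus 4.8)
The plan is to obtain the statement directly from the isotropy restriction of Proposition \ref{PROPisotropyrestrictions}, applied over a suitable field in the nondefective splitting tower of $\varphi$.

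So suppose some element of $\overline{\mathrm{Ch}}_{>d_X}(X^2)$ (which in particular lies in $\overline{\mathrm{Ch}}(X^2)$) involves $h^i \times l_{i+j}$ or $l_{i+j} \times h^i$. Since $i < r - j < r = \mathfrak{j}_{h_{\mathrm{nd}}}$, the set of $s \in \{1,\dots,h_{\mathrm{nd}}\}$ with $\mathfrak{j}_s > i$ is nonempty; I would let $t+1$ be its least element, so that $0 \leq t < h_{\mathrm{nd}}$, and, by minimality, $\mathfrak{j}_t \leq i$ (for $t=0$ this reads $\mathfrak{j}_0 = \witti{0}{\varphi} = 0 \leq i$). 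It then remains only to prove the strict inequality $i + j < \mathfrak{j}_{t+1}$. For this I would invoke Proposition \ref{PROPisotropyrestrictions} with $m = 2$, $k = 1$, and the extension $K = F_{t+1}$: since $t + 1 \leq h_{\mathrm{nd}}$, Lemma \ref{LEMnondefectiveheight} guarantees that $\varphi$ remains nondefective over $F_{t+1}$ with $\windex{\varphi_{F_{t+1}}} = \mathfrak{j}_{t+1}$, so hypothesis (ii) of the proposition is satisfied (both quadratic factors of $X^2$ are $\varphi$), and hypothesis (i) holds because $\mathfrak{j}_{t+1} > i$ by the choice of $t$. The proposition then yields $\windex{\varphi_{F_{t+1}}} > i + j$, i.e. $\mathfrak{j}_{t+1} > i+j$, which together with $\mathfrak{j}_t \leq i$ is exactly the assertion $\mathfrak{j}_t \leq i < \mathfrak{j}_{t+1} - j$. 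In the case of the basis element $l_{i+j} \times h^i$ one first swaps the two factors of $X^2$ to bring it into the form $h^i \times l_{i+j}$ prescribed in the statement of Proposition \ref{PROPisotropyrestrictions}, which is harmless as both factors are the same quadric.

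There is essentially no serious obstacle here: all of the scalar-extension bookkeeping — that the cycle remains rational over $F_{t+1}$, and that the standard basis and the relation ``involves'' are compatible with passage to $F_{t+1}$ — is already subsumed in the proof of Proposition \ref{PROPisotropyrestrictions} (through its ``replace $F$ by $K$'' reduction). The only points that require a little attention are the correct choice of the minimal index $t$ and the verification that $\varphi$ is still nondefective over $F_{t+1}$; the latter is precisely the reason the admissible range of $t$ is $0 \leq t < h_{\mathrm{nd}}$, i.e. that one must stay within the nondefective splitting tower.
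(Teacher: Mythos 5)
Your argument is correct and is essentially identical to the paper's proof: choose the minimal $t$ with $\mathfrak{j}_{t+1}>i$ (so $\mathfrak{j}_t\leq i$), then apply Proposition \ref{PROPisotropyrestrictions} over $F_{t+1}$ to upgrade $\mathfrak{j}_{t+1}>i$ to $\mathfrak{j}_{t+1}>i+j$. The extra bookkeeping you supply (verifying hypotheses (i)–(ii) via Lemma \ref{LEMnondefectiveheight} and swapping factors for $l_{i+j}\times h^i$) is exactly what the paper leaves implicit.
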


Let $E_{X^2}$ be the subset of $(R_{X^2})_{\geq d_X}$ consisting of all elements of the form $h^i \times l_{i+j}$ or $l_{i+j} \times h^i$ for integers $j \geq 0$ and $0 \leq i < r-j$ satisfying the restriction of Lemma \ref{LEMshells} when $j \geq 1$.  If $0 \leq t < h_{\mathrm{nd}}$, then the \emph{$t$th shell of $E_{X^2}$} is the subset consisting of the elements $h^i \times l_{i+j}$ or $l_{i+j} \times h^i$ with $\mathfrak{j}_{t-1} \leq i < \mathfrak{j}_{t} - j$. Following the treatment of nondegenerate forms in \cite[\S 73]{EKM}, the elements of $E_{X^2}$ may be assembled in a ``shell pyramid diagram'': \vspace{.5 \baselineskip}

\begin{center} \pyrdot{2}{0} \pyrdot{6}{1} \pyrdot{4}{2} \hspace{.5cm} \pyrdot{4}{2} \pyrdot{6}{1} \pyrdot{2}{0} \end{center}

\noindent The nodes in the $j$th row of the diagram (the bottom one being the $0$th row) represent of the elements of $E_{X^2}$ of the form $h^i \times l_{i+j}$ and $l_{i+j} \times h^i$.  Reading from left to right, the elements are ordered as follows: $h^0 \times l_j, h^1 \times l_{j+1},\hdots, h^{r-1} \times l_{r-1 + j}, l_{r-1 + j} \times h^{r-1},\hdots, l_j \times h^0$. If $0 \leq t < h_{\mathrm{nd}}$, then the elements of the $t$th shell of $E_{X^2}$ correspond to the elements in the $t$th pyramid from the left as well the $t$th pyramid from the right. For instance, the diagram above depicts the situation where $r = 12$ and $\varphi$ has nondefective splitting pattern $\lbrace 2,8,12 \rbrace$ (the numbers beneath index the shell numbers). For the reader familiar with the nondegenerate case, one may interpret things as follows: Suppose $\psi$ is a nondegenerate form of dimension $2r + s$ over $F$ with the same nondefective splitting pattern as $\varphi$. Then the shell pyramid diagram for $X$ is the shell pyramid diagram for $X_{\psi}$ found in \cite[\S 73]{EKM}, but with the the shells indexed by integers $\geq h_{\mathrm{nd}}$ deleted.\footnote{In \cite[\S 73]{EKM}, the authors restrict their considerations to anisotropic forms. When $\varphi$ is anisotropic, the $0$th shell of our diagram is empty, and so the discussions agree in this case.}  For non-negative integers $j,k_1,k_2$ with $k_1 + k_2 \leq j$, the operator $D^{k_1,k_2} \colon (R_{X^2})_{d_X+j} \rightarrow (R_{X^2})_{d_X+j-k_1-k_2} $ preserves the elements of $E_{X^2}$, and admits the following simple visualization in terms of its action on any given node: Shift the given node $k_1$ times along the negative diagonal containing it, and then the resulting node $k_2$ times along the positive diagonal containing that node (cf. \cite[Ex. 73.9]{EKM}). For each $0 \leq t<h_{\mathrm{nd}}$, the maps $f_t$ and $g_t$ of Proposition \ref{PROPisotropicreductionalongsplittingtower} (with $m=2$) also admit simple visualizations in this setup: The shell pyramid diagram for $X_t$ may be viewed as the diagram for $X$ with the shells numbered $0,1,\hdots,t-1$ deleted. The restriction of $f_t$ to $E_{X^2}$ (resp. $E_{X_t^2}$) may then be visualized as the obvious projection of one diagram onto the other (c.f. \cite[Ill. 73.25]{EKM}). Similarly, $g_t$ may be visualized as the obvious inclusion of the diagram for $X_t$ into the diagram for $X_{F_t}$. If $\varphi$ is not hyperbolic, then every essential element of $\overline{\mathrm{Ch}}_{\geq d_X}(X^2)$ may be visualized by shading the nodes in the diagram corresponding to the standard basis elements it involves. The determination of $\overline{\mathrm{Ch}}_{\geq d_X}(X^2)$ then amounts to determine the permissable shadings. In fact, it suffices to determine those that correspond to the indecomposable elements of $\overline{\mathrm{Ch}}_{\geq d_X}(X^2)$.  Following \cite[Ch. XIII]{EKM}, we note some simple restrictions on the possibilities here.

First, if $\alpha$ and $\beta$ are essential elements of $(R_{X^2})_{d_X + j}$ for some nonnegative integer $j$, neither of which involves $l_{r-1} \times l_{r-1}$ in the case where $s = j = 0$, then a quick computation using Lemma \ref{LEMcompositionformulas} reveals that the cycle $\alpha \cap \beta$ coincides with $\mathrm{ess}(D^{j,0} \circ \alpha)$. This gives:

\begin{lemma} \label{LEMintersections} Suppose that $\alpha, \beta \in \overline{\mathrm{Ch}}_{d_X + j}(X^2)$ for some nonnegative integer $j$.  Then:
\begin{enumerate} \item $\alpha \cap \beta \in \overline{\mathrm{Ch}}_{d_X + j}(X^2)$;
\item If $\alpha$ is indecomposable and $\alpha \cap \beta \neq 0$, then $\alpha \subset \beta$. \end{enumerate} 
\begin{proof} (1) If $\varphi$ is hyperbolic, there is nothing to show. If not, then neither $\alpha$ nor $\beta$ involves $l_{r-1} \times l_{r-1}$ in the case where $s = j = 0$, and so the claim holds by the remarks above.
(2) If $\alpha$ is indecomposable, then (1) implies that either $\alpha \cap \beta = 0$ or $\alpha \cap \beta = \alpha$. \end{proof} \end{lemma}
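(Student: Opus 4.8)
The plan is to derive both parts from the single identity, recorded in the remark immediately preceding the statement, that expresses $\alpha\cap\beta$ in terms of the composition law $\circ$ of \S\ref{SECcorrespondences} together with the operators $D^{j,0}$ and $\mathrm{ess}$ of \S\ref{SUBSECMDTpreliminaries} --- all three of which are known to preserve $F$-rationality. First I would dispose of the case in which $\varphi$ is hyperbolic: then $X$ is a split quadric, every class on $\overline{X^2}$ is already $F$-rational, and there is nothing to prove. So assume henceforth that $\varphi$ is not hyperbolic. Proposition \ref{PROPisotropyrestrictions}, applied with the exceptional standard basis element $l_{r-1}\times l_{r-1}$ (which occurs only when $s=0$), then shows that no element of $\overline{\mathrm{Ch}}_{d_X}(X^2)$ involves $l_{r-1}\times l_{r-1}$; in particular neither $\alpha$ nor $\beta$ does, and hence neither does $\alpha\cap\beta$.

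Next I would reduce to the case that $\alpha$ and $\beta$ are essential. Writing $\alpha=\mathrm{ess}(\alpha)+\alpha'$ and $\beta=\mathrm{ess}(\beta)+\beta'$, with $\alpha'$, $\beta'$ the $\mathbb{F}_2$-combinations of the non-essential standard basis elements $h^{i_1}\times h^{i_2}$ occurring in $\alpha$, $\beta$, all four cycles $\mathrm{ess}(\alpha)$, $\mathrm{ess}(\beta)$, $\alpha'$, $\beta'$ are $F$-rational because $\mathrm{ess}$ preserves $F$-rationality. Since $\mathrm{ess}(\alpha)$ and $\alpha'$ involve disjoint sets of standard basis elements, $\alpha\cap\beta=(\mathrm{ess}(\alpha)\cap\mathrm{ess}(\beta))+(\alpha'\cap\beta')$, and $\alpha'\cap\beta'$ is a sum of $F$-rational classes $h^{i_1}\times h^{i_2}$, hence $F$-rational. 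It therefore suffices to show $\mathrm{ess}(\alpha)\cap\mathrm{ess}(\beta)\in\overline{\mathrm{Ch}}_{d_X+j}(X^2)$.

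For this I invoke the identity from the preceding remark, which for essential cycles $\gamma,\delta\in(R_{X^2})_{d_X+j}$ neither involving $l_{r-1}\times l_{r-1}$ when $s=j=0$ reads $\gamma\cap\delta=\mathrm{ess}\bigl(D^{j,0}(\delta)\circ\gamma\bigr)$: taking $\gamma=\mathrm{ess}(\alpha)$ and $\delta=\mathrm{ess}(\beta)$, the right-hand side is obtained from $F$-rational cycles by applying $D^{j,0}$ (which preserves $F$-rationality by \S\ref{SUBSECMDTpreliminaries}), composing via $\circ$ (which preserves $F$-rationality by Propositions \ref{PROPbasicresultonrationalcycles} and \ref{PROPalgebrastructureforX^2}), and then applying $\mathrm{ess}$; this proves (1). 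Part (2) is then formal: $\alpha\cap\beta$ is by construction a partial sum of the standard basis elements occurring in $\alpha$, so $\alpha\cap\beta\subset\alpha$, and by (1) it lies in $\overline{\mathrm{Ch}}_{d_X+j}(X^2)$; hence if $\alpha$ is indecomposable and $\alpha\cap\beta\neq0$ the defining property of indecomposability forces $\alpha\cap\beta=\alpha$, i.e. $\alpha\subset\beta$.

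The one substantive point is the computational identity $\gamma\cap\delta=\mathrm{ess}(D^{j,0}(\delta)\circ\gamma)$ taken from the preceding remark. Verifying it requires separating the two families of essential standard basis elements $h^i\times l_{i+j}$ and $l_{i+j}\times h^i$, determining the action of $D^{j,0}$ on each, composing through the explicit formula of Lemma \ref{LEMcompositionformulas}, and checking that the outer $\mathrm{ess}$ removes exactly the stray terms $h^{i_1}\times h^{i_2}$ produced along the way --- including the one arising from the product $l_{r-1}\cdot l_{r-1}=l_0$ when $s=0$, which is why the hyperbolic case must be excluded first. Everything else in the argument is routine bookkeeping with the machinery of \S\S\ref{SECrationalcycles}--\ref{SECisotropicreduction}.
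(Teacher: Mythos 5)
Your proof is correct and follows essentially the same route as the paper, which likewise reduces everything to the identity $\alpha\cap\beta=\mathrm{ess}\bigl(D^{j,0}(\beta)\circ\alpha\bigr)$ stated in the remark immediately before the lemma, after excluding the hyperbolic case and the exceptional element $l_{r-1}\times l_{r-1}$ via Proposition \ref{PROPisotropyrestrictions}. Your explicit reduction to essential cycles and your corrected form of the identity (with the argument $\beta$ made explicit) merely spell out details the paper leaves implicit.
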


The computation underlying the proof of the previous lemma also reveals that for any nonegative integer $j$, the map $D^{j,0} \colon \overline{\mathrm{Ch}}_{d_X+j}(X^2) \rightarrow \overline{\mathrm{Ch}}_{d_X}(X^2)$ sends indecomposable elements to indecomposable elements. Since $D^{k,0} = (D^{1,0})^{\circ k}$ for any positive integer $k$, the same is then true of $D^{k,0} \colon \overline{\mathrm{Ch}}_{d_X+j}(X^2) \rightarrow \overline{\mathrm{Ch}}_{d_X + j-k}(X^2)$ for any $1 \leq k \leq j$. Taking into account the factor exchange automorphism of $X^2$, we also get the same assertion for the maps $D^{0,k} \colon \overline{\mathrm{Ch}}_{d_X+j}(X^2) \rightarrow \overline{\mathrm{Ch}}_{d_X + j-k}(X^2)$. Combining, we then get:

\begin{lemma} \label{LEMderivatives} Let $j,k_1,k_2$ be nonnegative integers with $k_1 + k_2 \leq j$. Then the $\mathbb{F}_2$-linear map $D^{k_1,k_2} \colon \overline{\mathrm{Ch}}_{d+j}(X\times X) \rightarrow \overline{\mathrm{Ch}}_{d + j - k_1 - k_2}(X^2)$ is injective and sends indecomposable elements to indecomposable elements. 
\begin{proof} Injectivity is clear from the definition, and since $D^{k_1,k_2} = D^{k_1,0} \circ D^{0,k_2}$, the second statement holds by the remarks above. \end{proof} \end{lemma}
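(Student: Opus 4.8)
The plan is to treat the two assertions separately, reducing each to material already developed. Injectivity I would regard as immediate from the definition. By construction $D^{k_1,k_2}$ is multiplication in the commutative ring $R_{X^2}$ by the $F$-rational class $h^{k_1}\times h^{k_2}$, so it does restrict to a map $\overline{\mathrm{Ch}}_{d_X+j}(X^2)\to\overline{\mathrm{Ch}}_{d_X+j-k_1-k_2}(X^2)$; and for $j=0$ there is nothing to prove, so assume $j\ge 1$. On the essential standard basis vectors of $\overline{\mathrm{Ch}}_{>d_X}(X^2)$ --- the ones indexed by the nodes of the shell pyramid $E_{X^2}$, which (leaving aside $l_{r-1}\times l_{r-1}$, irrelevant here by Proposition~\ref{PROPisotropyrestrictions}) are the only ones that can occur in an essential class --- the operator $D^{k_1,k_2}$ realizes the diagonal double-shift noted just before the statement, $h^i\times l_{i+j}\mapsto h^{i+k_1}\times l_{i+j-k_2}$ and symmetrically. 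Under the hypothesis $k_1+k_2\le j$, the shell restriction of Lemma~\ref{LEMshells} forces $i+j<r$, whence all the shifted indices stay in the range in which the corresponding classes are nonzero; thus this shift annihilates no such basis vector and is injective on their span, giving injectivity of $D^{k_1,k_2}$.

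For the second assertion I would use the factorization $D^{k_1,k_2}=D^{k_1,0}\circ D^{0,k_2}$. This is legitimate because, under the identification $R_{X^2}\cong R_X\otimes_{\mathbb{F}_2}R_X$ of Lemma~\ref{LEMinjectivityofexternalproduct}, one has $h^{k_1}\times h^{k_2}=(h^{k_1}\times h^0)\cdot(h^0\times h^{k_2})$, since $h^0$ is the identity of $R_X$; hence $D^{k_1,k_2}$ is the composite of the multiplication operators $D^{0,k_2}$ and $D^{k_1,0}$. It therefore suffices to know that $D^{k_1,0}$ and $D^{0,k_2}$ each send indecomposable elements to indecomposable elements, as a composite of two maps with this property again has it. But this is exactly what was recorded in the discussion preceding the lemma: the case of $D^{1,0}$ is extracted from the computation set out just before Lemma~\ref{LEMintersections}, which identifies the intersection $\alpha\cap\beta$ of two essential cycles of a common dimension with an essential truncation of a composite built from $D^{j,0}$ (so that, by Lemma~\ref{LEMintersections}, $D^{1,0}$ cannot split an indecomposable); the case of $D^{k,0}$ then follows from $D^{k,0}=(D^{1,0})^{\circ k}$; and the case of $D^{0,k}$ follows from the case of $D^{k,0}$ by conjugating with the factor-exchange automorphism of $X^2$. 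Composing yields the assertion for $D^{k_1,k_2}$.

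I do not expect a genuine obstacle here: the substantive input --- the behaviour of $D^{1,0}$, resting on the combinatorics of the shell pyramid through Lemma~\ref{LEMintersections} --- is already in place, and the lemma is essentially an assembly step. The only points needing (light) care are the index bookkeeping showing the double-shift stays in range when $k_1+k_2\le j$, and the verification that $h^{k_1}\times h^{k_2}=(h^{k_1}\times h^0)(h^0\times h^{k_2})$ in $R_{X^2}$, which is immediate from commutativity and associativity of the product together with the fact that $h^0$ is the multiplicative identity.
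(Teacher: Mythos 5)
Your proof is correct and takes essentially the same route as the paper's, whose entire argument is ``Injectivity is clear from the definition, and since $D^{k_1,k_2} = D^{k_1,0} \circ D^{0,k_2}$, the second statement holds by the remarks above'' --- i.e.\ precisely your factorization through $D^{k,0}=(D^{1,0})^{\circ k}$ together with the factor-exchange automorphism, resting on the discussion preceding the lemma. Your additional bookkeeping for injectivity (the shell-pyramid shift staying in range when $k_1+k_2\le j$) merely spells out what the paper declares clear.
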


\subsection{The MDT Invariant} \label{SUBSECMDTInvariant} We now introduce a discrete invariant of $\varphi$ that reconstructs the space $\overline{\mathrm{Ch}}_{d_X}(X^2)$. For each integer $i$, we introduce formal symbols $i_{\mathrm{lo}}$ and $i^{\mathrm{up}}$. For any integer $j$, there is a shifting map on the set consisting of these symbols that sends $i_{\mathrm{lo}}$ to $i_{\mathrm{lo}}[j]: = (i+j)_{\mathrm{lo}}$ and $i^{\mathrm{up}}$ to $i^{\mathrm{up}}[j]: = (i+j)^{\mathrm{up}}$. If $\Lambda$ is a subset of this set, then we write $\Lambda[j]$ for the image of $\Lambda$ under this shifting map. 

Let $\Lambda(X)$ be the set consisting of the symbols $i_{\mathrm{lo}}$ with $0 \leq i <r$ and $i^{\mathrm{up}}$ with $d_X - r<i \leq d_X$. If $\Lambda$ is a subset of $\Lambda(X)$, then we write $\Lambda_{\mathrm{lo}}$ (resp. $\Lambda^{\mathrm{up}}$) for the subset of $\Lambda$ consisting of the elements of the form $i_{\mathrm{lo}}$ (resp. $i^{\mathrm{up}}$).  If $\Lambda_{\mathrm{lo}} \neq \emptyset$ (resp. $\Lambda^{\mathrm{up}} \neq \emptyset$), then we set $a(\Lambda): = \mathrm{min} \lbrace i\;|\; i_{\mathrm{lo}} \in \Lambda \rbrace$ (resp. $b(\Lambda): = \mathrm{max}\lbrace i\;|\; i^{\mathrm{up}} \in \Lambda \rbrace$). For each integer $0 \leq i <r$, we set $\alpha_{i_{\mathrm{lo}}}: = h^i \times l_i \in E_{X^2}$.  For each integer $d_X - r< i \leq d_X$, we set $\alpha_{i^{\mathrm{up}}} : = l_{d_X - i} \times h^{d_X - i} \in E_{X^2}$. Using Lemma \ref{LEMcompositionformulas}, one readily observes that these elements are idempotent in $(R_{X^2})_{d_X}$. Moreover, one exceptional case aside, they are pairwise mutually orthogonal. The exceptional case is that where $s = 0$ and $\mydim{\varphi} \equiv 2 \pmod{4}$, as there we have $(h^{r-1} \times l_{r-1}) \circ (l_{r-1} \circ h^{r-1}) = h^{r-1} \times h^{r-1}$.  If $\Lambda$ is a subset of $\Lambda(X)$, then we write $\alpha_{\Lambda}$ for the essential cycle $\sum_{\lambda \in \Lambda}\alpha_{\lambda} \in (R_{X^2})_{d_X}$. If $\varphi$ is not hyperbolic, then every element of $\overline{\mathrm{Ch}}_{d_X}(X^2)$ is of this form. In this case, we write $\Lambda(\alpha)$ for the subset of $\Lambda(X)$ corresponding to a given element $\alpha \in \overline{\mathrm{Ch}}_{d_X}(X^2)$.

\begin{remark} \label{REMisotropicreductionforMDT} If $0 \leq t < h_{\mathrm{nd}}$, then $d_{X_t} = d_X - 2\mathfrak{j}_t$, and one readily checks that the maps $f_t$ and $g_t$ from Proposition \ref{PROPisotropicreductionalongsplittingtower} (for $m=2$) have the following properties:
\begin{itemize} \item If $\Lambda \subseteq \Lambda(X)$, then $f(\alpha_{\Lambda}) = \alpha_{\Lambda[-\mathfrak{j}_t] \cap \Lambda(X_t)}$;
\item If $\Lambda \subseteq \Lambda(X_t)$, then $g_t(\alpha_{\Lambda}) = \alpha_{\Lambda[\mathfrak{j}_t]}$.
\end{itemize} \end{remark}

Now, Lemma \ref{LEMclassofthediagonal} yields the following:

\begin{lemma} \label{LEMrationalityoffdiagonalelement} $\alpha_{\Lambda(X)} \in \overline{\mathrm{Ch}}_{d_X}(X^2)$. 
\begin{proof} The claim is that $\sum_{i=0}^{r-1} (h^i \times l_i + l_i \times h^i)$ is $F$-rational. If $\windex{\varphi} = r$, this is clear.  If not, then Proposition \ref{PROPisotropicreduction} allows us to reduce to the case where $\varphi$ is anisotropic, and here the claim holds by Lemma \ref{LEMclassofthediagonal}. \end{proof} \end{lemma}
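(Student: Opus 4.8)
The statement to prove is that $\alpha_{\Lambda(X)} = \sum_{i=0}^{r-1}(h^i \times l_i + l_i \times h^i)$ lies in $\overline{\mathrm{Ch}}_{d_X}(X^2)$, and the plan is to treat separately the cases $\windex{\varphi}=r$ (maximal Witt index) and $\windex{\varphi}<r$. Suppose first that $\windex{\varphi}=r$. Then $\varphi$ already has a totally isotropic subspace of dimension $r$ defined over $F$, maximal totally isotropic for a hyperbolic subform of $\varphi$ defined over $F$, and I would take the space $W_t$ of \S\ref{SUBSECstatementsonrationalcycles} (for the single quadric $X$) to be its scalar extension, representing each $l_i$ by the scalar extension of an $F$-subspace of $\mathbb{P}(W_t)$. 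This shows $l_i\in\ratchow{X}$ for every $0\le i<r$ (in the exceptional case $s=0$, $i=r-1$ both orientation classes are $F$-rational for a form of maximal Witt index, so the conclusion is orientation-independent). Since the $h^i$ are always $F$-rational and $\ratchow{\,\cdot\,}$ is closed under external products and sums, $\alpha_{\Lambda(X)}$ is $F$-rational.

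Now suppose $\windex{\varphi}=j<r$. Then $\varphi_0=\anispart{\varphi}$ has type $(r-j,s)$ with $r-j\ge 1$, hence is anisotropic and nonquasilinear; write $X_0=X_{\varphi_0}$, so $\mathfrak{j}_0=j$ and $d_{X_0}=d_X-2j$. The plan is: (i) dispatch the anisotropic case by applying Lemma~\ref{LEMclassofthediagonal} to $\varphi_0$ in place of $\varphi$, noting that $\varphi_0$ is anisotropic so that the parameter $j$ of that lemma is $0$ and its ``$Y$'' is $X_0$ itself, obtaining $\alpha_{\Lambda(X_0)}=\sum_{i=0}^{r-j-1}(h^i\times l_i+l_i\times h^i)\in\ratchow{X_0^2}$; (ii) push this forward to $R_{X^2}$ via the map $g_0$ of Proposition~\ref{PROPisotropicreductionalongsplittingtower} (with $t=0$, $m=2$), which preserves $F$-rationality and, by the explicit description in Remark~\ref{REMisotropicreductionforMDT}, sends $\alpha_{\Lambda(X_0)}$ to $\alpha_{\Lambda(X_0)[\mathfrak{j}_0]}=\sum_{i=j}^{r-1}(h^i\times l_i+l_i\times h^i)$; (iii) supplement with the ``missing'' bottom part $\sum_{i=0}^{j-1}(h^i\times l_i+l_i\times h^i)$, which is $F$-rational because, $\varphi$ being nondefective, its Witt index $j$ equals its isotropy index, so $\varphi$ carries an $F$-rational totally isotropic subspace of dimension $j$ and hence $F$-rational projective linear subspaces of $X$ of every dimension $i<j\le r-1$; since $\chowbar{X}$ is one-dimensional in each such dimension by the cell decomposition of Proposition~\ref{PROPcelldecomposition}, the classes of those subspaces are the $l_i$, so $l_i\in\ratchow{X}$ for $0\le i<j$. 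Adding the rational cycles from (ii) and (iii) gives $\alpha_{\Lambda(X)}\in\ratchow{X^2}$.

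The step requiring the most care is (iii): isotropic reduction alone only produces the part of $\alpha_{\Lambda(X)}$ supported on the positively-numbered shells (the summands with $i\ge\windex{\varphi}$), and one has to add by hand the observation that the bottom $\windex{\varphi}$ linear-subspace classes are already defined over $F$. I would also keep in view, but not expect trouble from, the $s=0$ orientation conventions and the exceptional multiplication rule in $(R_{X^2})_{d_X}$ when $\mydim{\varphi}\equiv 2\pmod{4}$; neither of these bears on the cycle $\alpha_{\Lambda(X)}$ itself.
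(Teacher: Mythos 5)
Your proof is correct and follows essentially the same route as the paper's: the case $\windex{\varphi}=r$ is immediate, and otherwise one applies Lemma \ref{LEMclassofthediagonal} to the anisotropic part and transfers the result back to $R_{X^2}$ via the isotropic-reduction map $g$. Your step (iii), supplying the bottom $\windex{\varphi}$ summands from the $F$-rational totally isotropic subspaces, is precisely the detail the paper's one-line reduction leaves implicit.
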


By Lemma \ref{LEMintersections}, it follows that $\Lambda(X)$ admits a unique partition into disjoint subsets $\Lambda_1,\hdots,\Lambda_t$ such that each of the cycles $\alpha_{\Lambda_i}$ is an indecomposable element of $\overline{\mathrm{Ch}}_{d_X}(X^2)$. The set consisting of the subsets $\Lambda_1,\hdots,\Lambda_t$ shall be denoted $\mathrm{MDT}(\varphi)$, and its elements shall be called the \emph{connected components} of $\Lambda(X)$. We shall also say that two elements of $\Lambda(X)$ belonging to the same connected component are \emph{connected}. By a \emph{summand} of $\Lambda(X)$, we shall mean a union of the connected components. Our notation and terminology is informed by the case where $\varphi$ is nondegenerate, where we have the following: 

\begin{remarks}[$\mathrm{MDT}$ in the nondegenerate case] \label{REMSMDT} Suppose that $\varphi$ is nondegenerate (so that $X$ is smooth).
\begin{enumerate}[leftmargin=*] \item The invariant $\mathrm{MDT}(\varphi)$ has the following interpretation: For a field $k$, let $\mathrm{Chow}(k,\mathbb{F}_2)$ be the category of Chow motives over $F$ with $\mathbb{F}_2$-coefficients. By a result (essentially) due to Vishik, the motive of any smooth projective quadric in this category admits an essentially unique decomposition as a direct sum of indecomposable objects. Over $\overline{F}$,  the idempotents $\alpha_{\lambda} \in \mathrm{Ch}_{d_X}(\overline{X^2}) = \mathrm{End}_{\mathrm{Chow}(\overline{F},\mathbb{F}_2)}(M(\overline{X}))$ give rise to a decomposition of $M(\overline{X})$ as a direct sum of Tate motives indexed by the elements of $\Lambda(X)$.  Via scalar extension, the complete decomposition of $M(X)$ in $\mathrm{Chow}(F,\mathbb{F}_2)$ yields a partition of these Tate motives, and the corresponding partition of $\Lambda(X)$ is exactly $\mathrm{MDT}(\varphi)$ (see \cite[Ch. XVII]{EKM} for more details). For this reason, $\mathrm{MDT}(\varphi)$ is sometimes referred to in this setting as the \emph{motivic decomposition type} of $\varphi$. Although we do not have a definitive motivic framework within which we can discuss the case of degenerate forms, we nevertheless adopt the $\mathrm{MDT}$ notation for the sake of consistency.
\item In one respect, our notation differs slightly from what can be found elsewhere in the literature in this case. In particular, in \cite{Vishik2}, $\Lambda(X)$ is (essentially) taken to be the set consisting of the symbols $i_{\mathrm{lo}}$ and $i^{\mathrm{up}}$ with $0 \leq i < r$, with $\alpha_{i^{\mathrm{up}}}$ being now the cycle $l_i \times h^i$. As indicated above, our notation is rather in line with indexing of Tate motives. This allows for cleaner use of the shift notation introduced above in later statements.  \end{enumerate} \end{remarks}

Isotropic reduction gives the following:

\begin{lemma} \label{LEMMDTforisotropicforms}If $\windex{\varphi} <r$, then $\mathrm{MDT}(\varphi)$ consists of the following sets:
\begin{itemize} \item $\lbrace i_{\mathrm{lo}} \rbrace$ with $0 \leq i < \windex{\varphi}$;
\item $\lbrace i^{\mathrm{up}} \rbrace$ with $d_X - \windex{\varphi} < i \leq d_X$;
\item $\Lambda[\windex{\varphi}]$ with $\Lambda$ an element of $\mathrm{MDT}(\anispart{\varphi})$.
\end{itemize}
\begin{proof} If $0 \leq i < \windex{\varphi}$, then $h^i \times l_i$ and $l_i \times h^i$ are clearly indecomposable elements of $\overline{\mathrm{Ch}}_{d_X}(X^2)$, and so the sets in the first two points are connected components of $\Lambda(X)$.  Now, if $\Lambda$ is an element of $\mathrm{MDT}(\anispart{\varphi})$, then $g_0(\alpha_{\Lambda}) = \alpha_{\Lambda[\windex{\varphi}]}$, where $g_0 \colon R_{X^2} \rightarrow R_{X_0^2}$ is the map from Proposition \ref{PROPisotropicreductionalongsplittingtower} (Remark \ref{REMisotropicreductionforMDT}). But it is clear from the statement of Proposition \ref{PROPisotropicreductionalongsplittingtower} that $g_0$ sends indecomposable elements of $\overline{\mathrm{Ch}}_{d_{X_0}}(X_0^2)$ to indecomposable elements of $\overline{\mathrm{Ch}}_{d_X}(X^2)$, and so the listed sets are all the connected components of $\mathrm{MDT}(\varphi)$. 
\end{proof} \end{lemma}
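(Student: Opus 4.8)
The plan is to strip off the ``already split'' part of the cell structure on $X^2$ by hand and then identify what remains with the invariant $\mathrm{MDT}(\anispart{\varphi})$ by means of the isotropic reduction maps of \S\ref{SECisotropicreduction}. Write $j := \windex{\varphi}$ (so $j = \mathfrak{j}_0$) and $X_0 := X_{\anispart{\varphi}}$; since $r - j \geq 1$, the form $\anispart{\varphi}$ is nondefective and nonquasilinear, $\mathrm{MDT}(\anispart{\varphi})$ is defined, and $d_{X_0} = d_X - 2j$. Directly from the definitions, one has the disjoint decomposition
\[ \Lambda(X) \;=\; \{\, i_{\mathrm{lo}} \,:\, 0 \leq i < j \,\} \;\sqcup\; \{\, i^{\mathrm{up}} \,:\, d_X - j < i \leq d_X \,\} \;\sqcup\; \Lambda(X_0)[j], \]
where the identification of the third part is the bookkeeping $\Lambda(X_0)[j] = \{\, i_{\mathrm{lo}} : j \leq i < r \,\} \cup \{\, i^{\mathrm{up}} : d_X - r < i \leq d_X - j \,\}$, obtained by shifting $\Lambda(X_0)$ by $j$ and using $d_{X_0} = d_X - 2j$. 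Since $j < r$ forces $\varphi$ to be non-hyperbolic, every element of $\overline{\mathrm{Ch}}_{d_X}(X^2)$ has the form $\alpha_{\Lambda}$, and by the discussion following Lemma \ref{LEMrationalityoffdiagonalelement}, $\Lambda(X)$ admits a unique partition into subsets $\Lambda_i$ with each $\alpha_{\Lambda_i}$ an indecomposable element of $\overline{\mathrm{Ch}}_{d_X}(X^2)$, and $\mathrm{MDT}(\varphi)$ is by definition the collection of these parts. As the three families in the statement make up exactly the displayed partition, it suffices to show that $\alpha_{\Lambda}$ is indecomposable for each such $\Lambda$.

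The singletons are handled directly. For $0 \leq i < j$ the cycle $\alpha_{i_{\mathrm{lo}}} = h^i \times l_i$ is $F$-rational: $h^i$ always is, and since $\varphi$ admits a totally isotropic subspace of dimension $j$ defined over $F$, the class $l_i$ is represented by an $i$-dimensional projective linear subspace of $X$ defined over $F$. A single standard-basis element is trivially minimal, so $\{i_{\mathrm{lo}}\}$ is a connected component; applying the same reasoning to $\alpha_{i^{\mathrm{up}}} = l_{d_X - i} \times h^{d_X - i}$ (where now $d_X - i < j$) disposes of the sets $\{i^{\mathrm{up}}\}$ with $d_X - j < i \leq d_X$.

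For the third family I would invoke Proposition \ref{PROPisotropicreductionalongsplittingtower} with $t = 0$ and $m = 2$ (note $F_0 = F$ and $X_{F_0} = X$): it provides $\mathbb{F}_2$-linear maps $f_0 \colon R_{X^2} \rightarrow R_{X_0^2}$ and $g_0 \colon R_{X_0^2} \rightarrow R_{X^2}$ with $f_0 \circ g_0 = \mathrm{id}$, both preserving $F$-rationality, and — by Remark \ref{REMisotropicreductionforMDT} — with $g_0(\alpha_{\Lambda}) = \alpha_{\Lambda[j]}$ for $\Lambda \subseteq \Lambda(X_0)$ and $f_0(\alpha_{\Lambda}) = \alpha_{\Lambda[-j] \cap \Lambda(X_0)}$ for $\Lambda \subseteq \Lambda(X)$. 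Fix $\Lambda \in \mathrm{MDT}(\anispart{\varphi})$. Then $g_0(\alpha_{\Lambda}) = \alpha_{\Lambda[j]}$ is $F$-rational. For minimality, suppose $\alpha_{\Lambda'} \in \overline{\mathrm{Ch}}_{d_X}(X^2)$ with $\emptyset \neq \Lambda' \subseteq \Lambda[j]$; then $\Lambda'[-j] \subseteq \Lambda \subseteq \Lambda(X_0)$, so $f_0(\alpha_{\Lambda'}) = \alpha_{\Lambda'[-j]}$ is a nonzero $F$-rational cycle with $\Lambda'[-j] \subseteq \Lambda$, whence $\Lambda'[-j] = \Lambda$ by the indecomposability of $\alpha_{\Lambda}$, i.e.\ $\Lambda' = \Lambda[j]$. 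Thus each $\alpha_{\Lambda[j]}$ is indecomposable, so the three families partition $\Lambda(X)$ into parts with indecomposable index-cycles, and the uniqueness of such a partition identifies it with $\mathrm{MDT}(\varphi)$.

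The only step here that is more than bookkeeping is the minimality of $\alpha_{\Lambda[j]}$: one must exclude the possibility that passing from $\anispart{\varphi}$ back to $\varphi$ creates a strictly smaller $F$-rational cycle sitting inside it. The pull-back along $f_0$ above does precisely this, and what makes it work is that $\Lambda[j] \subseteq \Lambda(X_0)[j]$, so that on cycles supported in that range $f_0$ is just the index-shift by $-j$ and loses no information; equivalently, this is the assertion that $g_0$ carries indecomposable elements to indecomposable elements, which I expect to be the crux of the argument.
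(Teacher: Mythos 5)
Your proof is correct and follows essentially the same route as the paper: the singletons are handled by the rationality of $l_i$ for $i<\windex{\varphi}$, and the remaining components are transported from $\anispart{\varphi}$ via the isotropic reduction map $g_0$ of Proposition \ref{PROPisotropicreductionalongsplittingtower}. The only difference is that you spell out, via the formula for $f_0$ on cycles supported in $\Lambda(X_0)[\windex{\varphi}]$, why $g_0$ carries indecomposables to indecomposables — a step the paper declares to be clear from the statement of that proposition.
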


Using Proposition \ref{PROPpullbackalongdiagonal}, we then get:

\begin{lemma} \label{LEMevennumberofTates} The following are equivalent:
\begin{enumerate} \item $\varphi$ is isotropic;
\item $\lbrace 0_{\mathrm{lo}} \rbrace$ is an element of $\mathrm{MDT}(\varphi)$;
\item There is an odd-cardinality element of $\mathrm{MDT}(\varphi)$;
\item There is an odd-cardinality subset $\Lambda$ of $\Lambda(X)$ with $\alpha_{\Lambda} \in \overline{\mathrm{Ch}}_d(X^2)$.  \end{enumerate}
\begin{proof} $(1)\Rightarrow (2)$: Apply Lemma \ref{LEMMDTforisotropicforms}.

$(2)\Rightarrow (3) \Rightarrow (4)$: Clear.

$(4) \Rightarrow (1)$: Consider the map $\mathrm{mult} \colon R_{X^2} \rightarrow R_X$ of Proposition \ref{PROPpullbackalongdiagonal}. By definition, we have $\mathrm{mult}(\alpha_{\lambda}) = l_0$ for all $\lambda \in \Lambda(X)$. Since $\mathrm{mult}$ preserves $F$-rationality (Proposition \ref{PROPpullbackalongdiagonal}), it follows that $l_0 = |\Lambda|l_0 = \mathrm{mult}(\sum_{\lambda \in \Lambda}\alpha_{\lambda})$ is $F$-rational, and so $X$ admits a $0$-cycle of degree $1$. But Springer's theorem then tells us that $\varphi$ is isotropic, as desired.  \end{proof}
\end{lemma}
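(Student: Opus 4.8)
The plan is to prove the cyclic chain $(1) \Rightarrow (2) \Rightarrow (3) \Rightarrow (4) \Rightarrow (1)$. The two middle implications are essentially immediate once the relevant definitions are unwound, so all the substance sits in $(1) \Rightarrow (2)$ (an appeal to the isotropic-reduction description of $\mathrm{MDT}$ in Lemma \ref{LEMMDTforisotropicforms}) and in $(4) \Rightarrow (1)$ (an application of the diagonal pullback map of Proposition \ref{PROPpullbackalongdiagonal} followed by Springer's theorem).

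For $(1) \Rightarrow (2)$: assuming $\varphi$ isotropic gives $\windex{\varphi} \geq 1$, and I would then quote Lemma \ref{LEMMDTforisotropicforms}, whose first bullet with $i = 0$ states that $\lbrace 0_{\mathrm{lo}} \rbrace$ is one of the connected components comprising $\mathrm{MDT}(\varphi)$. (In the borderline case $\windex{\varphi} = r$, one notes directly that $X$ then has an $F$-rational point, so $l_0$ and hence the single standard basis element $\alpha_{0_{\mathrm{lo}}} = h^0 \times l_0$ are $F$-rational and manifestly minimal.) For $(2) \Rightarrow (3)$: the singleton $\lbrace 0_{\mathrm{lo}} \rbrace$ has odd cardinality. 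For $(3) \Rightarrow (4)$: by the construction of $\mathrm{MDT}(\varphi)$, for every $\Lambda \in \mathrm{MDT}(\varphi)$ the cycle $\alpha_{\Lambda}$ is an indecomposable, hence $F$-rational, element of $\overline{\mathrm{Ch}}_{d_X}(X^2)$, so an odd-cardinality member of $\mathrm{MDT}(\varphi)$ is precisely the witness (4) asks for.

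The crux is $(4) \Rightarrow (1)$. Here I would use the homomorphism $\mathrm{mult} \colon R_{X^2} \rightarrow R_X$ of Proposition \ref{PROPpullbackalongdiagonal}, which preserves $F$-rationality. The one computation needed is that $\mathrm{mult}(\alpha_{\lambda}) = l_0$ for every $\lambda \in \Lambda(X)$: indeed $\alpha_{i_{\mathrm{lo}}} = h^i \times l_i \mapsto h^i l_i = l_0$ and $\alpha_{i^{\mathrm{up}}} = l_{d_X - i} \times h^{d_X - i} \mapsto h^{d_X - i} l_{d_X - i} = l_0$, the ranges of $i$ occurring in $\Lambda(X)$ being exactly what force these products (computed via the multiplication rules of \S\ref{SUBSECstatementsonrationalcycles}) to equal $l_0$. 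Given $\Lambda$ as in (4), the cycle $\alpha_{\Lambda} = \sum_{\lambda \in \Lambda} \alpha_{\lambda}$ is $F$-rational by hypothesis, hence so is $\mathrm{mult}(\alpha_{\Lambda}) = |\Lambda| \cdot l_0 = l_0$ (using that $|\Lambda|$ is odd and coefficients are mod $2$); thus $X$ carries a $0$-cycle of degree $1$, and Springer's theorem forces $\varphi$ to be isotropic. It is worth noting that $\alpha_{\Lambda}$, being a sum of $\alpha_{\lambda}$'s with $\lambda \in \Lambda(X)$, never involves the exceptional standard basis element $l_{r-1} \times l_{r-1}$ (present only when $s = 0$ and $\mydim{\varphi} \equiv 2 \pmod{4}$), so the hyperbolic case requires no separate handling and the formula for $\mathrm{mult}$ applies verbatim. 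I expect the only real obstacle to be notational: keeping the index shift in $\alpha_{i^{\mathrm{up}}} = l_{d_X - i} \times h^{d_X - i}$ straight so that the product rule $h^j l_j = l_0$ genuinely applies. Everything else is a direct assembly of Lemma \ref{LEMMDTforisotropicforms}, Proposition \ref{PROPpullbackalongdiagonal}, and Springer's theorem.
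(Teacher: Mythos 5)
Your proposal is correct and follows essentially the same route as the paper: Lemma \ref{LEMMDTforisotropicforms} for $(1)\Rightarrow(2)$, and the map $\mathrm{mult}$ of Proposition \ref{PROPpullbackalongdiagonal} together with Springer's theorem for $(4)\Rightarrow(1)$. Your extra checks (the explicit verification that $\mathrm{mult}(\alpha_\lambda)=l_0$ for both types of $\lambda$, and the separate remark for $\windex{\varphi}=r$, which falls outside the hypothesis of Lemma \ref{LEMMDTforisotropicforms}) are sound refinements of the same argument.
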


Before proceeding to the anisotropic case, we make one further basic observation:

\begin{lemma} \label{LEMMDTunderpurelytranscendental} If $K/F$ is a purely transcendental extension, then $\mathrm{MDT}(\varphi_K) = \mathrm{MDT}(\varphi)$. 
\begin{proof} We claim that the scalar extension map $\mathrm{Ch}_{d_X}(X^2) \rightarrow \mathrm{Ch}_{d_X}(X_K^2)$ is surjective. For this, it suffices to consider the case where $K = F(T)$ for a single variable $T$. In this case, the projection $X_K^2 \rightarrow X^2$ factors as the composition $X_K^2 = \mathrm{Spec}(K) \times X^2 \rightarrow \mathbb{A}^1 \times X^2 \rightarrow X^2$, where the second map is the canonical projection, and the first is given by inclusion of the generic point of $\mathbb{A}^1$ on the first factor and the identity on the second. The pullback along the second map is surjective by homotopy invariance, and the pullback along the second is surjective by \cite[Cor. 57.11]{EKM}, so the claim holds. In particular, the scalar extension map $\overline{\mathrm{Ch}}_{d_X}(X^2) \rightarrow \overline{\mathrm{Ch}}_{d_X}(X_K^2)$ is an isomorphism, and so $\mathrm{MDT}(\varphi_K) = \mathrm{MDT}(\varphi)$.  \end{proof}
\end{lemma}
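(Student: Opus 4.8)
The plan is to reduce the statement to the surjectivity of the scalar-extension homomorphism $\mathrm{Ch}_{d_X}(X^2) \to \mathrm{Ch}_{d_X}(X_K^2)$. First I would record why this suffices. By construction, $\mathrm{MDT}(\varphi)$ is the partition of the index set $\Lambda(X)$ into connected components, and this partition is completely determined by the subspace $\overline{\mathrm{Ch}}_{d_X}(X^2) \subseteq (R_{X^2})_{d_X}$ via the idempotents $\alpha_\lambda$ and Lemma \ref{LEMintersections}. Now $\Lambda(X)$ depends only on the integers $r$ and $d_X$ attached to $\varphi$, and these are unchanged by passage to $K$ (using $\mathrm{ql}(\varphi_K) \simeq \mathrm{ql}(\varphi)_K$), so $\Lambda(X_K) = \Lambda(X)$; moreover the classes $h^i, l_i$, and hence the idempotents $\alpha_\lambda$, are compatible with scalar extension once one identifies $\mathrm{Ch}(X^2_{\overline{F}})$ with $\mathrm{Ch}(X^2_{\overline{K}})$ through the canonical isomorphism coming from the cellular basis of a product of split quadrics (iterated projective bundle formula together with \cite[Thm. 66.2]{EKM}, as already implicit in \S\ref{SECrationalcycles}). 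Granting surjectivity of $\mathrm{Ch}_{d_X}(X^2) \to \mathrm{Ch}_{d_X}(X_K^2)$, the images of these two groups in $\mathrm{Ch}_{d_X}(X^2_{\overline{K}}) \cong \mathrm{Ch}_{d_X}(X^2_{\overline{F}})$ therefore coincide, i.e. $\overline{\mathrm{Ch}}_{d_X}(X_K^2) = \overline{\mathrm{Ch}}_{d_X}(X^2)$ as subspaces of $(R_{X^2})_{d_X}$, whence the two partitions of $\Lambda(X)$ agree and $\mathrm{MDT}(\varphi_K) = \mathrm{MDT}(\varphi)$.

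To prove the surjectivity, I would first reduce to a finitely generated extension by continuity: writing $K$ as the directed union of its subfields $F(T_1,\dots,T_n)$ with the $T_i$ algebraically independent over $F$, one has $\mathrm{Ch}(X_K^2) = \varinjlim \mathrm{Ch}(X^2_{F(T_1,\dots,T_n)})$, so it is enough to treat $K = F(T_1,\dots,T_n)$; and since scalar extension is transitive, an induction on $n$ reduces to the case $K = F(T)$ of a single variable. For this case I would factor the projection $X_K^2 \to X^2$ as $X_K^2 \xrightarrow{q} X^2 \times \mathbb{A}^1 \xrightarrow{\mathrm{pr}} X^2$, where $\mathrm{pr}$ is the canonical projection and $q$ is the flat pullback along the inclusion $\mathrm{Spec}(F(T)) \hookrightarrow \mathbb{A}^1$ of the generic point in the second factor combined with the identity on $X^2$. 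Then $\mathrm{pr}^*$ is an isomorphism by homotopy invariance for mod-$2$ Chow groups, and $q^*$ is surjective by \cite[Cor. 57.11]{EKM} (the Chow group of the fibre of a flat map over the generic point of a smooth base is the colimit of the Chow groups of preimages of nonempty opens). Composing, $\mathrm{Ch}_{d_X}(X^2) \to \mathrm{Ch}_{d_X}(X_K^2)$ is surjective, as needed.

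I do not expect a serious obstacle: the core is a routine combination of homotopy invariance with the continuity of Chow groups under filtered colimits of base fields. The one place calling for mild care is the bookkeeping in the first paragraph — verifying that the combinatorial package ($\Lambda(X)$, the idempotents $\alpha_\lambda$, the connected-component partition) underlying $\mathrm{MDT}$ transports canonically under $K/F$ — and this reduces to the standard compatibility of the cellular basis of the Chow group of a product of split quadrics with further scalar extension.
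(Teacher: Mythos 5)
Your proposal is correct and follows essentially the same route as the paper: reduce to surjectivity of $\mathrm{Ch}_{d_X}(X^2) \to \mathrm{Ch}_{d_X}(X_K^2)$, pass to the single-variable case $K = F(T)$, and factor the projection through $\mathbb{A}^1 \times X^2$ so that homotopy invariance and \cite[Cor.~57.11]{EKM} give surjectivity. The only differences are that you spell out the limit/transitivity argument for the reduction to one variable and the compatibility of the standard basis under scalar extension, both of which the paper leaves implicit.
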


\subsection{Basic Connections Arising From the Nondefective Splitting Pattern} In view of Lemma \ref{LEMMDTforisotropicforms}, we shall now assume for the remainder of this section that $\varphi$ is anisotropic.  The following extends \cite[Lem. 73.19]{EKM} to our setting:

\begin{proposition} \label{PROPsplittingpatternconnections} Let $1 \leq t \leq h_{\mathrm{nd}}$. For any integer $0 \leq i < \mathfrak{i}_t - 1$, the elements $(\mathfrak{j}_{t-1} + i)_{\mathrm{lo}}$ and $(d_X - (\mathfrak{j}_t - 1 - i))^{\mathrm{up}}$ are then connected in $\Lambda(X)$. 
\begin{proof} With what has been established thus far, the proof of \cite[Lem. 73.19]{EKM} goes through. Briefly, one readily reduces to the case where $t = 1$ by using the map $f_{t-1} \colon R_{X^2} \rightarrow R_{X_{t-1}^2}$ of Proposition \ref{PROPisotropicreductionalongsplittingtower} (note that $\mathfrak{j}_1(\varphi_{t-1}) = \mathfrak{j}_t - \mathfrak{j}_{t-1}$). For $t = 1$, let $\mu \colon X_{F_1} = \mathrm{Spec}(F_1) \times X \rightarrow X^2$ be the map given by inclusion of the generic point of $X$ on the first factor and the identity on the second. By \cite[Cor. 57.11]{EKM}, $\mu^* \colon \overline{\mathrm{Ch}}(X^2) \rightarrow \overline{\mathrm{Ch}}(X_{F_1})$ is surjective. Since $\windex{\varphi_{F_1}} = \mathfrak{j}_1$, the target contains $l_{\mathfrak{j}_1-1}$. By the definition of $\mu$, it follows that there exists an indecomposable cycle $\beta \in \overline{\mathrm{Ch}}_{d_X + \mathfrak{j}_1-1}(X^2)$ involving $h^0 \times l_{\mathrm{j}_1-1}$. Consider $\alpha: = D^{i,\mathfrak{j}_1-i-1}(\beta) \in \overline{\mathrm{Ch}}_{d_X}(X^2)$. By Lemma \ref{LEMderivatives}, $\alpha$ is indecomposable. Now by Lemma \ref{LEMshells}, the only elements in the $1$st shell of $E_{X^2}$ that have degree $d_X + \mathfrak{j}_1-1$ are $h^0 \times l_{\mathfrak{j}_1-1}$ and $l_{\mathfrak{j}_1-1} \times h^0$. As a result, $\alpha = (h^i \times l_i) + a(l_{\mathfrak{j}_1-1-i} \times h^{\mathfrak{j}_1-1-i}) + \beta$ for some $a \in \mathbb{F}_2$ and some $\beta \in (R_{X^2})_{d_X}$ that involves no elements in the $1$st shell of $E_{X^2}$.  Now since $\varphi$ is anisotropic, $|\Lambda(\alpha)|$ is even by Lemma \ref{LEMevennumberofTates}. At the same time, the same is also true of $|\Lambda(\beta)|$. Indeed, if $\beta = 0$, this is clear; if not, then $h_{\mathrm{nd}} >1$, and an application of Lemma \ref{LEMevennumberofTates} to $f_1(\alpha) \in \overline{\mathrm{Ch}}_{d_{X_1}}(X_1^2)$ gives the claim. As a result, we must have that $a=1$, and so both $i_{\mathrm{lo}}$ and $(d_X - (\mathfrak{j}_1-1-i))^{\mathrm{up}}$ lie in $\Lambda(\alpha)$, which is a connected component of $\Lambda(X)$. \end{proof} \end{proposition}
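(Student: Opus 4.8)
The plan is to adapt the proof of \cite[Lem.~73.19]{EKM}, using the machinery of \S\S\ref{SECrationalcycles}--\ref{SECMDT}. First I would reduce to the case $t=1$. For $1 \leq t \leq h_{\mathrm{nd}}$, the anisotropic kernel form $\varphi_{t-1}$ over $F_{t-1}$ is again anisotropic and nonquasilinear, with $d_{X_{t-1}} = d_X - 2\mathfrak{j}_{t-1}$ and $\mathfrak{j}_1(\varphi_{t-1}) = \mathfrak{j}_t - \mathfrak{j}_{t-1} = \mathfrak{i}_t$. Thus the $t=1$ case applied to $\varphi_{t-1}$ asserts that $i_{\mathrm{lo}}$ and $(d_{X_{t-1}} - (\mathfrak{i}_t - 1 - i))^{\mathrm{up}}$ are connected in $\Lambda(X_{t-1})$. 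Since $\windex{\varphi_{F_{t-1}}} = \mathfrak{j}_{t-1}$ with anisotropic part $\varphi_{t-1}$, Lemma~\ref{LEMMDTforisotropicforms} applied over $F_{t-1}$ shows that shifting by $\mathfrak{j}_{t-1}$ turns this into connectedness of $(\mathfrak{j}_{t-1}+i)_{\mathrm{lo}}$ and $(d_X - (\mathfrak{j}_t - 1 - i))^{\mathrm{up}}$ in $\Lambda(X_{F_{t-1}})$; and since scalar extension realizes $\overline{\mathrm{Ch}}_{d_X}(X^2)$ inside $\overline{\mathrm{Ch}}_{d_X}(X^2_{F_{t-1}})$, the partition $\mathrm{MDT}(\varphi_{F_{t-1}})$ refines $\mathrm{MDT}(\varphi)$, so the connectedness persists over $F$. (Equivalently, one can argue by contradiction and run the reduction through the projection $f_{t-1}$ of Proposition~\ref{PROPisotropicreductionalongsplittingtower}, using Remark~\ref{REMisotropicreductionforMDT} and Lemma~\ref{LEMintersections}.)

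For $t=1$ we may assume $\varphi$ anisotropic, so $\mathfrak{j}_0 = 0$, $\mathfrak{i}_1 = \mathfrak{j}_1 = \witti{1}{\varphi}$, and the claim becomes: $i_{\mathrm{lo}}$ and $(d_X - (\mathfrak{j}_1 - 1 - i))^{\mathrm{up}}$ are connected for $0 \leq i < \mathfrak{j}_1 - 1$. The starting point is the $F$-morphism $\mu \colon X_{F_1} = \mathrm{Spec}(F_1) \times X \to X^2$ that inserts the generic point of $X$ on the first factor and is the identity on the second. Because $\varphi_{F_1}$ has Witt index $\mathfrak{j}_1 \geq 1$, the cycle $l_{\mathfrak{j}_1-1}$ is $F_1$-rational on $X_{F_1}$, and by \cite[Cor.~57.11]{EKM} the pullback $\mu^* \colon \overline{\mathrm{Ch}}(X^2) \to \overline{\mathrm{Ch}}(X_{F_1})$ is surjective. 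Since $\mu$ is, up to the second factor, the generic-point inclusion, $\mu^*$ annihilates every standard basis element of $R_{X^2}$ except those of the form $h^0 \times \gamma$, which it carries to the restriction of $\gamma$; in particular $\mu^*(h^0 \times l_{\mathfrak{j}_1-1}) = l_{\mathfrak{j}_1-1}$. Hence there is an $F$-rational cycle of degree $d_X + \mathfrak{j}_1 - 1$ involving $h^0 \times l_{\mathfrak{j}_1-1}$, and taking a minimal such cycle (possible by Lemma~\ref{LEMintersections}(1)) yields an indecomposable $\beta \in \overline{\mathrm{Ch}}_{d_X+\mathfrak{j}_1-1}(X^2)$ involving $h^0 \times l_{\mathfrak{j}_1-1}$.

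Next I would form $\alpha := D^{i,\mathfrak{j}_1-1-i}(\beta) \in \overline{\mathrm{Ch}}_{d_X}(X^2)$; the exponents are nonnegative with sum $\mathfrak{j}_1-1$ (using $i < \mathfrak{j}_1 - 1$), so by Lemma~\ref{LEMderivatives} $\alpha$ is again indecomposable and $\Lambda(\alpha)$ is a connected component of $\Lambda(X)$. A direct computation with Lemma~\ref{LEMcompositionformulas} gives $D^{i,\mathfrak{j}_1-1-i}(h^0 \times l_{\mathfrak{j}_1-1}) = h^i \times l_i = \alpha_{i_{\mathrm{lo}}}$, so $i_{\mathrm{lo}} \in \Lambda(\alpha)$. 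By Lemma~\ref{LEMshells}, the only $1$st-shell elements of $E_{X^2}$ of degree $d_X + \mathfrak{j}_1 - 1$ are $h^0 \times l_{\mathfrak{j}_1-1}$ and $l_{\mathfrak{j}_1-1} \times h^0$, which $D^{i,\mathfrak{j}_1-1-i}$ sends to $h^i \times l_i$ and $l_{\mathfrak{j}_1-1-i} \times h^{\mathfrak{j}_1-1-i}$, respectively; and since $D^{k_1,k_2}$ preserves $E_{X^2}$ and cannot move a higher-shell node into the $1$st shell (see the diagonal-shift description following Lemma~\ref{LEMshells}), writing $\beta = (h^0 \times l_{\mathfrak{j}_1-1}) + a\,(l_{\mathfrak{j}_1-1} \times h^0) + \gamma'$ with $\gamma'$ free of $1$st-shell elements shows $\alpha = (h^i \times l_i) + a\,(l_{\mathfrak{j}_1-1-i} \times h^{\mathfrak{j}_1-1-i}) + \delta$ with $a \in \mathbb{F}_2$ and $\delta := D^{i,\mathfrak{j}_1-1-i}(\gamma')$ free of $1$st-shell elements.

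The crux is to force $a = 1$, and this parity step is where the real work lies. Since $\varphi$ is anisotropic, Lemma~\ref{LEMevennumberofTates} gives that $|\Lambda(\alpha)|$ is even. On the other hand $|\Lambda(\delta)|$ is also even: if $\delta = 0$ this is clear, and otherwise $f_1(\alpha) = f_1(\delta)$ because the projection $f_1$ of Proposition~\ref{PROPisotropicreductionalongsplittingtower} annihilates the $1$st shell, so $f_1(\delta)$ is $F_1$-rational and lies in $\overline{\mathrm{Ch}}_{d_{X_1}}(X_1^2)$; as $f_1$ is injective on the (shell $\geq 2$) standard basis elements that $\delta$ involves, and $\varphi_1$ is anisotropic, Lemma~\ref{LEMevennumberofTates} applied over $F_1$ forces $|\Lambda(\delta)| = |\Lambda(f_1(\delta))|$ to be even. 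Since $|\Lambda(\alpha)| = |\Lambda(\delta)| + 1 + a$, we conclude $a = 1$, so $l_{\mathfrak{j}_1-1-i} \times h^{\mathfrak{j}_1-1-i} = \alpha_{(d_X - (\mathfrak{j}_1 - 1 - i))^{\mathrm{up}}}$ is also involved in $\alpha$; hence $i_{\mathrm{lo}}$ and $(d_X - (\mathfrak{j}_1 - 1 - i))^{\mathrm{up}}$ lie in the connected component $\Lambda(\alpha)$, which finishes the $t = 1$ case and hence the proof. The main obstacle, as indicated, is precisely this last parity argument: it requires pinning down exactly which standard basis elements $D^{i,\mathfrak{j}_1-1-i}$ can contribute — which is why the splitting-pattern restrictions of Lemma~\ref{LEMshells} and the shell-compatibility of the operators $D^{k_1,k_2}$ and $f_1$ are needed — and it requires invoking the anisotropy of $\varphi$ via Lemma~\ref{LEMevennumberofTates} at the two levels $F$ and $F_1$.
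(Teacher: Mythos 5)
Your proposal is correct and follows essentially the same route as the paper's own proof: reduce to $t=1$ via the isotropic-reduction machinery of Proposition \ref{PROPisotropicreductionalongsplittingtower}, produce an indecomposable cycle involving $h^0\times l_{\mathfrak{j}_1-1}$ from the surjectivity of $\mu^*$, apply $D^{i,\mathfrak{j}_1-1-i}$ together with the shell restriction of Lemma \ref{LEMshells}, and close with the parity argument from Lemma \ref{LEMevennumberofTates} applied over both $F$ and $F_1$. The only differences are cosmetic: you transport the $t=1$ case back up through scalar extension and Lemma \ref{LEMMDTforisotropicforms} rather than projecting down through $f_{t-1}$ (and you note the latter as an alternative), and in the parity step you correctly invoke the projection $f_1$ where the paper's text writes $g_1$.
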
 

Thus, if $\Lambda$ is an summand of $\Lambda(X)$, then each element of $\Lambda_{\mathrm{lo}}$ admits a ``dual'' element of $\Lambda^{\mathrm{up}}$ determined by the nondefective splitting pattern of $\varphi$. In particular, if the latter is known, then $\Lambda$ is determined by $\Lambda_{\mathrm{lo}}$. Moreover, the integers $a(\Lambda)$ and $b(\Lambda)$ are both defined, and if $\mathfrak{j}_{t-1} \leq a(\Lambda) < \mathfrak{j}_t$ with $1 \leq t \leq h_{\mathrm{nd}}$, then $d_X - \mathfrak{j}_t < b(\Lambda) \leq d_X - \mathfrak{j}_{t-1}$. We can make this more precise with the following, the first assertion of which extends \cite[Prop. 73.23]{EKM} to our setting:

\begin{proposition} \label{PROPsummandshifts} Let $1 \leq t < h_{\mathrm{nd}}$. If $\mathrm{MDT}(\varphi)$ admits an element $\Lambda'$ with $\mathfrak{j}_{t-1} \leq a(\Lambda') < \mathfrak{j}_t$, then it admits an element $\Lambda$ with $a(\Lambda) = \mathfrak{j}_{t-1}$. Moreover:
\begin{enumerate} \item The sets $\Lambda,\Lambda[1],\hdots,\Lambda[\mathfrak{i}_t -1]$ are all elements of $\mathrm{MDT}(\varphi)$, and $\Lambda' = \Lambda[a(\Lambda') - \mathfrak{j}_{t-1}]$;
\item Fix an integer $t \leq t' \leq h_{\mathrm{nd}}$. If $i_{\mathrm{lo}} \in \Lambda$ for some $\mathfrak{j}_{t'-1} \leq i < \mathfrak{j}_{t'}$, then $i + \mathfrak{i}_{t} \leq \mathfrak{j}_{t'}$. In particular, $\mathfrak{i}_{t'} \geq \mathfrak{i}_t$. \end{enumerate} 
\begin{proof} Let $0 \leq j  < \mathfrak{i}_t$ be such that $a(\Lambda') = \mathfrak{j}_{t-1} + j$.  By the definition of $a(\Lambda')$ and the remarks preceding the statement, $\alpha_{\Lambda}$ involves no element of $E_{X^2}$ in the shells numbered $1,\hdots,t-1$. The key step is proving: \vspace{.5 \baselineskip}

\noindent {\bf Claim.} There is an indecomposable element $\alpha \in \overline{\mathrm{Ch}}_{d_X + \mathfrak{i}_t - 1 - j}(X^2)$ that involves $h^{\mathfrak{j}_{t-1} + j} \times l_{\mathfrak{j}_t - 1}$, but no element of $E_{X^2}$ in the shells numbered $1,\hdots,t-1$.  \vspace{.5 \baselineskip}

Before remarking on this, let us show how it gives the desired conclusion. Let $\alpha$ be as in the claim. By Lemma \ref{LEMderivatives}, $D^{\mathfrak{i}_t-1-j,0}(\beta)$ is an indecomposable element of $\overline{\mathrm{Ch}}_{d_X}(X^2)$. By construction, it involves $h^{\mathfrak{j}_t-1} \times l_{\mathfrak{j}_t-1}$, but no element of $E_{X^2}$ in the shells numbered $1,\hdots,t-1$.  By Proposition \ref{PROPsplittingpatternconnections}, it also involves $l_{\mathfrak{j}_{t-1}} \times h^{\mathfrak{j}_{t-1}}$.  Letting $\gamma$ be its image under pushforward along the factor exchange automorphism of $X^2$, we then get that $\Lambda: = \Lambda(\gamma)$ is an element of $\mathrm{MDT}(\varphi)$ with $a(\Lambda) = \mathfrak{j}_{t-1}$.  The claim then holds with $j = 0$, and so we have an indecomposable element $\beta \in \overline{\mathrm{Ch}}_{d_X + \mathfrak{i}_t-1}(X^2)$ that involves $h^{\mathfrak{j}_{t-1}} \times l_{\mathfrak{j}_t-1}$, but no element of $E_{X^2}$ in the shells numbers $1,\hdots,t-1$.  For each $0 \leq k < \mathfrak{i}_t$, set $\beta_k: = D^{k,\mathfrak{i}_t-1-k}(\beta)$. By Lemma \ref{LEMderivatives}, $\beta_k$ is an indecomposable element of $\overline{\mathrm{Ch}}_{d_X}(X^2)$ involving no elements of $E_{X^2}$ in the shells numbered $1,\hdots,t-1$. Since $\beta_0$ involves $h^{\mathfrak{j}_{t-1}} \times l_{\mathfrak{j}_t-1}$, we have $\Lambda(\beta_0) = \Lambda$, and then $\Lambda(\beta_k) = \Lambda[k]$ for all $1 \leq k < \mathfrak{i}_t-1$ by construction. Thus, $\Lambda,\Lambda[1],\hdots,\Lambda[\mathfrak{i}_t-1]$ are elements of $\mathrm{MDT}(\varphi)$,  and since $a(\Lambda') = a(\Lambda[j]) = a(\Lambda[a(\Lambda') - \mathfrak{j}_{t-1}])$, we have $\Lambda' = \Lambda[a(\Lambda') - \mathfrak{j}_{t-1}]$, proving (1). At the same time, since $\beta_0 = D^{0,\mathfrak{i}_t-1}(\beta)$, it involves $h^i \times l_i$ if and only if $\beta$ involves $h^i \times l_{i+\mathfrak{i}_t-1}$. Thus,  if $i_{\mathrm{lo}} \in \Lambda$ for some integers $t \leq t' \leq h_{\mathrm{nd}}$ and $\mathfrak{j}_{t'-1} \leq i < \mathfrak{j}_{t'}$, then $\beta$ must involve $h^i \times l_{i + \mathfrak{i}_t - 1}$, and so $i + \mathfrak{i}_t-1 < \mathfrak{j}_{t'}$ by Lemma \ref{LEMshells}, proving (2). 

As for the claim, the case where $t=1$ is already implicit in the proof of Proposition \ref{PROPsplittingpatternconnections}. When $t>1$, an explicit construction of $\alpha$ from $\alpha_{\Lambda'}$ in the nondegenerate case is given in \cite[Proof of Prop. 73.23]{EKM} using scalar extension to the function field of $X$, composition of correspondences between powers of $X$ and pullback along a partial diagonal embedding $X^2 \rightarrow X^3$ (in \cite[Ill. 73.24]{EKM}, this is the construction allowing one to move from position 2 in the diagram to position 3).  With the results of \S 5, together with Propositions \ref{PROPisotropicreductionalongsplittingtower} and \ref{PROPpullbackalongdiagonal} (the latter being our substitute for pullback along partial diagonals), the very same construction also gives the desired element in our more general setting. \end{proof} \end{proposition}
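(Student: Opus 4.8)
The plan is to adapt Karpenko's argument for the nondegenerate case (\cite[Prop.~73.23]{EKM}), which \S\S\ref{SECcorrespondences}--\ref{SECisotropicreduction} together with Proposition \ref{PROPpullbackalongdiagonal} now make available: that argument rests on three operations — scalar extension to $F(\varphi)$, composition of correspondences between powers of $X$, and pullback along a partial diagonal embedding $X^2 \to X^3$ — and each of these has a substitute in our setting, with Proposition \ref{PROPpullbackalongdiagonal} playing the role of the last. Put $j := a(\Lambda') - \mathfrak{j}_{t-1}$, so that $0 \le j < \mathfrak{i}_t$. The hypothesis $\mathfrak{j}_{t-1} \le a(\Lambda') < \mathfrak{j}_t$, combined with the remarks preceding the statement (which bound $b(\Lambda')$ above by $d_X - \mathfrak{j}_{t-1}$), forces every standard basis element involved in $\alpha_{\Lambda'}$ to lie in a shell numbered $\ge t$; this is what will let us stay out of the lower shells throughout.

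\textbf{The crux} is the claim that there is an indecomposable cycle $\alpha \in \overline{\mathrm{Ch}}_{d_X + \mathfrak{i}_t - 1 - j}(X^2)$ involving the standard basis element $h^{\mathfrak{j}_{t-1}+j} \times l_{\mathfrak{j}_t-1}$ but no element of $E_{X^2}$ lying in shells $1, \dots, t-1$. For $t = 1$ this is essentially contained in the proof of Proposition \ref{PROPsplittingpatternconnections}: by \cite[Cor.~57.11]{EKM} the pullback along the morphism $X_{F_1} = \mathrm{Spec}(F_1) \times X \to X^2$ induced by the generic point of the first factor surjects onto $\overline{\mathrm{Ch}}(X_{F_1})$, so the class $l_{\mathfrak{j}_1-1}$ (present since $\windex{\varphi_{F_1}} = \mathfrak{j}_1$) lifts to an indecomposable $\beta \in \overline{\mathrm{Ch}}_{d_X + \mathfrak{i}_1 - 1}(X^2)$ involving $h^0 \times l_{\mathfrak{j}_1-1}$, and $D^{j,0}(\beta)$ is then the required cycle by Lemma \ref{LEMderivatives}. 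For $t > 1$ one carries out the full three-move construction of \cite[Proof of Prop.~73.23]{EKM} with $\alpha_{\Lambda'}$ and $\beta$ as input: extend scalars to $F(\varphi)$, form the appropriate composite of correspondences between powers of $X$ (using the composition law of \S\ref{SECcorrespondences}), and apply the diagonal-multiplication map of Proposition \ref{PROPpullbackalongdiagonal} in place of pullback along $X^2 \to X^3$; because $\alpha_{\Lambda'}$ already avoids shells $1, \dots, t-1$, so does the cycle produced.

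Granting the crux, the conclusions follow by bookkeeping in the shell-pyramid diagram. Applying $D^{\mathfrak{i}_t-1-j,\,0}$ and Lemma \ref{LEMderivatives} yields an indecomposable element of $\overline{\mathrm{Ch}}_{d_X}(X^2)$ that involves $h^{\mathfrak{j}_t-1} \times l_{\mathfrak{j}_t-1} = \alpha_{(\mathfrak{j}_t-1)_{\mathrm{lo}}}$ and no element of shells $1,\dots,t-1$; since $\varphi$ is anisotropic the corresponding subset of $\Lambda(X)$ has even cardinality (Lemma \ref{LEMevennumberofTates}), so by Proposition \ref{PROPsplittingpatternconnections} (applied at the extreme corner of the $t$th shell) and the factor-exchange automorphism of $X^2$ we obtain a component $\Lambda \in \mathrm{MDT}(\varphi)$ with $a(\Lambda) = \mathfrak{j}_{t-1}$. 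Re-running the crux with $j = 0$ gives an indecomposable $\beta' \in \overline{\mathrm{Ch}}_{d_X + \mathfrak{i}_t-1}(X^2)$ involving $h^{\mathfrak{j}_{t-1}} \times l_{\mathfrak{j}_t-1}$ and no element of shells $1,\dots,t-1$; the cycles $D^{k,\,\mathfrak{i}_t-1-k}(\beta')$ for $0 \le k < \mathfrak{i}_t$ are then indecomposable elements of $\overline{\mathrm{Ch}}_{d_X}(X^2)$ by Lemma \ref{LEMderivatives}, and comparing smallest lo-indices shows they realize $\Lambda, \Lambda[1], \dots, \Lambda[\mathfrak{i}_t-1]$, with $\Lambda' = \Lambda[a(\Lambda')-\mathfrak{j}_{t-1}]$ — this is (1). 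For (2): $D^{0,\mathfrak{i}_t-1}(\beta')$ has underlying set $\Lambda$, so $\beta'$ involves $h^i \times l_{i+\mathfrak{i}_t-1}$ whenever $i_{\mathrm{lo}} \in \Lambda$, whence Lemma \ref{LEMshells} forces $i + \mathfrak{i}_t - 1 < \mathfrak{j}_{t'}$ whenever $\mathfrak{j}_{t'-1} \le i < \mathfrak{j}_{t'}$; reading this off at $i$ minimal among the lo-indices of $\Lambda$ in the $t'$th shell gives $\mathfrak{i}_{t'} \ge \mathfrak{i}_t$.

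I expect the main obstacle to be the crux for $t > 1$, where one must manufacture a genuinely new \emph{indecomposable} rational cycle, not merely a rational one. The subtlety is that the construction in \cite{EKM} is phrased entirely in terms of intersection-theoretic operations on products of smooth quadrics — composition of Chow correspondences and pullback along a partial diagonal — which have no literal meaning on the non-smooth variety $X^2$; the whole point of \S\S\ref{SECcorrespondences}--\ref{SECisotropicreduction} and of Proposition \ref{PROPpullbackalongdiagonal} is that these operations descend to $R_{X^2}$ and preserve $F$-rationality. So the real work is to check that the precise sequence of moves in \cite[Proof of Prop.~73.23]{EKM} uses only operations now legitimised in our framework, and that indecomposability persists through every move — the latter being ensured by the injectivity and indecomposability-preservation of the operators $D^{k_1,k_2}$ (Lemma \ref{LEMderivatives}). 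Once this translation is in hand, the combinatorics in the shell pyramid is routine.
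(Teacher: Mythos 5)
Your proposal follows the paper's own proof essentially step for step: the same key claim (an indecomposable cycle in $\overline{\mathrm{Ch}}_{d_X+\mathfrak{i}_t-1-j}(X^2)$ involving $h^{\mathfrak{j}_{t-1}+j}\times l_{\mathfrak{j}_t-1}$ and avoiding shells $1,\dots,t-1$), the same derivation of (1) and (2) via the operators $D^{k_1,k_2}$, Lemma \ref{LEMderivatives}, Proposition \ref{PROPsplittingpatternconnections} and the factor-exchange automorphism, and the same disposal of the claim — implicit in Proposition \ref{PROPsplittingpatternconnections} for $t=1$, and for $t>1$ by transporting the construction of \cite[Proof of Prop.~73.23]{EKM} using the composition law of \S\ref{SECcorrespondences}, isotropic reduction, and Proposition \ref{PROPpullbackalongdiagonal} in place of the partial diagonal pullback. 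The argument is correct as the paper presents it, and your identification of the $t>1$ case of the claim as the genuine remaining work is exactly right.
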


In view of the remarks preceding the statement of the proposition, we get:

\begin{corollary} \label{CORdimensionofsummand} Let $\Lambda$ be an element of $\mathrm{MDT}(\varphi)$, and let $1 \leq t \leq h_{\mathrm{nd}}$ be the unique integer with $\mathfrak{j}_{t-1} \leq a(\Lambda) < \mathfrak{j}_t$. Then $b(\Lambda) = d_X - (a(\Lambda) + (\mathfrak{i}_t - 1))$. \end{corollary}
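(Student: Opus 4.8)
The plan is to deduce the corollary from Proposition~\ref{PROPsummandshifts} together with the explicit description of ``duality'' in $\Lambda(X)$ furnished by Proposition~\ref{PROPsplittingpatternconnections}. Fix $\Lambda \in \mathrm{MDT}(\varphi)$ and the integer $t$ with $\mathfrak{j}_{t-1} \le a(\Lambda) < \mathfrak{j}_t$, and put $k := a(\Lambda) - \mathfrak{j}_{t-1} \in \lbrace 0, \dots, \mathfrak{i}_t - 1 \rbrace$. Suppose first that $t < h_{\mathrm{nd}}$. Then Proposition~\ref{PROPsummandshifts} supplies an element $\Lambda_0 \in \mathrm{MDT}(\varphi)$ with $a(\Lambda_0) = \mathfrak{j}_{t-1}$ such that $\Lambda = \Lambda_0[k]$; since $b(\Lambda_0[k]) = b(\Lambda_0) + k$ directly from the definition of the shift, the question reduces to computing $b(\Lambda_0)$, i.e. to the ``base'' case $a(\Lambda) = \mathfrak{j}_{t-1}$.

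For the base component $\Lambda_0$ I would argue as follows. On the one hand, Proposition~\ref{PROPsplittingpatternconnections} applied with $i = 0$ connects $(\mathfrak{j}_{t-1})_{\mathrm{lo}} \in \Lambda_0$ with $(d_X - \mathfrak{j}_t + 1)^{\mathrm{up}}$, so that $b(\Lambda_0) \ge d_X - \mathfrak{j}_t + 1$. On the other hand, as recorded in the remarks preceding Proposition~\ref{PROPsummandshifts}, $\Lambda_0$ is determined by $(\Lambda_0)_{\mathrm{lo}}$, with $(\Lambda_0)^{\mathrm{up}}$ consisting precisely of the splitting-pattern duals of the elements of $(\Lambda_0)_{\mathrm{lo}}$; hence $b(\Lambda_0)$ is the largest such dual. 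By Proposition~\ref{PROPsummandshifts}(2), the only element of $(\Lambda_0)_{\mathrm{lo}}$ lying in the $t$-th shell is $\mathfrak{j}_{t-1}$ itself, while any $i_{\mathrm{lo}} \in \Lambda_0$ with $\mathfrak{j}_{t'-1} \le i < \mathfrak{j}_{t'}$ for some $t' > t$ satisfies $i \le \mathfrak{j}_{t'} - \mathfrak{i}_t$. Substituting this into the expression $(d_X - \mathfrak{j}_{t'} + 1 + (i - \mathfrak{j}_{t'-1}))^{\mathrm{up}}$ for the dual of $i_{\mathrm{lo}}$ and using $\mathfrak{j}_{t'-1} \ge \mathfrak{j}_t$ for $t' > t$, a short estimate shows that every such dual is $\le d_X - \mathfrak{j}_t + 1$, with equality only when $i = \mathfrak{j}_{t-1}$. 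Therefore $b(\Lambda_0) = d_X - \mathfrak{j}_t + 1$, and consequently $b(\Lambda) = b(\Lambda_0) + k$, which is the asserted value of $b(\Lambda)$ once one rewrites it using $k = a(\Lambda) - \mathfrak{j}_{t-1}$ and $\mathfrak{i}_t = \mathfrak{j}_t - \mathfrak{j}_{t-1}$.

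It remains to treat the case $t = h_{\mathrm{nd}}$, which is not literally covered by Proposition~\ref{PROPsummandshifts}. Here I would pass to the penultimate kernel form $\varphi_{h_{\mathrm{nd}} - 1}$: by Remark~\ref{REMisotropicreductionforMDT} and the maps $f_{h_{\mathrm{nd}} - 1}, g_{h_{\mathrm{nd}} - 1}$ of Proposition~\ref{PROPisotropicreductionalongsplittingtower}, the elements of $\mathrm{MDT}(\varphi)$ with $a(\Lambda) \in [\mathfrak{j}_{h_{\mathrm{nd}} - 1}, r)$ correspond, via the shift by $\mathfrak{j}_{h_{\mathrm{nd}} - 1}$, to the first-shell elements of $\mathrm{MDT}(\varphi_{h_{\mathrm{nd}} - 1})$; and since $\mathfrak{i}_1(\varphi_{h_{\mathrm{nd}} - 1}) = \mathfrak{i}_{h_{\mathrm{nd}}}$ coincides with the full non-quasilinear rank of $\varphi_{h_{\mathrm{nd}} - 1}$, one is reduced to the sub-case of a quadric whose anisotropic non-quasilinear part splits completely over its own function field --- exactly the situation handled directly inside the proof of Proposition~\ref{PROPsplittingpatternconnections} --- where the same argument as above (with $t = 1$ there) applies. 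The main obstacle throughout is the upper bound $b(\Lambda_0) \le d_X - \mathfrak{j}_t + 1$ for the base component: the reverse inequality follows at once from the splitting-pattern connections, but excluding that $\Lambda_0$ involves some $l_i \times h^i$ with $i > d_X - \mathfrak{j}_t + 1$ genuinely relies on the refined restriction of Proposition~\ref{PROPsummandshifts}(2) (equivalently, the monotonicity $\mathfrak{i}_{t'} \ge \mathfrak{i}_t$ for $t' \ge t$) on the standard basis elements that an indecomposable rational cycle can involve.
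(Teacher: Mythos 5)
Your overall strategy --- reduce via Proposition \ref{PROPsummandshifts} to a base component $\Lambda_0$ with $a(\Lambda_0) = \mathfrak{j}_{t-1}$ and $\Lambda = \Lambda_0[k]$, compute $b(\Lambda_0)$ as the splitting-pattern dual of $(\mathfrak{j}_{t-1})_{\mathrm{lo}}$ (lower bound from Proposition \ref{PROPsplittingpatternconnections}, upper bound from the shell restriction in Proposition \ref{PROPsummandshifts}(2)), then shift --- is exactly the derivation the paper intends, and your identification of the upper bound as the only nontrivial point is accurate.

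The problem is your final sentence of the main argument: it does not close. You correctly obtain $b(\Lambda) = b(\Lambda_0) + k = d_X - \mathfrak{j}_t + 1 + k$ with $k = a(\Lambda) - \mathfrak{j}_{t-1}$, but no rewriting turns this into $d_X - (a(\Lambda) + \mathfrak{i}_t - 1)$: the two expressions are $d_X - \mathfrak{j}_t + 1 + k$ and $d_X - \mathfrak{j}_t + 1 - k$, differing by $2k$, so they agree only when $a(\Lambda) = \mathfrak{j}_{t-1}$. The value you computed is in fact the correct one --- it is the one forced by the shift identity $b(\Lambda_0[k]) = b(\Lambda_0) + k$, it is the only one compatible with the range $d_X - \mathfrak{j}_t < b(\Lambda) \leq d_X - \mathfrak{j}_{t-1}$ recorded just before Proposition \ref{PROPsummandshifts}, it agrees with Proposition \ref{PROPMDTfornondefectiveheight1} (where $b(\Lambda^U(X)[i]) = r+s-1+i$, whereas the printed formula would give $r+s-1-i$), and it is what is actually used when the corollary is invoked in the proof of Theorem \ref{THMPfisterneighbourproblem}. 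So the statement as printed appears to carry a sign slip and should read $b(\Lambda) = d_X - (\mathfrak{j}_{t-1} + \mathfrak{j}_t - 1 - a(\Lambda))$, which specializes to the printed form exactly when $a(\Lambda) = \mathfrak{j}_{t-1}$; a proof must record this rather than assert the two expressions coincide. Separately, your handling of $t = h_{\mathrm{nd}}$ is only sketched: transporting indecomposability through $f_{h_{\mathrm{nd}}-1}$ (components can merge under scalar extension) is not justified, and what is really needed for the last shell is that Proposition \ref{PROPsplittingpatternconnections} already covers $t = h_{\mathrm{nd}}$ over $F$ (giving the lower bound directly) together with an extension of the shell-$t$ exclusion of Proposition \ref{PROPsummandshifts}(2) to that case.
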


The element of $\mathrm{MDT}(\varphi)$ that contains $0_{\mathrm{lo}}$ shall be denoted $\Lambda^U(X)$. We refer to it as the \emph{upper connected component} of $\Lambda(X)$.\footnote{When $\varphi$ is nondegenerate, this is the connected component of $\Lambda(X)$ corresponding to what in the literature is called the \emph{upper motive} of $X$ -- see Remarks \ref{REMSMDT}.} This set has the following properties:

\begin{proposition} \label{PROPuppersummand} \;
\begin{enumerate} \item $a(\Lambda^U(X)) = 0$ and $b(\Lambda^U(X)) = \Izhdim{\varphi} -1$. 
\item If $|\Lambda^U(X)| \neq 2$, then $|\Lambda^U(X)_{\mathrm{lo}}| \geq 2$, and $\mathrm{min}\lbrace i>0 \;|\ i_{\mathrm{lo}} \in \Lambda^U(X) \rbrace = \mathfrak{j}_t$ for some integer $1 \leq t < h_{\mathrm{nd}}$. \end{enumerate}
\begin{proof} (1) Immediate from Corollary \ref{CORdimensionofsummand} and the definition. 

(2) If $|\Lambda^U(X)| \neq 2$, then $|\Lambda^U(X)|_{\mathrm{lo}} \geq 2$ by Proposition \ref{PROPsplittingpatternconnections}. Let $1 \leq t < h_{\mathrm{nd}}$ and $0 \leq j < \mathfrak{i}_{t+1}$ be such that $\mathfrak{j}_t + j = \mathrm{min}\lbrace i>0 \;|\ i_{\mathrm{lo}} \in \Lambda^U(X) \rbrace$. We have to show that $j = 0$. Suppose otherwise.  By Proposition \ref{PROPsummandshifts} (2), we have $\mathfrak{j}_t + \mathfrak{i}_1 -1 < \mathfrak{j}_{t+1}$. We claim that there exists an indecomposable element $\alpha \in \overline{\mathrm{Ch}}_{d_X + \mathfrak{i}_1-1}(X^2)$ that involves $h^{\mathfrak{j}_t} \times l_{\mathfrak{j}_t + \mathfrak{i}_1 - 1}$, but no element of $E_{X^2}$ in the shells numbered $1,\hdots,t-1$.  Given this, Lemma \ref{LEMderivatives} gives that $\beta: = D^{0,\mathfrak{i}_1-1}$ is an indecomposable element of $\overline{\mathrm{Ch}}_{d_X}(X^2)$ with $a(\Lambda(\beta)) = \mathfrak{j}_{t}$. By Proposition \ref{PROPsummandshifts} (1), however, $\Lambda(\beta)[j]$ is then an element of $\mathrm{MDT}(\varphi)$ with $a(\Lambda(\beta)[j]) = \mathfrak{j}_t + j$, contradicting the fact that $\mathfrak{j}_t + j \in \Lambda^U(X)$.  We must therefore have that $j=0$. For the claim, an explicit construction of $\alpha$ from $\alpha_{\Lambda^U(X)}$ in the nondegenerate case is given in \cite[Proof of Prop. 73.27]{EKM} using scalar extension to the function field of $X$, composition of correspondences between powers of $X$ and pullback along a partial diagonal embedding $X^2 \rightarrow X^3$. With the results of \S 5, together with Propositions \ref{PROPisotropicreductionalongsplittingtower} and \ref{PROPpullbackalongdiagonal}, the very same construction also yields the desired element in our setting.\end{proof} \end{proposition}

Recall (Theorem \ref{THMstb}) that $\Izhdim{\varphi}$ is a stable birational invariant of $X$ (among the class of quadrics we are considering). We shall see shortly that the same is true of $\Lambda^U(X)$.

\subsection{Example: Forms of Nondefective Height 1} \label{SUBSECnondefectiveheight1} We consider here the simplest possible case, namely that where $h_{\mathrm{nd}} = 1$. This may be described as follows:

\begin{lemma} \label{LEMnondefectiveheight1}The following are equivalent:
\begin{enumerate} \item $h_{\mathrm{nd}} = 1$;
\item $\witti{1}{\varphi} = r$;
\item $\varphi_1 \simeq \mathrm{ql}(\varphi)_{F(\varphi)}$. \end{enumerate}
\begin{proof} $(1) \Leftrightarrow (2)$: By definition.

$(2) \Leftrightarrow (3)$:  Since $\varphi$ remains nondefective over $F(\varphi)$, we have $\varphi_1 = \anispart{(\varphi_{F(\varphi)})} \cong \tau \perp \mathrm{ql}(\varphi)_{F(\varphi)}$ for some nondegenerate form $\tau$ of dimension $2r - 2\witti{1}{\varphi}$ over $F(\varphi)$. Thus, $\witti{1}{\varphi} = r$ if and only if $\varphi_1 \simeq \mathrm{ql}(\varphi)_{F(\varphi)}$.  \end{proof} \end{lemma}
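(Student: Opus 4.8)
The plan is to prove $(1)\Leftrightarrow(2)$ by unwinding the definition of the nondefective height, and then $(2)\Leftrightarrow(3)$ by a dimension count on the Witt decomposition of $\varphi$ over $F(\varphi)$. The single structural fact underlying everything is that, since $\varphi$ is nonquasilinear, the extension $F(\varphi)/F$ is separable (Lemma \ref{LEMfunctionfields}(1)); combined with the fact that anisotropic quasilinear forms remain anisotropic over separable extensions, this shows that $\varphi$ stays nondefective over $F(\varphi)$ and that $\mathrm{ql}(\varphi_{F(\varphi)}) \simeq \mathrm{ql}(\varphi)_{F(\varphi)}$ is still anisotropic. Consequently $\varphi_{F(\varphi)}$ still has type $(r,s)$, its isotropy index over $F(\varphi)$ equals its Witt index $\windex{\varphi_{F(\varphi)}}$, and (because $\varphi$ is anisotropic, so $\wittj{0}{\varphi}=0$ and $F_1 = F(\varphi)$) this Witt index is exactly $\witti{1}{\varphi} = \wittj{1}{\varphi} = \witti{0}{\varphi_{F(\varphi)}}$. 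Hence the first anisotropic kernel form $\varphi_1 = \anispart{(\varphi_{F(\varphi)})}$ has type $(r-\witti{1}{\varphi},\,s)$, with nondegenerate part of dimension $2(r-\witti{1}{\varphi})$ and quasilinear part $\mathrm{ql}(\varphi)_{F(\varphi)}$.

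For $(1)\Leftrightarrow(2)$: by definition $h_{\mathrm{nd}}$ is the least index $t$ for which $\varphi_t$ is quasilinear, and $\varphi_0=\varphi$ is nonquasilinear by hypothesis; hence $h_{\mathrm{nd}}=1$ if and only if $\varphi_1$ is quasilinear. By the type computation above, $\varphi_1$ is quasilinear precisely when $r-\witti{1}{\varphi}=0$, i.e.\ $\witti{1}{\varphi}=r$.

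For $(2)\Leftrightarrow(3)$: write $\varphi_1 \simeq \tau \perp \mathrm{ql}(\varphi)_{F(\varphi)}$ with $\tau$ nondegenerate of dimension $2(r-\witti{1}{\varphi})$, as above. Since $\mathrm{ql}(\varphi)_{F(\varphi)}$ is anisotropic, no cancellation is hidden in this orthogonal decomposition, so $\varphi_1 \simeq \mathrm{ql}(\varphi)_{F(\varphi)}$ holds if and only if $\tau$ is the zero form, i.e.\ $r-\witti{1}{\varphi}=0$, i.e.\ $\witti{1}{\varphi}=r$.

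The only point requiring real care — the ``main obstacle'', though a mild one — is the verification that $\varphi$ stays nondefective, and in particular keeps type $(r,s)$, over $F(\varphi)$; this is precisely where the hypothesis that $\varphi$ is nonquasilinear (so that $F(\varphi)/F$ is separable) is used, and without it the type-counting argument would break down. Everything else is bookkeeping with the definition of $h_{\mathrm{nd}}$ and the Witt decomposition.
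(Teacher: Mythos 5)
Your proof is correct and follows essentially the same route as the paper: the key decomposition $\varphi_1 \simeq \tau \perp \mathrm{ql}(\varphi)_{F(\varphi)}$ with $\tau$ nondegenerate of dimension $2r-2\,\witti{1}{\varphi}$ is exactly the paper's argument for $(2)\Leftrightarrow(3)$, and $(1)\Leftrightarrow(2)$ is definitional in both. You merely spell out more explicitly the separability of $F(\varphi)/F$ and the resulting nondefectivity of $\varphi_{F(\varphi)}$, which the paper defers to Lemmas \ref{LEMfunctionfields} and \ref{LEMnondefectiveheight}.
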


When these conditions are satisfied, the results of the previous section immediately give the following ``binary decomposition'' of $\Lambda(X)$:

\begin{proposition} \label{PROPMDTfornondefectiveheight1} If $h_{\mathrm{nd}} = 1$, then $\mathrm{MDT}(\varphi)$ consists of the sets $\Lambda^U(X)[i]$ with $0 \leq i < r$, and we have $\Lambda^U(X) = \lbrace 0_{\mathrm{lo}}, (r+s-1)^{\mathrm{up}} \rbrace$.
\begin{proof} Since $h_{\mathrm{nd}} =1$, we have $\witti{1}{\varphi} = r$. In particular, $\Izhdim{\varphi} = \mydim{\varphi} - \witti{1}{\varphi} = (2r+s) - r = r + s$. By Proposition (1), it follows that $\Lambda^U(X)$ contains the elements $0_{\mathrm{lo}}$ and $(r+s-1)^{\mathrm{up}}$. On the other hand, Proposition \ref{PROPsummandshifts} (1) tells us that each of the sets $\Lambda^U(X)[i]$ with $0 \leq i < r$ is an element of $\mathrm{MDT}(\varphi)$. Since $|\Lambda(X)| = 2r$, we conclude that these are the only elements of $\mathrm{MDT}(\varphi)$, and that $\Lambda^U(X) = \lbrace 0_{\mathrm{lo}}, (r+s-1)^{\mathrm{up}} \rbrace$. 
\end{proof} \end{proposition}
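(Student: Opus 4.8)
The plan is to read off the structure of $\mathrm{MDT}(\varphi)$ directly from the results already assembled in \S\ref{SUBSECMDTInvariant}. The key observation is that the hypothesis $h_{\mathrm{nd}} = 1$ forces $\witti{1}{\varphi} = r$ (this is the equivalence $(1)\Leftrightarrow(2)$ of Lemma \ref{LEMnondefectiveheight1}), and hence $\Izhdim{\varphi} = \mydim{\varphi} - \witti{1}{\varphi} = (2r+s) - r = r+s$. This single numerical fact, fed into the general machinery, is enough to pin down everything.

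The first step is to identify the upper connected component $\Lambda^U(X)$. By Proposition \ref{PROPuppersummand}(1), we have $a(\Lambda^U(X)) = 0$ and $b(\Lambda^U(X)) = \Izhdim{\varphi} - 1 = r+s-1$, so $0_{\mathrm{lo}}$ and $(r+s-1)^{\mathrm{up}}$ both lie in $\Lambda^U(X)$. To see these are the \emph{only} elements, I would invoke Proposition \ref{PROPuppersummand}(2): if $|\Lambda^U(X)| \neq 2$, then $\mathrm{min}\lbrace i > 0 \mid i_{\mathrm{lo}} \in \Lambda^U(X)\rbrace = \mathfrak{j}_t$ for some $1 \leq t < h_{\mathrm{nd}}$. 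But $h_{\mathrm{nd}} = 1$ leaves no room for such a $t$, so the minimum is undefined, meaning $\Lambda^U(X)_{\mathrm{lo}} = \lbrace 0_{\mathrm{lo}} \rbrace$ has a single element; by Proposition \ref{PROPsplittingpatternconnections} (applied with $t=1$, $i=0$, valid since $\mathfrak{i}_1 = r \geq 1$ — or, in the edge case $r=1$, directly) the component consists of exactly one ``lo'' symbol and its dual ``up'' symbol. Either way $|\Lambda^U(X)| = 2$, so $\Lambda^U(X) = \lbrace 0_{\mathrm{lo}}, (r+s-1)^{\mathrm{up}} \rbrace$.

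The second step is to produce the remaining components. Since $h_{\mathrm{nd}} = 1$, the only splitting indices are $\mathfrak{j}_0 = 0$ and $\mathfrak{j}_1 = r$, so $\mathfrak{i}_1 = r$. Proposition \ref{PROPsummandshifts}(1) (in the case $t=1$, which is the relevant $1 \leq t \leq h_{\mathrm{nd}}$; one should check the hypothesis $t < h_{\mathrm{nd}}$ is used only for the ``moreover'' about finding $\Lambda'$, while the shift conclusion $\Lambda, \Lambda[1], \ldots, \Lambda[\mathfrak{i}_t-1] \in \mathrm{MDT}(\varphi)$ follows from the construction in its proof applied with $\Lambda = \Lambda^U(X)$ — alternatively one can cite the explicit cell-decomposition computation) shows that $\Lambda^U(X)[i]$ for $0 \leq i < \mathfrak{i}_1 = r$ are all elements of $\mathrm{MDT}(\varphi)$. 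These $r$ sets are pairwise disjoint, and each has cardinality $2$, so together they account for $2r = |\Lambda(X)|$ symbols. Since $\mathrm{MDT}(\varphi)$ is a \emph{partition} of $\Lambda(X)$, there is no room for further components, and we conclude that $\mathrm{MDT}(\varphi) = \lbrace \Lambda^U(X)[i] \mid 0 \leq i < r \rbrace$.

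The main obstacle I anticipate is the correct invocation of Proposition \ref{PROPsummandshifts}: its statement is phrased for $1 \leq t < h_{\mathrm{nd}}$, which is vacuous when $h_{\mathrm{nd}} = 1$, so one cannot literally apply it as stated to conclude that the shifts of $\Lambda^U(X)$ are components. The fix is to observe that $\Lambda^U(X) = \lbrace 0_{\mathrm{lo}}, (r+s-1)^{\mathrm{up}}\rbrace$ is the component containing $\alpha_{0_{\mathrm{lo}}} = h^0 \times l_0$, and that applying $D^{0, r-1}$ to the $F$-rational cycle representing the upper component (equivalently, using the indecomposability-preservation of the operators $D^{k_1,k_2}$ from Lemma \ref{LEMderivatives} together with the construction internal to the proof of Proposition \ref{PROPsummandshifts}) yields indecomposable cycles supported on $\Lambda^U(X)[i]$ for each $0 \leq i < r$; alternatively, since $\windex{\varphi_1}$-considerations and the cell decomposition of Proposition \ref{PROPcelldecomposition} give that $|\overline{\mathrm{Ch}}_{d_X}(X^2)|$ has $\mathbb{F}_2$-dimension at least $r$ while Lemma \ref{LEMrationalityoffdiagonalelement} and cardinality force it to be exactly $r$, the partition into $r$ two-element blocks is the only possibility. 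Everything else is bookkeeping with Corollary \ref{CORdimensionofsummand} to confirm the ``up'' ends.
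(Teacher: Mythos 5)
Your proof is correct and follows essentially the same route as the paper's: compute $\Izhdim{\varphi} = r+s$, place $0_{\mathrm{lo}}$ and $(r+s-1)^{\mathrm{up}}$ in $\Lambda^U(X)$ via Proposition \ref{PROPuppersummand} (1), produce the $r$ shifts via Proposition \ref{PROPsummandshifts} (1), and count against $|\Lambda(X)| = 2r$. The one wrinkle you flag --- that Proposition \ref{PROPsummandshifts} is stated for $1 \leq t < h_{\mathrm{nd}}$ and is therefore vacuous when $h_{\mathrm{nd}} = 1$ --- is real but elided in the paper, which simply cites the proposition with $t = 1 = h_{\mathrm{nd}}$; your fix (the $t=1$ construction from the proof of Proposition \ref{PROPsplittingpatternconnections} together with Lemma \ref{LEMderivatives}) is exactly what justifies that citation, and your alternative derivation of $|\Lambda^U(X)| = 2$ from Proposition \ref{PROPuppersummand} (2) is an equally valid substitute for the paper's counting argument.
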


If $r = 1$, then we clearly have that $h_{\mathrm{nd}} = 1$. For larger values of $r$, examples may be produced with the following:

\begin{lemma} \label{LEMclosePfisterneighbours} Write $\mydim{\varphi} = 2^n + m$ for integers $n \geq 0$ and $1 \leq m \leq 2^n$. If $\varphi$ is a Pfister neighbour, then $r+s \leq 2^n$, and the following are equivalent:
\begin{enumerate} \item $h_{\mathrm{nd}} = 1$;
\item $r+s = 2^n$;
\item $\mydim{\varphi} = 2^{n+1} - s$. \end{enumerate}
\begin{proof} The first statement holds by Lemma \ref{LEMPneighbours} (1). By part (3) of the same lemma, we have $\witti{1}{\varphi} = \mydim{\varphi} - 2^n = 2r + s - 2^n$. In particular, $h_{\mathrm{nd}} = 1$ if and only if $2r + s - 2^n = r$, i.e., $r + s = 2^n$.  This shows the equivalence of (1) and (2), and the equivalence of (2) and (3) is clear since $\mydim{\varphi} = 2r + s = 2(r+s)-s$.  \end{proof} \end{lemma}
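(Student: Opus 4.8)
The plan is to reduce everything to the basic properties of anisotropic Pfister neighbours recorded in Lemma \ref{LEMPneighbours}, together with the characterization of nondefective height $1$ from Lemma \ref{LEMnondefectiveheight1}. First observe that writing $\mydim{\varphi} = 2^n + m$ with $1 \le m \le 2^n$ is the same as requiring $2^n < \mydim{\varphi} \le 2^{n+1}$, so the integer $n$ here coincides with the one appearing in Lemma \ref{LEMPneighbours}, and $\varphi$ is anisotropic by the standing assumption of this subsection. Thus the inequality $r + s \le 2^n$ is immediate from Lemma \ref{LEMPneighbours}(1).

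For the equivalences, I would start from Lemma \ref{LEMPneighbours}(3), which says that a Pfister neighbour has maximal splitting, i.e. $\Izhdim{\varphi} = 2^n$. Unwinding the definition $\Izhdim{\varphi} = \mydim{\varphi} - \witti{1}{\varphi}$, this reads $\witti{1}{\varphi} = \mydim{\varphi} - 2^n = (2r + s) - 2^n$.

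Next, invoke Lemma \ref{LEMnondefectiveheight1} (equivalence $(1)\Leftrightarrow(2)$), which says that $h_{\mathrm{nd}} = 1$ precisely when $\witti{1}{\varphi} = r$ — this could equally be read off directly from the definition of $h_{\mathrm{nd}}$. Substituting the identity from the previous step, $h_{\mathrm{nd}} = 1$ holds if and only if $(2r + s) - 2^n = r$, i.e. if and only if $r + s = 2^n$; this is the equivalence of (1) and (2). Finally, $(2)\Leftrightarrow(3)$ is purely arithmetic: since $\mydim{\varphi} = 2r + s = 2(r + s) - s$, the condition $r + s = 2^n$ is equivalent to $\mydim{\varphi} = 2^{n+1} - s$.

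I do not anticipate any real obstacle: the statement is entirely a bookkeeping consequence of the maximal-splitting property of Pfister neighbours and the description of nondefective height $1$ via $\mathfrak{i}_1$, and the only point to check is that those two inputs are applicable here, which they are since $\varphi$ is anisotropic, nonquasilinear, and a Pfister neighbour.
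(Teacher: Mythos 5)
Your proposal is correct and follows essentially the same route as the paper: Lemma \ref{LEMPneighbours}(1) for the inequality, Lemma \ref{LEMPneighbours}(3) to get $\witti{1}{\varphi} = 2r+s-2^n$, the characterization $h_{\mathrm{nd}}=1 \Leftrightarrow \witti{1}{\varphi}=r$ (which the paper uses implicitly and you cite via Lemma \ref{LEMnondefectiveheight1}), and the arithmetic identity $\mydim{\varphi}=2(r+s)-s$ for the last equivalence. No issues.
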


The forms satisfying the conditions of the lemma are said to be \emph{close Pfister neighbours}.  The nondegenerate close Pfister neighbours are the general Pfister forms and their codimension-1 subforms.  It is expected that most anisotropic forms of nondefective height 1 are close Pfister neighbours - see \S \ref{SECPfisterneighbourconjecture} below for more precise statements. 

\begin{remark}  \label{REMPfister} If $\varphi$ is a general $(n+1)$-fold Pfister form (resp. a codimension-$1$ subform of such a form), then the preceding discussion shows that $\Lambda^U(X) = \lbrace 0_{\mathrm{lo}}, (2^n - 1)^{\mathrm{up}} \rbrace$, and that $\mathrm{MDT}(\varphi)$ consists of the shifts $\Lambda^U(X)[i]$ with $0 \leq i < 2^n$ (resp. $0 \leq i < 2^n - 1$). The deeper point underlying this is that the motive of $X$ in $\mathrm{Chow}(F,\mathbb{F}_2)$ decomposes are a direct sum of shifts of the (binary) Rost motive attached to $\varphi$ -- see \cite[Ex. 94.3]{EKM}. \end{remark}

\subsection{Further Restrictions Arising From the Steenrod Operations} Finer restrictions on $\mathrm{MDT}(\varphi)$ may be obtained using the action of the cohomological-type Steenrod operations on $\overline{\mathrm{Ch}}(X^2)$ (see Proposition \ref{PROPbasicresultonrationalcycles} (3)).  For a positive integer $x$, we write $v_2(x)$ for the $2$-adic valuation of $x$. The following, which refines part (2) of Proposition \ref{PROPuppersummand}, extends a result of Karpenko (\cite[Prop. 83.2, Thm. 83.3, Cor. 83.4]{EKM}) to our setting:

\begin{theorem} \label{THMKarpenkouppersummand} Let $v$ be the smallest integer for which $\mathfrak{i}_1 \leq 2^v$.  If $|\Lambda^U(X)|>2$, then:
\begin{enumerate} \item Each positive integer $i$ with $i_{\mathrm{lo}} \in \Lambda^U(X)$ is divisible by $2^v$;
\item If $i$ the smallest positive integer for which $i_{\mathrm{lo}} \in \Lambda^U(X)$, then $i = \mathfrak{j}_{t}$ for some integer $1 \leq t < h_{\mathrm{nd}}$ with $v_2(\mathfrak{i}_{t+1}) \geq v_2(\mathfrak{i}_1)$.  In particular,
$$v_2(\mathfrak{i}_1) \leq \mathrm{max}\lbrace v_2(\mathfrak{i}_2),\hdots,v_2(\mathfrak{i}_{h_{\mathrm{nd}}}) \rbrace.  $$ \end{enumerate}
\begin{proof} (1) Let $0 \leq i <r$ be an integer with $i_{\mathrm{lo}} \in \Lambda^U(X)$. We show that $i$ is divisible by $2^v$ by induction on $i$. If $i = 0$, there is nothing to show. Suppose now that $i>0$, and that each integer $j<i$ with $j_{\mathrm{lo}} \in \Lambda^U(X)$ is divisible by $2^v$.  Set $w: = v_2(i)$. We have to show that $w\geq v$. Suppose, for the sake of contradiction, that $w<v$. By the definition of $v$, we then have that $2^w < \mathfrak{i}_1$.  In particular, we may consider the map
$$ f: = D^{0,\mathfrak{i}_1 -1 - 2^w} \circ S^{2^w} \colon \overline{\mathrm{Ch}}_{d + \mathfrak{i}_1-1}(X^2) \rightarrow \overline{\mathrm{Ch}}_d(X^2). $$
Now, by the proof of Proposition \ref{PROPsplittingpatternconnections}, there is an indecomposable element $\pi \in \overline{\mathrm{Ch}}_{d + \mathfrak{i}_1 - 1}(X^2)$ that involves $h^0 \times l_{\mathfrak{i}_1-1}$.  By Lemma \ref{LEMderivatives}, $D^{0,\mathfrak{i}_1-1}(\pi)$ is then an indecomposable element of $\overline{\mathrm{Ch}}_d(X^2)$ that involves $h^0 \times l_0$. In particular, for any integer $j$, we have $j_{\mathrm{lo}} \in \Lambda^U(X)$ (resp. $j^{\mathrm{up}} \in \Lambda^U(X)$) if and only if $\pi$ involves $h^j \times l_{\mathfrak{i}_1 - 1+j}$ (resp. $\pi$ involves $l_{d_X-j+\mathfrak{i}_1} \times h^{d_X-j}$). Let $j$ be an integer with $j_{\mathrm{lo}} \in \Lambda^U(X)$. By the definition of $S^{2^w}$ (see \S \ref{SUBSECstatementsonrationalcycles}), we have
$$ f(h^j \times l_{\mathfrak{i}_1 -1 + j}) = \sum_{k \geq 0}^{2^w} \binom{j}{2^w - k}\binom{\Izhdim{\varphi}- j}{k} h^{j + 2^w - k} \times l_{j + 2^w - k}. $$
Observe now that if $j<i$, then the products $\binom{j}{2^w - k}\binom{\Izhdim{\varphi}- j}{k}$ appearing in the sum are $0$ in $\mathbb{F}_2$. Indeed, in this case, $j$ is divisible by $2^v$ by hypothesis, and the same is true of $\Izhdim{\varphi}$ by Theorem \ref{THMi1}. Both $j$ and $\Izhdim{\varphi} - j$ are therefore divisible by $2^v$, and the claim then follows from Lucas' theorem (see \cite[Lem. 78.6]{EKM}). In particular, we see that if we express $f(\pi)$ as an $\mathbb{F}_2$-linear combination of the essential standard basis elements in $\mathrm{Ch}_d(X^2)$, then the coefficient of $h^k \times l_k$ is $0$ when $k<i$ and $\binom{\Izhdim{\varphi} - i}{2^w}$ when $j = i$. Since $i_{\mathrm{lo}}$ is connected to $0_{\mathrm{lo}}$ in $\Lambda(X)$ (being an element of $\Lambda^U(X)$), it follows that $\binom{\Izhdim{\varphi} - i}{2^w} = 0$ in $\mathbb{F}_2$.  But since $\Izhdim{\varphi}$ is divisible by $2^v$, another application of Lucas' theorem then tells us that $i$ is not divisible by $2^w$, a contradiction.  The result follows. 

(2)  We already know from part (2) of Proposition \ref{PROPuppersummand} that $i = \mathfrak{j}_t$ for some integer $1 \leq t < h_{\mathrm{nd}}$ with $\mathfrak{i}_{t+1} \geq \mathfrak{i}_1$.  Set $n: = v_2(\mathfrak{i}_1)$. By Theorem \ref{THMi1}, $\mydim{\varphi} = \Izhdim{\varphi} + \mathfrak{i}_1$ is divisible by $2^n$. By part (1), the same is then true of $\mydim{\varphi_t} = \mydim{\varphi} -2\mathfrak{j}_t$. Let $m$ be the smallest integer for which $\mathfrak{i}_{t+1} \leq 2^m$.  Since $\mathfrak{i}_{t+1} \geq \mathfrak{i}_1$, we have $m \geq n$.  Now $\mydim{\varphi_t} - \mathfrak{i}_{t+1} = \Izhdim{\varphi_t}$ is divisible by $2^m$ by another application of Theorem \ref{THMi1}. Since $m \geq n$,and since $\mydim{\varphi_t}$ is divisible by $2^n$, it follows that $\mathfrak{i}_{t+1}$ is also divisible by $2^n$, i.e., $v_2(\mathfrak{i}_{t+1}) \geq n$. This proves the result. 
 \end{proof} \end{theorem}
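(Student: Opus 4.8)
The plan is to adapt Karpenko's treatment of the nondegenerate case (\cite[\S 83]{EKM}), with the intersection-theoretic machinery used there replaced throughout by the tools for rational cycles modulo $2$ developed in \S\S~\ref{SECrationalcycles}--\ref{SECisotropicreduction} and by Propositions \ref{PROPpullbackalongdiagonal}, \ref{PROPisotropicreductionalongsplittingtower}, \ref{PROPsplittingpatternconnections} and \ref{PROPuppersummand} of the present section. The one genuinely new input beyond what is already available here is the action of the cohomological-type Steenrod operations $S^j$ on $\overline{\mathrm{Ch}}(X^2)$ supplied by Proposition \ref{PROPbasicresultonrationalcycles} (3), together with the explicit formulas for $S^j(h^i)$ and $S^j(l_i)$ from \S \ref{SUBSECstatementsonrationalcycles} and the divisibility $2^v \mid \Izhdim{\varphi}$ furnished by Theorem \ref{THMi1}.

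For part (1), I would argue by induction on the integers $i$ with $i_{\mathrm{lo}} \in \Lambda^U(X)$, the case $i = 0$ being vacuous. Fixing $i > 0$, assuming every smaller such integer divisible by $2^v$, setting $w := v_2(i)$, and supposing for contradiction that $w < v$ (so $2^w < \mathfrak{i}_1$), I would feed the indecomposable element $\pi \in \overline{\mathrm{Ch}}_{d_X + \mathfrak{i}_1 - 1}(X^2)$ produced in the proof of Proposition \ref{PROPsplittingpatternconnections} — it involves $h^0 \times l_{\mathfrak{i}_1-1}$, so $D^{0,\mathfrak{i}_1-1}(\pi) = \alpha_{\Lambda^U(X)}$ by Lemma \ref{LEMderivatives} — through the $F$-rationality-preserving composite $f := D^{0,\mathfrak{i}_1-1-2^w} \circ S^{2^w}$, landing in $\overline{\mathrm{Ch}}_{d_X}(X^2)$. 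Expanding $f(\pi)$ in the standard basis via the Cartan formula and the formulas for $S^{2^w}$, the coefficient of $h^j \times l_j$ involves products $\binom{j}{2^w-k}\binom{\Izhdim{\varphi}-j}{k}$; for $j < i$ both $j$ and $\Izhdim{\varphi}-j$ are divisible by $2^v$, so Lucas' theorem (\cite[Lem. 78.6]{EKM}) kills every such contribution, while for $j = i$ one is left with $\binom{\Izhdim{\varphi}-i}{2^w}$. Since $0_{\mathrm{lo}}$ is then not involved in $f(\pi)$ but $0_{\mathrm{lo}}$ and $i_{\mathrm{lo}}$ lie in the same connected component $\Lambda^U(X)$, the indecomposability of $\alpha_{\Lambda^U(X)}$ together with Lemma \ref{LEMintersections} forces $\binom{\Izhdim{\varphi}-i}{2^w} = 0$ in $\mathbb{F}_2$; a final application of Lucas' theorem, using $2^v \mid \Izhdim{\varphi}$, then contradicts $v_2(i) = w$.

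For part (2), I would start from Proposition \ref{PROPuppersummand} (2), which already gives $i = \mathfrak{j}_t$ for some $1 \le t < h_{\mathrm{nd}}$ with $\mathfrak{i}_{t+1} \geq \mathfrak{i}_1$, and then run a short $2$-adic divisibility argument. Writing $n := v_2(\mathfrak{i}_1)$: since $2^v \mid \Izhdim{\varphi}$ and $v \geq n$, Theorem \ref{THMi1} gives $2^n \mid \mydim{\varphi} = \Izhdim{\varphi} + \mathfrak{i}_1$, hence $2^n \mid \mydim{\varphi_t} = \mydim{\varphi} - 2\mathfrak{j}_t$ by part (1). Letting $m$ be minimal with $\mathfrak{i}_{t+1} \leq 2^m$, we have $m \geq n$ because $\mathfrak{i}_{t+1} \geq \mathfrak{i}_1$, and Theorem \ref{THMi1} applied to $\varphi_t$ yields $2^m \mid \Izhdim{\varphi_t} = \mydim{\varphi_t} - \mathfrak{i}_{t+1}$; as $2^n$ divides both $\mydim{\varphi_t}$ and $\Izhdim{\varphi_t}$, it divides $\mathfrak{i}_{t+1}$, i.e.\ $v_2(\mathfrak{i}_{t+1}) \geq v_2(\mathfrak{i}_1)$, and the displayed inequality follows at once.

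I expect the main obstacle to be the bookkeeping in part (1): pinning down exactly which standard basis elements appear with which binomial coefficients after applying $S^{2^w}$ to $\pi$, and justifying cleanly — via the indecomposability of $\alpha_{\Lambda^U(X)}$ and Lemma \ref{LEMintersections} — that the coefficient of $h^i \times l_i$ in $f(\pi)$ must vanish. Once the Steenrod action on $\overline{\mathrm{Ch}}(X^2)$ and the machinery of \S\S~\ref{SECrationalcycles}--\ref{SECisotropicreduction} and the present section are granted, however, this should be a faithful transcription of \cite[\S 83]{EKM}, requiring no essentially new idea.
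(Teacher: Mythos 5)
Your proposal is correct and follows essentially the same route as the paper's proof: the same induction with the operator $D^{0,\mathfrak{i}_1-1-2^w}\circ S^{2^w}$ applied to the indecomposable cycle from Proposition \ref{PROPsplittingpatternconnections}, the same Lucas-theorem bookkeeping for part (1), and the identical $2$-adic divisibility argument via Theorem \ref{THMi1} and Proposition \ref{PROPuppersummand} (2) for part (2). No substantive differences to report.
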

 
In the special case where $\Lambda^U(X)$ involves the ``second shell'', we get:

\begin{corollary} \label{CORconnectiontoshell2} If $i_{\mathrm{lo}} \in \Lambda^U(X)$ for some integer $\mathfrak{j}_1 \leq i< \mathfrak{j}_2$, then there exist nonnegative integers $v$ and $x$ such that:
\begin{itemize} \item[$\mathrm{(i)}$] $\mathfrak{i}_1 = 2^v$ and $\mathfrak{i}_2 = x2^v$;
\item[$\mathrm{(ii)}$] $\lbrace 0 \leq i < \mathfrak{j}_2\;|\; i_{\mathrm{lo}} \in \Lambda^U(X) \rbrace = \lbrace 0, 2^v, 2\cdot 2^v,\hdots, x2^v \rbrace$. \end{itemize}
\begin{proof} The existence of nonnegative integers $v$ and $x$ satisfying (i) is immediate from Theorem \ref{THMKarpenkouppersummand}. Suppose now that $\Lambda$ is an element of $\mathrm{MDT}(\varphi)$ containing $i_{\mathrm{lo}}$ for some $0 \leq i < \mathfrak{j}_2$. If we had $a(\Lambda) \geq \mathfrak{j}_1$, then Proposition \ref{PROPsummandshifts} (1) would imply that the same is true of every element of $\mathrm{MDT}(\varphi)$ containing $i_{\mathrm{lo}}$ for some $\mathfrak{j}_1 \leq i < \mathfrak{j}_2$. Since $\Lambda^U(X)$ is one of these elements, however, we must then have that $a(\Lambda) < \mathfrak{j}_1$. By another application of Proposition \ref{PROPsummandshifts} (1), we then have that $\Lambda=  \Lambda^U(X)[j]$ for some $0 \leq j < \mathfrak{i}_1$. At the same time, Proposition \ref{PROPsummandshifts} (2) implies that the components $\Lambda^U(X),\Lambda^U(X)[1],\hdots,\Lambda^U(X)[\mathfrak{i}_1 - 1]$ contain an equal number of elements $i_{\mathrm{lo}}$ with $0 \leq i < \mathfrak{j}_2$, and so (ii) must also hold. \end{proof} \end{corollary} 
 
When $\varphi$ is nondegenerate, another non-trivial restriction on the integer $i$ from part (2) due to Karpenko is essentially established in \cite[Thm. 81.2, Cor. 81.19]{EKM} (the characteristic assumption in \cite{EKM} was only imposed since the Steenrod operations of \cite{Primozic} where not available at that time). The same arguments, modulo the kind of adjustments implemented in the preceding discussion, yield an analogous result in our setting.  Since the proof is long and technical, we omit the details and simply state here the conclusion:

\begin{theorem} Suppose that $|\Lambda^U(X)|>2$, and let $1 \leq t < h_{\mathrm{nd}}$ be such that $\mathfrak{j}_t = \mathrm{min}\lbrace i>0\;|\; i_{\mathrm{lo}} \in \Lambda^U(X) \rbrace$ $($see Theorem \ref{THMKarpenkouppersummand}$)$. If $v_2(\mathfrak{j}_t - \mathfrak{i}_1) \geq v_2(\mathfrak{i}_1) + 2$, then $v_2(\mathfrak{i}_{t+1}) \leq v_2(\mathfrak{i}_1) + 1$.  In particular, 
$$v_2(\mathfrak{i}_1) \geq \mathrm{min}\lbrace v_2(\mathfrak{i}_2),\hdots,v_2(\mathfrak{i}_{h_{\mathrm{nd}}}) \rbrace -1. $$
\end{theorem}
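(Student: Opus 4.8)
The plan is to obtain the main implication by transcribing the proof of \cite[Thm.~81.2, Cor.~81.19]{EKM} into the present framework, in exactly the manner already used for the proofs of Proposition~\ref{PROPsummandshifts}, Proposition~\ref{PROPuppersummand}(2) and Theorem~\ref{THMKarpenkouppersummand}: the cohomological-type Steenrod operations on $\overline{\mathrm{Ch}}(X^2)$ of Proposition~\ref{PROPbasicresultonrationalcycles}(3) play the role of those on smooth quadrics, the isotropic-reduction maps $f_t,g_t$ of Proposition~\ref{PROPisotropicreductionalongsplittingtower} play the role of scalar extension up the nondefective splitting tower, and the maps $\mathrm{mult}\otimes\mathrm{id}$ of Proposition~\ref{PROPpullbackalongdiagonal} play the role of pullback along partial diagonals; crucially, all three kinds of map preserve $F$-rationality, which is all that the argument actually uses. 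Granting the main implication, I would then deduce the displayed ``in particular'' by a purely arithmetic step: assume for contradiction that $v_2(\mathfrak{i}_j)\geq v_2(\mathfrak{i}_1)+2$ for every $2\leq j\leq h_{\mathrm{nd}}$. Since $\varphi$ is anisotropic we have $\mathfrak{j}_t-\mathfrak{i}_1=\mathfrak{i}_2+\cdots+\mathfrak{i}_t$ (the empty sum when $t=1$), so every summand here has $2$-adic valuation $\geq v_2(\mathfrak{i}_1)+2$, whence $v_2(\mathfrak{j}_t-\mathfrak{i}_1)\geq v_2(\mathfrak{i}_1)+2$; the main implication then forces $v_2(\mathfrak{i}_{t+1})\leq v_2(\mathfrak{i}_1)+1$, and since $1\leq t<h_{\mathrm{nd}}$ we have $2\leq t+1\leq h_{\mathrm{nd}}$, contradicting the assumption.

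For the main implication itself, I would first set $v:=v_2(\mathfrak{i}_1)$ and record, via Theorem~\ref{THMi1}, that $2^v$ divides $\Izhdim{\varphi}$ and hence $\mydim{\varphi}$, and likewise that the relevant $2$-powers divide $\mydim{\varphi_t}$ and $\Izhdim{\varphi_t}$ (using, for the latter, isotropic reduction past $F_t$ together with part~(1) of Theorem~\ref{THMKarpenkouppersummand} and Theorem~\ref{THMi1}). As in the proof of Proposition~\ref{PROPsplittingpatternconnections}, scalar extension to $F_1=F(\varphi)$ produces an indecomposable cycle $\pi\in\overline{\mathrm{Ch}}_{d_X+\mathfrak{i}_1-1}(X^2)$ involving $h^0\times l_{\mathfrak{i}_1-1}$, and by Lemma~\ref{LEMderivatives} the cycle $D^{0,\mathfrak{i}_1-1}(\pi)$ is an indecomposable element of $\overline{\mathrm{Ch}}_{d_X}(X^2)$ whose associated set of symbols is $\Lambda^U(X)$; thus $\pi$ records the whole upper component. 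One then applies to $\pi$ suitable composites of the operations $S^{2^v}$ and $S^{2^{v+1}}$, the derivations $D^{k_1,k_2}$, and the diagonal-type maps $\mathrm{mult}\otimes\mathrm{id}$ of Proposition~\ref{PROPpullbackalongdiagonal}, each of which both preserves $F$-rationality and has a transparent effect on the shell-pyramid positions of the standard basis elements involved (here Remark~\ref{REMisotropicreductionforMDT} and the visualisation of $D^{k_1,k_2}$ in \S\ref{SUBSECcyclesonX^2} are used freely). The scalars attached to the outputs are products of binomial coefficients; by Lucas' theorem (\cite[Lem.~78.6]{EKM}) together with the divisibilities above, all but a narrow range of them vanish modulo $2$, and imposing the hypothesis $v_2(\mathfrak{j}_t-\mathfrak{i}_1)\geq v+2$ and comparing the surviving coefficients against the known shape of $\Lambda^U(X)$ (Proposition~\ref{PROPuppersummand}, Theorem~\ref{THMKarpenkouppersummand}) shows that $2^{v+2}\mid\mathfrak{i}_{t+1}$ would force a binomial coefficient of the form $\binom{c}{2^{v+1}}$, with $c$ a combination of $\mydim{\varphi_t}$ and $\mathfrak{i}_{t+1}$ of $2$-adic valuation $\geq v+2$, to be nonzero in $\mathbb{F}_2$ — which Lucas' theorem rules out. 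This contradiction gives $v_2(\mathfrak{i}_{t+1})\leq v+1$, as desired; the bookkeeping is an induction on the relevant index, organised exactly as in \cite[proof of Thm.~81.2]{EKM}.

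The hard part here is not conceptual but organisational: the EKM argument is long, and at each step one must verify that the cycle produced still lies in $\overline{\mathrm{Ch}}(X^2)$ and involves only standard basis elements permitted by Lemma~\ref{LEMshells} and Proposition~\ref{PROPisotropyrestrictions}, so that the shell-pyramid picture — and the shift behaviour of $f_t,g_t$ — continues to apply; I expect this to be where essentially all of the work lies. It is worth emphasising that, in contrast to the excellent-connections statement of Conjecture~\ref{CONJexcellentconnections}, the present argument never needs the Steenrod action to descend to $\mathrm{Ch}(X^2)$ itself: it uses only that the operations $S^j$ preserve the subspace of $F$-rational cycles, which is precisely what Proposition~\ref{PROPbasicresultonrationalcycles}(3) supplies. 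This is why the statement can be proved here unconditionally, and we accordingly confine ourselves to indicating the strategy and omit the lengthy verification.
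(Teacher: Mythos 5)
Your proposal is correct and takes essentially the same route as the paper: the paper gives no proof of this theorem beyond remarking that the argument of \cite[Thm.~81.2, Cor.~81.19]{EKM} carries over once the cohomological-type Steenrod operations, the isotropic-reduction maps and the diagonal-type maps of \S\S 4--7 are substituted for their smooth-quadric counterparts, which is precisely your plan. Your arithmetic deduction of the ``in particular'' clause from the main implication is sound and in fact supplies a step the paper leaves implicit.
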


\begin{remark} When $\varphi$ is nondegenerate, a sufficient condition for the inequality $|\Lambda^U(X)| > 2$ is that $\varphi$ does \emph{not} have maximal splitting: This is the ``binary motive theorem'' of Vishik, a special case of Theorem \ref{THMVishikconnections} below. When $\varphi$ is degenerate, this is no longer valid in general, but some sufficient conditions are (implicitly) discussed in \S \ref{SECexcellentconnections} below. 
\end{remark}

\section{Decompositions Arising from Stable Birational Equivalences of Quadratic Grassmannians} \label{SECVishik} Over fields of characteristic different from 2, a basic but important result of Vishik (see \cite[Thm. 4.7]{Vishik2}) allows to relate the Chow motives of the (smooth) projective quadrics attached to two anisotropic quadratic forms in situations where certain of their associated quadratic Grassmannians are stably birationally isomorphic. In particular, one can relate the motivic decomposition types of the two forms under the appropriate hypotheses (see Remarks \ref{REMSMDT}). In this section, we prove the analogous statement for the $\mathrm{MDT}$ invariant in our setting. While Vishik's arguments make use of quadratic Grassmannians and their motives, we give here a more direct argument lying within the framework of the previous sections. For nondegenerate forms, this yields the stronger motivic statement as in \cite{Vishik2}.\footnote{The characteristic-2 case of Vishik's result for nondegenerate forms appears to be absent from the literature (in particular, it is not discussed in \cite{EKM}).}

\subsection{Basic Result} Let $\varphi$ and $\psi$ be anisotropic quadratic forms over $F$ of types $(r,s)$ and $(r',s')$, respectively. We assume that $r,r' \geq 1$, i.e., that $\varphi$ and $\psi$ are nonquasilinear. Set $X := X_{\varphi}$ and $Y := Y_{\varphi}$. If necessary, we choose orientations of $X$ and $Y$. When considering separable extensions $K$ of $F$, we choose orientations of $X_K$ and $Y_K$ compatible with those for $X$ and $Y$ (note that $\varphi_K$ and $\psi_K$ remain nondefective here). When considering products of copies of these quadrics (over $F$ or a separable extension of $F$), we orient the product using the given orientations of the individual factors. 

\begin{lemma} \label{LEMVishik2} Suppose we have integers $1 \leq s < h_{\mathrm{nd}}(\varphi)$ and $1 \leq t < h_{\mathrm{nd}}(\psi)$ such that for every separable extension $K/F$, we have $\windex{\varphi_K} > \wittj{s-1}{\varphi}$ if and only if $\windex{\psi_K} > \wittj{t-1}{\psi}$. Let $F = F_0 \subseteq F_1 \subseteq \cdots \subseteq F_{h_{\mathrm{nd}}(\varphi)}$ be the nondefective splitting tower of $\varphi$, let $0 \leq s' <s$, and set $\varphi': = \varphi_{s'} = \anispart{(\varphi_{F_{s'}})}$ and $\psi': = \anispart{(\psi_{F_{s'}})}$. Then:
\begin{enumerate} \item The nondefective splitting pattern of $\psi'$ is contained in the set $\lbrace \wittj{k}{\psi} - \windex{\psi_{F_{s'}}}\;|\; 0 \leq k \leq h_{\mathrm{nd}}(\psi) \rbrace$, and contains $\wittj{t}{\psi} - \windex{\psi_{F_{s'}}}$. 
 \item There exists an integer $0 \leq t' < t$ such that $\wittj{t'}{\psi} - \windex{\psi_{F_{s'}}}$ lies in the nondefective splitting pattern of $\psi'$, and such that the following are equivalent for every separable extension $K/F_{s'}$:
 \begin{itemize} \item $\windex{\varphi'_K} > \wittj{s-1}{\varphi} - \wittj{s'}{\varphi}$ 
 \item $\windex{\psi'_K} > \wittj{t'}{\varphi} - \windex{\psi_{F_{s'}}}$;
 \item $\windex{\psi'_K} \geq \wittj{t}{\varphi} - \windex{\psi_{F_{s'}}} -1$. \end{itemize}
 
\end{enumerate}
\begin{proof} (1) Since any separable extension of $F_{s'}$ is a separable extension of $F$, the first statement holds by Lemma \ref{LEMnondefectiveheight}. For the second statement, let $K$ be the unique field in the nondefective splitting tower of $\psi$ for which $\windex{\psi_K} = \wittj{t}{\psi}$. By construction, $K/F$ is separable, and so $\windex{\varphi_K} > \wittj{s-1}{\varphi}$ by hypothesis. By repeated application of Lemma \ref{LEMfunctionfields} (2), the compositum $K \cdot F_{s'}$ is then a purely transcendental extension of $K$, and so $\windex{\psi'_{K \cdot F_{s'}}} = \windex{\psi_K} - \windex{\psi_{F_{s'}}} = \wittj{t}{\psi} - \windex{\psi_{F_{s'}}}$. Since $K/F$ and $F_{s'}/F$ are separable, so is $K \cdot F_{s'}/F$, and another application of Lemma \ref{LEMnondefectiveheight} then gives the claim.

(2) Since $\windex{\varphi_{F_{s'}}} = \wittj{s'}{\varphi} \leq \wittj{s-1}{\varphi}$, and since $F_{s'}/F$ is separable, we have $\windex{\psi_{F_{s'}}} \leq \wittj{t-1}{\psi}$. By (1), there is then a largest integer $0 \leq t'<t$ such that $\wittj{t'}{\psi} - \windex{\psi_{F_{s'}}}$ lies in the nondefective splitting pattern of $\psi'$. Since $\wittj{t}{\psi} - \windex{\psi_{F_{s'}}}$ also lies in this set by (1), it then follows from our standing assumption (and the fact that any separable extension of $F_{s'}$ is a separable extension of $F$) that $t'$ has the desired property.\end{proof}
\end{lemma}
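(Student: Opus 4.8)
The plan is to read off both statements from the two key "transfer" principles already available: Lemma \ref{LEMnondefectiveheight} (the nondefective splitting pattern is exactly the set of Witt indices attained over separable extensions) and Lemma \ref{LEMfunctionfields} (2) (function fields of quadrics with positive Witt index are purely transcendental, so scalar extension along the nondefective splitting tower up to the point where the Witt index exceeds the current value does not create new isotropy). For part (1), I would first observe that since $F_{s'}/F$ is separable (it is a tower of function fields of generically smooth quadrics, by Lemma \ref{LEMfunctionfields} (1)), any separable extension of $F_{s'}$ is a separable extension of $F$. Hence the nondefective splitting pattern of $\psi' = \anispart{(\psi_{F_{s'}})}$, which by Lemma \ref{LEMnondefectiveheight} is $\lbrace \windex{\psi'_K}\;|\; K/F_{s'} \text{ separable}\rbrace$, injects into $\lbrace \windex{\psi_K}\;|\; K/F \text{ separable}\rbrace = \lbrace \wittj{k}{\psi}\;|\; 0 \leq k \leq h_{\mathrm{nd}}(\psi)\rbrace$ after subtracting the constant $\windex{\psi_{F_{s'}}}$ (this constant is itself $\wittj{k_0}{\psi}$ for some $k_0$, since $\psi$ stays nondefective over the separable extension $F_{s'}$). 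For the claim that $\wittj{t}{\psi} - \windex{\psi_{F_{s'}}}$ actually lies in this pattern, I would take $K$ to be the field in the nondefective splitting tower of $\psi$ with $\windex{\psi_K} = \wittj{t}{\psi}$; this $K/F$ is separable, so the hypothesis gives $\windex{\varphi_K} > \wittj{s-1}{\varphi} \geq \wittj{s'}{\varphi}$, whence by repeated application of Lemma \ref{LEMfunctionfields} (2) the compositum $K \cdot F_{s'}$ is purely transcendental over $K$ — so $\windex{\psi'_{K\cdot F_{s'}}} = \windex{\psi_K} - \windex{\psi_{F_{s'}}}$, and $K \cdot F_{s'}/F$ is separable, so Lemma \ref{LEMnondefectiveheight} puts this value in the nondefective splitting pattern of $\psi'$.

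For part (2), I would first note that the hypothesis $\windex{\varphi_{F_{s'}}} = \wittj{s'}{\varphi} \leq \wittj{s-1}{\varphi}$ forces, via the "only if" direction of the standing assumption applied to the separable extension $F_{s'}/F$, that $\windex{\psi_{F_{s'}}} \leq \wittj{t-1}{\psi}$; in particular $\wittj{t}{\psi} - \windex{\psi_{F_{s'}}} \geq 1$. Then by part (1) there is a well-defined \emph{largest} integer $0 \leq t' < t$ with $\wittj{t'}{\psi} - \windex{\psi_{F_{s'}}}$ in the nondefective splitting pattern of $\psi'$ (there is at least one, since $\wittj{t-1}{\psi} \geq \windex{\psi_{F_{s'}}}$ gives $0$ as a candidate, corresponding to an index $t'$ with $\wittj{t'}{\psi} = \windex{\psi_{F_{s'}}}$). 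The point is then that "$\windex{\psi'_K} > \wittj{t'}{\psi} - \windex{\psi_{F_{s'}}}$" is equivalent to "$\windex{\psi'_K} \geq \wittj{t}{\psi} - \windex{\psi_{F_{s'}}} - 1$" because $\wittj{t'}{\psi} - \windex{\psi_{F_{s'}}}$ and $\wittj{t}{\psi} - \windex{\psi_{F_{s'}}}$ are \emph{consecutive} entries of the nondefective splitting pattern of $\psi'$ (by maximality of $t'$ and part (1)): the Witt index of $\psi'_K$ over a separable $K$ can only attain values in that pattern (Lemma \ref{LEMnondefectiveheight}), so exceeding the lower of two consecutive entries is the same as reaching at least one below the higher one. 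Finally, for the equivalence with "$\windex{\varphi'_K} > \wittj{s-1}{\varphi} - \wittj{s'}{\varphi}$", I would translate back to $F$: for $K/F_{s'}$ separable, $\windex{\varphi'_K} = \windex{\varphi_K} - \wittj{s'}{\varphi}$ and $\windex{\psi'_K} = \windex{\psi_K} - \windex{\psi_{F_{s'}}}$ (again using Lemma \ref{LEMfunctionfields} (2) along the tower $F \subseteq F_1 \subseteq \cdots \subseteq F_{s'} \subseteq K\cdot F_1 \subseteq \cdots$, whose steps have positive Witt index by construction up to $F_{s'}$, then $K$ just sits on top) — so "$\windex{\varphi'_K} > \wittj{s-1}{\varphi} - \wittj{s'}{\varphi}$" becomes "$\windex{\varphi_K} > \wittj{s-1}{\varphi}$", which by the standing assumption is "$\windex{\psi_K} > \wittj{t-1}{\psi}$". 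Since $\wittj{t-1}{\psi} \leq \wittj{t'}{\psi}$ would break things if the inequality were not tight, I need the extra observation that, by the choice of $t'$ as the largest index below $t$ contributing to the pattern of $\psi'$, the values $\wittj{t'}{\psi}, \wittj{t'+1}{\psi}, \ldots, \wittj{t-1}{\psi}, \wittj{t}{\psi}$ all become equal to $\wittj{t'}{\psi}$ or $\wittj{t}{\psi}$ modulo the constant shift when restricted to separable extensions of $F_{s'}$ — equivalently, over any separable $K/F_{s'}$ one cannot have $\wittj{t'}{\psi} - \windex{\psi_{F_{s'}}} < \windex{\psi'_K} < \wittj{t}{\psi} - \windex{\psi_{F_{s'}}}$. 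This makes "$\windex{\psi_K} > \wittj{t-1}{\psi}$" for such $K$ equivalent to "$\windex{\psi'_K} > \wittj{t'}{\psi} - \windex{\psi_{F_{s'}}}$", closing the loop.

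The main obstacle I anticipate is bookkeeping the two constant shifts $\wittj{s'}{\varphi}$ (for $\varphi$) and $\windex{\psi_{F_{s'}}}$ (for $\psi$, which is \emph{not} a priori $\wittj{s'}{\psi}$) and verifying carefully that $t'$ as defined — the largest index $<t$ whose $\psi$-splitting value, shifted, is realized in the $\psi'$-pattern — genuinely makes $\wittj{t'}{\psi} - \windex{\psi_{F_{s'}}}$ and $\wittj{t}{\psi} - \windex{\psi_{F_{s'}}}$ \emph{adjacent} in the nondefective splitting pattern of $\psi'$; this adjacency is exactly what converts the strict inequality "$>\wittj{t'}{\psi}-\windex{\psi_{F_{s'}}}$" into the non-strict "$\geq \wittj{t}{\psi}-\windex{\psi_{F_{s'}}}-1$" and, combined with the standing hypothesis transported to $F_{s'}$, into "$\windex{\varphi'_K} > \wittj{s-1}{\varphi} - \wittj{s'}{\varphi}$". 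Everything else is a mechanical application of Lemmas \ref{LEMnondefectiveheight} and \ref{LEMfunctionfields}, together with the elementary fact that separability is transitive and preserved by composita.
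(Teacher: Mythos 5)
Your proposal is correct and follows essentially the same route as the paper: part (1) is the identical compositum argument (take the field $K$ in the tower of $\psi$ realizing $\wittj{t}{\psi}$, use the standing hypothesis plus Lemma \ref{LEMfunctionfields} (2) to see $K \cdot F_{s'}/K$ is purely transcendental, then apply Lemma \ref{LEMnondefectiveheight}), and part (2) simply fills in the ``consecutive pattern entries'' reasoning that the paper compresses into one sentence. The only caveat --- shared with the paper's own formulation rather than being a defect of your argument --- is that the equivalence of the second and third bullets has an off-by-one edge case when $\wittj{t}{\psi} = \wittj{t'}{\psi} + 1$ with both shifted values lying in the pattern of $\psi'$, where ``$\windex{\psi'_K} \geq \wittj{t}{\psi} - \windex{\psi_{F_{s'}}} - 1$'' is strictly weaker than ``$\windex{\psi'_K} > \wittj{t'}{\psi} - \windex{\psi_{F_{s'}}}$''; only the latter equivalence, which you do establish correctly, is actually used in the sequel.
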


\begin{lemma} \label{LEMVishik1} Let $1 \leq t < h_{\mathrm{nd}}(\psi)$, and set $a: = d_X - d_Y + \wittj{t-1}{\psi} + \wittj{t}{\psi} - 1$. Suppose that for every separable extension $K/F$, $\windex{\varphi_K} >0$ if and only if $\windex{\psi_K} > \mathfrak{j}_{t-1}$. Then $a\geq 0$, and any element of $\overline{\mathrm{Ch}}_{d_X + \wittj{t}{\psi} - 1}$ that involves $h^0 \times l_{\wittj{t}{\psi} - 1}$ also involves $l_a \times h^{\wittj{t-1}{\psi}}$. 
\begin{proof} As a matter of notation, let us set $l_i : = 0$ whenever $i<0$. Let $F_{t-1}$ be the unique field in the nondefective splitting tower for $\psi$ with $\windex{\psi_{F_{t-1}}} = \mathfrak{j}_{t-1}(\psi)$. By hypothesis, $\varphi$ remains anisotropic over $F_{t-1}$. Set $\psi' := \psi_{t-1} = \anispart{(\psi_{F_{t-1}})}$ and $Y' := X_{\psi'}$. If necessary, we choose an orientation of $Y'$ and use this to orient $X_{F_{t-1}} \times Y'$.  Let $p \colon R_{X \times Y} \rightarrow R_{X_{F_{t-1}} \times Y'}$ be the composition of the scalar extension map $R_{X \times Y} \rightarrow R_{(X \times  Y)_{F_{t-1}}}$ and the map $f \colon R_{(X \times  Y)_{F_{t-1}}} \rightarrow R_{X_{F_{t-1}} \times Y'}$ of Proposition \ref{PROPisotropicreduction}. By the latter, $p$ sends $F$-rational elements of the source to $F_{t-1}$-rational elements of the target, sends $h^0 \times l_{\wittj{t}{\psi}-1}$ to $h^0 \times l_{\wittj{1}{\psi_{t-1}}-1}$ and sends $l_a \times h^{\wittj{t-1}{\psi}}$ to $l_{a} \times h^0$. If $K/F_{t-1}$ is a separable extension, then we have $\windex{\varphi_K}>0$ if and only if $\windex{\psi'_K} >0$ by Lemma \ref{LEMVishik2}. To prove the lemma, we can therefore assume that $t=1$ (note that $d_Y' = d_Y - 2\mathfrak{j}_{t-1}(\psi)$, so replacing $\psi$ with $\psi_{t-1}$ does not change $a$). Under this assumption, $\varphi$ becomes isotropic over $F_1: = F(\psi)$. Consider the map $\mu \colon X_{F_1} = \mathrm{Spec}(F_1) \times X \rightarrow Y \times X$ given by the inclusion of the generic point of $Y$ on the first factor and the identity on the second. By \cite[Cor. 57.11]{EKM}, the pullback $\mu^* \colon \overline{\mathrm{Ch}}(Y \times X) \rightarrow \overline{\mathrm{Ch}}(X_{F_1})$ is surjective. Since $\varphi_{F_1}$ is isotropic, the target contains $l_0$. By the definition of $\mu$, it follows that there exists an indecomposable element of $\overline{\mathrm{Ch}}_{d_Y}(Y \times X)$ involving $h^0 \times l_0$. Applying $D^{\wittj{1}{\psi}-1,0}$ to this element, we can then find an indecomposable element $\beta \in \overline{\mathrm{Ch}}_{d_Y - \wittj{1}{\psi} +1}(Y \times X)$ that involves $h^{\wittj{1}{\varphi}-1} \times l_0$. Suppose now that $\alpha$ is an element of $\overline{\mathrm{Ch}}_{d_X + \wittj{1}{\psi} - 1}(X \times Y)$ that involves $h^0 \times l_{\wittj{1}{\psi} - 1}$. We can then write $\alpha = (h^0 \times l_{\wittj{1}{\psi} - 1}) + \lambda(l_a \times h^0) + \alpha'$ for some $\lambda \in \mathbb{F}_2$ and some $\alpha' \in (R_{X \times Y})_{d_X + \wittj{1}{\psi}-1}$ that involves neither $h^0 \times  l_{\wittj{1}{\psi} - 1}$ or $l_a \times h^0$.  Replacing $\alpha$ with $\mathrm{ess}(\alpha)$ if needed, we can further assume that $\alpha'$ involves no standard basis elements with $h^0$ as the second factor. Consider now the composite cycle $\alpha \circ \beta \in \overline{\mathrm{Ch}}_{d_Y}(Y^2)$. Since $\beta$ involves $h^{\wittj{1}{\psi}-1} \times l_0$, and since $\alpha$ involves $h^0 \times l_{\wittj{1}{\psi}-1}$, Lemma \ref{LEMcompositionformulas} shows that $\alpha \circ \beta$ involves $h^{\wittj{1}{\psi} - 1} \times l_{\wittj{1}{\psi}-1}$. Applying Proposition \ref{PROPsplittingpatternconnections} to an indecomposable element $\eta \in \overline{\mathrm{Ch}}_{d_Y}(Y^2)$ involving $h^{\wittj{1}{\psi} - 1} \times l_{\wittj{1}{\psi}-1}$ and satisfying $\eta \subset \alpha \circ \beta$, we then get that $\alpha \circ \beta$ also involves $l_0 \times h^0$. Since $\alpha'$ involves no standard basis elements with $h^0$ as the second factor, Lemma \ref{LEMcompositionformulas} then tells us that $\lambda(l_a \times h^0) \circ \beta \neq 0$, and so $a \geq 0$ and $\lambda = 1$, proving the lemma. \end{proof} \end{lemma}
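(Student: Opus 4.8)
The plan is to adapt Vishik's argument in characteristic $\neq 2$, working directly with the rings of rational cycles and the composition law of \S\ref{SECcorrespondences} in place of Chow motives of quadratic Grassmannians. The proof divides into a reduction to the case $t=1$ by isotropic reduction along the nondefective splitting tower of $\psi$, and, in that case, the construction of an auxiliary $F(\psi)$-rational correspondence from $Y$ to $X$ recording the forced isotropy of $\varphi$.

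For the reduction I would pass to the field $F_{t-1}$ in the nondefective splitting tower of $\psi$ with $\windex{\psi_{F_{t-1}}}=\wittj{t-1}{\psi}$; this is a separable extension of $F$ by Lemma \ref{LEMfunctionfields} (1), so the hypothesis forces $\windex{\varphi_{F_{t-1}}}=0$, i.e.\ $\varphi$ remains anisotropic over $F_{t-1}$. Setting $\psi':=\anispart{(\psi_{F_{t-1}})}$ and $Y':=X_{\psi'}$, let $p\colon R_{X\times Y}\to R_{X_{F_{t-1}}\times Y'}$ be the composite of the scalar-extension map with the isotropic reduction map $f$ of Proposition \ref{PROPisotropicreduction}. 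From the explicit formulas there, $p$ carries $F$-rational cycles to $F_{t-1}$-rational ones, carries $h^0\times l_{\wittj{t}{\psi}-1}$ to $h^0\times l_{\wittj{1}{\psi'}-1}$ and $l_a\times h^{\wittj{t-1}{\psi}}$ to $l_a\times h^0$ (the constant $a$ being again the relevant one for the pair $(\varphi_{F_{t-1}},\psi')$), and respects the notion of a cycle involving a given standard basis element in the relevant degrees; Lemma \ref{LEMVishik2} then supplies precisely the separable-extension hypothesis for $(\varphi_{F_{t-1}},\psi')$ with $t$ replaced by $1$. Hence it suffices to treat $t=1$, where $a=d_X-d_Y+\wittj{1}{\psi}-1$ and $\varphi$ becomes isotropic over $F_1:=F(\psi)$ (separable by Lemma \ref{LEMfunctionfields} (1)).

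For $t=1$ I would argue as follows. The morphism $\mu\colon X_{F_1}=\mathrm{Spec}(F_1)\times X\to Y\times X$ given by the generic point of $Y$ on the first factor and the identity on the second has surjective pullback $\mu^*\colon\overline{\mathrm{Ch}}(Y\times X)\to\overline{\mathrm{Ch}}(X_{F_1})$ by \cite[Cor.\ 57.11]{EKM}; since $\varphi_{F_1}$ is isotropic, its target contains $l_0$, so there is an indecomposable element of $\overline{\mathrm{Ch}}_{d_Y}(Y\times X)$ involving $h^0\times l_0$, and applying $D^{\wittj{1}{\psi}-1,0}$ (which preserves indecomposability, cf.\ Lemma \ref{LEMderivatives}) produces an indecomposable $\beta\in\overline{\mathrm{Ch}}_{d_Y-\wittj{1}{\psi}+1}(Y\times X)$ involving $h^{\wittj{1}{\psi}-1}\times l_0$. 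Given $\alpha\in\overline{\mathrm{Ch}}_{d_X+\wittj{1}{\psi}-1}(X\times Y)$ involving $h^0\times l_{\wittj{1}{\psi}-1}$, a degree count shows $l_a\times h^0$ is the only candidate essential standard basis element of the form $l_i\times h^0$ in this degree, so I may write $\alpha=(h^0\times l_{\wittj{1}{\psi}-1})+\lambda(l_a\times h^0)+\alpha'$ with $\lambda\in\mathbb{F}_2$ (the middle term absent if $a<0$), and after replacing $\alpha$ by $\mathrm{ess}(\alpha)$ I may further assume $\alpha'$ involves no standard basis element having $h^0$ as second factor. Then $\alpha\circ\beta\in\overline{\mathrm{Ch}}_{d_Y}(Y^2)$ involves $h^{\wittj{1}{\psi}-1}\times l_{\wittj{1}{\psi}-1}$ by Lemma \ref{LEMcompositionformulas}; passing to an indecomposable $\eta\subset\alpha\circ\beta$ containing that element and applying Proposition \ref{PROPsplittingpatternconnections} on $Y$ shows $\alpha\circ\beta$ also involves $l_0\times h^0$. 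Since neither $(h^0\times l_{\wittj{1}{\psi}-1})\circ\beta$ nor $\alpha'\circ\beta$ can involve $l_0\times h^0$ (the former has $l_{\wittj{1}{\psi}-1}\neq h^0$ as its second factor, the latter by construction), the term $\lambda(l_a\times h^0)\circ\beta$ must; by Lemma \ref{LEMcompositionformulas} this is possible only if $a\geq 0$ and $\lambda=1$, which gives both conclusions.

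I expect the main obstacle to be the bookkeeping in the reduction to $t=1$: one must check carefully that the isotropic reduction map $p$ interacts correctly both with $F_{t-1}$-rationality and with the precise combinatorics of which standard basis elements a cycle involves, and that Lemma \ref{LEMVishik2} is invoked with exactly the indices needed for the hypothesis to descend to $(\varphi_{F_{t-1}},\psi')$. A second delicate point, in the $t=1$ step, is the analysis of $\alpha\circ\beta$ through Lemma \ref{LEMcompositionformulas}: verifying that no spurious cancellations occur and that the indecomposable $\eta$ genuinely connects $h^{\wittj{1}{\psi}-1}\times l_{\wittj{1}{\psi}-1}$ to $l_0\times h^0$, which is exactly where the splitting-pattern connectedness of $Y$ (Proposition \ref{PROPsplittingpatternconnections}) is needed.
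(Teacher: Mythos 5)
Your proposal is correct and follows essentially the same route as the paper's own proof: the reduction to $t=1$ via the isotropic reduction map $p$ through $F_{t-1}$ together with Lemma \ref{LEMVishik2}, and then, for $t=1$, the construction of the indecomposable correspondence $\beta$ from the surjectivity of $\mu^*$, the decomposition of $\alpha$, and the analysis of $\alpha\circ\beta$ via Lemma \ref{LEMcompositionformulas} and Proposition \ref{PROPsplittingpatternconnections}. The only (harmless) cosmetic difference is in how the image of $l_a\times h^{\wittj{t-1}{\psi}}$ under $p$ is indexed.
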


Our version of Vishik's result is now the following:

\begin{theorem} \label{THMVishik} Suppose that there are integers $0 \leq m < r$ and $0 \leq n < r'$ such that for every separable extension $K/F$, we have $\windex{\varphi_K} > m$ if and only if $\windex{\psi_K} > n$.  If $\mathrm{MDT}(\varphi)$ admits an element $\Lambda$ with $a(\Lambda) = m$, then $\Lambda[n-m]$ is an element of $\mathrm{MDT}(\psi)$. 
\begin{proof} Let $s$ and $t$ be the unique integers with $\wittj{s-1}{\varphi} \leq m < \wittj{s}{\varphi}$ and $\wittj{t-1}{\psi} \leq n< \wittj{t}{\psi}$.  By Lemma \ref{LEMnondefectiveheight} and Proposition \ref{PROPsummandshifts}, our hypotheses are then equivalent to:
\begin{itemize} \item For every separable extension $K/F$, $\windex{\varphi_K} > \wittj{s-1}{\varphi}$ if and only if $\windex{\psi_K} > \wittj{t-1}{\psi}$;
\item $\mathrm{MDT}(\varphi)$ admits an element $\Lambda'$ with $a(\Lambda') = \wittj{s-1}{\varphi}$. \end{itemize}
We may thus assume that $m = \wittj{s-1}{\varphi}$ and $n = \wittj{t-1}{\psi}$.  Let us set
$$ a: = d_X-d_Y - \wittj{s-1}{\varphi} + \wittj{t-1}{\psi} + \wittj{t}{\varphi}-1 $$
and
$$ b: =  \wittj{t}{\varphi} - 1 - \wittj{s-1}{\varphi} = a + d_Y - d_X - \wittj{t-1}{\psi}. $$
We make the following claim: \vspace{.5 \baselineskip}

\noindent {\bf Claim.} In the above situation, $a \geq \wittj{s-1}{\varphi}$, and there exists a cycle $\beta \in \overline{\mathrm{Ch}}_{d_X +b}(X \times Y)$ with the following properties:
\begin{itemize} \item[(i)] $\beta$ involves $h^{\wittj{s-1}{\varphi}} \times l_{\wittj{t}{\psi}-1}$ and $l_a \times h^{\wittj{t-1}{\psi}}$;
\item[(ii)] If $\beta$ involves $h^u \times l_v$ (resp. $l_u \times h^v$) for some integers $u,v$, then $u_{\mathrm{lo}} \in \Lambda$ (resp. $(d_X-u)^{\mathrm{up}} \in \Lambda$);
\item[(iii)] $\beta$ involves no terms of the form $h^u \times h^v$ or $l_u \times l_v$ for integers $u$ and $v$. \end{itemize} \vspace{.5 \baselineskip}

Before proving it, let us first show how the claim yields the desired result. Let $\beta$ be as in the claim, and let $\beta^t$ be its image under pushforward along the factor exchange isomorphism $X \times Y \rightarrow Y \times X$.  Set
$$ \gamma: = (h^{\witti{t}{\varphi}-1} \times h^0)\beta^t \in \overline{\mathrm{Ch}}_{d_X + b - (\witti{t}{\psi}-1)}(Y \times X). $$
By (i),  $\beta$ involves $h^{\wittj{s-1}{\varphi}} \times l_{\wittj{t}{\psi} - 1}$ and $\gamma$ involves $h^{\wittj{t}{\psi}-1} \times l_a$. Using (ii) and (iii), together with Lemma \ref{LEMcompositionformulas}, one then checks that $\gamma \circ \beta \in \overline{\mathrm{Ch}}_{d_X - \wittj{s-1}{\varphi} + a}(X^2)$ involves 
$$(h^{\wittj{t}{\psi}-1} \times l_a) \circ (h^{\wittj{s-1}{\varphi}} \times l_{\wittj{t}{\psi} - 1}) = h^{\wittj{s-1}{\varphi}} \times l_a. $$
By Lemma \ref{LEMshells} it follows that $a < \wittj{s}{\varphi}$. But since $\beta$ involves $l_a \times h^{\wittj{t-1}{\psi}}$, (ii) tells us that $(d_X - a)^{\mathrm{up}} \in \Lambda$. Since $a(\Lambda) = \wittj{s-1}{\varphi}$, Corollary \ref{CORdimensionofsummand} then gives that $a \geq \wittj{s}{\varphi} -1$, and so $a = \wittj{s}{\varphi}-1$. Let us now consider the cycle
$$ \eta: = (h^{\witti{s}{\varphi}-1} \times h^0)\beta \in \overline{\mathrm{Ch}}_{d_X + b - (\witti{s}{\varphi}-1)}(X \times Y). $$
By (i) and the preceding remarks, $\beta^t$ involves $h^{\wittj{t-1}{\psi}} \times l_a = h^{\wittj{t-1}{\psi}} \times l_{\wittj{s}{\varphi}-1}$ and $\eta$ involves $h^{\wittj{s}{\varphi}-1} \times l_{\wittj{t}{\psi}-1}$. Using properties (ii) and (iii) of $\beta$, together with Lemma \ref{LEMcompositionformulas}, one then checks that $\eta \circ \beta^t \in \overline{\mathrm{Ch}}_{}(Y^2)$ involves 
$$ (h^{\wittj{s}{\varphi}-1} \times l_{\wittj{t}{\psi}-1}) \circ (h^{\wittj{t-1}{\psi}} \times l_{\wittj{s}{\varphi}-1}) = h^{\wittj{t-1}{\psi}} \times l_{\wittj{t}{\psi}-1}. $$
Let $\alpha$ be the unique indecomposable element of $\overline{\mathrm{Ch}}_{d_Y}(Y^2)$ which is essential and involves $l_{\wittj{t}{\psi}-1} \times h^{\wittj{t}{\psi}-1}$. By the preceding remarks, both $\alpha^t$ and $D^{\witti{t}{\psi}-1,0}(\eta \circ \beta^t) \in \overline{\mathrm{Ch}}_{d_Y}(Y^2)$ involve $h^{\wittj{t}{\psi}-1} \times l_{\wittj{t}{\psi}-1}$, and so $\alpha^t \subset D^{\witti{t}{\psi}-1,0}(\eta \circ \beta^t)$. We now claim that $\Lambda(\alpha) = \Lambda[\wittj{t-1}{\psi} - \wittj{s-1}{\varphi}] = \Lambda[m-n]$. Since $\alpha$ is indecomposable, this will prove what we want.  Let $0 \leq i < r'$. If $\alpha$ involves $h^i \times l_i$, then $\alpha^t$ involves $l_i \times h^i$, and so $\eta \circ \beta^t$ involves $l_{i + (\wittj{t}{\psi}-1)} \times h^i$. By Lemma \ref{LEMcompositionformulas}, $\beta^t \in \overline{\mathrm{Ch}}_{d_X + b}(Y \times X)$ must then involve $l_{i + (\wittj{t}{\psi}-1)} \times h^{i + (\wittj{t}{\psi}-1) - b}$, and so $\beta$ involves $h^{i + (\wittj{t}{\psi}-1) - b} \times l_{i + (\wittj{t}{\psi}-1)}$. Setting $k: = \wittj{t-1}{\psi} - \wittj{t-1}{\varphi} = b - (\witti{t}{\psi}-1)$, we see that $\alpha$ involves $h^i \times l_i$ only if $\beta$ involves $h^{i-k} \times l_{i +(\wittj{t}{\psi} - 1)}$. By property (ii) of $\beta$, this implies that $i_{\mathrm{lo}} \in \Lambda(\alpha)$ only if $(i-k)_{\mathrm{lo}} \in \Lambda$. In other words, $\Lambda(\alpha)_{\mathrm{lo}} \subseteq \Lambda[k]_{\mathrm{lo}}$. A similar calculation shows that $\Lambda(\alpha)^{\mathrm{up}} \subseteq \Lambda[k]^{\mathrm{up}}$, and so $\Lambda(\alpha) \subseteq \Lambda[k]$.  Now since $\alpha$ involves $l_{\wittj{t}{\psi}-1} \times h^{\wittj{t}{\psi}-1}$, we have $(d_Y - (\wittj{t}{\psi}-1))^{\mathrm{up}} \in \Lambda(\alpha)$. By Proposition \ref{PROPsplittingpatternconnections}, it follows that $\wittj{t-1}{\psi}_{\mathrm{lo}} \in \Lambda(\alpha)$. Since $a(\Lambda(k)) = a(\Lambda) + k = \wittj{s-1}{\varphi} + k = \wittj{t-1}{\psi}$, we then have that $a(\Lambda(\alpha)) = \wittj{t-1}{\psi}$.  As $\Lambda(\alpha)$ is an element of $\mathrm{MDT}(\psi)$, this puts us in a position to switch the roles of $\varphi$ and $\psi$ in the preceding arguments. In doing so, the roles of $\Lambda$ and $\Lambda(\alpha)$ are switched, and the role played by $k$ is assumed by $-k$. The preceding arguments then tells us that $\Lambda \subseteq \Lambda(\alpha)[-k]$. In other words, $\Lambda[k] \subseteq \Lambda(\alpha)$, and so $\Lambda(\alpha) = \Lambda[k] = \Lambda[\wittj{t-1}{\psi} - \wittj{s-1}{\varphi}]$, as desired. 

It remains to prove the claim. We first show by induction on $s$ that there exists an element $\nu \in \overline{\mathrm{Ch}}_{d_X + b}(X \times Y)$ involving $h^{\wittj{s-1}{\varphi}} \times l_{\wittj{t}{\psi}-1}$.  The case where $s=1$ was already done as part of the proof of Lemma \ref{LEMVishik1}.  Suppose now that $s\geq 2$, and let $F_1 = F(\varphi)$.  Set $\varphi' := \varphi_1 = \anispart{(\varphi_{F_1})}$ and $\psi' := \anispart{(\psi_{F_1})}$.  By Lemma \ref{LEMVishik2}, $\wittj{t}{\psi} - \windex{\psi_{F_1}}$ lies in the nondefective splitting pattern of $\psi$. Moreover, for any separable extension $K/F_1$, we have $\windex{\varphi'_K} > \wittj{s-1}{\varphi} - \witti{1}{\varphi}$ if and only if $\windex{\psi'_K} \geq \wittj{t}{\psi} - \windex{\psi_{F_1}}-1$.  Set $X': = X_{\varphi'}$ and $Y': = X_{\psi'}$. If necessary, we choose orientations of $X'$ and $Y'$, and use these to orient $X' \times Y'$.  By the induction hypothesis, $\overline{\mathrm{Ch}}_{d_X + b - \witti{1}{\varphi} - \windex{\psi_{F_1}}}(X' \times Y')$ has an element involving $h^{\wittj{s-1}{\varphi} - \witti{1}{\varphi}} \times l_{\wittj{t}{\psi} - 1 - \windex{\psi_{F_1}}}$.  Using the map $g$ from Proposition \ref{PROPisotropicreduction}, we then get a cycle in $\overline{\mathrm{Ch}}_{d_X + b}((X \times Y)_{F_1})$ involving $h^{\wittj{s-1}{\varphi}} \times l_{\wittj{t}{\psi}-1}$.  As in the proof of Lemma \ref{LEMVishik1}, \cite[Cor. 57.11]{EKM} allows us to lift this to a cycle $\xi \in \overline{\mathrm{Ch}}_{2d_X + b}(X^2 \times Y)$ involving $h^0 \times h^{\wittj{s-1}{\varphi}} \times l_{\wittj{t}{\psi}-1}$.  In $\overline{\mathrm{Ch}}_{d_X + d_Y}(X^2 \times Y)$, we also have the element $\xi' : = \alpha_{\Lambda} \times h^0$.  Using that $\Lambda(\alpha) = \wittj{s-1}{\varphi}$, one readily checks that the product $\xi \xi' \in \overline{\mathrm{Ch}}_{d_X+b}(X^2 \times Y)$ involves $h^{\wittj{s-1}{\varphi}} \times l_0 \times l_{\wittj{t}{\psi}-1}$. Pushing forward along the projection from $X^2 \times Y$ to the product of its two outer factors then gives the desired cycle $\nu$. Let us now set $\beta: = \mathrm{ess}(\nu \circ \alpha_{\Lambda})$. Since $\alpha_{\Lambda}$ involves $h^{\wittj{s-1}{\varphi}} \times l_{\wittj{s-1}{\varphi}}$, Lemma \ref{LEMcompositionformulas} gives that $\beta$ involves
$$ (h^{\wittj{s-1}{\varphi}} \times l_{\wittj{t}{\psi}-1}) \circ (h^{\wittj{s-1}{\varphi}} \times l_{\wittj{s-1}{\varphi}}) = h^{\wittj{s-1}{\varphi}} \times l_{\wittj{t}{\psi}-1}.$$
At the same time,  $\beta$ satisfies condition (ii) in the statement of our claim by construction. It also satisfies condition (iii): Let $0 \leq u <r$ and $0\leq v < r'$. Since $\beta$ is essential, it does not involve $h^u \times h^v$. If it involved $l_u \times l_v$, then we would have that $(h^u \times h^v)\beta = l_0 \times l_0$, and so $X \times Y$ would have a $0$-cycle of degree $1$. But then the same would be true of $X$ and $Y$, contradicting the anisotropy of $\varphi$ and $\psi$ via Springer's theorem. To complete the proof, it now only remains to show that $a \geq \wittj{s-1}{\varphi}$ and that $\beta$ involves $l_a \times h^{\wittj{t-1}{\psi}}$.  To see this, let $F_{s-1}$ be the unique field in the nondefective splitting tower of $\varphi$ with $\windex{\varphi_{F_{s-1}}} : = \wittj{s-1}{\varphi}$.  Overriding the notation used in the construction of $\beta$, let us now set $\varphi' := \varphi_{s-1} = \anispart{(\varphi_{F_{s-1}})}$ and $\psi' := \anispart{(\psi_{F_{s-1}})}$.  As before, set $X': = X_{\varphi'}$ and $Y' : = X_{\psi'}$. If needed, we choose orientations of $X'$ and $Y'$, and use these to orient $X' \times Y'$. Let $p \colon R_{X \times Y} \rightarrow R_{X' \times Y'}$ be the composition of the scalar extension map $R_{X \times Y} \rightarrow R_{(X \times Y)_{F_{s-1}}}$ and the map $f \colon R_{(X \times Y)_{F_{s-1}}} \rightarrow R_{X' \times Y'}$ of Proposition \ref{PROPisotropicreduction}. By the latter, $p(\beta)$ is then an element of $\overline{\mathrm{Ch}}_{d_X + b}(X' \times Y')$ involving $h^0 \times l_{\wittj{t}{\varphi} - 1 - \windex{\psi_{F_{s-1}}}}$.  By Lemma \ref{LEMVishik2}, there is an integer $0 \leq t' < t$ such that for any separable extension $K/F_{s-1}$, we have $\windex{\varphi'_K} > 0$ if and only if $\windex{\psi'_K} > \wittj{t'}{\varphi} - \windex{\psi_{F_{s-1}}}$.  Since $p(\beta)$ involves $h^0 \times l_{\wittj{t}{\varphi} - 1 - \windex{\psi_{F_{s-1}}}}$, Lemma \ref{LEMVishik1} then tells us that it also involves $l_{a'} \times h^{\wittj{t'}{\psi} - \windex{\psi_{F_{s-1}}}}$ for some integer $a' \geq 0$. In particular, $\beta$ involves $l_{\wittj{s-1}{\varphi} + a'} \times h^{\wittj{t'}{\psi}}$. Let $K$ be the unique field in the nondefective splitting tower of $\psi$ such that $\windex{\psi_K} = \wittj{t'+1}{\psi}$. Since $\beta$ involves $l_{\wittj{s-1}{\varphi} + a'} \times h^{\wittj{t'}{\psi}}$, Proposition \ref{PROPisotropyrestrictions} then implies that $\windex{\varphi_K} > \wittj{s-1}{\varphi} +a'$. Since $a' >0$, our standing assumption on $\varphi$ and $\psi$ then tells us that $\windex{\psi_K} > \wittj{t-1}{\psi}$, and so $t' = t-1$.  Thus, $\beta$ involves $l_{\wittj{s-1}{\varphi} + a'} \times h^{\wittj{t-1}{\psi}}$. Since $\beta$ has degree $d_X + b = d_Y - (\wittj{t-1}{\psi} + a)$, we then have that $a = \wittj{s-1}{\varphi} + a'$. Since $a' \geq 0$, this completes the proof. \end{proof} \end{theorem}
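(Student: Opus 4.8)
The plan is to adapt Vishik's argument (\cite[Thm.~4.7]{Vishik2}; see also \cite[Ch.~XVII]{EKM}) to the present framework, replacing all use of quadratic Grassmannians by the composition law of \S\ref{SECcorrespondences}, the isotropic reduction maps of Proposition~\ref{PROPisotropicreductionalongsplittingtower}, and the diagonal pullback of Proposition~\ref{PROPpullbackalongdiagonal}. The first step is a reduction to a normalized situation. Let $s$ and $t$ be the unique integers with $\wittj{s-1}{\varphi}\le m<\wittj{s}{\varphi}$ and $\wittj{t-1}{\psi}\le n<\wittj{t}{\psi}$. By Lemma~\ref{LEMnondefectiveheight} the isotropy hypothesis over separable extensions depends on $m$ and $n$ only through the pair $(s,t)$; by Proposition~\ref{PROPsummandshifts} the set $\mathrm{MDT}(\varphi)$ contains a summand $\Lambda_0$ with $a(\Lambda_0)=\wittj{s-1}{\varphi}$ together with its shifts $\Lambda_0[i]$ for $0\le i<\witti{s}{\varphi}$, and $\Lambda=\Lambda_0[m-\wittj{s-1}{\varphi}]$; and the same proposition, applied now to $\psi$, shows it is enough to prove $\Lambda_0[\wittj{t-1}{\psi}-\wittj{s-1}{\varphi}]\in\mathrm{MDT}(\psi)$. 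I would therefore assume $m=\wittj{s-1}{\varphi}$, $n=\wittj{t-1}{\psi}$, write $k:=n-m$, and aim to show $\Lambda[k]\in\mathrm{MDT}(\psi)$.

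The core step is the construction of an $F$-rational correspondence $\beta\in\ratchow{X\times Y}$ of a prescribed degree with the following properties: (i) $\beta$ involves the standard basis elements $h^{m}\times l_{\wittj{t}{\psi}-1}$ and $l_{a}\times h^{n}$ for a suitable integer $a$; (ii) whenever $\beta$ involves $h^u\times l_v$ one has $u_{\mathrm{lo}}\in\Lambda$, and whenever $\beta$ involves $l_u\times h^v$ one has $(d_X-u)^{\mathrm{up}}\in\Lambda$; (iii) $\beta$ involves no term $h^u\times h^v$ or $l_u\times l_v$. To build $\beta$, I would first produce, by induction on $s$, a cycle $\nu\in\ratchow{X\times Y}$ of the right degree involving $h^{m}\times l_{\wittj{t}{\psi}-1}$: in the base case $s=1$ one extends scalars to $F(\varphi)$, which is separable over $F$ by Lemma~\ref{LEMfunctionfields}(1), so the hypothesis forces $\windex{\psi_{F(\varphi)}}>n$, and then surjectivity of pullback along $\mathrm{Spec}(F(\varphi))\times X\to Y\times X$ (\cite[Cor.~57.11]{EKM}) together with the operators $D^{k_1,k_2}$ of \S\ref{SUBSECMDTpreliminaries} supplies $\nu$; for $s>1$ one passes to $F(\varphi)$, applies Proposition~\ref{PROPisotropicreductionalongsplittingtower} and Lemma~\ref{LEMVishik2} to invoke the inductive hypothesis for $\varphi_1$ and $\anispart{(\psi_{F(\varphi)})}$, lifts back along \cite[Cor.~57.11]{EKM}, and cleans up by multiplying with the diagonal idempotent $\alpha_{\Lambda}$. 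Setting $\beta:=\mathrm{ess}(\nu\circ\alpha_{\Lambda})$, the first half of (i) and property (ii) follow from Lemma~\ref{LEMcompositionformulas} and $\alpha_{\Lambda}=\sum_{\lambda\in\Lambda}\alpha_{\lambda}$; property (iii) holds because a term $l_u\times l_v$ would yield a degree-$1$ zero-cycle on $X\times Y$, contradicting the anisotropy of $\varphi$ and $\psi$ via Springer's theorem; and the existence of the lower term $l_a\times h^{n}$ with $a=\wittj{s}{\varphi}-1$ is obtained by pushing $\beta$ down through the isotropic reduction of Proposition~\ref{PROPisotropicreduction} to the field $F_{s-1}$ in the nondefective splitting tower of $\varphi$ and then invoking Lemma~\ref{LEMVishik1}, with the precise value of $a$ pinned down by Proposition~\ref{PROPisotropyrestrictions} at a suitable field in the splitting tower of $\psi$ together with the standing hypothesis.

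With $\beta$ available, I would multiply $\beta$ and its image $\beta^{t}$ under the factor-exchange isomorphism $X\times Y\to Y\times X$ by appropriate powers of $h$, compose the results together with $\alpha_{\Lambda}$ using Lemma~\ref{LEMcompositionformulas}, and thereby exhibit an indecomposable $\alpha\in\overline{\mathrm{Ch}}_{d_Y}(Y^2)$ that is essential and involves $l_{\wittj{t}{\psi}-1}\times h^{\wittj{t}{\psi}-1}$. Tracking which basis elements can occur in $\alpha$ — via Lemma~\ref{LEMcompositionformulas}, properties (ii) and (iii) of $\beta$, Lemma~\ref{LEMshells} and Corollary~\ref{CORdimensionofsummand} — gives $\Lambda(\alpha)\subseteq\Lambda[k]$, and Proposition~\ref{PROPsplittingpatternconnections} forces $n_{\mathrm{lo}}\in\Lambda(\alpha)$, so $a(\Lambda(\alpha))=n=a(\Lambda[k])$. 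Running the whole argument with the roles of $\varphi$ and $\psi$ (and of $\Lambda$ and $\Lambda(\alpha)$, and of $k$ and $-k$) interchanged yields $\Lambda[k]\subseteq\Lambda(\alpha)$, whence $\Lambda(\alpha)=\Lambda[k]=\Lambda[n-m]$; since $\alpha$ is indecomposable, $\Lambda[n-m]$ is a connected component of $\Lambda(Y)$, i.e.\ an element of $\mathrm{MDT}(\psi)$.

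I expect the construction of $\beta$ to be the main obstacle: one must simultaneously carry out the induction on $s$ producing $\nu$, keep the degree bookkeeping exact, and apply Lemmas~\ref{LEMVishik1} and \ref{LEMVishik2} twice — once for the upper term $h^{m}\times l_{\wittj{t}{\psi}-1}$ and once for the lower term $l_{\wittj{s}{\varphi}-1}\times h^{n}$ — all while ensuring that $\beta$ remains supported on basis elements indexed by $\Lambda$. Once $\beta$ is in hand, the concluding composition-and-symmetry argument is an intricate but essentially formal computation inside the correspondence algebras $\overline{\mathrm{Ch}}_{\ge d_X}(X^2)$ and $\overline{\mathrm{Ch}}_{\ge d_Y}(Y^2)$.
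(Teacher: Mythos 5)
Your proposal follows essentially the same route as the paper's proof: the same normalization via Lemma \ref{LEMnondefectiveheight} and Proposition \ref{PROPsummandshifts}, the same central claim (construction of the correspondence $\beta$ with properties (i)--(iii) via induction on $s$, Lemmas \ref{LEMVishik1} and \ref{LEMVishik2}, isotropic reduction, and \cite[Cor.~57.11]{EKM}), the same identification $a = \wittj{s}{\varphi}-1$, and the same concluding composition-and-symmetry argument yielding $\Lambda(\alpha)=\Lambda[n-m]$. The outline is accurate and correctly anticipates the main technical difficulty.
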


In the course of the proof, we showed that the integer $a: = d_X-d_Y - \wittj{s-1}{\varphi} + \wittj{t-1}{\psi} + \wittj{t}{\varphi}-1$ coincides with $\wittj{s}{\varphi} - 1$ under the standing hypotheses. In other words:

\begin{corollary} Suppose there are integers $1 \leq s \leq h_{\mathrm{nd}}(\varphi)$ and $1 \leq t \leq h_{\mathrm{nd}}(\psi)$ such that for every separable extension $K/F$, we have $\windex{\varphi_K} > \wittj{s-1}{\varphi}$ if and only if $\windex{\psi_K} > \wittj{t-1}{\psi}$. Then
$$ \mydim{\varphi} - \wittj{s-1}{\varphi} - \wittj{s}{\varphi} = \mydim{\psi} - \wittj{t-1}{\psi} - \wittj{t}{\psi}. $$ \end{corollary}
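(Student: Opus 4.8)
The plan is to deduce the identity directly from Theorem~\ref{THMVishik}, or more precisely from the computation carried out inside its proof. Recall from the remark immediately above that, under the hypotheses of that theorem in the normalized form $m = \wittj{s-1}{\varphi}$, $n = \wittj{t-1}{\psi}$, the proof establishes that the auxiliary integer $a := \mydim{\varphi} - \mydim{\psi} - \wittj{s-1}{\varphi} + \wittj{t-1}{\psi} + \wittj{t}{\psi} - 1$ equals $\wittj{s}{\varphi} - 1$; transposing the terms of this equality is exactly the assertion of the corollary. (Equivalently, writing $\Lambda$ for the element of $\mathrm{MDT}(\varphi)$ with $a(\Lambda) = \wittj{s-1}{\varphi}$ used there and $\Lambda' := \Lambda[\wittj{t-1}{\psi} - \wittj{s-1}{\varphi}]$ for the element of $\mathrm{MDT}(\psi)$ produced by the theorem, the identity drops out by computing $b(\Lambda')$ in two ways via Corollary~\ref{CORdimensionofsummand} --- once inside $\mathrm{MDT}(\varphi)$ and once inside $\mathrm{MDT}(\psi)$.) It may help to note the restated form $\mydim{\varphi} - \wittj{s-1}{\varphi} - \wittj{s}{\varphi} = \Izhdim{\anispart{(\varphi_{F_{s-1}})}}$, where $F_{s-1}$ is the member of the nondefective splitting tower of $\varphi$ with $\windex{\varphi_{F_{s-1}}} = \wittj{s-1}{\varphi}$ (so $\mydim{\anispart{(\varphi_{F_{s-1}})}} = \mydim{\varphi} - 2\wittj{s-1}{\varphi}$ and its first higher isotropy index is $\wittj{s}{\varphi} - \wittj{s-1}{\varphi}$): in these terms the corollary asserts that the two anisotropic kernels $\anispart{(\varphi_{F_{s-1}})}$ and $\anispart{(\psi_{F_{t-1}})}$, each formed in its own nondefective splitting tower, have equal Izhboldin dimension.

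So the only real content is to check that Theorem~\ref{THMVishik} applies with $m = \wittj{s-1}{\varphi}$ and $n = \wittj{t-1}{\psi}$. Since $s \le h_{\mathrm{nd}}(\varphi)$ and $t \le h_{\mathrm{nd}}(\psi)$ we have $m < r$ and $n < r'$, and the separable-splitting condition required by the theorem is precisely the hypothesis of the corollary; the remaining task is to exhibit an element $\Lambda \in \mathrm{MDT}(\varphi)$ with $a(\Lambda) = \wittj{s-1}{\varphi}$. When $s = 1$ one may take $\Lambda = \Lambda^U(X)$, which always exists. In general, if $\mathrm{MDT}(\varphi)$ has a connected component whose lower end lies in the interval $[\wittj{s-1}{\varphi}, \wittj{s}{\varphi})$, then Proposition~\ref{PROPsummandshifts} produces one whose lower end is exactly $\wittj{s-1}{\varphi}$. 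The delicate case is the one in which $\wittj{s-1}{\varphi}$ is absorbed into a component born at some earlier stage $u < s$; here I would apply Theorem~\ref{THMVishik} at stage $u$ --- where the needed summand, a shift of $\Lambda^U(X)$, is at hand --- to learn that the separable-splitting condition also holds at level $(\wittj{u-1}{\varphi}, \wittj{t'-1}{\psi})$ for the corresponding earlier stage $t'$ of $\psi$, and then propagate the dimension identity from stage $u$ back up to stage $s$ using the internal description of that component provided by Propositions~\ref{PROPsummandshifts} and \ref{PROPuppersummand}.

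The genuinely routine steps are the dimension count and the rearrangement of the formula for $a$. The main obstacle is the bookkeeping just described: recognizing exactly when the summand $\Lambda$ with $a(\Lambda) = \wittj{s-1}{\varphi}$ is present, and, when it is not, transporting the identity faithfully along the nondefective splitting towers of $\varphi$ and $\psi$. All the tools required are the structural results of \S\ref{SECMDT} (Propositions~\ref{PROPsummandshifts}, \ref{PROPuppersummand} and Corollary~\ref{CORdimensionofsummand}) together with Lemmas~\ref{LEMVishik2} and \ref{LEMnondefectiveheight}, so no input beyond Theorem~\ref{THMVishik} itself should be needed.
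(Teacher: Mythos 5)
Your first paragraph is exactly the paper's proof: the corollary is stated immediately after the remark that, in the course of proving Theorem~\ref{THMVishik} (with $m,n$ normalized to $\wittj{s-1}{\varphi}$, $\wittj{t-1}{\psi}$), the auxiliary integer $a$ was shown to equal $\wittj{s}{\varphi}-1$, and rearranging that equality is the entire content. Your reformulation as the equality $\Izhdim{\anispart{(\varphi_{F_{s-1}})}} = \Izhdim{\anispart{(\psi_{F_{t-1}})}}$ is also correct.

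Where the proposal goes astray is the patch in your second paragraph. You are right that the corollary, unlike Theorem~\ref{THMVishik}, carries no hypothesis that $\mathrm{MDT}(\varphi)$ contains a component $\Lambda$ with $a(\Lambda)=\wittj{s-1}{\varphi}$ (the paper is silent on this point), and right that such a component need not exist when $(\wittj{s-1}{\varphi})_{\mathrm{lo}}$ is absorbed into a component born at an earlier stage; the derivation of both bounds $a<\wittj{s}{\varphi}$ and $a\geq\wittj{s}{\varphi}-1$ in the theorem's proof genuinely uses $\Lambda$ (the second via Corollary~\ref{CORdimensionofsummand}). But your proposed remedy --- ``apply Theorem~\ref{THMVishik} at stage $u$ \dots to learn that the separable-splitting condition also holds at level $(\wittj{u-1}{\varphi},\wittj{t'-1}{\psi})$'' --- cannot work: the separable-splitting condition is the \emph{input} to Theorem~\ref{THMVishik}, not an output, and the corollary's hypothesis at level $(s-1,t-1)$ gives no such condition at an earlier level; nor is it explained how a dimension identity at stage $u$ would transport up to stage $s$. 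The viable route is the one already built into the theorem's proof: pass to $F_{s-1}$ and use Lemma~\ref{LEMVishik2} together with isotropic reduction (Proposition~\ref{PROPisotropicreduction}) to replace $(\varphi,\psi)$ by $(\varphi_{s-1},\anispart{(\psi_{F_{s-1}})})$, thereby reducing to the case $s=1$, where the upper component always supplies the required summand; the remaining issue of identifying the correct index of $\psi$'s tower (that the relevant $t'$ equals $t-1$) is then handled exactly as at the end of the theorem's proof via Proposition~\ref{PROPisotropyrestrictions} and the standing separable-splitting hypothesis.
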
 

Now, since $a(\Lambda^U(X)) = 0$, the $m=0$ case of Theorem \ref{THMVishik} gives:

\begin{corollary} \label{CORVishik} Suppose there exists an integer $0 \leq n < r'$ such that for every separable extension $K/F$, we have $\windex{\varphi_K} > 0$ if and only if $\windex{\psi_K} > n$. Then $\Lambda^U(X)[n]$ is an element of $\mathrm{MDT}(\psi)$. 
\end{corollary}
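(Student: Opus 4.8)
The plan is to obtain this statement as the immediate $m = 0$ specialization of Theorem \ref{THMVishik}, so essentially no new work is required beyond that theorem. First I would recall that $\Lambda^U(X)$, the upper connected component of $\Lambda(X)$, is by definition the element of $\mathrm{MDT}(\varphi)$ containing $0_{\mathrm{lo}}$, and that part (1) of Proposition \ref{PROPuppersummand} records that $a(\Lambda^U(X)) = 0$. Hence $\mathrm{MDT}(\varphi)$ admits an element $\Lambda$ with $a(\Lambda) = 0$, which is precisely what is needed to feed into Theorem \ref{THMVishik} with $m = 0$ (note that $0 \leq m = 0 < r$ since $r \geq 1$, and $0 \leq n < r'$ by the hypothesis of the corollary).

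Next I would observe that the assumption of the corollary---that for every separable extension $K/F$ one has $\windex{\varphi_K} > 0$ if and only if $\windex{\psi_K} > n$---is exactly the hypothesis of Theorem \ref{THMVishik} for the pair of integers $(m, n) = (0, n)$. Applying that theorem with $\Lambda = \Lambda^U(X)$, I would conclude that $\Lambda[n - m] = \Lambda^U(X)[n]$ is an element of $\mathrm{MDT}(\psi)$, which is the desired assertion.

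There is no genuine obstacle here: all of the substance is packaged in Theorem \ref{THMVishik} (together with its supporting Lemmas \ref{LEMVishik1} and \ref{LEMVishik2}), and the only point beyond a direct citation is the trivial fact that the upper connected component exists and satisfies $a(\Lambda^U(X)) = 0$, which I would take from its definition and Proposition \ref{PROPuppersummand}. Accordingly the proof reduces to a single line invoking Theorem \ref{THMVishik}.
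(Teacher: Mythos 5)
Your proposal is correct and is exactly the argument the paper uses: the corollary is stated as the $m=0$ case of Theorem \ref{THMVishik}, applied to $\Lambda = \Lambda^U(X)$, which qualifies because $a(\Lambda^U(X)) = 0$ by Proposition \ref{PROPuppersummand}~(1). Nothing further is needed.
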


In particular, we get our earlier claim (extending the stable birational invariance of the Izhboldin dimension):

\begin{corollary} \label{CORstbinvarianceofuppersummands} If $\varphi \stb \psi$, then $\Lambda^U(X) = \Lambda^U(Y)$. 
\begin{proof} In this case, we have $\windex{\varphi_K} > 0$ if and only if $\windex{\psi_K}>0$ for every separable extension $K/F$.  By the previous corollary, it follows that $\Lambda^U(X)$ is an element of $\mathrm{MDT}(\psi)$. Since $a(\Lambda^U(X)) = 0$, we then have that $\Lambda^U(Y) = \Lambda^U(X)$.  \end{proof}
\end{corollary}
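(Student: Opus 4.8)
The plan is to deduce the corollary immediately from Corollary~\ref{CORVishik}, the only real work being to reformulate the hypothesis $\varphi \stb \psi$ as a condition on Witt indices over separable extensions. Recall that by the standing assumptions of this section $\varphi$ and $\psi$ are anisotropic and nonquasilinear; being anisotropic, they are in particular nondefective, and they remain so over any separable extension $K/F$ because anisotropic quasilinear forms stay anisotropic under separable extensions (see \S\ref{SUBSECquadraticforms}). Consequently $\witti{\mathrm{d}}{\varphi_K} = \witti{\mathrm{d}}{\psi_K} = 0$ for such $K$, so $\witti{0}{\varphi_K} = \windex{\varphi_K}$ and $\witti{0}{\psi_K} = \windex{\psi_K}$; in particular $\varphi_K$ is isotropic precisely when $\windex{\varphi_K} > 0$, and likewise for $\psi$.

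First I would invoke condition (2) in the characterisation of stable birational equivalence from \S\ref{SUBSECstb}: since $\varphi \stb \psi$, for every field extension $K/F$ (and in particular every separable one) the form $\varphi_K$ is isotropic if and only if $\psi_K$ is. Combining this with the previous paragraph gives that, for every separable extension $K/F$, one has $\windex{\varphi_K} > 0$ if and only if $\windex{\psi_K} > 0$. This is exactly the hypothesis of Corollary~\ref{CORVishik} with $n = 0$ (note $0 \le n < r'$ since $\psi$ is nonquasilinear), so that corollary yields that $\Lambda^U(X) = \Lambda^U(X)[0]$ is an element of $\mathrm{MDT}(\psi)$.

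Finally I would identify this element. By Proposition~\ref{PROPuppersummand}(1) we have $a(\Lambda^U(X)) = 0$, i.e.\ $0_{\mathrm{lo}} \in \Lambda^U(X)$. Since the sets comprising $\mathrm{MDT}(\psi)$ form a partition of $\Lambda(Y)$, there is a unique element of $\mathrm{MDT}(\psi)$ containing $0_{\mathrm{lo}}$, namely $\Lambda^U(Y)$ by definition; hence $\Lambda^U(X) = \Lambda^U(Y)$, as desired. There is no genuine obstacle at this level: the substance is entirely contained in Theorem~\ref{THMVishik} and Corollary~\ref{CORVishik}, and the only point demanding any care is the use of separability above to guarantee vanishing of the defect index, so that ``isotropic'' and ``positive Witt index'' coincide for $\varphi_K$ and $\psi_K$ and the Vishik-type hypothesis can be applied to $\varphi$ and $\psi$ on the same footing.
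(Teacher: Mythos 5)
Your proposal is correct and follows exactly the paper's argument: translate $\varphi \stb \psi$ into the Witt-index condition over separable extensions, apply Corollary \ref{CORVishik} with $n=0$, and conclude from $a(\Lambda^U(X))=0$ that the resulting element of $\mathrm{MDT}(\psi)$ must be $\Lambda^U(Y)$. The only difference is that you spell out the (correct) justification that anisotropy is preserved in the nondefective sense over separable extensions, which the paper leaves implicit.
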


\begin{remarks}[Nondegenerate forms] In the case where $\varphi$ and $\psi$ are nondegenerate, Theorem \ref{THMVishik} gives the following stronger result: Suppose there are integers $0 \leq m <r$ and $0\leq n <r'$ such that for every separable extension $K/F$, we have $\windex{\varphi_K} > m$ if and only if $\windex{\psi_K} >n$. If $M(X)$ admits a direct summand $N$ in $\mathrm{Chow}(F,\mathbb{F}_2)$ such that $a(N) = m$, then $M(Y)$ admits a direct summand isomorphic to the Tate twist $N(n-m)$. This follows from Theorem \ref{THMVishik} and the discussion of \cite[Ch. XVII]{EKM}. Similarly, Corollary \ref{CORstbinvarianceofuppersummands} yields that when $\varphi \stb \psi$, the upper motives of $X$ and $Y$ in $\mathrm{Chow}(F,\mathbb{F}_2)$ are isomorphic.
\end{remarks}

\subsection{Examples: Pfister Neighbours and Strongly Excellent Forms} \label{SUBSECPNSandexcellent} Let $\varphi$ be an anisotropic nonquasilinear quadratic form over $F$ of type $(r,s)$. We set $X: = X_{\varphi}$, $h: = h_{\mathrm{nd}}(\varphi)$, and write $\mydim{\varphi} = 2^n + m$ for integers $n \geq 0$ and $1 \leq m \leq 2^n$. 

\begin{lemma} \label{LEMMDTofneighbour} If $\varphi$ is a Pfister neighbour with ambient general Pfister form $\pi$ and complementary form $\varphi^c$, then:
\begin{enumerate} \item For every integer $0 \leq i < m$ and every separable extension $K/F$, we have $\windex{\pi_K} > 0$ if and only if $\windex{\varphi_K} > i$;
\item If $m<r$, then for every integer $m \leq i <r$ and every separable extension $K/F$, we have $\windex{\varphi^c_K}>i-m$ if and only if $\windex{\varphi_K} > i$. \end{enumerate}
\begin{proof} (1) This holds since $\pi$ and $\varphi$ are stably birationally equivalent forms with maximal splitting (Lemma \ref{LEMPneighbours}). 

(2) Recall that $\varphi^c$ is an anisotropic form of dimension $2^n - m = \mydim{\varphi} - 2m$ satisfying $\varphi^c \sim \varphi \perp \pi$.  Like $\varphi$, $\varphi^c$ is dominated by $\pi$. In particular, if $K/F$ is a separable extension such that either $\varphi_K$ or $\varphi^c_K$ is isotropic, then $\pi_K$ is isotropic, and hence hyperbolic. Then $\varphi^c_K \sim \varphi_K$, and so $\windex{\varphi^c_K} = \windex{\varphi_K} - m$.  The desired assertion therefore holds.  \end{proof} \end{lemma}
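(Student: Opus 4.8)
The plan is to deduce both parts from facts already in hand: that the anisotropic Pfister neighbour $\varphi$ is stably birationally equivalent to its ambient general Pfister form $\pi$ and has maximal splitting, with $\Izhdim{\varphi} = 2^n$ and hence $\witti{1}{\varphi} = \mydim{\varphi} - 2^n = m$ (Lemma~\ref{LEMPneighbours}); that an isotropic general Pfister form is hyperbolic (\S\ref{SUBSECexcellentforms}); and that $\varphi^c$ is anisotropic of dimension $2^n - m$, is dominated by $\pi$, and satisfies $\varphi^c \sim \varphi \perp \pi$. One also uses throughout that a \emph{separable} extension $K/F$ keeps both $\varphi$ and $\varphi^c$ nondefective, since the quasilinear parts $\mathrm{ql}(\varphi) \simeq \mathrm{ql}(\varphi^c)$ stay anisotropic under separable extensions; thus $\windex{\varphi_K} = \witti{0}{\varphi_K}$ and $\windex{\varphi^c_K} = \witti{0}{\varphi^c_K}$, and each form decomposes over $K$ as its anisotropic part together with Witt-index-many hyperbolic planes.

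For (1), I would first note that $\pi$ is nondegenerate, so $\windex{\pi_K} > 0$ simply means $\pi_K$ is isotropic; by $\varphi \stb \pi$ this is equivalent to $\varphi_K$ being isotropic, i.e.\ to $\windex{\varphi_K} > 0$. It then remains to upgrade this to $\windex{\varphi_K} \geq m$, which is exactly maximal splitting: since $\varphi$ is anisotropic, $\witti{1}{\varphi} = m$ is the second smallest element of the full splitting pattern of $\varphi$, so no extension realizes an isotropy index strictly between $0$ and $m$. Hence for any $0 \leq i < m$ and any separable $K$,
$$\windex{\pi_K} > 0 \iff \windex{\varphi_K} > 0 \iff \windex{\varphi_K} \geq m \iff \windex{\varphi_K} > i.$$
(The implication $\windex{\pi_K} > 0 \Rightarrow \windex{\varphi_K} \geq m$ can also be seen directly: a hyperbolic $\pi_K$ has a totally isotropic subspace of dimension $2^n$, which meets the codimension-$2^n$ subspace $V_{\varphi} \otimes_F K$ of $V_{\pi} \otimes_F K$ in dimension at least $m$.)

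For (2), fix a separable $K/F$; the hypothesis $m < r$ serves only to make the range $m \leq i < r$ nonempty (equivalently, it says $\varphi^c$ is nonquasilinear). Since $\pi_K$ is a general Pfister form it is either anisotropic or hyperbolic. If $\pi_K$ is anisotropic, then $\varphi_K$ and $\varphi^c_K$, being dominated by it, are both anisotropic, so $\windex{\varphi_K} = \windex{\varphi^c_K} = 0$ and both sides of the asserted equivalence fail for every $i$ with $i \geq m \geq 1$. If $\pi_K$ is hyperbolic, then $\varphi^c_K \sim \varphi_K \perp \pi_K \sim \varphi_K$, so $\anispart{(\varphi^c_K)} \simeq \anispart{(\varphi_K)}$; comparing the two Witt decompositions over $K$ gives $2\windex{\varphi^c_K} - 2\windex{\varphi_K} = \mydim{\varphi^c} - \mydim{\varphi} = -2m$, i.e.\ $\windex{\varphi^c_K} = \windex{\varphi_K} - m$, and hence $\windex{\varphi^c_K} > i - m \iff \windex{\varphi_K} > i$. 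In both cases the equivalence holds.

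I do not expect a genuine obstacle here. The only slightly delicate observation is that an isotropic $\varphi^c_K$ (or $\varphi_K$) forces $\pi_K$ to be isotropic, but this is immediate from $\varphi^c \prec \pi$, since an isotropic vector of $\varphi^c_K$ is an isotropic vector of $\pi_K$ via the inclusion of underlying spaces — and this fact is already absorbed into the dichotomy used in part (2). The argument is thus essentially careful bookkeeping with Witt indices, relying on stable birational equivalence, maximal splitting, and the hyperbolicity of isotropic general Pfister forms.
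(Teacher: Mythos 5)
Your proof is correct and follows essentially the same route as the paper: part (1) is the combination of $\varphi \stb \pi$ with maximal splitting (plus the observation that separability preserves nondefectivity, so Witt index and isotropy index agree), and part (2) is the dichotomy on whether $\pi_K$ is anisotropic or hyperbolic, with the Witt-index shift by $m$ coming from $\varphi^c_K \sim \varphi_K$ and the dimension count. The extra bookkeeping you supply (e.g.\ $\mathrm{ql}(\varphi^c) \simeq \mathrm{ql}(\varphi)$ and the explicit handling of the anisotropic case) is accurate and merely makes explicit what the paper leaves implicit.
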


Applying Theorem \ref{THMVishik}, we get:

\begin{theorem} \label{THMPfisterneighbourdecomposition} Suppose that $\varphi$ is a Pfister neighbour with complementary form $\varphi^c$. Then $r \geq m$, and the following hold:
\begin{enumerate} \item If $r=m$, then $\mathrm{MDT}(\varphi)$ consists of the sets $\lbrace 0_{\mathrm{lo}}, (2^n-1)^{\mathrm{up}} \rbrace[i]$ with $0 \leq i < m$;
\item If $r > m$, then $\varphi^c$ is a nonquasilinear nondefective form, and $\mathrm{MDT}(\varphi)$ consists of the sets $\lbrace 0_{\mathrm{lo}}, (2^n-1)^{\mathrm{up}} \rbrace[i]$ with $0 \leq i < m$, together with the sets $\Lambda[m]$ for $\Lambda \in \mathrm{MDT}(\varphi^c)$. \end{enumerate}
\begin{proof} By Lemma \ref{LEMPneighbours}, we have 
$$ r \leq 2^n - s = \mydim{\varphi} -m - s = (2r + s) -m-s = 2r - m, $$ 
and so $r \geq m$. Let $\pi$ be the ambient Pfister form of $\varphi$. Then $h_{\mathrm{nd}}(\pi) = 1$ (Lemma \ref{LEMclosePfisterneighbours}), and so $\Lambda^U(X_{\pi}) = \lbrace 0_{\mathrm{lo}}, (2^n-1)^{\mathrm{up}}\rbrace$ by Proposition \ref{PROPMDTfornondefectiveheight1}. In view of Lemma \ref{LEMMDTofneighbour} (1), Corollary \ref{CORVishik} then implies that $\mathrm{MDT}(\varphi)$ contains the sets $\lbrace 0_{\mathrm{lo}}, (2^n-1)^{\mathrm{up}} \rbrace[i]$ with $0 \leq i < m$. We now consider the two cases.

(1) If $r = m$, then the union of the elements of $\mathrm{MDT}(\varphi)$ found above is all of $\Lambda(X)$, and so $\mathrm{MDT}(\varphi)$ is as stated.

(2) If $r>m$, then Lemma \ref{LEMPneighbours} (3) gives that $\witti{1}{\varphi} = m <r$, and so $h \geq 2$. Moreover, we have $\varphi_1 \simeq \varphi^c_{F(\varphi)}$ by Lemma \ref{LEMPneighbours} (2), so $\varphi$  is nonquasilinear and nondefective of type $(r-m,s)$. Now if $m \leq i <r$, then Lemma \ref{LEMMDTofneighbour} (2) tells us that for every separable extension $K/F$, we have $\windex{\varphi^c_K}>i-m$ if and only if $\windex{\varphi_K} > i$. By Theorem \ref{THMVishik}, it follows that $\mathrm{MDT}(\varphi)$ also contains sets $\Lambda[m]$ with $\Lambda \in \mathrm{MDT}(\varphi^c)$. Since $|\Lambda(X_{\varphi^c})| = 2(r-m) = |\Lambda| - 2m$, the union of these sets and the other elements of $\mathrm{MDT}(\varphi)$ found previously is all of $\Lambda(X)$, and so $\mathrm{MDT}(\varphi)$ is as stated. \end{proof}
\end{theorem}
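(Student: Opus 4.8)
The strategy is to combine Lemma \ref{LEMMDTofneighbour} with the Vishik-type result Theorem \ref{THMVishik} (in the form of Corollary \ref{CORVishik} for the Pfister piece, and in its full form for the complementary piece), and then use a counting argument on the cardinality of $\Lambda(X)$ to see that no further connected components can appear. First I would record the elementary bound $r \geq m$: by Lemma \ref{LEMPneighbours}(1) we have $r + s \leq 2^n$, and since $\mydim{\varphi} = 2r+s = 2^n + m$ this rearranges to $r \geq m$ (equivalently $m = \witti{1}{\varphi} \leq r$ by Lemma \ref{LEMPneighbours}(3), which is how I would phrase it). This also shows $h_{\mathrm{nd}}(\varphi) \geq 1$, with equality exactly when $r = m$.

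Next, the Pfister-form contribution. Let $\pi$ be the ambient general Pfister form of $\varphi$. Since $\pi$ is anisotropic and a Pfister form, $h_{\mathrm{nd}}(\pi) = 1$ by Lemma \ref{LEMclosePfisterneighbours}, so Proposition \ref{PROPMDTfornondefectiveheight1} gives $\Lambda^U(X_\pi) = \lbrace 0_{\mathrm{lo}}, (2^n - 1)^{\mathrm{up}} \rbrace$ (here the type of $\pi$ is $(2^{n}, 0)$, so $r+s-1 = 2^n - 1$). For each $0 \leq i < m$, Lemma \ref{LEMMDTofneighbour}(1) says that over every separable extension $K/F$, $\windex{\pi_K} > 0$ iff $\windex{\varphi_K} > i$; applying Corollary \ref{CORVishik} with the roles of $(\varphi,\psi)$ there played by $(\pi,\varphi)$ and with $n$ replaced by $i$, we conclude that $\Lambda^U(X_\pi)[i] = \lbrace 0_{\mathrm{lo}}, (2^n-1)^{\mathrm{up}} \rbrace[i]$ is an element of $\mathrm{MDT}(\varphi)$ for each $0 \leq i < m$. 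In case (1), $r = m$ and $|\Lambda(X)| = 2r = 2m$, so these $m$ disjoint two-element sets already exhaust $\Lambda(X)$, forcing $\mathrm{MDT}(\varphi)$ to consist of exactly these sets.

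For case (2), $r > m$. By Lemma \ref{LEMPneighbours}(2) we have $\varphi_1 \simeq (\varphi^c)_{F(\varphi)}$, and since $\varphi^c$ has dimension $2^n - m < 2^n$ it stays anisotropic over $F(\varphi)$ by the separation theorem (Corollary \ref{CORseparation}); moreover $\varphi^c$ is a subform of the nonsingular completion $\pi$ of $\varphi$ with $\mathrm{ql}(\varphi^c) \simeq \mathrm{ql}(\varphi)$, so $\varphi^c$ is nonquasilinear and nondefective of type $(r-m, s)$ — this is where I would be most careful, spelling out from the description in \S\ref{SUBSECdomination} that $\varphi^c \simeq \tau \perp \mathrm{ql}(\varphi)$ with $\tau$ nondegenerate of even dimension $2(r-m)$, hence $r-m \geq 1$ and $\varphi^c$ has the asserted type. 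Then for each $m \leq i < r$, Lemma \ref{LEMMDTofneighbour}(2) gives: over every separable $K/F$, $\windex{(\varphi^c)_K} > i - m$ iff $\windex{\varphi_K} > i$. Feeding this into Theorem \ref{THMVishik} with $\psi = \varphi$, $\varphi$ (of the theorem) $= \varphi^c$, and shift $n - m = (i) - (i-m) = m$: every element $\Lambda \in \mathrm{MDT}(\varphi^c)$ with $a(\Lambda) = i - m$ yields an element $\Lambda[m] \in \mathrm{MDT}(\varphi)$. Ranging over all $0 \leq i - m < r - m$ and all $\Lambda \in \mathrm{MDT}(\varphi^c)$, we obtain all of $\lbrace \Lambda[m] : \Lambda \in \mathrm{MDT}(\varphi^c)\rbrace$ as elements of $\mathrm{MDT}(\varphi)$. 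These sets are pairwise disjoint, disjoint from the $m$ Pfister-type components above (they live on indices $\geq m$ in the $\mathrm{lo}$-part, by the shift), and the union of everything has cardinality $2m + |\Lambda(X_{\varphi^c})| = 2m + 2(r-m) = 2r = |\Lambda(X)|$. Since $\Lambda(X)$ is partitioned by $\mathrm{MDT}(\varphi)$, these must be all the connected components, which is the claim.

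The main obstacle I anticipate is purely bookkeeping rather than conceptual: getting the index shifts in Theorem \ref{THMVishik} exactly right (the hypothesis is phrased in terms of $\windex{} > m$ vs.\ $\windex{} > n$, and one must match $m \leftrightarrow i - m$ on the $\varphi^c$ side and $n \leftrightarrow i$ on the $\varphi$ side so that the shift $n - m$ equals $m$), and making sure that the components produced are genuinely distinct so that the cardinality count is clean. The nondefectivity and nonquasilinearity of $\varphi^c$ needed to even invoke Theorem \ref{THMVishik} is the one place where a small argument (rather than a citation) is required, but it follows directly from Lemma \ref{LEMPneighbours}(2), Corollary \ref{CORseparation}, and the structure of complementary forms in \S\ref{SUBSECdomination}.
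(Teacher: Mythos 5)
Your proposal is correct and follows essentially the same route as the paper's proof: the bound $r\geq m$ from Lemma \ref{LEMPneighbours}, the Pfister components via Lemma \ref{LEMMDTofneighbour}(1) and Corollary \ref{CORVishik}, the complementary components via Lemma \ref{LEMMDTofneighbour}(2) and Theorem \ref{THMVishik} with shift $m$, and the final cardinality count on $\Lambda(X)$. The only (immaterial) difference is that you justify the nondefectivity and type $(r-m,s)$ of $\varphi^c$ directly from the structure of complementary forms in \S\ref{SUBSECdomination} and the separation theorem, whereas the paper cites $\varphi_1\simeq(\varphi^c)_{F(\varphi)}$ from Lemma \ref{LEMPneighbours}(2).
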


We can enhance this as follows. Recall that we write $\varphi_1$ for the form $\anispart{(\varphi_{F(\varphi)})}$. When $\varphi$ is a Pfister neighbour with complementary form $\varphi^c$, we have $\varphi_1 \simeq (\varphi^c)_{F(\varphi)}$ by Lemma \ref{LEMPneighbours} (2). We now have: 

\begin{proposition} \label{PROPMDTofcomplementaryform} In the situation of Theorem \ref{THMPfisterneighbourdecomposition} $\mathrm{(2)}$, we have $\mathrm{MDT}(\varphi^c) = \mathrm{MDT}(\varphi_1)$. Thus, $\mathrm{MDT}(\varphi)$ consists of the sets $\lbrace 0_{\mathrm{lo}}, (2^n-1)^{\mathrm{up}} \rbrace[i]$ with $0 \leq i < m$, together with the sets $\Lambda[m]$ for $\Lambda \in \mathrm{MDT}(\varphi_1)$. 
\begin{proof} Set $Y: = X_{\varphi^c}$. Let $\pi$ be the ambient general Pfister form of $\varphi$, and set $X: = X_{\pi}$. If $y \in Y \times Y$, then $Y(F(y)) \neq \emptyset$, and so $\varphi^c_{F(y)}$ is isotropic. Since $\pi$ dominates $\varphi$, it follows that $\pi_{F(y)}$ is isotropic, and hence hyperbolic. The canonical map $\mathrm{Ch}(X_{F(y)}) \rightarrow R_{X_{F(y)}}$ is then an isomorphism (see \S \ref{SUBSECpartialcelldec}), so we may identify $ \mathrm{Ch}(X_{F(y)})$ with $R_{X_{F(y)}}$. Let $0 \leq i \leq d_Y$.  Since $d_Y = 2^n -m - 2 < 2^n-1 = \frac{d_X}{2}$, the only nonzero element of $\mathrm{Ch}^{i}(X_{F(y)})$ is $h^i$, which lies in the image of the scalar extension map $\mathrm{Ch}^i(X) \rightarrow \mathrm{Ch}^i(X_{F(y)})$. By \cite[Lem. 88.5]{EKM}, it follows that the scalar extension map $\mathrm{Ch}_{d_Y}(Y^2) \rightarrow \mathrm{Ch}_{d_Y}(Y^2_{F(\pi)})$ is surjective.  The induced map $\overline{\mathrm{Ch}}_{d_Y}(Y^2) \rightarrow \overline{\mathrm{Ch}}_{d_Y}(Y^2_{F(\pi)})$ is then an isomorphism, and so $\mathrm{MDT}(\varphi^c) = \mathrm{MDT}(\varphi^c_{F(\pi)})$. Now since $\varphi \stb \pi$, the fields $F(\varphi)$ and $F(\pi)$ admit a common extension which is purely transcendental over both. By Lemma \ref{LEMMDTunderpurelytranscendental}, we then have that 
$$ \mathrm{MDT}(\varphi^c) = \mathrm{MDT}(\varphi^c_{F(\pi)}) = \mathrm{MDT}(\varphi^c_{F(\varphi)}) = \mathrm{MDT}(\varphi_1),$$
as desired.  \end{proof} \end{proposition}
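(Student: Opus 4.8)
The plan is to reduce the equality $\mathrm{MDT}(\varphi^c) = \mathrm{MDT}(\varphi_1)$ to the behaviour of the $\mathrm{MDT}$ invariant under a single scalar extension, namely passage to the function field of the ambient general Pfister form $\pi$ of $\varphi$. First, recall from Lemma \ref{LEMPneighbours} (2) that $\varphi_1 \simeq (\varphi^c)_{F(\varphi)}$, so it suffices to show $\mathrm{MDT}(\varphi^c) = \mathrm{MDT}((\varphi^c)_{F(\varphi)})$. Since $\varphi$ is a Pfister neighbour, $\varphi \stb \pi$, and hence $F(\varphi)$ and $F(\pi)$ admit a common field extension which is purely transcendental over each of them; by Lemma \ref{LEMMDTunderpurelytranscendental} this gives $\mathrm{MDT}((\varphi^c)_{F(\varphi)}) = \mathrm{MDT}((\varphi^c)_{F(\pi)})$. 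The problem is thereby reduced to proving $\mathrm{MDT}(\varphi^c) = \mathrm{MDT}((\varphi^c)_{F(\pi)})$ (note that $\varphi^c$ is a nonquasilinear nondefective anisotropic form by Theorem \ref{THMPfisterneighbourdecomposition} (2), so its $\mathrm{MDT}$ is defined).

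Write $Y := X_{\varphi^c}$. Since the invariant $\mathrm{MDT}(\varphi^c)$ is recovered from the partition of $\Lambda(Y)$ induced by the decomposition of $\alpha_{\Lambda(Y)}$ into indecomposable elements of $\overline{\mathrm{Ch}}_{d_Y}(Y^2)$ (see \S \ref{SUBSECMDTInvariant}), it is enough to show that the scalar extension map $\overline{\mathrm{Ch}}_{d_Y}(Y^2) \to \overline{\mathrm{Ch}}_{d_Y}(Y^2_{F(\pi)})$ is an isomorphism. This map is injective, since scalar extension to an algebraic closure identifies both groups with subgroups of the single group $\mathrm{Ch}_{d_Y}(\overline{Y^2})$; so the content is surjectivity, and for this it suffices to show that the integral map $\mathrm{Ch}_{d_Y}(Y^2) \to \mathrm{Ch}_{d_Y}(Y^2_{F(\pi)})$ is surjective. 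Here the plan is to apply the criterion of \cite[Lem. 88.5]{EKM} with the smooth quadric $X := X_\pi$ as the auxiliary "splitting" variety $($whose function field is $F(\pi))$: for any point $y$ of $Y^2$ the form $\varphi^c$ is isotropic over $F(y)$, so $\pi$, which dominates $\varphi^c$, becomes hyperbolic over $F(y)$ and hence $(X_\pi)_{F(y)}$ is split. The key numerical input is the inequality $d_Y = 2^n - m - 2 < 2^n - 1 = \tfrac12 d_{X_\pi}$, which guarantees that in every codimension $i \leq d_Y$ the only nonzero class in $\mathrm{Ch}^i((X_\pi)_{F(y)})$ is the power $h^i$, and $h^i$ is already defined over $F$; this is precisely the hypothesis needed to run \cite[Lem. 88.5]{EKM} in the range of codimensions relevant to $\mathrm{Ch}_{d_Y}(Y^2)$.

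Granting the surjectivity, $\overline{\mathrm{Ch}}_{d_Y}(Y^2)$ and $\overline{\mathrm{Ch}}_{d_Y}(Y^2_{F(\pi)})$ coincide as subspaces of $\mathrm{Ch}_{d_Y}(\overline{Y^2})$; since the distinguished idempotents $\alpha_\lambda$ and the $F$-rational element $\alpha_{\Lambda(Y)}$ are the same on both sides, the induced partitions of $\Lambda(Y)$ into connected components agree, i.e.\ $\mathrm{MDT}(\varphi^c) = \mathrm{MDT}((\varphi^c)_{F(\pi)})$. Combined with the first paragraph this yields $\mathrm{MDT}(\varphi^c) = \mathrm{MDT}(\varphi_1)$, and feeding this into the description of $\mathrm{MDT}(\varphi)$ in Theorem \ref{THMPfisterneighbourdecomposition} (2) gives the asserted conclusion.

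I expect the main obstacle to be the careful bookkeeping around \cite[Lem. 88.5]{EKM}: one must check that the surjectivity hypothesis on $\mathrm{Ch}^i(X_\pi) \to \mathrm{Ch}^i((X_\pi)_{F(y)})$ is only required for $i \leq d_Y$, and that these are exactly the codimensions that enter the computation of $\mathrm{Ch}_{d_Y}(Y^2)$ via the partial cell decomposition — it is the dimension bound $d_Y < \tfrac12 d_{X_\pi}$, coming from $\varphi$ being a Pfister neighbour with $r > m$, that makes this work. The remaining steps (the two reductions via $\varphi_1 \simeq (\varphi^c)_{F(\varphi)}$ and via purely transcendental extensions) are formal consequences of results already available.
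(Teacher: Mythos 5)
Your proposal is correct and follows essentially the same route as the paper's proof: the same reduction via $\varphi_1 \simeq (\varphi^c)_{F(\varphi)}$ and the common purely transcendental extension of $F(\varphi)$ and $F(\pi)$, and the same application of \cite[Lem. 88.5]{EKM} with $X_\pi$ as auxiliary variety, using the dimension bound $d_Y = 2^n - m - 2 < 2^n - 1 = \tfrac{1}{2}d_{X_\pi}$ to get surjectivity of $\mathrm{Ch}_{d_Y}(Y^2) \to \mathrm{Ch}_{d_Y}(Y^2_{F(\pi)})$. Your phrasing that $\pi$ dominates $\varphi^c$ (rather than $\varphi$) is in fact the correct justification for why isotropy of $\varphi^c_{F(y)}$ forces $\pi_{F(y)}$ to be hyperbolic.
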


This yields:

\begin{proposition} \label{PROPquasistronglyexcellent} Suppose that $\varphi_i$ is a Pfister neighbour for every $0 \leq i < h$.  Then there exist unique integers $n+1 = n_1 > n_2> \cdots > n_{h} > \mathrm{log}_2(s) + 1$ such that $\mydim{\varphi} = 2^{n_1} - 2^{n_2}  + \cdots + (-1)^{h-1}2^{n_h} + (-1)^{h}s$. Moreover:
\begin{enumerate} \item For each $0 \leq i < h$, we have $\mydim{\varphi_i} = 2^{n_{i+1}} - 2^{n_{i+2}} + \cdots + (-1)^{h-1-i}2^{n_h} + (-1)^{h-i}s$. 
\item If we set $m_{i+1}: = \mydim{\varphi_i} - 2^{n_{i+1}-1} = 2^{n_{i+1}-1} - 2^{n_{i+2}} + 2^{n_{i+3}} + \cdots + (-1)^{h-1-i}2^{n_{h}} + (-1)^{h-i}s$ for each $0 \leq i < h$, then $\mathrm{MDT}(\varphi)$ consists of the sets 
$$ \lbrace 0_{\mathrm{lo}}, (2^{n_{i+1}-1}-1)^{\mathrm{up}} \rbrace[m_1 + \cdots + m_{i} + j] $$
with $0 \leq i < h$ and $0 \leq j < m_{i+1}$.  \end{enumerate}
\begin{proof} Per the statement, let us set $n_1 : = n+1$. The integer $m_1$ in (2) is then equal to $m$, which equals $\witti{1}{\varphi}$ by Lemma \ref{LEMPneighbours} (3). By hypothesis, $\varphi$ is a neighbour of an $n_1$-fold general Pfister form, and so $\mydim{\varphi_1} = 2^{n_1} - \mydim{\varphi_1}$. We now proceed by induction on $h$. If $h = 1$, then $\varphi_1 \simeq \mathrm{ql}(\varphi)_{F(\varphi)}$, and so $s < 2^{n_1-1}$ and $\mydim{\varphi} = 2^{n_1} - s$.  It is clear that $n_1$ is the unique integer satisfying these conditions. Moreover, we have $r = \witti{1}{\varphi} = m$, and so Theorem \ref{THMPfisterneighbourdecomposition} (1) gives that $\mathrm{MDT}(\varphi)$ consists of the sets $ \lbrace 0_{\mathrm{lo}}, (2^{n_{1}-1}-1)^{\mathrm{up}} \rbrace[j]$ with $0 \leq j < m = m_0$.  This proves the desired assertions in this case. Suppose now that $h \geq 2$.  In this case,  $r > \witti{1}{\varphi} = m$. By Theorem \ref{THMPfisterneighbourdecomposition} and Proposition \ref{PROPMDTofcomplementaryform}, $\mathrm{MDT}(\varphi)$ then consists of the sets $ \lbrace 0_{\mathrm{lo}}, (2^{n_{1}-1}-1)^{\mathrm{up}} \rbrace[j]$ with $0 \leq j < m = m_1$, together with the sets $\Lambda[m] = \Lambda[m_1]$ with $\Lambda \in \mathrm{MDT}(\varphi_1)$. Note, however, that $\varphi_1$ is an anisotropic form of type $(r-m,s)$, nondefective height $h-1$ and dimension $<2^{n_1}$ satisfying the same condition as $\varphi$. By the induction hypothesis, it follows that there exist unique integers $n_1 > n_2 > \cdots > n_{h} > \mathrm{log}_2(s) + 1$ such that:
\begin{itemize} \item For each $1 \leq i <h$, $\mydim{\varphi_i} = 2^{n_{i+1}} - 2^{n_{i+2}} + \cdots + (-1)^{h-1-i}2^{n_{h}} + (-1)^{h-i}s$;
\item If we set $m_{i+1}: = \mydim{\varphi_i} - 2^{n_{i+1}-1} = 2^{n_{i+1}-1} - 2^{n_{i+2}} + 2^{n_{i+3}} + \cdots + (-1)^{h-1-i}2^{n_{h}} + (-1)^{h-i}s$ for each $1 \leq i < h$, then $\mathrm{MDT}(\varphi_1)$ consists of the sets 
$$ \lbrace 0_{\mathrm{lo}}, (2^{n_{i+1}-1}-1)^{\mathrm{up}} \rbrace[m_1 + \cdots + m_{i} + j] $$
with $1 \leq i < h$ and $0 \leq j < m_{i+1}$.  
\end{itemize}
In view of the preceding remarks, the desired assertions then follow.  \end{proof} \end{proposition}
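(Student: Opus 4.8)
The plan is to induct on $h = h_{\mathrm{nd}}(\varphi)$, peeling off one Pfister form at each step. The two engines are Theorem \ref{THMPfisterneighbourdecomposition}, which for a Pfister neighbour $\varphi$ describes $\mathrm{MDT}(\varphi)$ in terms of the shifted diagonal components and of $\mathrm{MDT}(\varphi^c)$, and Proposition \ref{PROPMDTofcomplementaryform}, which replaces $\mathrm{MDT}(\varphi^c)$ by $\mathrm{MDT}(\varphi_1)$. Throughout, $\varphi$ being a Pfister neighbour forces $r \geq m$ (Lemma \ref{LEMPneighbours}(1)) and, by Lemma \ref{LEMPneighbours}(3), $\witti{1}{\varphi} = m$; I will set $n_1 := n+1$ and $m_1 := \mydim{\varphi} - 2^{n_1 - 1} = \witti{1}{\varphi} = m$.

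First I would settle the base case $h = 1$. Then $\varphi$ is a close Pfister neighbour, so by Lemma \ref{LEMclosePfisterneighbours} one has $r = m$ and $\mydim{\varphi} = 2^{n_1} - s$ with $s < 2^{n_1 - 1}$; these conditions determine $n_1$ uniquely, and there are no further indices. Assertion (1) for $i = 0$ is then the dimension identity itself, and assertion (2) reads off from Theorem \ref{THMPfisterneighbourdecomposition}(1) after observing $2^n - 1 = 2^{n_1 - 1} - 1$.

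For the step $h \geq 2$, Lemma \ref{LEMnondefectiveheight1} together with $\witti{1}{\varphi} = m$ gives $r > m$, so $\varphi_1 \simeq \varphi^c_{F(\varphi)}$ (Lemma \ref{LEMPneighbours}(2)) is an anisotropic nonquasilinear nondefective form of type $(r-m, s)$, of nondefective height $h - 1$, and of dimension $\mydim{\varphi_1} = \mydim{\varphi^c} = 2^{n_1} - \mydim{\varphi} = 2^n - m < 2^{n_1 - 1}$ (the anisotropy of $\varphi^c$ over $F(\varphi)$ being the separation theorem, Corollary \ref{CORseparation}). Its anisotropic kernel forms are $\varphi_2, \dots, \varphi_{h-1}$, all Pfister neighbours, so $\varphi_1$ satisfies the hypothesis and I can apply the induction hypothesis to it: this yields unique $n_2 > \dots > n_h > \mathrm{log}_2(s) + 1$ with the claimed formulas for $\mydim{\varphi_i}$ ($i \geq 1$) and for $\mathrm{MDT}(\varphi_1)$. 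One then checks $n_2 < n_1$ from $\mydim{\varphi_1} < 2^{n_1 - 1}$, deduces the dimension formula for $\mydim{\varphi}$ by substituting $\mydim{\varphi} = 2^{n_1} - \mydim{\varphi_1}$, and obtains uniqueness of the full tuple because $2^{n_1 - 1} < \mydim{\varphi} < 2^{n_1}$ forces $n_1 = n + 1$ and hence $\mydim{\varphi_1}$, and then the remaining $n_i$, are determined. Assertion (2) for $\varphi$ is the concatenation of the two lists from Theorem \ref{THMPfisterneighbourdecomposition}(2) and Proposition \ref{PROPMDTofcomplementaryform} --- the sets $\lbrace 0_{\mathrm{lo}}, (2^{n_1 - 1} - 1)^{\mathrm{up}} \rbrace[j]$ with $0 \leq j < m_1$, and the sets $\Lambda[m_1]$ with $\Lambda \in \mathrm{MDT}(\varphi_1)$ --- once the inductive description of $\mathrm{MDT}(\varphi_1)$ is substituted and the outer shift by $m_1$ is absorbed into the running partial sums $m_1 + \dots + m_i$.

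The hard part will be purely combinatorial rather than geometric: confirming that $\varphi_1$ really is covered by the inductive hypothesis (i.e.\ that $\varphi$ stays nondefective and nonquasilinear along its nondefective splitting tower, so that the kernel forms of $\varphi_1$ are exactly $\varphi_2,\dots,\varphi_{h-1}$), and then aligning the shift parameters between the $\varphi$-statement and the $\varphi_1$-statement while keeping the inequalities $n_1 > n_2$ and $n_h > \mathrm{log}_2(s) + 1$ under control. After the index arithmetic is arranged, both assertions follow by direct substitution into the two cited results.
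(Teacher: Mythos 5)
Your proposal is correct and follows essentially the same route as the paper: induction on $h_{\mathrm{nd}}$, with the base case handled via Theorem \ref{THMPfisterneighbourdecomposition} (1) and the inductive step via Theorem \ref{THMPfisterneighbourdecomposition} (2) combined with Proposition \ref{PROPMDTofcomplementaryform} to pass from $\mathrm{MDT}(\varphi^c)$ to $\mathrm{MDT}(\varphi_1)$. The index bookkeeping you flag as the remaining work is exactly what the paper carries out, and your dimension count $\mydim{\varphi_1} = 2^{n_1} - \mydim{\varphi} < 2^{n_1-1}$ is the correct input for it.
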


By Lemmas \ref{LEMPneighbours} (2) and \ref{LEMstrongexcellence}, the hypothesis in the proposition is satisfied when $\varphi$ is strongly excellent (see the discussion following Lemma \ref{LEMstrongexcellence}). Thus:

\begin{corollary} \label{CORMDTstronglyexcellent} If $\varphi$ is strongly excellent, then there exist unique integers $n+1 = n_1 > n_2 > \cdots > n_{h} > \mathrm{log}_2(s) + 1$ such that $\mydim{\varphi} = 2^{n_1} - 2^{n_2}  + \cdots + (-1)^{h-1}2^{n_h} + (-1)^h s$. Moreover:
\begin{enumerate} \item For each $0 \leq i < h$, we have $\mydim{\varphi_i} = 2^{n_{i+1}} - 2^{n_{i+2}} + \cdots + (-1)^{h-1-i}2^{n_{h}} + (-1)^{h-i}s$. 
\item If we set $m_{i+1}: = \mydim{\varphi_i} - 2^{n_{i+1}-1} = 2^{n_{i+1}-1} - 2^{n_{i+2}} + 2^{n_{i+3}} + \cdots + (-1)^{h-1-i}2^{n_h} + (-1)^{h-i}s$ for each $0 \leq i < h$, then $\mathrm{MDT}(\varphi)$ consists of the sets 
$$ \lbrace 0_{\mathrm{lo}}, (2^{n_{i+1}-1}-1)^{\mathrm{up}} \rbrace[m_1 + \cdots + m_i + j] $$
with $0 \leq i < h$ and $0 \leq j < m_{i+1}$.  \end{enumerate} \end{corollary}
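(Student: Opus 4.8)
The plan is to deduce Corollary \ref{CORMDTstronglyexcellent} as an immediate consequence of Proposition \ref{PROPquasistronglyexcellent} once we verify that the hypothesis of that proposition --- namely, that every anisotropic kernel form $\varphi_i$ with $0 \leq i < h$ is a Pfister neighbour --- holds whenever $\varphi$ is strongly excellent. So the entire content of the proof is the observation already flagged in the sentence preceding the statement: \emph{strongly excellent $\Rightarrow$ all $\varphi_i$ ($i<h$) are Pfister neighbours}, together with the bookkeeping that $h = h_{\mathrm{nd}}(\varphi)$ in this situation.

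Concretely, I would argue as follows. Suppose $\varphi$ is strongly excellent, and let $\psi_0, \psi_1, \hdots, \psi_{h'-1}$ be the Pfister neighbours over $F$ provided by Lemma \ref{LEMstrongexcellence} (with $h'$ the integer appearing there), so that $\psi_0 = \anispart{\varphi} = \varphi$ (as $\varphi$ is anisotropic), $\psi_i \simeq \psi_{i-1}^c$ for $1 \leq i < h'$, and $\psi_{h'-1}^c \simeq \anispart{(\mathrm{ql}(\varphi))} = \mathrm{ql}(\varphi)$ (the last equality by anisotropy of $\varphi$). Lemma \ref{LEMstrongexcellence} then gives that $\varphi$ is excellent of nondefective height $h'$ --- so $h = h'$ --- and that $\varphi_i \simeq (\psi_i)_{F_i}$ for all $0 \leq i < h$, where $F = F_0 \subseteq F_1 \subseteq \cdots$ is the Knebusch splitting tower. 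Since $\psi_i$ is a Pfister neighbour over $F$ and $\varphi_i$ is its anisotropic scalar extension to $F_i$, it follows directly from the characterization of Pfister neighbours via stable birational equivalence with an anisotropic Pfister form (the discussion following Lemma \ref{LEMstbsubforms}; or simply that $\psi_i$ is dominated by its ambient general Pfister form $\pi_i$, hence $(\psi_i)_{F_i}$ is dominated by $(\pi_i)_{F_i}$, and the latter is anisotropic since $\varphi_i$ is) that $\varphi_i$ is a Pfister neighbour over $F_i$. Thus the hypothesis of Proposition \ref{PROPquasistronglyexcellent} is satisfied for $\varphi$ (over $F$, using that $\varphi_0 = \varphi$ is itself such a kernel form), and applying that proposition verbatim yields the existence of the unique integers $n+1 = n_1 > n_2 > \cdots > n_h > \log_2(s) + 1$ with $\mydim{\varphi} = 2^{n_1} - 2^{n_2} + \cdots + (-1)^{h-1}2^{n_h} + (-1)^h s$, together with the two listed conclusions.

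The only subtlety worth a line of care is matching the parameter $h$: Proposition \ref{PROPquasistronglyexcellent} is stated for a form $\varphi$ of nondefective height $h$ all of whose kernel forms $\varphi_0, \hdots, \varphi_{h-1}$ are Pfister neighbours, and strong excellence delivers exactly this with $h = h_{\mathrm{nd}}(\varphi)$ (Lemma \ref{LEMstrongexcellence}), so no adjustment is needed --- one simply invokes the proposition. There is essentially no obstacle here; the work was all done in establishing Lemma \ref{LEMstrongexcellence}, Proposition \ref{PROPquasistronglyexcellent}, and the structure theory of Pfister neighbours. If I wanted to make the corollary self-contained I might instead invoke the explicit description of strongly excellent forms from the Proposition in \S \ref{SUBSECexcellentforms} (the forms $\pi_0, \hdots, \pi_{h-1}, \tau$ with the subform and dimension conditions), from which the Pfister-neighbour property of each $\varphi_i$ is again transparent via $\varphi_i \simeq (\psi_i)_{F_i}$ where $\psi_i = \anispart{a(\pi_i \perp \cdots \perp \pi_{h-1} \perp \tau)}$; but the route through Lemma \ref{LEMstrongexcellence} is the cleanest.
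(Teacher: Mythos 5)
Your proposal is correct and matches the paper's argument: the corollary is deduced exactly as you describe, by noting (via Lemmas \ref{LEMPneighbours} (2) and \ref{LEMstrongexcellence}) that strong excellence forces each kernel form $\varphi_i$ ($0 \leq i < h$) to be a Pfister neighbour, and then invoking Proposition \ref{PROPquasistronglyexcellent} verbatim. No gaps.
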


We expect that Proposition \ref{PROPquasistronglyexcellent} says nothing more than this, i.e., that if $\varphi_i$ is a Pfister neighbour for every $0 \leq i < h$, then $\varphi$ is strongly excellent. Proving this amounts to showing that if $h \geq 2$ and both $\varphi$ and $\varphi_1$ are Pfister neighbours, then $\varphi^c$ is also a Pfister neighbour. Specifically, what has to be shown here is that if $\pi$ is the ambient general Pfister form of $\varphi_1$, then $\pi$ descends from $F(\varphi)$ to $F$.  However, we currently do not know how to prove this, even in the case where $\varphi$ is nondegenerate.

\subsection{Addendum: Virtual Pfister Neighbours} \label{SUBSECvirtualPNs} As in the previous subsection, let $\varphi$ be a nonquasilinear anisotropic quadratic form over $F$ of type $(r,s)$. Set $X: = X_{\varphi}$, $h := h_{\mathrm{nd}}(\varphi)$, and write $\mydim{\varphi} = 2^n + m$ for integers $n \geq 0$ and $1 \leq m \leq 2^n$.

If there exists an extension $K/F$ such that $\varphi_K$ is an anisotropic Pfister neighbour, then we say that $\varphi$ is a \emph{virtual Pfister neighbour}.  Using the main result of the previous subsection, we get the following restriction on the MDT invariant for these forms:

\begin{lemma} \label{LEMMDTforVPNs} If $\varphi$ is a virtual Pfister neighbour, then:
\begin{enumerate} \item $m$ is in the nondefective splitting pattern of $\varphi$;
\item For each integer $0 \leq i < m$, $i_{\mathrm{lo}}$ and $(2^n+i - 1)^{\mathrm{up}}$ are connected in $\Lambda(X)$;
\item If $1 \leq t \leq h$ is such that $\wittj{t}{\varphi} = m$, then $\wittj{t-1}{\varphi}_{\mathrm{lo}} \in \Lambda^U(X)$.  \end{enumerate}
\begin{proof} (1) If $\varphi$ is a Pfister neighbour, then $m = \witti{1}{\varphi}$ (Lemma \ref{LEMPneighbours} (3)). If $\varphi$ is a virtual Pfister neighbour, $m$ then lies in the nondefective splitting pattern of $\varphi$ by Lemma \ref{LEMnondefectiveheight}. 

(2) If $K/F$ is a field extension with $\mathfrak{i}_{\mathrm{d}}(\varphi) = 0$, then it is clear that any connections existing in $\Lambda(X_K)$ also exist in $\Lambda(X)$. To prove the assertion, we can therefore assume that $\varphi$ is a Pfister neighbour. But in this case, the claim holds by Theorem \ref{THMPfisterneighbourdecomposition}. 

(3) As $0_{\mathrm{lo}} \in \Lambda^U(X)$, we have $(2^n - 1)^{\mathrm{up}} \in \Lambda^U(X)$ by (2). Since $\wittj{t}{\varphi} = m = \mydim{\varphi} - 2^n = d_X - 2^n - 2$, we then also have that $\wittj{t-1}{\varphi}_{\mathrm{lo}} \in \Lambda^U(X)$ by Proposition \ref{PROPsplittingpatternconnections}. 
\end{proof} \end{lemma}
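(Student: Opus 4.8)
Looking at Lemma \ref{LEMMDTforVPNs}, I need to prove three statements about virtual Pfister neighbours. Let me plan the proof.

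\textbf{The plan.} The three parts follow a common strategy: reduce from the "virtual" case to the genuine Pfister neighbour case by passing to a suitable extension $K/F$, then invoke the already-established results on genuine Pfister neighbours (mainly Theorem \ref{THMPfisterneighbourdecomposition} and Lemma \ref{LEMPneighbours}). The key reduction principle is that the relevant MDT-theoretic data is governed by extensions that preserve nondefectivity, so one can pass to the extension making $\varphi$ a Pfister neighbour.

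For part (1), I would recall that $\varphi$ being a virtual Pfister neighbour means $\varphi_K$ is an anisotropic Pfister neighbour for some $K/F$. Since $\varphi$ remains anisotropic of dimension $2^n+m$ over $K$, Lemma \ref{LEMPneighbours}(3) gives $\witti{1}{\varphi_K} = m$, so $m$ is the second element of the nondefective splitting pattern of $\varphi_K$ (in fact $\witti{0}{\varphi_K}$ need not be $0$, but after the anisotropic kernel of $\varphi_K$ is taken, $\witti{1}{\varphi_K}=m$). Then by Lemma \ref{LEMnondefectiveheight}, since $m = \windex{(\varphi_K)_L}$ for $L = K(\varphi_K)$ and the relevant extension is separable, $m$ lies in $\lbrace \windex{\varphi_{K'}} \mid K'/F \text{ separable} \rbrace = \lbrace \wittj{t}{\varphi} \mid 0 \leq t \leq h_{\mathrm{nd}} \rbrace$, i.e., $m$ is in the nondefective splitting pattern of $\varphi$. (One must be slightly careful: $K/F$ may not be separable, but $m$ is attained over an extension with $\mathfrak{i}_{\mathrm{d}}=0$, which by Lemma \ref{LEMnondefectiveheight} suffices.)

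For part (2), I would use the observation — already invoked in the proof as written — that if $K/F$ has $\mathfrak{i}_{\mathrm{d}}(\varphi_K)=0$, then any connection holding in $\Lambda(X_K)$ also holds in $\Lambda(X)$. This is because $\ratchow{X^2} \to \ratchow{X_K^2}$ is compatible with the standard bases (the orientation conventions at the start of \S\ref{SECMDT} ensure $\pi^*$ respects the labeling), so an indecomposable $F$-rational cycle cannot "break apart" under scalar extension, meaning connected components of $\Lambda(X)$ can only be coarser than those of $\Lambda(X_K)$; hence connectedness in $\Lambda(X_K)$ implies connectedness in $\Lambda(X)$. Having reduced to the case where $\varphi$ is a genuine Pfister neighbour, I apply Theorem \ref{THMPfisterneighbourdecomposition}: in either case (1) or (2) of that theorem, $\mathrm{MDT}(\varphi)$ contains $\lbrace 0_{\mathrm{lo}}, (2^n-1)^{\mathrm{up}}\rbrace[i]$ for $0 \leq i < m$, and $\lbrace 0_{\mathrm{lo}}, (2^n-1)^{\mathrm{up}}\rbrace[i] = \lbrace i_{\mathrm{lo}}, (2^n-1+i)^{\mathrm{up}}\rbrace$, which is precisely the claimed connection.

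For part (3), I note $0_{\mathrm{lo}} \in \Lambda^U(X)$ by definition of the upper connected component, so part (2) with $i=0$ gives $(2^n-1)^{\mathrm{up}} \in \Lambda^U(X)$. Now $\wittj{t}{\varphi} = m$ by hypothesis, and $m = \mydim{\varphi} - 2^n = (d_X+2) - 2^n - ... $ — wait, $\mydim{\varphi} = d_X + 2$, so $m = d_X + 2 - 2^n$. Then I want to apply Proposition \ref{PROPsplittingpatternconnections} with the appropriate index: for $i=0$ it connects $(\wittj{t-1}{\varphi})_{\mathrm{lo}}$ with $(d_X - (\wittj{t}{\varphi} - 1))^{\mathrm{up}} = (d_X - m + 1)^{\mathrm{up}} = (2^n - 1)^{\mathrm{up}}$. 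Since $(2^n-1)^{\mathrm{up}} \in \Lambda^U(X)$ and connectedness is transitive, $\wittj{t-1}{\varphi}_{\mathrm{lo}} \in \Lambda^U(X)$.

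\textbf{Main obstacle.} The subtle point is part (2)'s claim that connections in $\Lambda(X_K)$ descend to $\Lambda(X)$ when $\mathfrak{i}_{\mathrm{d}}(\varphi_K)=0$; one must check the orientation/standard-basis compatibility carefully so that an indecomposable $F$-rational cycle pulls back to an $F$-rational (hence possibly further decomposable but not further indecomposable) cycle over $K$ supported on the "same" standard basis elements — this is what forces the $K$-components to refine the $F$-components. This is essentially routine given the setup in \S\ref{SECMDT}, but it is the one place requiring genuine care rather than citation.

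Here is the proof:

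\begin{proof}[Proof of Lemma \ref{LEMMDTforVPNs}]
Let $K/F$ be an extension for which $\varphi_K$ is an anisotropic Pfister neighbour; note that $\varphi_K$ then has dimension $2^n + m$ and the same integer $n$. By Lemma \ref{LEMPneighbours} (3), $\witti{1}{\varphi_K} = m$, so $m = \windex{(\varphi_K)_L}$ where $L = K(\varphi_K)$. Since $L/K$ is separable (Lemma \ref{LEMfunctionfields} (1)) and $\varphi_K$ is nonquasilinear, $\witti{\mathrm{d}}{(\varphi_K)_L} = 0$. By Lemma \ref{LEMnondefectiveheight}, $m = \witti{0}{(\varphi_K)_L} = \witti{0}{\varphi_L}$ lies in $\lbrace \witti{0}{\varphi_{K'}} \mid K'/F \text{ with } \witti{\mathrm{d}}{\varphi_{K'}} = 0 \rbrace = \lbrace \wittj{t}{\varphi} \mid 0 \leq t \leq h_{\mathrm{nd}} \rbrace$, proving (1).

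(2) We first observe that if $K/F$ is any extension with $\witti{\mathrm{d}}{\varphi_K} = 0$ (so $\varphi_K$ remains anisotropic, nonquasilinear and nondefective of type $(r,s)$), then any two elements of $\Lambda(X)$ connected in $\Lambda(X_K)$ are connected in $\Lambda(X)$. Indeed, with the orientation conventions of \S \ref{SECMDT}, the canonical projection $\pi \colon X_K^2 \rightarrow X^2$ satisfies $\pi^*(\alpha_\lambda) = \alpha_\lambda$ for every $\lambda \in \Lambda(X) = \Lambda(X_K)$, and $\pi^*$ carries $\overline{\mathrm{Ch}}_{d_X}(X^2)$ into $\overline{\mathrm{Ch}}_{d_X}(X_K^2)$. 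Hence if $\Lambda$ is an element of $\mathrm{MDT}(\varphi)$, then $\alpha_\Lambda = \pi^*(\alpha_\Lambda) \in \overline{\mathrm{Ch}}_{d_X}(X_K^2)$, so $\Lambda$ is a union of connected components of $\Lambda(X_K)$. Thus the partition $\mathrm{MDT}(\varphi_K)$ refines $\mathrm{MDT}(\varphi)$, and (2) for $\varphi_K$ implies (2) for $\varphi$. We may therefore assume that $\varphi$ itself is a Pfister neighbour. In that case, Theorem \ref{THMPfisterneighbourdecomposition} shows that $\mathrm{MDT}(\varphi)$ contains the set $\lbrace 0_{\mathrm{lo}}, (2^n-1)^{\mathrm{up}} \rbrace[i] = \lbrace i_{\mathrm{lo}}, (2^n + i - 1)^{\mathrm{up}} \rbrace$ for each $0 \leq i < m$, giving the asserted connections.

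(3) Since $0_{\mathrm{lo}} \in \Lambda^U(X)$ by definition, part (2) with $i = 0$ gives $(2^n - 1)^{\mathrm{up}} \in \Lambda^U(X)$. Now $\wittj{t}{\varphi} = m = \mydim{\varphi} - 2^n = d_X + 2 - 2^n$, so $d_X - (\wittj{t}{\varphi} - 1) = 2^n - 1$. Applying Proposition \ref{PROPsplittingpatternconnections} with $i = 0$, the elements $\wittj{t-1}{\varphi}_{\mathrm{lo}}$ and $(d_X - (\wittj{t}{\varphi} - 1))^{\mathrm{up}} = (2^n - 1)^{\mathrm{up}}$ are connected in $\Lambda(X)$. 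Since $(2^n-1)^{\mathrm{up}} \in \Lambda^U(X)$, it follows that $\wittj{t-1}{\varphi}_{\mathrm{lo}} \in \Lambda^U(X)$.
\end{proof}
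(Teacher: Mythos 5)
Your proof is correct and follows essentially the same route as the paper's: reduce to the genuine Pfister neighbour case via the refinement of $\mathrm{MDT}(\varphi)$ by $\mathrm{MDT}(\varphi_K)$ for extensions preserving nondefectivity, then cite Lemma \ref{LEMPneighbours}, Theorem \ref{THMPfisterneighbourdecomposition} and Proposition \ref{PROPsplittingpatternconnections}. You supply slightly more detail on the descent of connections in part (2) (which the paper calls ``clear'') and correctly compute $m = d_X + 2 - 2^n$ where the paper's proof of (3) has a sign typo; otherwise the arguments coincide.
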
 

It is an intriguing problem to determine sufficient conditions for an anisotropic quadratic form to be a virtual Pfister neighbour.  While the nondefective splitting pattern is unable to detect this property in general, we do have the following special cases:

\begin{proposition} \label{PROPVPNs} $\varphi$ is a virtual Pfister neighbour in the following cases:
\begin{enumerate} \item $\varphi$ has maximal splitting, i.e, $\witti{1}{\varphi} = m$;
\item $m = 1$;
\item $m = 2$ and $2$ lies in the nondefective splitting pattern of $\varphi$;\end{enumerate} \end{proposition}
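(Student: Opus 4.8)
The plan is to treat the three cases separately, each time exhibiting a field extension $K/F$ over which $\anispart{(\varphi_K)}$ becomes an anisotropic Pfister neighbour. Throughout, write $\mydim{\varphi} = 2^n+m$ with $1 \leq m \leq 2^n$, and recall from Corollary \ref{CORmaxsplitting} that $\Izhdim{\varphi} \geq 2^n$, with equality precisely when $\witti{1}{\varphi} = m$ (i.e.\ when $\varphi$ has maximal splitting).

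For case (1), suppose $\witti{1}{\varphi} = m$, so $\Izhdim{\varphi} = 2^n$. I would pass to the field $K := F(\varphi)$: the form $\psi := \varphi_1 = \anispart{(\varphi_{F(\varphi)})}$ is nonquasilinear nondefective of dimension $\mydim{\varphi} - 2m = 2^n - m < 2^n$, and by Lemma \ref{LEMstbsubforms} (applied to a suitable subform realizing the Witt index drop) one gets $\varphi \stb \psi$ is not quite what is needed — rather, the cleaner route is to invoke the Cassels--Pfister/stable-birational characterization of Pfister neighbours recalled in \S \ref{SUBSECexcellentforms}: $\varphi$ is a Pfister neighbour iff $\varphi \stb \pi$ for some anisotropic Pfister form $\pi$. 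Since $\Izhdim{\varphi} = 2^n$ and $\mydim{\varphi} = 2^n + m \leq 2^{n+1}$, one takes $\pi$ to be an anisotropic $(n+1)$-fold general Pfister form over a field $K$ with $\varphi_K \stb \pi$; such a $\pi$ exists by a standard generic construction (e.g.\ realize $\varphi$ as a subform-up-to-similarity inside a generic general Pfister form, or use that a form of maximal splitting becomes a Pfister neighbour over the function field of the relevant Pfister quadric). I expect this case to follow from assembling Lemma \ref{LEMstbsubforms}, Corollary \ref{CORmaxsplitting}, and the remarks in \S \ref{SUBSECexcellentforms}.

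For case (2), $m=1$, so $\mydim{\varphi} = 2^n+1$. Here $\varphi$ is \emph{already} a virtual Pfister neighbour over $F$ itself, because any anisotropic form of dimension $2^n+1$ is a Pfister neighbour: it embeds (after scaling) into a general $(n+1)$-fold Pfister form by an explicit construction — one adjoins an orthogonal complement of dimension $2^{n+1} - (2^n+1) = 2^n - 1$ and observes that one can arrange the completion to be a general Pfister form. More carefully, by the Cassels--Pfister subform theorem one reduces to checking that a suitable general Pfister form containing $\varphi$ up to similarity exists; since $\mydim{\varphi} > 2^n = \frac{1}{2}\mydim{\pi}$, any even-dimensional nondegenerate completion $\pi$ of $\varphi$ of dimension $2^{n+1}$ is a Pfister neighbour ambient form once $\pi$ is chosen to be general Pfister, and the existence of such a completion is elementary (extend $\varphi$ to a nonsingular form of dimension $2^{n+1}$ whose Clifford-type invariants force it into the Pfister class — or simply pass to the field where this holds). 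Alternatively, and perhaps most cleanly, take $K = F$ and cite that forms of dimension $2^n+1$ are Pfister neighbours, which is classical (cf.\ the discussion of excellent forms of type $(r,1)$ in \S \ref{SUBSECexcellentforms}); so $\varphi$ is literally a Pfister neighbour, hence trivially a virtual one.

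For case (3), $m=2$ and $2$ lies in the nondefective splitting pattern of $\varphi$. Let $1 \leq t \leq h_{\mathrm{nd}}$ be the integer with $\wittj{t}{\varphi} = 2$, and let $K = F_t$ be the corresponding field in the nondefective splitting tower. Then $\psi := \varphi_t = \anispart{(\varphi_{F_t})}$ is nonquasilinear nondefective of dimension $\mydim{\varphi} - 4 = 2^n - 2$. Now I would go one more step: since $\psi$ has dimension $2^{n-1} + (2^{n-1}-2)$ with $2^{n-1}-2 \geq 1$ when $n \geq 2$ (the cases $n \leq 1$ forcing small dimensions handled directly), one does not immediately get a neighbour. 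Instead the right move is to note that over $F_{t-1}$ the form $\varphi_{t-1}$ has dimension $2^n+2$ or more precisely that $\varphi$ becomes, over $F_{t-1}$, a form with $\witti{1} = \wittj{t}{\varphi} - \wittj{t-1}{\varphi}$; and at the last step before defect, an anisotropic form $\sigma$ of dimension $2^k+2$ with $\witti{1}{\sigma} = 2$ has maximal splitting, hence is a virtual Pfister neighbour by case (1). So I would apply case (1) to $\varphi_{t-1}$ over $F_{t-1}$ (checking $\witti{1}{\varphi_{t-1}} = 2$, which is exactly the hypothesis that $2 = \wittj{t}{\varphi}$ is the value of the splitting pattern just above $\wittj{t-1}{\varphi}$ — one must verify $\wittj{t-1}{\varphi} = 0$, i.e.\ that $t=1$, or more generally reduce to $\varphi_{t-1}$), obtaining an extension over which $\anispart{(\varphi_{t-1})_{\bullet}}$, hence $\anispart{(\varphi_{\bullet})}$, is an anisotropic Pfister neighbour.

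The main obstacle I anticipate is case (3): one needs $2$ to be the \emph{first} positive jump detected, i.e.\ that $\witti{1}{\varphi} \in \{1,2\}$, so that the reduction to maximal splitting via case (1) is legitimate; if $\witti{1}{\varphi} > 2$ but $2$ still appears later in the splitting pattern, then passing to $F_{t-1}$ first (where the form has $\witti{1} = 2$) is essential, and one must carefully track that $\anispart{(\varphi_{F_{t-1}})}$ is a virtual Pfister neighbour of the \emph{correct} Pfister form so that this property ascends to $\varphi$ itself — virtual Pfister neighbour status is preserved under passing up the splitting tower precisely because $\anispart{(\varphi_K)} = \anispart{((\varphi_{F_{t-1}})_K)}$ for $K \supseteq F_{t-1}$. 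I would double-check the degenerate-case bookkeeping (the quasilinear part $\mathrm{ql}(\varphi)$ of dimension $s$ is carried along unchanged and must be accommodated inside the ambient Pfister form, which is consistent with $\witti{1}{\varphi} = m$ and $\mydim{\varphi} = 2^n+m$ forcing $r+s \leq 2^n$ as in Lemma \ref{LEMPneighbours}(1)).
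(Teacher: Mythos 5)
Your proposal does not engage with the central difficulty of this statement. By definition (\S \ref{SUBSECvirtualPNs}), $\varphi$ is a virtual Pfister neighbour only if there is an extension $K/F$ over which $\varphi_K$ itself is simultaneously \emph{anisotropic} and a Pfister neighbour; the property concerns $\varphi_K$, not $\anispart{(\varphi_K)}$. Your case (1) asserts that a suitable $K$ "exists by a standard generic construction," but producing such a $K$ --- over which a generic $(n+1)$-fold Pfister form becomes isotropic just enough to dominate a scalar multiple of $\varphi_K$ while $\varphi_K$ stays anisotropic --- is precisely the content of the proposition. The paper does this by setting $\psi := \varphi \perp \pi$ for a generic $(n+1)$-fold Pfister form $\pi$ over $K = F(T_1,\hdots,T_{n+1})$, climbing the nondefective splitting tower of $\psi$, and using Theorems \ref{THMKM} and \ref{THMstb} (via a comparison of Izhboldin dimensions) to certify that $\varphi$ remains anisotropic at the stage $K_{j+1}$ where Lemma \ref{LEMVPNpreliminaries}~(2) forces $\varphi_{K_{j+1}} \prec \pi_{K_{j+1}}$ through the Cassels--Pfister subform theorem. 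None of this anisotropy control appears in your argument. Your case (2) rests on the claim that every anisotropic form of dimension $2^n+1$ is already a Pfister neighbour over $F$; this is false (a generic form of dimension $5$, say, is not stably birationally equivalent to any anisotropic $3$-fold Pfister form). The correct statement is exactly the \emph{virtual} one, which is \cite[Prop.~3.1]{HoffmannLaghribi2} and cannot be taken as elementary.

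Case (3) has two further breakdowns. First, when $\witti{1}{\varphi} = 1$ and $\wittj{2}{\varphi} = 2$, the kernel $\varphi_1$ has dimension $2^n = 2^{n-1} + 2^{n-1}$, so maximal splitting for $\varphi_1$ would require $\witti{1}{\varphi_1} = 2^{n-1}$, whereas $\witti{1}{\varphi_1} = 1$; your proposed application of case (1) to $\varphi_{t-1}$ is therefore not legitimate for $n \geq 2$. Second, and more fundamentally, for $t \geq 2$ the field $F_{t-1}$ contains $F(\varphi)$, over which $\varphi$ is isotropic; knowing that $\varphi_{t-1}$ is a virtual Pfister neighbour gives extensions $L \supseteq F_{t-1}$ over which $\anispart{(\varphi_L)}$ is an anisotropic Pfister neighbour, but $\varphi_L$ itself is isotropic over every such $L$, so this does not make $\varphi$ a virtual Pfister neighbour. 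The paper's Case 2 instead works entirely over fields $K_{j+1}, K_{j+2}$ in the splitting tower of $\psi = \varphi \perp \pi$ (not of $\varphi$), where anisotropy of $\varphi$ can be and is verified, and distinguishes the two dimension counts $\mydim{\psi_{j+1}} = 2^n$ and $\mydim{\psi_{j+2}} = 2^n - 2$ to trigger cases (ii) and (iii) of Lemma \ref{LEMVPNpreliminaries}~(2).
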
 

Note that case (2) has been treated in \cite[Prop. 3.1]{HoffmannLaghribi2}. A small modification of the argument gives the more general case (1) (see below). Over fields of characteristic not 2, the analogues of cases (1) and (3) are due to Hoffmann (\cite[Cor. 3]{Hoffmann1}) and Izhboldin (\cite[Thm. 5.8]{Izhboldin1}), respectively.  The basis of the arguments (which goes back to \cite{Hoffmann1}) is the following: Let $T_1,\hdots,T_{n+1}$ be indeterminates, and set $K: = F(T_1,\hdots,T_{n+1})$. Consider the $(n+1)$-fold Pfister form $\pi: = \qpfister{T_1,\hdots}{T_{n+1}}$ over $K$. It is straightforward to check that $\pi$ is anisotropic, and showing that $\varphi$ is a Pfister neighbour amounts to showing that there exists an extension $L/K$ such that $\pi_L$ is anisotropic and dominates a nonzero scalar multiple of $\varphi_L$. We have the following lemma:

\begin{lemma} \label{LEMVPNpreliminaries} In the above situation, set $\psi: = \varphi \perp \pi$, and let $K = K_0 \subset K_1 \subset \cdots \subset K_{h_{\mathrm{nd}}(\psi)}$ be the nondefective splitting tower of $\psi$.
\begin{enumerate} \item Let $i$ be a nonnegative integer in the nondefective spitting pattern of $\varphi$. Then:
\begin{itemize} \item[$\mathrm{(i)}$] There exists an integer $1 \leq t < h_{\mathrm{nd}}(\psi)$ such that $\windex{\varphi_{K_t}} = i$ and $\mydim{\psi_t} = 2^n + m - 2i$;
\item[$\mathrm{(ii)}$] If $i \geq m$, then there exists a separable extension $L/K$ such that $\windex{\varphi_L} = i$ and $\mydim{\anispart{(\psi_L)}} = 2^n -m + 2i$.  \end{itemize}
\item Let $1 \leq t < h_{\mathrm{nd}}(\psi)$ be such that $\varphi_{K_t}$ is anisotropic. Suppose that any one of the following holds:
\begin{itemize} \item[$\mathrm{(i)}$] $\pi_{K_{t+1}}$ is isotropic;
\item[$\mathrm{(ii)}$] $\mydim{\anispart{((\psi_t)_{K_t(\varphi)})}} < 2^n + m - 2\witti{1}{\varphi}$;
\item[$\mathrm{(iii)}$] $\mydim{\anispart{(\varphi_{K_{t+1}})}} < 2^{n+1} - \mydim{\psi_{t+1}}$. \end{itemize}
Then $\varphi_{K_t} \prec \pi_{K_t}$, and so $\varphi_{K_t}$ is a Pfister neighbour. 
\end{enumerate} 
\begin{proof} (1) Since $\pi_{K(\pi)}$ is hyperbolic, we have $\psi_{K(\pi)} \sim \varphi_{K(\pi)}$. But it is straightforward to see that $K(\pi)$ is a purely transcendental extension of $F$, and so (i) holds by Lemma \ref{LEMnondefectiveheight}.  Suppose now that $i \geq m$. By Lemma \ref{LEMnondefectiveheight}, there exists a separable extension $F'/F$ such that $\windex{\varphi_{F'}} = i$. Set $\varphi' : = \anispart{(\varphi_{F'})}$. Then $\mydim{\varphi'} = \mydim{\varphi} - 2i = 2^n + m - 2i < 2^n$. By \cite[Prop. 3.1]{HoffmannLaghribi2}, it follows that there exists a separable extension $L$ of $F'(T_1,\hdots,T_{n+1}) = K \cdot F'$ such that $\pi_L$ is anisotropic and $\varphi'_L \prec \pi_L$.  Then $\windex{\varphi_L} = i$, and we have
$$ \psi_L \sim (\varphi \perp \pi)_L \sim \varphi'_L \perp \pi_L \sim (\varphi'_L)_{\pi_L}^c. $$
Since $\mydim{(\varphi'_L)_{\pi_L}^c} = 2^{n+1} - \mydim{\varphi'} = 2^n + m - 2i$, it follows that $\mydim{\anispart{(\psi_L)}} = 2^n -m + 2i$, proving the claim (note that $L/F$ is separable, being a tower of separable extensions).

(2) By definition, we have $\psi_t \sim \varphi_{K_t} \perp \pi_{K_t}$. Let us now consider the three cases of interest.

Suppose first that we are in case (i), and let $j \leq t$ be the largest integer for which $\pi_{K_j}$ is anisotropic. By Lemma \ref{LEMWittequivalenceofnondefective}, we have
$$ \psi_j \perp \pi_{K_j} \sim \varphi_{K_j} \perp \pi_{K_j} \perp \pi_{K_j} \sim \varphi_{K_j}. $$
Since $\mydim{\varphi_{K_j}} = 2^n + m < \mydim{(\psi_j \perp \pi_{K_j})}$, and since $\pi_{K_j}$ is anisotropic, $\pi_{K_j}$ and $\pi_{K_j}$ represent a common nonzero value of $K_j$. Since $\pi_{K_{j+1}}$ is isotropic, and hence hyperbolic, the Cassels-Pfister subform theorem (\cite[Thm. 22.5]{EKM}) then gives that $\psi_j \prec \pi_{K_j}$. By Lemma \ref{LEMWittequivalenceofnondefective}, we then have that
$$ \varphi_{K_j} \sim \varphi_{K_j} \perp \pi_{K_j} \perp \pi_{K_j} \sim \psi_j \perp \pi_{K_j} \sim (\psi_j)_{\pi_{K_j}}^c, $$
and so $\varphi_{K_j} \simeq (\psi_j)_{\pi_{K_j}}^c \prec \pi_{K_j}$ (both forms being anisotropic). In particular, $\varphi_{K_t} \prec \pi_{K_t}$. 

Suppose now that we are in case (ii) or (iii). Let $L$ be $K_t(\varphi)$ or $K_{t+1}$, depending on whether we are in the former or latter case, and set $\varphi': = \anispart{(\varphi_L)}$ and $\psi' := \anispart{(\psi_L)} = \anispart{((\psi_t)_L)}$.  Then $\psi' \sim \varphi' \perp \pi_L$, and the dimension hypothesis in each case tells us that $\mydim{\varphi'}+\mydim{\psi'} < 2^{n+1} = \mydim{\pi}$. Since $L/K$ is separable, Lemma \ref{LEMnondefectiveheight1} then gives that
$$ \witti{0}{\varphi' \perp \pi_L} = \windex{\varphi' \perp \pi_L} = \frac{\mydim{\varphi'} + \mydim{\pi} - \mydim{\psi'}}{2} > \mydim{\varphi'}. $$
If $\pi_L$ were anisotropic, it would then follow from \cite[Prop. 3.11]{HoffmannLaghribi1} that $\varphi' \prec \pi_L$.  But we would then have $\psi' \sim \varphi' \perp \pi_L \sim \varphi'_{\pi_L}$, giving that $\mydim{\psi'} = 2^{n+1} - \mydim{\varphi'}$, a contradiction. Thus, $\pi_L$ is isotropic, and hence hyperbolic. If $L = K_{t+1}$, we are then back in case (i). Suppose therefore that $L = K_t(\varphi)$. Again, if $\pi_{K_{t+1}}$ is isotropic, then we are in case (i), so we may assume otherwise. In particular, we may assume that $\pi_{K_t}$ is anisotropic. Now since $t \geq 1$, $\varphi_{K_t} \perp \pi_{K_t} = \psi_{K_t}$ is isotropic. Since both $\varphi_{K_t}$ and $\pi_{K_t}$ are anisotropic, they then represent a common element of $K_t^\times$. Since $\pi_L$ is hyperbolic, another application of the Cassels-Pfister subform theorem then gives that $\varphi_{K_t} \prec \pi_{K_t}$, as desired. \end{proof}\end{lemma}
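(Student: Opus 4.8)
The plan is to work inside the nondefective splitting tower $K=K_0\subseteq\cdots\subseteq K_{h_{\mathrm{nd}}(\psi)}$ of $\psi=\varphi\perp\pi$, exploiting three standard facts about $\pi$: it is anisotropic over $K=F(T_1,\dots,T_{n+1})$ (iterate over the $n+1$ slots the fact that an anisotropic form stays anisotropic after adjoining an indeterminate slot); it has maximal splitting, so $\pi_E$ is hyperbolic as soon as it is isotropic; and $K(\pi)$ is purely transcendental over $F$ (solve the equation of $X_\pi$ for one slot). Note also that $\mathrm{ql}(\psi)=\mathrm{ql}(\varphi)$ is anisotropic, so $\psi$ is nondefective of type $(r+2^n,s)$, the tower is well defined, and $\mathfrak{j}_{h_{\mathrm{nd}}(\psi)}(\psi)=r+2^n$.

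For part~(1)(i): over $K(\pi)$ the form $\pi$ is hyperbolic, so $\psi_{K(\pi)}\sim\varphi_{K(\pi)}$ and $\windex{\psi_M}=2^n+\windex{\varphi_M}$ for every separable $M/K(\pi)$. Since $K(\pi)/F$ is purely transcendental (and anisotropy survives purely transcendental base change), Lemma~\ref{LEMnondefectiveheight} identifies $\{\windex{\varphi_M}\}$ with the nondefective splitting pattern of $\varphi$, and a second application of Lemma~\ref{LEMnondefectiveheight} puts $2^n+i$ in the nondefective splitting pattern of $\psi$; write $2^n+i=\mathfrak{j}_t(\psi)$. Anisotropy of $\psi_K=\varphi_K\perp\pi$ gives $\mathfrak{j}_0(\psi)=0<2^n+i$, hence $t\ge 1$, and $2^n+i<2^n+r$ (as $i<r$) gives $t<h_{\mathrm{nd}}(\psi)$; then $\windex{\varphi_{K_t}}=i$ and $\mydim{\psi_t}=\mydim{\psi}-2\mathfrak{j}_t(\psi)=2^n+m-2i$. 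For (1)(ii), assume $i\ge m$ and pick, via Lemma~\ref{LEMnondefectiveheight}, a separable $F'/F$ with $\windex{\varphi_{F'}}=i$, so that $\varphi':=\anispart{(\varphi_{F'})}$ has dimension $2^n+m-2i<2^n$. By \cite[Prop.~3.1]{HoffmannLaghribi2}, applied to $\varphi'$ over $F'$ and the generic $(n+1)$-fold Pfister form over $F'(T_1,\dots,T_{n+1})=K\cdot F'$, there is a separable $L/K$ with $\pi_L$ anisotropic and $\varphi'_L\prec\pi_L$; then $\windex{\varphi_L}=i$ and $\psi_L\sim\varphi'_L\perp\pi_L$ is Witt-equivalent to the complementary form of $\varphi'_L$ in $\pi_L$, an anisotropic form of dimension $2^{n+1}-\mydim{\varphi'}=2^n-m+2i$.

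For part~(2) the common engine is the Cassels--Pfister subform theorem \cite[Thm.~22.5]{EKM}: if $E(\sigma)$ makes the anisotropic Pfister form $\pi_E$ hyperbolic while $\sigma$ and $\pi_E$ share a nonzero represented value, then $\sigma\prec\pi_E$, and by Lemma~\ref{LEMWittequivalenceofnondefective} the complementary form of $\sigma$ in $\pi_E$ is Witt-equivalent to the other summand of $\sigma\perp\pi_E$ and, being anisotropic, is a genuine subform. In case~(i) I would take $j\le t$ maximal with $\pi_{K_j}$ anisotropic; then $\psi_j\perp\pi_{K_j}\sim\varphi_{K_j}$ has dimension $2^n+m$, strictly below $\mydim{\psi_j}+2^{n+1}$, so $\psi_j\perp\pi_{K_j}$ is isotropic and the anisotropic forms $\psi_j,\pi_{K_j}$ represent a common nonzero value; since $\pi_{K_{j+1}}=\pi_{K_j(\psi_j)}$ is hyperbolic, the engine gives $\psi_j\prec\pi_{K_j}$, whence $\varphi_{K_j}\simeq(\psi_j)_{\pi_{K_j}}^c\prec\pi_{K_j}$ and, extending scalars, $\varphi_{K_t}\prec\pi_{K_t}$ ($\pi_{K_t}$ being anisotropic, since an anisotropic $\varphi_{K_t}$ cannot embed in a hyperbolic form). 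In cases~(ii) and~(iii) set $L=K_t(\varphi)$, resp.\ $L=K_{t+1}$, and $\varphi'=\anispart{(\varphi_L)}$, $\psi'=\anispart{(\psi_L)}$; the dimension hypothesis of each case unwinds (using the anisotropy of $\varphi_{K_t}$) to $\mydim{\varphi'}+\mydim{\psi'}<2^{n+1}$, so since $L/K$ is separable $\varphi'\perp\pi_L$ is nondefective and $\witti{0}{\varphi'\perp\pi_L}=\windex{\varphi'\perp\pi_L}=\frac{1}{2}(\mydim{\varphi'}+2^{n+1}-\mydim{\psi'})>\mydim{\varphi'}$; were $\pi_L$ anisotropic, \cite[Prop.~3.11]{HoffmannLaghribi1} would force $\varphi'\prec\pi_L$ and hence $\mydim{\psi'}=2^{n+1}-\mydim{\varphi'}$, contradicting the inequality, so $\pi_L$ is hyperbolic. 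If $L=K_{t+1}$ this is precisely case~(i); if $L=K_t(\varphi)$, then either $\pi_{K_{t+1}}$ is isotropic (again case~(i)) or $\pi_{K_t}$ is anisotropic, in which case the isotropy of $\psi_{K_t}=\varphi_{K_t}\perp\pi_{K_t}$ (valid as $t\ge 1$) gives a common represented value and one more application of Cassels--Pfister yields $\varphi_{K_t}\prec\pi_{K_t}$.

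I expect the delicate points to be: in part~(1)(i), confirming that the index $t$ with $\mathfrak{j}_t(\psi)=2^n+i$ genuinely satisfies $1\le t<h_{\mathrm{nd}}(\psi)$ and that $\mydim{\psi_t}$ comes out to be exactly $2^n+m-2i$; and in part~(2), the bookkeeping verifying that cases~(ii) and~(iii) collapse to case~(i), in particular that $\pi$ remains anisotropic over $K_t$ so that ``Pfister neighbour'' carries its intended meaning (neighbour of an anisotropic general Pfister form).
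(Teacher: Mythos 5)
Your proposal is correct and follows essentially the same route as the paper's own proof: part (1) via the purely transcendental extension $K(\pi)/F$ together with Lemma \ref{LEMnondefectiveheight} and \cite[Prop.~3.1]{HoffmannLaghribi2}, and part (2) via the maximal $j$ with $\pi_{K_j}$ anisotropic, the Cassels--Pfister subform theorem, and \cite[Prop.~3.11]{HoffmannLaghribi1} to force $\pi_L$ hyperbolic and collapse cases (ii)--(iii) to case (i). The extra details you supply (the bounds $1\le t<h_{\mathrm{nd}}(\psi)$ in (1)(i) and the anisotropy of $\pi_{K_t}$ at the end of case (i)) are welcome elaborations of points the paper leaves implicit.
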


We can now prove Proposition \ref{PROPVPNs}:

\begin{proof}[Proof of Proposition \ref{PROPVPNs}] As in Lemma \ref{LEMVPNpreliminaries}, set $\psi: = \varphi \perp \pi$, and let $K = K_0 \subset K_1 \subset \cdots \subset K_{h_{\mathrm{nd}}(\psi)}$ be the nondefective splitting tower of $\psi$.  By part (1)(i) of the lemma (with $i=0$) there exists an integer $1 \leq j < h_{\mathrm{nd}}(\psi)$ such that $\varphi_{K_j}$ is anisotropic and $\mydim{\psi_j} = \mydim{\varphi} = 2^n +m$. Now in all cases we are considering, $m$ lies in the nondefective splitting pattern of $\varphi$. By part (1)(ii) of the lemma, it follows that there exists an extension $L/K$ such that $\windex{\varphi_L} = m$ and $\mydim{\anispart{(\psi_L)}} = 2^n + m$.  By repeated application of Lemma \ref{LEMfunctionfields} (2), the compositum $L \cdot K_j$ is then a purely transcendental extension of $L$.  In particular, $\mydim{\anispart{(\psi_{L \cdot K_j})}} = \mydim{\anispart{(\psi_L)}} = 2^n + m = \mydim{\psi_j}$, and so $\psi_j$ remains anisotropic over $L \cdot K_j$. Since $\varphi_{L \cdot K_j}$ is isotropic, it follows that $\varphi_{K_j}$ and $\psi_j$ are not stably birationally equivalent.  At the same time, part (1)(i) of Lemma \ref{LEMVPNpreliminaries} also tells us that $\Izhdim{\psi_j} \geq \Izhdim{\varphi} \geq \Izhdim{\varphi_{K_j}}$. Since $\varphi_{K_j}$ and $\psi_j$ are not stably birationally equivalent, Theorems \ref{THMKM} and \ref{THMstb} then imply that $\varphi_{K_{j+1}}$ is anisotropic.  Since $\psi_{j+1} \sim \varphi_{K_{j+1}} \perp \pi_{K_{j+1}}$, and since $\mydim{\psi_{j+1}} < \mydim{\psi_j} = \mydim{\varphi}$, it follows that $\pi_{K_{j+1}}$ is not hyperbolic, and is hence also anisotropic. There are now two cases to consider. \vspace{.5 \baselineskip}

\noindent {\it Case 1.} $\varphi$ has maximal splitting. In this case, we have
$$ \mydim{\anispart{(\psi_{K_{j+1}(\varphi)})}} \leq \mydim{\psi_{j+1}} < \mydim{\psi_j} = 2^n + m = 2^n - m + 2\witti{1}{\varphi}. $$
Since $\varphi_{K_{t+1}}$ is anisotropic, Lemma \ref{LEMVPNpreliminaries} (2) (case (ii)) then tells us that it is a Pfister neighbour, and so $\varphi$ is a virtual Pfister neighbour. \vspace{.5 \baselineskip}

\noindent {\it Case 2.} $m = 2$ and $\witti{1}{\varphi} = 1$. In this case, part (1)(i) of Lemma \ref{LEMVPNpreliminaries} shows that $j \leq h_{\mathrm{nd}}(\psi) - 2$, and that $\mydim{\psi_{j+1}} = 2^n$ and $\mydim{\psi_{j+2}} = 2^n - 2 < 2^n + m - 2\witti{1}{\varphi}$. If $\varphi_{K_{j+2}}$ is anisotropic then Lemma \ref{LEMVPNpreliminaries} (2) (case (ii) with $t = j+2$) tells us that it is a Pfister neighbour.  On the other hand, if $\varphi_{K_{j+2}}$ is isotropic, then
$$ \mydim{\anispart{(\varphi_{K_{j+2}})}} < \mydim{\varphi} = 2^n + 2 = 2^{n+1} - (2^n - 2) = 2^{n+1} - \mydim{\psi_{j+2}}, $$
and so the same lemma (now case (iii) with $t = j+1$) then tells us that $\varphi_{K_{j+1}}$ is a Pfister neighbour. Thus, in this case, either $\varphi_{K_{j+1}}$ or $\varphi_{K_{j+2}}$ is an anisotropic Pfister neighbour. Either way, $\varphi$ is a virtual Pfister neighbour, and so the result holds. \end{proof}

\section{The Degenerate Pfister Neighbour Problem}  \label{SECPfisterneighbourconjecture} In this section we fix an anisotropic quadratic form $\varphi$ of type $(r,s)$ over $F$. We assume that $r \geq 1$ (i.e., that $\varphi$ is not quasilinear) and set $X: = X_{\varphi}$.  We also write $\mydim{\varphi} = 2^n + m$ for integers $n \geq 0$ and $1 \leq m \leq 2^n$. Our goal is to investigate the conditions under which $\varphi$ is a Pfister neighbour. By Lemma \ref{LEMPneighbours} (1), a necessary condition is that $r+s \leq 2^n$, and so we assume that this is satisfied in everything that follows. 

Now, if $\varphi$ is a Pfister neighbour, then $\varphi_1$ is defined over $F$. More precisely, we have $\varphi_1 \simeq (\varphi^c)_{F(\varphi)}$, where $\varphi^c$ is the complementary form of $\varphi$ in its ambient general Pfister form (Lemma \ref{LEMPneighbours} (2)). For nondegenerate forms, a well-known result (essentially due to Knebusch) asserts that the converse holds: If $\varphi$ is nondegenerate and $\varphi_1$ is defined over $F$, then $\varphi$ is a Pfister neighbour (\cite[Thm. 28.1]{EKM}). If we relax the nondegeneracy assumption, however, then this is no longer true in general. For instance, if $\varphi$ has nondefective height $1$, then $\varphi_1 \simeq \mathrm{ql}(\varphi)_{F(\varphi)}$ (Lemma \ref{LEMnondefectiveheight1}), but $\varphi$ need not be a Pfister neighbour.  Simple examples may be constructed as follows:

\begin{example} Suppose that $r = 1$. Since $r+s \leq 2^n$ and $2r + s = \mydim{\varphi} > 2^n$, we then have that $s = 2^n - 1$.  Now, since $r = 1$, we automatically have that $h_{\mathrm{nd}} = 1$ (Lemma \ref{LEMnondefectiveheight1}). However, $\varphi$ need not be a Pfister neighbour in this case. For example, let $X_1,\hdots,X_{2^n}$ be indeterminates,  and let $\sigma$ be the form $[1,X_1] \perp \langle X_2,\hdots,X_{2^n} \rangle$ of type $(1,2^n-1)$ over the rational function field $\mathbb{F}_2(X_1,\hdots,X_{2^n})$. It is straightforward to see that $\sigma$ is anisotropic. If $n \geq 2$, however, then $\sigma$ is not a Pfister neighbour. Indeed, suppose that $\sigma$ were a neighbour of an anisotropic $(n+1)$-fold Pfister form $\pi$ over $\mathbb{F}_2(X_1,\hdots,X_{2^n})$.  The complementary form $\sigma^c$ is then $\langle X_2,\hdots,X_{2^n} \rangle$. Over $K: = \mathbb{F}_2(X_1,\hdots,X_{2^n})(\sqrt{X_2X_3})$, $\sigma^c$ is isotropic, so $\pi_K$ is isotropic and hence hyperbolic. Then $\sigma_K \sim (\sigma \perp \pi)_K \sim \sigma^c_K$. Since $\sigma^c_K$ is isotropic, it follows that $\witti{0}{\varphi_K} \geq 2$. But it is again straightforward to check that $[1,X_1] \perp \langle X_3,\hdots,X_{2^n} \rangle$ remains anisotropic over $K$. Since this is a codimension-1 subform of $\sigma$, we cannot have that $\witti{0}{\sigma_K} \geq 2$, and so $\sigma$ is not a Pfister neighbour. 
\end{example}

Nevertheless, it has been conjectured by Hoffmann and Laghribi that the such examples cannot arise when $s$ is sufficiently small relative to $r$:

\begin{conjecture}[{\cite[Conj. 6.5]{HoffmannLaghribi1}}] \label{CONJHoffmannLaghribi} Suppose that $r+s \leq 2^n$, and that $s < 2r$. If $\varphi_1$ is defined over $F$, then $\varphi$ is a Pfister neighbour. 
\end{conjecture}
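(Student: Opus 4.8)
The assertion is a conjecture, so the realistic goal is a reduction: of the general case to its simplest instance, and of that instance to an established open problem for nondegenerate forms. The reduction runs by induction on $\mydim{\varphi}$ (equivalently, under the standing bound $r+s\le 2^n$, on the nondefective height $h:=h_{\mathrm{nd}}(\varphi)$). The base case is $h=1$: there $\varphi_1\simeq\mathrm{ql}(\varphi)_{F(\varphi)}$ automatically (Lemma \ref{LEMnondefectiveheight1}), so the hypothesis ``$\varphi_1$ defined over $F$'' carries no information and the statement becomes the bare classification problem --- an anisotropic form of nondefective height $1$ with $r+s\le 2^n$ and $s<2r$ is a close Pfister neighbour (Lemma \ref{LEMclosePfisterneighbours}). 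This is the \emph{essential part} of the conjecture; I do not expect to resolve it unconditionally, and in fact the full conjecture should be equivalent to it (Corollary \ref{CORequivalenceofconjectures}).

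For the inductive step, assume $h\ge 2$ and $\varphi_1=\anispart{(\varphi_{F(\varphi)})}\simeq\tau_{F(\varphi)}$ for a form $\tau$ over $F$. Using Lemma \ref{LEMfunctionfields} (1), Lemma \ref{LEMnondefectiveheight}, and the stability of anisotropic quasilinear forms under separable extensions, one checks that $\tau$ may be chosen anisotropic, nonquasilinear and nondefective, with $\mydim{\tau}=\mydim{\varphi_1}<\mydim{\varphi}$ and $\mathrm{ql}(\tau)\simeq\mathrm{ql}(\varphi)$ (this last point using Lemma \ref{LEMWittequivalenceofnondefective} over $F(\varphi)$ together with separability of $F(\varphi)/F$). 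The role of the hypotheses $r+s\le 2^n$ and $s<2r$ is to keep $\tau$ within the scope of the conjecture and to guarantee that $\tau_1$ is again defined over $F$, so that the inductive hypothesis applies and $\tau$ is a Pfister neighbour, with ambient general Pfister form $\rho$ defined over $F$. The target is then to produce an anisotropic general $(n+1)$-fold Pfister form $\pi$ over $F$ having $\rho$ as a proper subform and satisfying $\varphi\prec\pi$ with the appropriate complementary form $\varphi^c$ --- equivalently (Lemma \ref{LEMstbsubforms}, Cassels--Pfister), to show that $\varphi$ has maximal splitting and is stably birationally equivalent to such a $\pi$. This mirrors the pattern of the proofs that nondegenerate excellent forms are strongly excellent, and of Rost's computation of the motives of Pfister neighbours (our degenerate analogues being Corollary \ref{CORMDTstronglyexcellent} and Theorem \ref{THMPfisterneighbourdecomposition}).

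The device that powers the inductive step is the Vishik-type Theorem \ref{THMVishik} and its corollaries: the hypothesis ``$\varphi_1$ defined over $F$'' constrains the isotropy of $\varphi$ over all separable extensions tightly enough to transport $\mathrm{MDT}$ data between $\varphi$ and suitable nondegenerate forms. Using Corollary \ref{CORVishik}, the complementary-form calculation (Proposition \ref{PROPMDTofcomplementaryform}), and the inductive description of $\mathrm{MDT}(\tau)$, one forces $\Lambda^U(X)=\lbrace 0_{\mathrm{lo}},(2^n-1)^{\mathrm{up}}\rbrace$ and, more generally, pins $\mathrm{MDT}(\varphi)$ down to exactly the shape it has for a strongly excellent Pfister neighbour (cf.\ Proposition \ref{PROPMDTfornondefectiveheight1}). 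At that stage everything has been reduced to manufacturing the Pfister form $\pi$ itself out of this motivic and combinatorial data; this is the content of Proposition \ref{PROPreductionofPNproblem}.

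The principal obstacle --- and the reason this is only a conditional reduction --- is precisely that final step: passing from ``the mod-$2$ rational cycles on $X^2$ look like those of a Pfister neighbour (the upper component of $X$ is binary)'' to ``$\varphi$ is a Pfister neighbour''. For smooth quadrics this is exactly Vishik's conjecture on binary direct summands in the motives of quadrics, which remains open (the binary motive theorem, a special case of Theorem \ref{THMVishikconnections}, supplies only the converse); in the degenerate situation there is the additional, equally unresolved, difficulty of descending the relevant ambient Pfister form from $F(\varphi)$ to $F$, already flagged in the discussion after Corollary \ref{CORMDTstronglyexcellent}. Consequently the realistic deliverables are: (i) an unconditional proof that Conjecture \ref{CONJHoffmannLaghribi} follows, via the induction above, from its $h_{\mathrm{nd}}=1$ special case (Corollary \ref{CORequivalenceofconjectures}); (ii) a proof that the $h_{\mathrm{nd}}=1$ case follows from Vishik's binary-summand conjecture (Proposition \ref{PROPreductionofPNproblem}); and (iii) unconditional verification where the inputs are available --- in low dimension, under maximal splitting, and for $m\le 2$ --- using the virtual Pfister neighbour results of Proposition \ref{PROPVPNs} (Theorem \ref{THMPfisterneighbourproblem}).
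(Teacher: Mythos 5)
This statement is Conjecture \ref{CONJHoffmannLaghribi}: the paper offers no proof of it, only the surrounding reductions and evidence, and your three stated deliverables (equivalence with the nondefective-height-$1$ case, conditional resolution under Vishik's binary-summand conjecture, unconditional verification in special cases) are exactly Corollary \ref{CORequivalenceofconjectures}, Proposition \ref{PROPreductionofPNproblem} and Theorem \ref{THMPfisterneighbourproblem}. So at the level of what is actually claimed, your programme and the paper's coincide.

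The mechanism you propose for the first deliverable, however, has a genuine gap. Your induction on $h_{\mathrm{nd}}$ applies the inductive hypothesis to a descent $\tau$ of $\varphi_1$, and for that you need $\tau_1$ to be defined over $F$; you assert that the hypotheses ``guarantee'' this, but nothing does. Even when $\varphi$ is honestly a Pfister neighbour, $\tau$ is (up to the function-field extension) the complementary form $\varphi^c$, and there is no reason for $(\varphi^c)_1$ to descend to $F$ --- the hypothesis ``$\varphi_1$ defined over $F$'' is not hereditary along the splitting tower. The paper's reduction avoids induction entirely: writing $\varphi \simeq \varphi' \perp \mathrm{ql}(\varphi)$ and $\tau \simeq \tau' \perp \mathrm{ql}(\varphi)$, it forms the single explicit companion $\psi := \varphi' \perp \tau' \perp \mathrm{ql}(\varphi)$ of type $(2r - \witti{1}{\varphi}, s)$, proves by a Witt-equivalence computation (Lemma \ref{LEMWittequivalenceofnondefective}) that $\windex{\varphi_K} > 0 \Leftrightarrow \windex{\psi_K} > 2r - \witti{1}{\varphi} - 1$ for every separable $K/F$, and feeds this into Corollary \ref{CORVishik}; comparing the transported upper component against Proposition \ref{PROPsummandshifts} and Corollary \ref{CORdimensionofsummand} forces $\psi$ to be anisotropic of nondefective height $1$, whence $\varphi \stb \psi$, maximal splitting via Theorem \ref{THMi1}, and $\lvert \Lambda^U(X)\rvert = 2$ all follow at once. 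One pass of this argument replaces your entire induction, and the elementary implication $s < 2r \Rightarrow s < \frac{2^{n+1}}{3}$ (using $r+s \le 2^n$ and Corollary \ref{CORdimensionofnondefectiveheight1}) then closes the cycle of implications in Corollary \ref{CORequivalenceofconjectures}. Your identification of the residual obstruction --- descending the ambient Pfister form from $F(\varphi)$, i.e.\ Vishik's binary-summand conjecture --- is accurate.
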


At present, this has only been established in the case where $s \leq 4$ (\cite[Thm. 6.6]{HoffmannLaghribi1}). Note that since $r + s \leq 2^n$, the inequality $s < 2r$ implies that $s < \frac{2^{n+1}}{3}$. We expect that the latter condition is in fact sufficient:

\begin{conjecture} \label{CONJPfisterneighbours}Suppose that $r+s \leq 2^n$, and that $s< \frac{2^{n+1}}{3}$. If $\varphi_1$ is defined over $F$, then $\varphi$ is a Pfister neighbour. \end{conjecture}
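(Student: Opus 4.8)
The plan is to reduce Conjecture~\ref{CONJPfisterneighbours} to Vishik's conjecture on binary direct summands in the Chow motives (with $\mathbb{F}_2$-coefficients) of smooth projective quadrics --- which in particular implies that a nondegenerate anisotropic quadratic form $q$ of dimension $\geq 2$ with $|\Lambda^U(X_q)| = 2$ is a Pfister neighbour --- thereby extending the reduction recorded in Proposition~\ref{PROPreductionofPNproblem} from the range $s < 2r$ of the Hoffmann--Laghribi conjecture (Conjecture~\ref{CONJHoffmannLaghribi}) to the wider range $s < \frac{2^{n+1}}{3}$. The starting point is that, if $\varphi_1$ is defined over $F$, one may fix an anisotropic form $\tau_0$ over $F$ with $(\tau_0)_{F(\varphi)} \simeq \varphi_1$ and $\mathrm{ql}(\tau_0) \simeq \mathrm{ql}(\varphi)$, which is necessarily nondefective of type $(r - \witti{1}{\varphi}, s)$. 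A short computation with the identities from the proof of Lemma~\ref{LEMWittequivalenceofnondefective} shows that $(\varphi \perp \tau_0)_{F(\varphi)}$ is Witt equivalent to $\mathrm{ql}(\varphi)_{F(\varphi)}$; since $F(\varphi)/F$ is separable and $\mathrm{ql}(\varphi)$ is quasilinear anisotropic, this forces the nondefective splitting behaviour of $\varphi$ and of $\tau_0$ over separable extensions of $F$ to be controlled by one and the same parameter. That is exactly the input needed to apply Theorem~\ref{THMVishik} and Corollary~\ref{CORVishik} to $\varphi$ and $\tau_0$ (or to a suitable nondegenerate companion), and hence to transport structural information about $\mathrm{MDT}$.

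Two things then have to be established: first, that $\varphi$ has maximal splitting, with binary upper connected component $\Lambda^U(X) = \lbrace 0_{\mathrm{lo}}, (2^n - 1)^{\mathrm{up}} \rbrace$; second, that $\varphi$ is a \emph{virtual} Pfister neighbour. Given both, Theorem~\ref{THMVishik} transports the binary upper motive of the ambient general Pfister quadric over a splitting field back to $X$, and combining this with Theorem~\ref{THMPfisterneighbourdecomposition} and Proposition~\ref{PROPMDTofcomplementaryform} one concludes that $\varphi$ is a Pfister neighbour. For the first point one would combine $r + s \leq 2^n$ and $s < \frac{2^{n+1}}{3}$ with the $\mathrm{MDT}$-restrictions of Propositions~\ref{PROPuppersummand} and~\ref{PROPsummandshifts}, Theorem~\ref{THMKarpenkouppersummand} and Lemma~\ref{LEMMDTforVPNs} to exclude every other possible shape of $\Lambda^U(X)$; the bound $\frac{2^{n+1}}{3}$ is precisely what makes this exclusion go through. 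For the second point, Proposition~\ref{PROPVPNs} already handles small values of $m$ unconditionally, and the general case is where Vishik's conjecture enters: it guarantees that the nondegenerate companion supplied by the splitting data above is a Pfister neighbour, whence --- after descent along the isotropic-reduction maps of \S\ref{SECisotropicreduction} --- so is $\varphi$. The equivalent reformulation in terms of forms of nondefective height $1$ (Corollary~\ref{CORequivalenceofconjectures}, cf.~\S\ref{SUBSECnondefectiveheight1} and Lemma~\ref{LEMclosePfisterneighbours}) gives a second route, reducing the general statement to the height-$1$ case by isotropic reduction along the nondefective splitting tower; and the Hoffmann--Laghribi theorem settles everything with $s \leq 4$.

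The hard part is the final descent step. As explained in \S\ref{SUBSECPNSandexcellent}, once $\varphi_1$ (equivalently, the relevant complementary form over the splitting field) is known to be a Pfister neighbour, producing the ambient general Pfister form of $\varphi$ itself comes down to descending the ambient general Pfister form of $\varphi_1$ from $F(\varphi)$ to $F$. No technique for this descent is presently available, even when $\varphi$ is nondegenerate --- that case being essentially Vishik's binary-motive conjecture --- which is why Conjecture~\ref{CONJPfisterneighbours} is stated only conjecturally. What the present methods do yield unconditionally is recorded in Theorem~\ref{THMPfisterneighbourproblem}, Corollary~\ref{CORequivalenceofconjectures} and Proposition~\ref{PROPreductionofPNproblem} below.
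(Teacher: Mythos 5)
The statement you are addressing is a conjecture; the paper offers no proof of it, so there is no argument to compare yours against, and your proposal rightly stops short of claiming one. What you describe is an accurate synthesis of the paper's supporting apparatus: the construction of a form $\tau$ over $F$ with $\tau_{F(\varphi)} \simeq \varphi_1$ and $\mathrm{ql}(\tau) \simeq \mathrm{ql}(\varphi)$, the Claim in the proof of Theorem \ref{THMPfisterneighbourproblem} relating the splitting of $\varphi$ and of $\psi = \varphi' \perp \tau' \perp \mathrm{ql}(\varphi)$ over separable extensions, the application of Theorem \ref{THMVishik}/Corollary \ref{CORVishik} to get maximal splitting, the binary upper component and the virtual Pfister neighbour property, the equivalence with the nondefective-height-$1$ case (Corollary \ref{CORequivalenceofconjectures}), and the conditional reduction to Vishik's binary-motive conjecture (Proposition \ref{PROPreductionofPNproblem}). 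You also correctly identify the genuine obstruction --- descending the ambient general Pfister form of $\varphi_1$ from $F(\varphi)$ to $F$ --- as the reason the statement remains open.

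Two imprecisions are worth correcting. First, the hypothesis $s < \frac{2^{n+1}}{3}$ is \emph{not} what forces $\Lambda^U(X) = \lbrace 0_{\mathrm{lo}}, (2^n-1)^{\mathrm{up}} \rbrace$: Theorem \ref{THMPfisterneighbourproblem} establishes parts (1)--(4) under $r+s \leq 2^n$ alone, with no constraint on $s$ beyond that. The bound $\frac{2^{n+1}}{3}$ enters only in the comparison with Conjecture \ref{CONJHoffmannLaghribi} (where it guarantees $s < 2r$ after normalizing $\mydim{\varphi} = 2^{n+1}-s$) and in the conditional reductions. Second, your opening claim that the reduction to Vishik's conjecture extends to the whole range $s < \frac{2^{n+1}}{3}$ overstates what is available: Proposition \ref{PROPreductionofPNproblem} requires $s \leq 2^{n-1}$ in case (1), because the argument passes to a nondegenerate codimension-$(s-1)$ subform $\tau \subset \varphi$ and needs $\witti{1}{\varphi} = 2^n - s \geq s$ to conclude $\varphi \stb \tau$. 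Since $2^{n-1} < \frac{2^{n+1}}{3}$, the range $2^{n-1} < s < \frac{2^{n+1}}{3}$ is not covered even conditionally by the paper's methods, and your proposal supplies no mechanism to close that gap.
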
 

We provide in this section some evidence for this conjecture. The basic result is the following, which shows that when $\varphi_1$ is defined over $F$, $\varphi$ exhibits key behaviour expected of a Pfister neighbour (see Lemma \ref{LEMPneighbours} and Theorem \ref{THMPfisterneighbourdecomposition}, in particular):

\begin{theorem} \label{THMPfisterneighbourproblem} Suppose that $r+s\leq 2^n$, and that $\varphi_1$ is defined over $F$. Then:
\begin{enumerate} \item $\varphi$ has maximal splitting;
\item $\varphi$ is a virtual Pfister neighbour;
\item $\Lambda^U(X) = \lbrace 0_{\mathrm{lo}}, (2^n-1)^{\mathrm{up}} \rbrace$;
\item There exists an anisotropic form $\psi$ of type $(2^n - s,s)$ over $F$ such that $\varphi \stb \psi$. Moreover, if $h_{\mathrm{nd}} = 1$, then $\psi$ may be taken to be $\varphi$ itself, i.e., $r = 2^n - s$. \end{enumerate}
\begin{proof} Let $\tau$ be a form over $F$ with $\varphi_1 \simeq \tau_{F(\varphi)}$. Since $\varphi_{F(\varphi)}$ is nondefective, we have
$$ \mathrm{ql}(\tau)_{F(\varphi)} \simeq \mathrm{ql}(\tau_{F(\varphi)}) \simeq \mathrm{ql}(\varphi_1) \simeq \mathrm{ql}(\varphi_{F(\varphi)}) \simeq \mathrm{ql}(\varphi)_{F(\varphi)}. $$
By Lemma \ref{LEMWittequivalenceofnondefective}, we then have that $\witti{0}{(\mathrm{ql}(\varphi) \perp \mathrm{ql}(\tau))_{F(\varphi)}} = s$. Since $F(\varphi)/F$ is separable, however, this implies that $\witti{0}{\mathrm{ql}(\varphi) \perp \mathrm{ql}(\tau)} = s$, and so $\mathrm{ql}(\varphi) \simeq \mathrm{ql}(\tau)$ by another application of Lemma \ref{LEMWittequivalenceofnondefective}. Let $\varphi'$ (resp. $\tau'$) be a nondegenerate form of dimension $2r$ (resp. $2r-2\witti{1}{\varphi})$) over $F$ such that $\varphi \simeq \varphi' \perp \mathrm{ql}(\varphi)$ (resp. $\tau \simeq \tau' \perp \mathrm{ql}(\varphi)$). Set $\psi: = \varphi' \perp \tau' \perp \mathrm{ql}(\varphi)$.  By definition, $\psi$ has type $(2r - \witti{1}{\varphi},s)$. Moreover, if $\varphi$ has nondefective height $1$, then $\tau' = 0$, and so $\psi = \varphi$. We state the following: \vspace{.5 \baselineskip}

\noindent {\bf Claim.} For any separable extension $K/F$, we have $\windex{\varphi_K} > 0$ if and only if $\windex{\psi_K} > 2r - \witti{1}{\varphi} - 1$. 
\vspace{.5 \baselineskip}

Before proving the claim, let us first use it to complete the proof of the proposition.  Set $i: = \windex{\psi}$. If $i$ were equal to $2r - \witti{1}{\varphi}$, then we would have that $\psi \sim \mathrm{ql}(\varphi)$. By Lemma \ref{LEMWittequivalenceofnondefective}, however, this would give that
$$ \varphi \sim \varphi \perp \tau' \perp \tau' \simeq \psi \perp \tau' \sim \mathrm{ql}(\varphi) \perp \tau' \simeq \tau, $$
contradicting the anisotropy of $\varphi$ (note that $\mydim{\tau} = \mydim{\varphi} - 2\witti{1}{\varphi} < \mydim{\varphi}$). We therefore have that $i< 2r - \witti{1}{\varphi}$, and so $\psi': = \anispart{\psi}$ is a nondefective and nonquasilinear form of type $(2r - \witti{1}{\varphi} - i,s)$.  If $K/F$ is a separable extension, then the claim tells us that $\windex{\varphi_K} > 0$ if and only if $\windex{\psi'_K} >2r - \witti{1}{\varphi} - i - 1$. By Corollary \ref{CORVishik}, it follows that $\Lambda: = \Lambda^U(X)[2r - \witti{1}{\varphi} - i-1]$ is an element of $\mathrm{MDT}(\psi')$. Let $h$ be the nondefective height of $\psi'$. Since $\psi'$ has type $(2r - \witti{1}{\varphi} - i,s)$, and since $a(\Lambda) = 2r - \witti{1}{\varphi} - i - 1$, Proposition \ref{PROPsummandshifts} and Corollary \ref{CORdimensionofsummand} imply that $\Lambda = \lbrace (2r - \witti{1}{\varphi} - i - 1)_{\mathrm{lo}}, (\mydim{\psi'} - 2 - \wittj{h-1}{\psi'})^{\mathrm{up}} \rbrace$.  Since $b(\Lambda^U(X)) = \Izhdim{\varphi} - 1$ (Proposition \ref{PROPuppersummand}), it then follows that 
$$ (\Izhdim{\varphi} - 1) + (2r - \witti{1}{\varphi} - i - 1) = \mydim{\psi'} - 2 - \wittj{h-1}{\psi'}. $$  
Since $\Izhdim{\varphi} = 2r + s - \witti{1}{\varphi}$ and $\mydim{\psi'} = 2(2r -\witti{1}{\varphi}-i-1) + s$, this amounts to the equality $i = -\wittj{h-1}{\psi'}$, and so $i = \wittj{h-1}{\psi'} = 0$. In other words, $\psi$ is anisotropic of nondefective height $1$.  By the claim, we then have that $\varphi \stb \psi$.  In particular, $\Izhdim{\psi} = \Izhdim{\varphi} \geq 2^n$ (Theorem \ref{THMstb}, Corollary \ref{CORmaxsplitting}). On the other hand, since $\psi$ has nondefective height $1$, we have $\mydim{\psi_1} = \mydim{\mathrm{ql}(\psi)} = s<2^n$ (Lemma \ref{LEMnondefectiveheight1}).  By Theorem \ref{THMi1} (Karpenko's theorem on the values of $\mathfrak{i}_1$), we must then have that $\Izhdim{\psi} = 2^n$. By the preceding remarks, we then also have that $\Izhdim{\varphi} = 2^n$, and so (1) holds. In view of Proposition \ref{PROPVPNs}, this also gives (2). For (3), we have already seen above that $|\Lambda^U(X)| = |\Lambda|= 2$. By Proposition \ref{PROPuppersummand}, however, we then have that $\Lambda^U(X) =\lbrace 0_{\mathrm{lo}}, (\Izhdim{\varphi}-1)^{\mathrm{up}} \rbrace =  \lbrace 0_{\mathrm{lo}}, (2^n-1)^{\mathrm{up}} \rbrace$, and so the desired assertion holds. It now only remains to prove (4). Since we have already shown that $\varphi \stb \psi$, we just have to show that $\psi$ has type $(2^n - s, s)$, i.e., that $2r - \witti{1}{\varphi} = 2^n - s$.  But since $\mydim{\varphi} = 2r + s$, this is simply a reformulation of the fact that $\Izhdim{\varphi} = 2^n$. 

We now complete the proof by proving the claim. Let $K/F$ be a separable extension. Suppose first that $\windex{\psi_K} > 2r - \witti{1}{\varphi} - 1$. Since $\psi$ has type $(2r - \witti{1}{\varphi},s)$, we then have that $\psi_K \sim \mathrm{ql}(\psi)_K = \mathrm{ql}(\varphi)_K$.  By Lemma \ref{LEMWittequivalenceofnondefective}, we then have that
$$ \varphi_K \sim (\varphi \perp \mathrm{ql}(\varphi))_K \sim (\varphi \perp \psi)_K \sim (\tau' \perp \mathrm{ql}(\varphi))_K \simeq \tau_K, $$
and so $\varphi_K$ is isotropic (again,  we have $\mydim{\tau} < \mydim{\varphi}$). Since $K/F$ is separable, we then have that $\windex{\varphi_K} > 0$ (Lemma \ref{LEMnondefectiveheight}). Conversely, if $\windex{\varphi_K} >0$, then the extension $K(\varphi)/K$ is purely transcendental by Lemma \ref{LEMfunctionfields} (2). In particular, we have $\windex{\psi_K} = \windex{\psi_{K(\varphi)}} \geq \windex{\psi_{F(\varphi)}}$. But since $\psi = \varphi' \perp \tau$, we have
$$ \psi_{F(\varphi)} \simeq \varphi'_{F(\varphi)} \perp \tau_{F(\varphi)} \simeq \varphi'_{F(\varphi)} \perp \varphi_1 \sim (\varphi' \perp \varphi)_{F(\varphi)} \sim (\varphi' \perp \varphi' \perp \mathrm{ql}(\varphi))_{F(\varphi)} \sim \mathrm{ql}(\varphi)_{F(\varphi)} $$
by Lemma \ref{LEMWittequivalenceofnondefective}, and so $\windex{\psi_F(\varphi)} > 2r - \witti{1}{\varphi} - 1$. In particular, $\windex{\psi_K} > 2r - \witti{1}{\varphi} - 1$, and so the claim holds. \end{proof} \end{theorem}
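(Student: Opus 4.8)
The plan is to reduce everything to the study of a single auxiliary nondefective form $\psi$ over $F$ built from the hypothesis "$\varphi_1$ is defined over $F$", and then to exploit Corollary \ref{CORVishik} (the $m=0$ case of our Vishik-type Theorem \ref{THMVishik}) together with the splitting-pattern constraints of \S\ref{SUBSECMDTInvariant}. First I would pick $\tau$ over $F$ with $\varphi_1 \simeq \tau_{F(\varphi)}$ and normalize it: since $F(\varphi)/F$ is separable (Lemma \ref{LEMfunctionfields}\,(1)) and $\varphi$ is nondefective, comparing quasilinear parts gives $\mathrm{ql}(\tau)_{F(\varphi)} \simeq \mathrm{ql}(\varphi)_{F(\varphi)}$, and because anisotropic quasilinear forms survive separable extensions, Lemma \ref{LEMWittequivalenceofnondefective} forces $\mathrm{ql}(\tau) \simeq \mathrm{ql}(\varphi)$ and in particular $\mydim{\mathrm{ql}(\tau)} = s$. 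Writing $\varphi \simeq \varphi' \perp \mathrm{ql}(\varphi)$ and $\tau \simeq \tau' \perp \mathrm{ql}(\varphi)$ with $\varphi',\tau'$ nondegenerate of even dimensions $2r$ and $2r-2\witti{1}{\varphi}$, I would set $\psi := \varphi' \perp \tau' \perp \mathrm{ql}(\varphi) = \varphi' \perp \tau$, a nondefective form of type $(2r-\witti{1}{\varphi},s)$ with $\mathrm{ql}(\psi)=\mathrm{ql}(\varphi)$; when $h_{\mathrm{nd}}(\varphi)=1$ one has $\tau'=0$ and hence $\psi=\varphi$.

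The heart of the proof is the claim that for every separable extension $K/F$ one has $\windex{\varphi_K}>0$ if and only if $\windex{\psi_K} > 2r-\witti{1}{\varphi}-1$. Both directions are Witt-class manipulations via Lemma \ref{LEMWittequivalenceofnondefective}: if $\windex{\psi_K}\geq 2r-\witti{1}{\varphi}$ then $\psi_K \sim \mathrm{ql}(\varphi)_K$, so $\varphi_K \sim (\varphi \perp \mathrm{ql}(\varphi))_K \sim (\varphi\perp\psi)_K \sim \tau_K$, which has smaller dimension than $\varphi$ (as $\witti{1}{\varphi}\geq 1$), so $\varphi_K$ is isotropic and, $K/F$ being separable, $\windex{\varphi_K}>0$; conversely if $\windex{\varphi_K}>0$ then $K(\varphi)/K$ is purely transcendental (Lemma \ref{LEMfunctionfields}\,(2)), whence $\windex{\psi_K}=\windex{\psi_{K(\varphi)}}\geq\windex{\psi_{F(\varphi)}}$, and computing $\psi_{F(\varphi)}\simeq \varphi'_{F(\varphi)}\perp\varphi_1 \sim \mathrm{ql}(\varphi)_{F(\varphi)}$ shows this Witt index is exactly $2r-\witti{1}{\varphi}$.

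Granting the claim, I would first rule out $\windex{\psi}=2r-\witti{1}{\varphi}$ (otherwise $\psi\sim\mathrm{ql}(\varphi)$ forces $\varphi\sim\tau$, contradicting anisotropy of $\varphi$), so $i:=\windex{\psi}<2r-\witti{1}{\varphi}$ and $\psi':=\anispart{\psi}$ is nonquasilinear nondefective of type $(2r-\witti{1}{\varphi}-i,\,s)$. The claim, rephrased over $\psi'$, lets Corollary \ref{CORVishik} place $\Lambda:=\Lambda^U(X)[\,2r-\witti{1}{\varphi}-i-1\,]$ inside $\mathrm{MDT}(\psi')$ with $a(\Lambda)$ equal to the maximal admissible lower index for $\psi'$; hence $\Lambda_{\mathrm{lo}}$ is a singleton, and by the splitting-pattern duality (Proposition \ref{PROPsplittingpatternconnections} and the remarks after it) together with Corollary \ref{CORdimensionofsummand}, $\Lambda$ is a two-element set whose upper index is determined by $h_{\mathrm{nd}}(\psi')$. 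Equating that value of $b(\Lambda)$ with $b(\Lambda)=b(\Lambda^U(X)) + (2r-\witti{1}{\varphi}-i-1) = (\Izhdim{\varphi}-1)+(2r-\witti{1}{\varphi}-i-1)$ from Proposition \ref{PROPuppersummand}\,(1), and substituting $\Izhdim{\varphi}=2r+s-\witti{1}{\varphi}$, the whole identity collapses to $i = -\wittj{h_{\mathrm{nd}}(\psi')-1}{\psi'}$, forcing $i=0$ and $h_{\mathrm{nd}}(\psi')=1$. Thus $\psi=\psi'$ is anisotropic of nondefective height $1$; since then $\Izhdim{\psi}=r_\psi+s=\Izhdim{\varphi}$ and $\varphi_{F(\psi)}$ is isotropic by the claim, Theorem \ref{THMstb} gives $\varphi\stb\psi$. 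Finally, $\psi$ has height $1$ with $\mydim{\mathrm{ql}(\psi)}=s<2^n$ (from $r+s\le 2^n$, $r\ge 1$), so combining Karpenko's bound (Theorem \ref{THMi1} applied to $\psi$) with $\Izhdim{\psi}=\Izhdim{\varphi}\ge 2^n$ (Corollary \ref{CORmaxsplitting}) and the hypothesis $r+s\le 2^n$ excludes the possibility that $\Izhdim{\psi}$ lies strictly between $2^n$ and $2^{n+1}$, so $\Izhdim{\psi}=2^n$. This yields $\Izhdim{\varphi}=2^n$ (item (1)), that $\psi$ has type $(2^n-s,s)$ and equals $\varphi$ when $h_{\mathrm{nd}}(\varphi)=1$ (item (4)), that $\varphi$ is a virtual Pfister neighbour by Proposition \ref{PROPVPNs}\,(1) (item (2)), and, since $|\Lambda^U(X)|=|\Lambda|=2$, that $\Lambda^U(X)=\lbrace 0_{\mathrm{lo}}, (2^n-1)^{\mathrm{up}}\rbrace$ by Proposition \ref{PROPuppersummand}\,(1) (item (3)).

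The main obstacle I expect is the step forcing $h_{\mathrm{nd}}(\psi')=1$: one has to feed the structural output of Corollary \ref{CORVishik} (existence of a shift of the upper component inside $\mathrm{MDT}(\psi')$ at the extreme position) through Corollary \ref{CORdimensionofsummand} and Proposition \ref{PROPuppersummand} to get a rigid numerical identity, and it is not a priori obvious that the bookkeeping collapses so cleanly. A secondary delicate point is the final appeal to Theorem \ref{THMi1}: although $\Izhdim{\psi}=2^n$ "follows from Karpenko's theorem", extracting it genuinely requires the inequality $s<2^n$ and the constraint $r+s\le 2^n$ to kill all intermediate values of $\Izhdim{\psi}$.
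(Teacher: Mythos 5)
Your proposal is correct and follows essentially the same route as the paper's proof: the same auxiliary form $\psi = \varphi' \perp \tau' \perp \mathrm{ql}(\varphi)$, the same claim comparing Witt indices over separable extensions, the same application of Corollary \ref{CORVishik} followed by the numerical collapse via Proposition \ref{PROPsummandshifts}/Corollary \ref{CORdimensionofsummand} and Proposition \ref{PROPuppersummand} to force $\windex{\psi}=0$ and $h_{\mathrm{nd}}(\psi)=1$, and the same endgame via Theorems \ref{THMstb} and \ref{THMi1}. You also correctly flag the two genuinely delicate points (the rigidity of the bookkeeping in the height-one step, and the need for $s<2^n$ in the final appeal to Karpenko's theorem), both of which are present, if somewhat compressed, in the paper's argument.
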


Note, in particular, that we have the following dimension restriction on the forms of nondefective height $1$ (under our assumption that $r + s \leq 2^n$):

\begin{corollary} \label{CORdimensionofnondefectiveheight1} Suppose that $r + s \leq 2^n$. If $\varphi$ has nondefective height $1$, then $\mydim{\varphi} = 2^{n+1} - s$.
\begin{proof} In this case, part (4) of Theorem \ref{THMPfisterneighbourproblem} tells us that $r = 2^n - s$, and so $\mydim{\varphi} = 2r + s = 2(2^n - s) + s = 2^{n+1} -s$. 
\end{proof} \end{corollary}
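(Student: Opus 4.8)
The plan is to obtain this as an immediate consequence of Theorem \ref{THMPfisterneighbourproblem}, so the first task is to check that the hypotheses of that theorem are in force. The standing assumption $r+s \leq 2^n$ is one of them. For the other, namely that $\varphi_1$ is defined over $F$, I would invoke Lemma \ref{LEMnondefectiveheight1}: the condition $h_{\mathrm{nd}} = 1$ is equivalent to $\varphi_1 \simeq \mathrm{ql}(\varphi)_{F(\varphi)}$, and $\mathrm{ql}(\varphi)$ is a form over $F$, so $\varphi_1$ is indeed (the scalar extension to $F(\varphi)$ of) a form defined over $F$. Thus Theorem \ref{THMPfisterneighbourproblem} applies.

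Next I would read off part (4) of that theorem. Its final clause asserts that, when $h_{\mathrm{nd}} = 1$, the form $\psi$ it produces (of type $(2^n - s, s)$, stably birationally equivalent to $\varphi$) may be taken to be $\varphi$ itself. Comparing types, this forces $r = 2^n - s$. Substituting into $\mydim{\varphi} = 2r + s$ then gives $\mydim{\varphi} = 2(2^n - s) + s = 2^{n+1} - s$, which is exactly the claim. (Equivalently, one could use part (1) of the theorem: it gives $\Izhdim{\varphi} = 2^n$, and since $h_{\mathrm{nd}} = 1$ means $\witti{1}{\varphi} = r$, we get $\mydim{\varphi} = \Izhdim{\varphi} + \witti{1}{\varphi} = 2^n + r$; together with $\mydim{\varphi} = 2r + s$ this again yields $r = 2^n - s$ and $\mydim{\varphi} = 2^{n+1} - s$.)

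I do not anticipate a genuine obstacle here: all the substantive work — the maximal splitting of $\varphi$, the stable birational equivalence to a height-one form of the right type, and the value $\Izhdim{\varphi} = 2^n$ — has already been absorbed into Theorem \ref{THMPfisterneighbourproblem}. The only point requiring care is the verification that $h_{\mathrm{nd}} = 1$ forces $\varphi_1$ to be defined over $F$, and this is handled cleanly by Lemma \ref{LEMnondefectiveheight1}. The rest is a one-line arithmetic manipulation.
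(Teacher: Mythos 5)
Your proposal is correct and follows essentially the same route as the paper: apply part (4) of Theorem \ref{THMPfisterneighbourproblem} to get $r = 2^n - s$ and compute $\mydim{\varphi} = 2r+s = 2^{n+1}-s$. Your extra check that $h_{\mathrm{nd}} = 1$ forces $\varphi_1$ to be defined over $F$ (via Lemma \ref{LEMnondefectiveheight1}) is exactly the justification the paper leaves implicit, having noted it just before stating Conjecture \ref{CONJnondefectiveheight1}.
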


As remarked above, $\varphi_1$ is trivially defined over $F$ in the case where $h_{\mathrm{nd}}(\varphi) = 1$. In particular, Conjecture \ref{CONJPfisterneighbours} includes the following as a special case:

\begin{conjecture} \label{CONJnondefectiveheight1} Suppose that $r + s \leq 2^n$, and that $s < \frac{2^{n+1}}{3}$. If $\varphi$ has nondefective height $1$, then is a Pfister neighbour.  \end{conjecture}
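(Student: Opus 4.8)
\textbf{Proof proposal for Conjecture \ref{CONJnondefectiveheight1}.}

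The plan is to reduce this to Conjecture \ref{CONJPfisterneighbours} (or, more realistically, to prove it directly along the same lines, since Conjecture \ref{CONJPfisterneighbours} is itself open). First observe that since $\varphi$ has nondefective height $1$, we have $\varphi_1 \simeq \mathrm{ql}(\varphi)_{F(\varphi)}$ by Lemma \ref{LEMnondefectiveheight1}, so $\varphi_1$ is trivially defined over $F$ (it is $\mathrm{ql}(\varphi)$ itself, extended to $F(\varphi)$). Hence Theorem \ref{THMPfisterneighbourproblem} applies: under the standing hypothesis $r+s\leq 2^n$, we already know that $\varphi$ has maximal splitting, is a virtual Pfister neighbour, satisfies $\Lambda^U(X) = \lbrace 0_{\mathrm{lo}}, (2^n-1)^{\mathrm{up}} \rbrace$, and — crucially — that $r = 2^n - s$ (part (4), since $h_{\mathrm{nd}} = 1$), so $\mydim{\varphi} = 2^{n+1} - s$ by Corollary \ref{CORdimensionofnondefectiveheight1}. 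The task is therefore to upgrade ``virtual Pfister neighbour'' to ``Pfister neighbour'' under the extra hypothesis $s < \frac{2^{n+1}}{3}$.

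The natural route is the generic Pfister form construction of \S \ref{SUBSECvirtualPNs}: set $K := F(T_1,\dots,T_{n+1})$, $\pi := \pfister{T_1,\dots,T_{n+1}}_b$, $\psi := \varphi \perp \pi$, and analyze the nondefective splitting tower $K = K_0 \subset K_1 \subset \cdots$ of $\psi$ using Lemma \ref{LEMVPNpreliminaries}. By part (1)(i) of that lemma (with $i=0$) there is an integer $1 \leq j < h_{\mathrm{nd}}(\psi)$ with $\varphi_{K_j}$ anisotropic and $\mydim{\psi_j} = \mydim{\varphi} = 2^{n+1}-s$. Since $\varphi$ has maximal splitting, $\witti{1}{\varphi} = m = 2^n - (2^n - s)\cdot\text{(correction)}$; more precisely $\mydim{\varphi} = 2^{n+1}-s$ forces $m = 2^n - s$, and $\witti{1}{\varphi} = m$ since $\varphi$ has maximal splitting. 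As in Case 1 of the proof of Proposition \ref{PROPVPNs}, one pushes along the tower until $\varphi_{K_{j+1}}$ is anisotropic (this follows from Theorems \ref{THMKM}, \ref{THMstb} exactly as there, using that $\varphi_{K_j}$ and $\psi_j$ are not stably birationally equivalent while having equal Izhboldin dimension), and then checks the inequality $\mydim{\anispart{((\psi_j)_{K_{j+1}(\varphi)})}} \leq \mydim{\psi_{j+1}} < \mydim{\psi_j} = 2^{n+1}-s = 2^n - m + 2\witti{1}{\varphi}$, which is automatic. Case (ii) of Lemma \ref{LEMVPNpreliminaries} (2) then gives $\varphi_{K_{j+1}} \prec \pi_{K_{j+1}}$, so $\varphi_{K_{j+1}}$ is a Pfister neighbour. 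The point of the hypothesis $s < \frac{2^{n+1}}{3}$ (equivalently $s < 2r$, given $r = 2^n-s$) is what one needs for the \emph{descent}: one must show that the ambient general Pfister form of $\varphi_{K_{j+1}}$ descends to $F$, i.e., that $\varphi$ is already a Pfister neighbour over $F$ and not merely over the transcendental extension $K_{j+1}$.

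The hard part is precisely this descent step. The forms $\varphi_{K_i}$ for $0 \leq i \leq j$ are obtained from $\varphi$ by successive function-field extensions by anisotropic subquadrics of $\psi$ (which remain ``specialization-equivalent'' to $\varphi$), and one wants to run the argument downward: if $\varphi_{K_{i+1}}$ is a Pfister neighbour with ambient form $\rho_{i+1}$ over $K_{i+1}$, one needs $\rho_{i+1}$ to descend to $K_i$. This is exactly the kind of descent problem flagged as unresolved in the remark following Corollary \ref{CORMDTstronglyexcellent} (``we currently do not know how to prove this, even in the case where $\varphi$ is nondegenerate''). I would attempt it via the MDT machinery of \S\S \ref{SECMDT}--\ref{SECVishik}: by Theorem \ref{THMPfisterneighbourproblem}(3) we know $\Lambda^U(X) = \lbrace 0_{\mathrm{lo}}, (2^n-1)^{\mathrm{up}} \rbrace$, and by Proposition \ref{PROPMDTfornondefectiveheight1} the entire $\mathrm{MDT}(\varphi)$ consists of the shifts $\Lambda^U(X)[i]$, $0 \leq i < r = 2^n - s$ — i.e., $X$ has the \emph{motivic decomposition type of a close Pfister neighbour} (Remark \ref{REMPfister}). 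One would then try to promote this discrete statement to an actual binary summand in the motive of the (generically smooth) quadric $X$ over $F$, and extract from that a form over $F$ stably birationally equivalent to an anisotropic Pfister form — exactly the reduction carried out in \S \ref{SECPfisterneighbourconjecture} when phrased through Proposition \ref{PROPreductionofPNproblem}, connecting this to Vishik's conjecture on binary direct summands. My honest expectation is that a complete proof is not available with the tools developed in this excerpt, and that the theorem should be stated conditionally (reducing it, as in Proposition \ref{PROPreductionofPNproblem} and Corollary \ref{CORequivalenceofconjectures}, to the nondegenerate binary-summand conjecture of Vishik), which is consistent with the paper presenting Conjecture \ref{CONJnondefectiveheight1} as a \emph{conjecture} rather than a theorem.
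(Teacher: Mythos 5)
The statement you were asked about is stated in the paper as a \emph{conjecture} (Conjecture \ref{CONJnondefectiveheight1}); the paper offers no proof of it, only partial results and reductions. Your proposal correctly recognizes this, and the supporting facts and reductions you assemble are exactly the ones the paper establishes: maximal splitting, virtual Pfister neighbourhood, $\Lambda^U(X) = \lbrace 0_{\mathrm{lo}}, (2^n-1)^{\mathrm{up}} \rbrace$ and the dimension $2^{n+1}-s$ via Theorem \ref{THMPfisterneighbourproblem} and Corollary \ref{CORdimensionofnondefectiveheight1}, the equivalence with Conjectures \ref{CONJHoffmannLaghribi} and \ref{CONJPfisterneighbours} via Corollary \ref{CORequivalenceofconjectures}, and the conditional reduction to Vishik's binary-summand conjecture via Proposition \ref{PROPreductionofPNproblem}. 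Your identification of the descent of the ambient Pfister form from $K_{j+1}$ to $F$ as the genuinely open obstruction is accurate and consistent with the paper's decision to leave the statement as a conjecture.
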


Recall that we use the term \emph{close Pfister neighbour} for a Pfister neighbour of nondefective height 1. If $\varphi$ is a close Pfister neighbour, then $\mydim{\varphi} = 2^{n+1} - s$ by Lemma \ref{LEMclosePfisterneighbours}. Thus, Corollary \ref{CORdimensionofnondefectiveheight1} confirms the dimension part of Conjecture \ref{CONJnondefectiveheight1}. Moreover, as far as Conjecture \ref{CONJPfisterneighbours} goes, Theorem \ref{THMPfisterneighbourproblem} gives the following:

\begin{corollary} \label{CORequivalenceofconjectures} Conjectures \ref{CONJHoffmannLaghribi}, \ref{CONJPfisterneighbours} and \ref{CONJnondefectiveheight1} are equivalent.
\begin{proof} We have already noted that Conjecture \ref{CONJPfisterneighbours} implies \ref{CONJHoffmannLaghribi}. 

Conjecture \ref{CONJHoffmannLaghribi} $\Rightarrow$ Conjecture \ref{CONJnondefectiveheight1}: Suppose that $s < \frac{2^{n+1}}{3}$. If $\varphi$ has nondefective height $1$, then $\varphi_1$ is defined over $F$ (see the preceding discussion). Moreover, Corollary \ref{CORdimensionofnondefectiveheight1} tells us that $\mydim{\varphi} = 2^{n+1} - s$, and so
$$ 2r = \mydim{\varphi} - s = 2^{n+1} - 2s > 2^{n+1} - 2\left(\frac{2^{n+1}}{3}\right) = \frac{2^{n+1}}{3} > s. $$
By Conjecture \ref{CONJHoffmannLaghribi}, $\varphi$ is then a Pfister neighbour.

Conjecture \ref{CONJnondefectiveheight1} $\Rightarrow$ Conjecture \ref{CONJPfisterneighbours}: Suppose that $s < \frac{2^{n+1}}{3}$ and that $\varphi_1$ is defined over $F$. By Theorem \ref{THMPfisterneighbourproblem}, $\varphi$ has maximal splitting, and there exists an anisotropic form $\psi$ of type $(2^n - s,s)$ over $F$ such that $\varphi \stb \psi$. By Theorem \ref{THMstb} (or Corollary \ref{CORVishik}), we then have that $\Izhdim{\psi} = \Izhdim{\varphi} = 2^n$, and so $\witti{1}{\psi} = \mydim{\psi} - 2^n = 2(2^n -s) + s - 2^n = 2^n - s$. By Lemma \ref{LEMnondefectiveheight1}, this means that $\psi$ has nondefective height $1$. Since $s< \frac{2^{n+1}}{3}$, Conjecture \ref{CONJnondefectiveheight1} then implies that $\psi$ is a Pfister neighbour. But since $\varphi \stb \psi$, the same is then true of $\varphi$, and so the claim holds.  \end{proof} \end{corollary}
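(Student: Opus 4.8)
The plan is to prove the equivalence by a cyclic chain of implications, exploiting the fact that Conjecture \ref{CONJnondefectiveheight1} is literally the $h_{\mathrm{nd}}(\varphi) = 1$ special case of Conjecture \ref{CONJPfisterneighbours} (since $\varphi_1$ is automatically defined over $F$ when $h_{\mathrm{nd}} = 1$), and that Conjecture \ref{CONJHoffmannLaghribi} is in turn a special case of Conjecture \ref{CONJPfisterneighbours} under the stronger hypothesis $s < 2r$ (which, given $r+s \leq 2^n$, forces $s < \tfrac{2^{n+1}}{3}$). So the only nontrivial content is to run the implications in the order \ref{CONJPfisterneighbours} $\Rightarrow$ \ref{CONJHoffmannLaghribi} (immediate, already noted in the excerpt), then \ref{CONJHoffmannLaghribi} $\Rightarrow$ \ref{CONJnondefectiveheight1}, and finally \ref{CONJnondefectiveheight1} $\Rightarrow$ \ref{CONJPfisterneighbours}, which closes the loop.

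First I would handle \ref{CONJHoffmannLaghribi} $\Rightarrow$ \ref{CONJnondefectiveheight1}. Assume $\varphi$ is anisotropic of type $(r,s)$ with $r+s \leq 2^n$, $s < \tfrac{2^{n+1}}{3}$, and $h_{\mathrm{nd}}(\varphi) = 1$. Then $\varphi_1 \simeq \mathrm{ql}(\varphi)_{F(\varphi)}$ by Lemma \ref{LEMnondefectiveheight1}, so $\varphi_1$ is defined over $F$. By Corollary \ref{CORdimensionofnondefectiveheight1} (which applies since $r+s \leq 2^n$), we get $\mydim{\varphi} = 2^{n+1} - s$, hence $2r = \mydim{\varphi} - s = 2^{n+1} - 2s > 2^{n+1} - \tfrac{2 \cdot 2^{n+1}}{3} = \tfrac{2^{n+1}}{3} > s$. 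So the hypothesis $s < 2r$ of Conjecture \ref{CONJHoffmannLaghribi} holds, and since $\varphi_1$ is defined over $F$, that conjecture yields that $\varphi$ is a Pfister neighbour.

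For \ref{CONJnondefectiveheight1} $\Rightarrow$ \ref{CONJPfisterneighbours}: assume $\varphi$ is anisotropic of type $(r,s)$, $r+s \leq 2^n$, $s < \tfrac{2^{n+1}}{3}$, and $\varphi_1$ is defined over $F$. By Theorem \ref{THMPfisterneighbourproblem}, $\varphi$ has maximal splitting, $\Izhdim{\varphi} = 2^n$, and there is an anisotropic form $\psi$ of type $(2^n - s, s)$ over $F$ with $\varphi \stb \psi$. Stable birational equivalence gives $\Izhdim{\psi} = \Izhdim{\varphi} = 2^n$ (Theorem \ref{THMstb}), hence $\witti{1}{\psi} = \mydim{\psi} - 2^n = 2(2^n - s) + s - 2^n = 2^n - s = r_\psi$, so $\psi$ has nondefective height $1$ by Lemma \ref{LEMnondefectiveheight1}. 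Writing $\mydim{\psi} = 2^{n'} + m'$, we have $\mydim{\psi} = 2^{n+1} - s$, so $n' = n$ and $r_\psi + s = (2^n - s) + s = 2^n$; thus the hypotheses of Conjecture \ref{CONJnondefectiveheight1} are met for $\psi$ (with the same $s < \tfrac{2^{n+1}}{3}$ and $n' = n$). That conjecture gives that $\psi$ is a Pfister neighbour, and since being a Pfister neighbour is a stable-birational-invariant property (an anisotropic form is a Pfister neighbour iff it is stably birationally equivalent to an anisotropic Pfister form, as recalled in \S\ref{SUBSECexcellentforms}), $\varphi$ is also a Pfister neighbour. Combined with the trivial implication \ref{CONJPfisterneighbours} $\Rightarrow$ \ref{CONJHoffmannLaghribi}, this closes the cycle.

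The main obstacle is bookkeeping rather than conceptual: one must be careful that in the step \ref{CONJnondefectiveheight1} $\Rightarrow$ \ref{CONJPfisterneighbours} the auxiliary form $\psi$ genuinely satisfies \emph{all} the numerical hypotheses of Conjecture \ref{CONJnondefectiveheight1} — in particular that the integer ``$n$'' attached to $\psi$ (via $2^{n} < \mydim{\psi} \leq 2^{n+1}$) is the same as the one for $\varphi$, which is exactly what $\Izhdim{\varphi} = 2^n$ together with $\mydim{\psi} = 2^{n+1} - s$ guarantees — and that the passage from ``$\psi$ is a Pfister neighbour'' back to ``$\varphi$ is a Pfister neighbour'' is legitimate, which rests on the stable-birational characterization of Pfister neighbours recorded in \S\ref{SUBSECexcellentforms}. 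Everything else is a direct substitution of the already-established Theorem \ref{THMPfisterneighbourproblem}, Corollary \ref{CORdimensionofnondefectiveheight1}, and Lemma \ref{LEMnondefectiveheight1}.
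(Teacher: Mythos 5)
Your proposal is correct and follows essentially the same route as the paper's proof: the trivial implication from the remark preceding Conjecture \ref{CONJPfisterneighbours}, then Corollary \ref{CORdimensionofnondefectiveheight1} for \ref{CONJHoffmannLaghribi} $\Rightarrow$ \ref{CONJnondefectiveheight1}, and Theorem \ref{THMPfisterneighbourproblem} plus the stable-birational characterization of Pfister neighbours for \ref{CONJnondefectiveheight1} $\Rightarrow$ \ref{CONJPfisterneighbours}. Your extra verification that the auxiliary form $\psi$ satisfies all the numerical hypotheses of Conjecture \ref{CONJnondefectiveheight1} (same $n$, and $r_\psi + s = 2^n$) is a welcome bit of bookkeeping that the paper leaves implicit.
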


If we impose a slightly stronger assumption on $s$, then we can also relate Conjecture \ref{CONJPfisterneighbours} to a well-known conjecture on nondegenerate forms.  In fact, we state here two conjectures, the first of which is (essentially) due to Vishik, and the second of which is (essentially) due to Hoffmann (note that the $r+s \leq 2^n$ condition is vacuously satisfied here):

\begin{conjectures} \label{CONJSRostmotives} Suppose that $\varphi$ is nondegenerate.
\begin{enumerate} \item If $|\Lambda^U(X)| = 2$, then $\varphi$ is a Pfister neighbour.
\item If $\varphi$ has maximal splitting and $\mydim{\varphi} > 2^n + 2^{n-2}$, then $\varphi$ is a Pfister neighbour.\end{enumerate}
\end{conjectures}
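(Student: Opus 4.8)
These are two long-standing open conjectures on nondegenerate quadratic forms (in characteristic $\neq 2$ they are the conjectures of Vishik and Hoffmann cited above; for nondegenerate forms in characteristic $2$ the picture is the same, since the required motivic Steenrod operations are now available by Primozic). I do not resolve them; what follows is the strategy I would pursue.

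The first step would be to reduce (2) to (1). Suppose $\varphi$ is nondegenerate with maximal splitting and $\mydim{\varphi}>2^n+2^{n-2}$, so that $\mathfrak{i}_1=\witti{1}{\varphi}=m>2^{n-2}$ and $\Izhdim{\varphi}=2^n$; in particular $b(\Lambda^U(X))=2^n-1$ by Proposition~\ref{PROPuppersummand}. One wants to conclude that $|\Lambda^U(X)|=2$, after which (1) applies. Assuming $|\Lambda^U(X)|>2$, Proposition~\ref{PROPuppersummand} and Theorem~\ref{THMKarpenkouppersummand} show that the smallest positive integer $i$ with $i_{\mathrm{lo}}\in\Lambda^U(X)$ is some $\mathfrak{j}_t$, $1\le t<h_{\mathrm{nd}}$, divisible by $2^{n-1}$; since $0<\mathfrak{j}_t<r\le 2^n$ this forces $\mathfrak{j}_t=2^{n-1}$, whence $\mathfrak{i}_{t+1}\le r-2^{n-1}=\lfloor m/2\rfloor$ while $v_2(\mathfrak{i}_{t+1})\ge v_2(m)$. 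When $m$ is a power of $2$ this is already contradictory; in general one also needs the unlabelled refinement of Theorem~\ref{THMKarpenkouppersummand} together with Vishik's finer ``outer excellent connections'' (the $J$-invariant), which lie beyond the scope of \S\ref{SECMDT}. Completing this bookkeeping is the content of the reduction, and I would not attempt to carry it out by elementary means.

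For (1), assume $|\Lambda^U(X)|=2$. By Vishik's binary motive theorem (valid here since $\varphi$ is nondegenerate --- see the remark at the end of \S\ref{SECMDT}) $\varphi$ has maximal splitting, so $\Izhdim{\varphi}=2^n$ and $\Lambda^U(X)=\lbrace 0_{\mathrm{lo}},(2^n-1)^{\mathrm{up}}\rbrace$ by Proposition~\ref{PROPuppersummand}. Then the indecomposable idempotent $\alpha_{\Lambda^U(X)}\in\overline{\mathrm{Ch}}_{d_X}(X^2)$ cuts out a direct summand $N$ of $M(X)$ in $\mathrm{Chow}(F,\mathbb{F}_2)$ whose scalar extension to $\overline{F}$ is $\mathbb{F}_2\oplus\mathbb{F}_2(2^n-1)[2(2^n-1)]$, i.e. the reduced Rost motive of an $(n+1)$-fold Pfister form. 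The plan is to show that $N$ is the Rost motive of an anisotropic general $(n+1)$-fold Pfister form $\pi$ over $F$ itself. Granting this, one would finish easily: $N$ splits over $F(\pi)$ (as the Rost motive of $\pi$ does), and since $N$ is the upper motive of $X$ this forces $\varphi_{F(\pi)}$ to be isotropic; as $\Izhdim{\varphi}=2^n=\Izhdim{\pi}$, Theorem~\ref{THMstb} gives $\varphi\stb\pi$, and then $\varphi$ is a Pfister neighbour by the characterization recalled just before Lemma~\ref{LEMPneighbours}.

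The genuine obstacle is producing $\pi$ from $N$. Abstractly $N$ is a geometrically split motive over $F$ which becomes the Rost motive $R_{n+1}$ over $\overline{F}$, and one must manufacture from it a nonzero pure symbol $\lbrace a_1,\hdots,a_{n+1}\rbrace$, so that $\pi=\qpfister{a_1,\hdots,a_n}{a_{n+1}}$ is anisotropic with Rost motive $N$. This is exactly the circle of ideas around Rost's norm varieties and Voevodsky's proof of the Bloch--Kato conjecture, and it is precisely the input that is not available within the elementary framework of \S\S\ref{SECnumericaltriviality}--\ref{SECisotropicreduction}; no unconditional argument is known, in either characteristic, which is why (1) remains a conjecture. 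In small dimensions the step can be side-stepped: for $\mydim{\varphi}\le 16$ one replaces it by the known classification results of Hoffmann, Izhboldin and Karpenko, using the Arason and Clifford invariants to exhibit $\pi$ directly; this would prove both conjectures for $n\le 3$, which is already enough for the applications in \S\ref{SECPfisterneighbourconjecture}.
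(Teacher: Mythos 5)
This item is a \emph{Conjectures} environment: the paper offers no proof of either part (it attributes (1) and (2), essentially, to Vishik and Hoffmann), so there is no argument of the paper to compare yours against. Your assessment of the status is accurate: both parts are genuinely open, and the obstruction you isolate for (1) --- recognizing the binary upper summand as the Rost motive of an actual symbol, so as to produce the ambient Pfister form over $F$ --- is exactly the right one.

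The one substantive claim the paper does make here is that (2) is implied by (1), and on this point your sketch stops short of what the paper's own tools already deliver. Assuming $|\Lambda^U(X)|>2$, you correctly deduce that the least positive $i$ with $i_{\mathrm{lo}}\in\Lambda^U(X)$ is some $\mathfrak{j}_t$ divisible by $2^{n-1}$ (Proposition \ref{PROPuppersummand}~(2) and Theorem \ref{THMKarpenkouppersummand}~(1), using $\mathfrak{i}_1=m>2^{n-2}$), hence $\mathfrak{j}_t=2^{n-1}$ and $\mathfrak{i}_{t+1}\le \mydim{\varphi_t}/2=\lfloor m/2\rfloor$. But you then invoke only the $2$-adic statement $v_2(\mathfrak{i}_{t+1})\ge v_2(m)$ and conclude that, outside the case where $m$ is a $2$-power, one needs outer excellent connections or the $J$-invariant. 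That is not so: Proposition \ref{PROPsummandshifts}~(2), applied to $\Lambda=\Lambda^U(X)$ (so $t=1$ there), gives the much stronger inequality $\mathfrak{i}_{t+1}\ge\mathfrak{i}_1=m$, which contradicts $\mathfrak{i}_{t+1}\le\lfloor m/2\rfloor$ outright. (Equivalently, as the paper remarks immediately after the statement, the hypotheses of (2) force $\mathfrak{i}_1>\mathfrak{i}_{t}$ for every $t\ge 2$ --- apply Theorem \ref{THMi1}, or Corollary \ref{CORmaxsplitting}, to the kernel forms, whose dimensions are at most $2^n-m<3\cdot 2^{n-2}$ --- and then Proposition \ref{PROPsummandshifts}~(2) forces $|\Lambda^U(X)|=2$.) So the reduction of (2) to (1) closes entirely within \S\ref{SECMDT}; no input beyond the labelled results is required.
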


Note that in (2), the standing hypotheses imply that $\witti{1}{\varphi}>\witti{t}{\varphi}$ for all $1 \leq t < h_{\mathrm{nd}}(\varphi)$. It then follows from Proposition \ref{PROPsummandshifts} (2) that $|\Lambda^U(X)| = 2$, and so (2) is in fact implied by (1).  Nevertheless, one may hope that a more direct approach to (2) is achievable, and so we include it in our discussion. Using Theorem \ref{THMPfisterneighbourproblem}, we can now show the following:

\begin{proposition} \label{PROPreductionofPNproblem} Suppose that $r + s \leq 2^n$, and that $\varphi_1$ is defined over $F$. Then $\varphi$ is a Pfister neighbour in the following cases: 
\begin{enumerate} \item $s \leq 2^{n-1}$ and Conjecture \ref{CONJSRostmotives} $\mathrm{(1)}$ holds;
\item $s \leq 2^{n-1} - 2^{n-3}$ and Conjecture \ref{CONJSRostmotives} $\mathrm{(2)}$ holds.  \end{enumerate}
\begin{proof} We may assume that $s \leq 2^{n-1}$. By Theorem \ref{THMPfisterneighbourproblem} (4), there exists an anisotropic form $\psi$ of type $(2^n - s,s)$ over $F$ such that $\varphi \stb \psi$. Per the proof of Corollary \ref{CORequivalenceofconjectures}, $\psi$ has nondefective height $1$, and so $\psi_1$ is defined over $F$. Moreover, since $r+s \leq 2^n$, we have $\mydim{\varphi} = 2r + s = 2(r+s) - s \leq 2^{n+1} -s = \mydim{\psi}$. Since $\varphi$ is a Pfister neighbour if and only if $\psi$ is, we can then replace $\varphi$ with $\psi$ in order to reduce to the case where $\mydim{\varphi} = 2^{n+1} - s$. By Theorem \ref{THMPfisterneighbourproblem} (1), we then have that $\witti{1}{\varphi} = \mydim{\varphi} - 2^n = 2^n - s \geq s$.  Let $\tau$ be a nondegenerate subform of codimension $s-1$ in $\varphi$. Since $\witti{1}{\varphi} \geq s$, $\tau_{F(\varphi)}$ is isotropic and so $\varphi \stb \tau$. To prove that $\varphi$ is a Pfister neighbour in cases (1) and (2), it then suffices to show that the same is true of $\tau$. 

(1) Since $\tau \stb \varphi$, we have $\Lambda^U(X_{\tau}) = \Lambda^U(X)$ by Corollary \ref{CORstbinvarianceofuppersummands}. But since $\varphi_1$ is defined over $F$, Theorem \ref{THMPfisterneighbourproblem} (3) then tells us that $|\Lambda^U(X_{\tau})| = |\Lambda^U(X)| = 2$. Since $\tau$ is nondegenerate,  it is then a Pfister neighbour by our hypothesis.

(2) Since $\tau \stb \varphi$, we have $\Izhdim{\tau} = \Izhdim{\varphi} = 2^n$ by Theorem \ref{THMstb} (or Corollary \ref{CORstbinvarianceofuppersummands}).  In other words, $\tau$ has maximal splitting. At the same time, we also have that
$$ \mydim{\tau} > \mydim{\varphi} - s = 2^{n+1} - 2s \geq 2^{n+1} - (2^{n-1} - 2^{n-3}) = 2^n + 2^{n-2}. $$
Since $\tau$ is nondegenerate, it is then a Pfister neighbour by our hypothesis. \end{proof} \end{proposition}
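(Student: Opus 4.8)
\textbf{Proof plan for Proposition \ref{PROPreductionofPNproblem}.}
The strategy is a reduction-by-stable-birational-equivalence argument that peels away the quasilinear part until we land in the nondegenerate setting, where we can invoke the relevant conjecture. First I would dispose of the trivial range: if $s > 2^{n-1}$ there is nothing to prove in case (1) (the hypothesis is vacuous), and similarly in case (2), so we may assume $s \leq 2^{n-1}$ throughout. The key first move is to apply Theorem \ref{THMPfisterneighbourproblem} (4): since $r+s \leq 2^n$ and $\varphi_1$ is defined over $F$, there is an anisotropic form $\psi$ of type $(2^n-s,s)$ with $\varphi \stb \psi$. As noted in the proof of Corollary \ref{CORequivalenceofconjectures}, such a $\psi$ has $\Izhdim{\psi} = \Izhdim{\varphi} = 2^n$ (Theorem \ref{THMstb}), hence $\witti{1}{\psi} = \mydim{\psi} - 2^n = 2^n - s$, so $\psi$ has nondefective height $1$ by Lemma \ref{LEMnondefectiveheight1}, and consequently $\psi_1 \simeq \mathrm{ql}(\psi)_{F(\psi)}$ is trivially defined over $F$. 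Since being a Pfister neighbour is a stable-birational invariant (this follows from the Cassels-Pfister subform theorem and Lemma \ref{LEMstbsubforms}, as recorded in \S \ref{SUBSECexcellentforms}), $\varphi$ is a Pfister neighbour iff $\psi$ is; and moreover $\mydim{\varphi} = 2r+s = 2(r+s) - s \leq 2^{n+1} - s = \mydim{\psi}$. So we may replace $\varphi$ by $\psi$ and assume $\mydim{\varphi} = 2^{n+1} - s$, whence $\witti{1}{\varphi} = \mydim{\varphi} - 2^n = 2^n - s \geq s$ (the last inequality using $s \leq 2^{n-1}$).

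Next I would pass to a nondegenerate form by cutting down the quasilinear part. Choose a nondegenerate subform $\tau \subset \varphi$ of codimension $s-1$ (concretely, drop all but one of the quasilinear summands), so $\mydim{\tau} = \mydim{\varphi} - (s-1)$ and $\tau$ is nondegenerate. Since $\witti{1}{\varphi} = 2^n - s \geq s > s-1 = \mydim{\varphi} - \mydim{\tau}$ and $\mydim{\tau} > \Izhdim{\varphi} = 2^n$ (because $\mydim{\tau} = 2^{n+1} - 2s + 1 > 2^n$ when $s \leq 2^{n-1}$... actually $\mydim{\tau} = \mydim{\varphi}-(s-1) = 2^{n+1}-s-(s-1) = 2^{n+1}-2s+1$, and this exceeds $2^n$ precisely when $2^n - 2s + 1 > 0$, i.e. $s \leq 2^{n-1}$), Lemma \ref{LEMstbsubforms} gives $\varphi \stb \tau$. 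So it suffices to show $\tau$ is a Pfister neighbour.

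For case (1): since $\tau \stb \varphi$, Corollary \ref{CORstbinvarianceofuppersummands} gives $\Lambda^U(X_\tau) = \Lambda^U(X_\varphi)$, and Theorem \ref{THMPfisterneighbourproblem} (3) tells us $\Lambda^U(X_\varphi) = \lbrace 0_{\mathrm{lo}}, (2^n-1)^{\mathrm{up}} \rbrace$, so $|\Lambda^U(X_\tau)| = 2$. As $\tau$ is nondegenerate, Conjecture \ref{CONJSRostmotives} (1) then yields that $\tau$ is a Pfister neighbour, hence so is $\varphi$. For case (2): since $\tau \stb \varphi$, Theorem \ref{THMstb} gives $\Izhdim{\tau} = \Izhdim{\varphi} = 2^n$, so $\tau$ has maximal splitting; and $\mydim{\tau} = 2^{n+1} - 2s + 1 > 2^{n+1} - 2s \geq 2^{n+1} - (2^{n-1} - 2^{n-3})\cdot 2$... here I need $\mydim{\tau} > 2^n + 2^{n-2}$, which holds because $\mydim{\tau} \geq \mydim{\varphi} - s = 2^{n+1} - 2s$ and $2^{n+1} - 2s \geq 2^{n+1} - 2(2^{n-1} - 2^{n-3}) = 2^n + 2^{n-2}$ using $s \leq 2^{n-1} - 2^{n-3}$. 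Since $\tau$ is nondegenerate with maximal splitting and $\mydim{\tau} > 2^n + 2^{n-2}$, Conjecture \ref{CONJSRostmotives} (2) applies, so $\tau$ and hence $\varphi$ is a Pfister neighbour.

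The only genuinely delicate point is bookkeeping with the inequalities — tracking exactly which bound on $s$ is needed to guarantee first that $\mydim{\tau} > \Izhdim{\varphi}$ (so that Lemma \ref{LEMstbsubforms} applies and $\tau \stb \varphi$), and second that $\mydim{\tau}$ lands in the range demanded by the respective conjecture. Everything else is a formal chain of the already-established results: Theorem \ref{THMPfisterneighbourproblem}, Corollary \ref{CORstbinvarianceofuppersummands}, Lemma \ref{LEMstbsubforms}, Theorem \ref{THMstb}, and the stable-birational invariance of the Pfister-neighbour property. I expect no real obstacle beyond ensuring the dimension estimates are tight enough; in particular one should double-check the codimension of $\tau$ is chosen optimally (codimension $s-1$, not $s$) so that $\mydim{\tau}$ stays as large as possible.
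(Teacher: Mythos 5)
Your proposal is correct and follows essentially the same route as the paper's proof: reduce via Theorem \ref{THMPfisterneighbourproblem} (4) to the case $\mydim{\varphi} = 2^{n+1}-s$, pass to a nondegenerate codimension-$(s-1)$ subform $\tau$ stably birationally equivalent to $\varphi$, and invoke the respective conjecture. The only cosmetic difference is that you justify $\varphi \stb \tau$ via Lemma \ref{LEMstbsubforms} (checking $\mydim{\tau} > \Izhdim{\varphi}$), while the paper argues directly that $\witti{1}{\varphi} \geq s$ forces $\tau_{F(\varphi)}$ to be isotropic; these are the same argument in substance.
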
 

\section{Excellent Connections and the First Higher Isotropy Index} \label{SECexcellentconnections} In this final section, we fix an anisotropic quadratic form $\varphi$ of type $(r,s)$ over $F$. We assume that $r \geq 1$ (i.e., that $\varphi$ is not quasilinear), and set $h := h_{\mathrm{nd}}(\varphi)$ and $X: = X_{\varphi}$. For ease of notation, we also set $\mathfrak{j}_t: = \wittj{t}{\varphi}$ and $\mathfrak{i}_t: = \witti{t}{\varphi}$ for all $1 \leq t \leq h$. 

\subsection{Excellent Pairs} Let $j$ and $n_1,\hdots,n_j$ be the unique nonnegative integers for which $\mydim{\varphi} = 2^{n_1} - 2^{n_2} + \cdots + (-1)^{j-1}2^{n_j}$ and $n_1>n_2> \cdots > n_{j-1} > n_j + 1$.  For each integer $1 \leq i \leq j$, set
$$ m_i : = 2^{n_i-1}- 2^{n_{i+1}} + \cdots + (-1)^{j-i}2^{n_j}. $$
Let $j'$ be $j$ or $j-1$ depending on whether $\mydim{\varphi}$ is even or odd. Accounting for Remark \ref{REMSMDT} (2), the following definition is taken from \cite[\S 3]{Vishik2}:

\begin{definition} Let $a,b$ be integers with $0 \leq a,d_X-b < r$. We say that the pair $(a,b)$ is \emph{excellent} (for $\varphi$) if there exists an integer $1 \leq k \leq j'$ such that the following hold:
\begin{enumerate} \item $b-a = 2^{n_k-1}-1$;
\item $a,d_X-b \in \left[\sum_{i=1}^{k-1}m_i, \sum_{i=1}^k m_i\right)$. \end{enumerate} \end{definition} 

The following lemma, which is a translation of Corollary \ref{CORMDTstronglyexcellent}, explains the terminology:

\begin{lemma} \label{LEMexcellentpairs} Let $a$ and $b$ be integers with $0 \leq a,d_X - b < r$. If $\varphi$ is strongly excellent, then the pair $(a,b)$ is excellent if and only if $a_{\mathrm{lo}}$ and $b^{\mathrm{up}}$ are connected in $\Lambda(X)$. 
\end{lemma}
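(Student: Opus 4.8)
The plan is to verify the statement by directly comparing the two definitions after invoking Corollary~\ref{CORMDTstronglyexcellent}, which describes $\mathrm{MDT}(\varphi)$ completely when $\varphi$ is strongly excellent. First I would unwind notation: the integers $j, n_1, \dots, n_j$ and $m_1, \dots, m_j$ defined just before the ``excellent pair'' definition are, for a strongly excellent $\varphi$, precisely the data furnished by Corollary~\ref{CORMDTstronglyexcellent} (with $s$ equal to the dimension of $\mathrm{ql}(\varphi)$; note that when $s=0$ the alternating sum for $\mydim{\varphi}$ genuinely terminates with a $\pm 2^{n_h}$ term, and when $s>0$ one has $n_j = \log_2(s)+1$ so that $(-1)^j s$ is accounted for by the ``$2^{n_j}$'' convention). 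The point is that $h = j$ or $h = j'$ appropriately, and the $m_i$ here agree with the $m_{i}$ of the corollary. I would make this identification precise as the first step, being slightly careful about the even/odd distinction encoded in $j'$ and about the edge case $\tau = 0$ in the structure of strongly excellent forms.

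Next I would use Corollary~\ref{CORMDTstronglyexcellent} to read off the connected components of $\Lambda(X)$: they are exactly the sets
$$ \lbrace 0_{\mathrm{lo}}, (2^{n_{i+1}-1}-1)^{\mathrm{up}} \rbrace[m_1 + \cdots + m_i + t] $$
for $0 \le i < h$ and $0 \le t < m_{i+1}$ (reindexing the corollary's $n_{i+1}$ as $n_{k}$ with $k = i+1$). So two elements $a_{\mathrm{lo}}$ and $b^{\mathrm{up}}$ of $\Lambda(X)$ are connected if and only if there is an index $k$ with $1 \le k \le h$ and a shift $t$ with $0 \le t < m_k$ such that $a = m_1 + \cdots + m_{k-1} + t$ and $b = d_X - ((2^{n_k - 1} - 1) + t)$, the latter because $\alpha_{b^{\mathrm{up}}} = l_{d_X - b} \times h^{d_X - b}$ and the shift $[t]$ moves the ``up'' symbol from $(2^{n_k-1}-1)^{\mathrm{up}}$ to $(2^{n_k-1}-1+t)^{\mathrm{up}}$ with $d_X - b = d_X - (2^{n_k-1}-1+t)$, hence $b - a = (2^{n_k - 1} - 1 + t) - t = 2^{n_k - 1} - 1$ after combining; wait — more carefully, $b = d_X - (d_X - b)$ and the ``up'' index of the shifted component is $2^{n_k-1}-1+t$, so $d_X - b$ satisfies $d_X - b = d_X - (2^{n_k-1}-1+t)$, which is not literally $b$. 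I would therefore recompute: the component is $\lbrace 0_{\mathrm{lo}}, (2^{n_k-1}-1)^{\mathrm{up}}\rbrace$ shifted by $m_1+\cdots+m_{k-1}+t$, so its two elements are $(m_1+\cdots+m_{k-1}+t)_{\mathrm{lo}}$ and $(2^{n_k-1}-1+m_1+\cdots+m_{k-1}+t)^{\mathrm{up}}$. Setting $a = m_1+\cdots+m_{k-1}+t$ and $b = 2^{n_k-1}-1+a$ gives $b - a = 2^{n_k-1}-1$, which is condition (1) of the excellent-pair definition, and the range $0 \le t < m_k$ translates exactly to $m_1+\cdots+m_{k-1} \le a < m_1 + \cdots + m_k$, which is the lower-bound half of condition (2). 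The upper-bound half (the condition on $d_X - b$) follows because $d_X - b = d_X - a - (2^{n_k-1}-1)$ and one checks using $d_X = \mydim{\varphi} - 2$ and the identity $\mydim{\varphi} = 2(m_1 + \cdots + m_j)$ (when $s$ is appropriately incorporated) that $d_X - b$ lies in the same interval $[m_1+\cdots+m_{k-1}, m_1+\cdots+m_k)$; this is precisely the symmetry between the ``lo'' and ``up'' ranges built into Corollary~\ref{CORMDTstronglyexcellent}, and I expect it to be the one genuinely computational verification needed.

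The logical structure is then: $(a,b)$ excellent $\iff$ there exists $k$ with $b - a = 2^{n_k-1}-1$ and $a, d_X-b$ in the prescribed interval $\iff$ $a_{\mathrm{lo}}$ and $b^{\mathrm{up}}$ lie in the common component $\lbrace 0_{\mathrm{lo}}, (2^{n_k-1}-1)^{\mathrm{up}}\rbrace[a]$ of $\mathrm{MDT}(\varphi)$ $\iff$ $a_{\mathrm{lo}}$ and $b^{\mathrm{up}}$ are connected in $\Lambda(X)$. Each equivalence is either a definition or a direct consequence of Corollary~\ref{CORMDTstronglyexcellent}; no new machinery is required. The main obstacle — really the only nontrivial point — is bookkeeping: matching the index conventions of the ``excellent pair'' definition (which follows \cite{Vishik2} and uses $j'$, the even/odd split, and a sum $\mydim{\varphi} = 2^{n_1} - 2^{n_2} + \cdots$ that omits $s$) against those of Corollary~\ref{CORMDTstronglyexcellent} (which incorporates $s$ via the terminal term and via $n_h > \log_2(s)+1$), and confirming that the interval condition on $d_X - b$ is automatically equivalent to that on $a$ via the duality already encoded in the MDT description. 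I would dispatch this with one short paragraph of arithmetic checking $d_X - b = \mydim{\varphi} - 2 - b$ and the telescoping identity $\sum_{i=k+1}^{j} m_i = (2^{n_{k+1}-1} - \cdots) $ relating partial sums of the $m_i$ to the $2$-powers, then conclude.
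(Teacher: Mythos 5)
Your approach is exactly the paper's: the paper offers no proof at all, describing the lemma simply as ``a translation of Corollary \ref{CORMDTstronglyexcellent}'', and your plan --- matching the index systems of \S 9.A and of the corollary, then checking that the interval condition on $d_X-b$ is dual to the one on $a$ --- is precisely that translation. Your corrected computation of the component as $\lbrace a_{\mathrm{lo}},\,(2^{n_k-1}-1+a)^{\mathrm{up}}\rbrace$ for $m_1+\cdots+m_{k-1}\le a< m_1+\cdots+m_k$ is right, and the duality you defer to a final arithmetic check does come out: since $m_k=\mydim{\varphi_{k-1}}-2^{n_k-1}$ and $\mydim{\varphi_{k-1}}=\mydim{\varphi}-2(m_1+\cdots+m_{k-1})$, one gets $d_X-b=(m_1+\cdots+m_{k-1})+(m_1+\cdots+m_k)-1-a$, so the two halves of condition (2) in the definition of an excellent pair are equivalent once $b-a=2^{n_k-1}-1$ holds.

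The one place your bookkeeping is actually wrong is the identification of the index ranges. It is not true that $h=j$ or $h=j'$, nor that $n_j=\log_2(s)+1$ (the latter does not even hold when $s$ is a $2$-power). When $s>0$, the \S 9.A decomposition of $\mydim{\varphi}$ is the concatenation of the corollary's alternating sum $2^{n_1}-\cdots+(-1)^{h-1}2^{n_h}$ with the alternating $2$-power decomposition of $s$ itself (the inequality $n_h>\log_2(s)+1$ from the corollary is exactly what makes the concatenation again an admissible decomposition); hence $j=h+l$ with $l$ the length of the decomposition of $s$, and $j'$ can strictly exceed $h$ (take $s=3$, say). One still gets that the $n_k$ and $m_k$ of the two sources agree for $k\le h$, but the definition of excellent pair admits indices $k$ with $h<k\le j'$, and these must be ruled out separately: since $m_1+\cdots+m_h=\wittj{h}{\varphi}=r$, condition (2) for such a $k$ forces $a\ge r$, contradicting the standing hypothesis $0\le a<r$. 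With that one additional observation your argument closes up.
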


\subsection{Excellent Connections for Nondegenerate Forms} In the situation of Lemma \ref{LEMexcellentpairs}, one may whether the necessity part of the statement remains valid if we relax the assumption that $\varphi$ be strongly excellent. Over fields of characteristic different from $2$, the analogous problem was shown to have a positive answer by Vishik in \cite[Thm. 1.3]{Vishik2}. Aside from the fact that anisotropic forms are nondegenerate in that setting, the only other reason for the characteristic restriction in the latter was the use of Brosnan's Steenrod operations for Chow groups modulo $2$. Thanks to \cite{Primozic}, exactly the same arguments now go through for nondegenerate forms in characteristic $2$, and so we have: 

\begin{theorem} \label{THMexcellentconnections} Suppose that $\varphi$ is nondegenerate, and let $a$ and $b$ be integers with $0 \leq a,d_X - b <r$. If the pair $(a,b)$ is excellent, then $a_{\mathrm{lo}}$ and $b^{\mathrm{up}}$ are connected in $\Lambda(X)$.  \end{theorem}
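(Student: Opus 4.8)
The plan is to reduce the claim to the known result for nondegenerate forms over fields of characteristic different from $2$, namely \cite[Thm. 1.3]{Vishik2}, by observing that the only place characteristic $\neq 2$ is used in Vishik's proof of that result is in the invocation of Brosnan's cohomological-type Steenrod operations on mod-$2$ Chow groups of smooth varieties. Since $\varphi$ is assumed nondegenerate, the quadric $X = X_\varphi$ is smooth, and all the intersection-theoretic objects appearing in Vishik's argument---pullbacks along diagonal and partial-diagonal embeddings of powers of $X$, composition of Chow correspondences, and the action of Steenrod operations on $\mathrm{Ch}(X^m)$---are available exactly as in the characteristic-not-$2$ case. The substitute for Brosnan's operations is now provided by Primozic's construction \cite{Primozic}, whose formal properties (Cartan formula, stability under pullback, Wu formula, the computation on projective space) match those used in \cite{Vishik2} verbatim. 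Concretely, I would note that under the identification of $\overline{\mathrm{Ch}}(X^2)$ with the group of rational cycles and of $\mathrm{MDT}(\varphi)$ with Vishik's ``connection'' structure on the set of Tate motives in $M(\overline X)$ (see \cite[Ch. XVII]{EKM} and Remarks \ref{REMSMDT}), the statement ``$a_{\mathrm{lo}}$ and $b^{\mathrm{up}}$ are connected in $\Lambda(X)$'' is literally the statement of \cite[Thm. 1.3]{Vishik2}.

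\textbf{Key steps in order.} First, I would recall the dictionary between our combinatorial $\mathrm{MDT}$ invariant and the motivic decomposition of $X$ in $\mathrm{Chow}(F,\mathbb{F}_2)$ in the nondegenerate case, so that ``excellent pair'' in the sense of the definition above coincides with Vishik's notion of an excellent pair of motivic shifts, and ``connected in $\Lambda(X)$'' coincides with lying in a common indecomposable summand of $M(X)$ (Remarks \ref{REMSMDT} (1)). Second, I would go through Vishik's proof of \cite[Thm. 1.3]{Vishik2} and isolate each ingredient: (i) scalar extension to the function fields in the Knebusch splitting tower, which in the nondegenerate case coincides with the nondefective splitting tower; (ii) the surjectivity of pullback along generic-point inclusions, which is \cite[Cor. 57.11]{EKM} and is characteristic-free; (iii) composition of correspondences between powers of the smooth quadric $X$; (iv) pullback along the diagonal and partial-diagonal embeddings of powers of $X$; and (v) the action of Steenrod operations and the explicit formulas for $S^j$ on the classes $h^i$ and $l_i$ (as recorded in our Lemma \ref{LEMSteenrodformula}). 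Third, I would verify that each of (i)--(v) holds over an arbitrary field of characteristic $2$ for the smooth quadric $X$: (i)--(iv) are standard intersection theory for smooth varieties, while (v) holds with Brosnan's operations replaced by Primozic's, the relevant formulas being those established in the present paper's \S\ref{SECrationalcycles}. Finally, I would conclude that Vishik's argument, read verbatim with this single substitution, proves that an excellent pair $(a,b)$ yields a connection between $a_{\mathrm{lo}}$ and $b^{\mathrm{up}}$ in $\Lambda(X)$.

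\textbf{Main obstacle.} The principal point requiring care is to confirm that Primozic's Steenrod operations satisfy \emph{all} the formal properties used in Vishik's proof---not merely the Cartan formula and functoriality under pullback, but also the precise action on the generators of $\mathrm{Ch}(X^m)$ and on products of projective spaces, and their compatibility with the external product. This is exactly the content that the present paper has already packaged: Proposition \ref{PROPbasicresultonrationalcycles} (3) and Lemma \ref{LEMSteenrodformula} give the needed formulas, and the relevant functoriality is standard for smooth schemes. Beyond this bookkeeping, there is no genuine new difficulty: once one accepts that Primozic's operations are a drop-in replacement for Brosnan's in the smooth setting, the remainder of Vishik's argument is purely formal manipulation of cycles on powers of $X$, and so no further modification is needed. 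For this reason I would present the proof as a citation of \cite[Thm. 1.3]{Vishik2} together with the remark that its sole characteristic restriction is lifted by \cite{Primozic}, possibly accompanied by a short indication of where the substitution takes place, rather than reproducing the (lengthy) argument in full.
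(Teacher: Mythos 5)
Your proposal is correct and is essentially identical to the paper's own treatment: the paper likewise proves Theorem \ref{THMexcellentconnections} by citing \cite[Thm. 1.3]{Vishik2} and observing that the only characteristic restriction in Vishik's argument stems from Brosnan's Steenrod operations, which in the smooth (nondegenerate) setting are replaced verbatim by Primozic's operations from \cite{Primozic}. Your additional bookkeeping (the dictionary of Remarks \ref{REMSMDT} and the formulas of Lemma \ref{LEMSteenrodformula}) is consistent with, and slightly more explicit than, what the paper records.
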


Theorem \ref{THMexcellentconnections} is perhaps the most profound known result concerning the MDT invariant. As shown by Vishik, it has a number of significant implications, including the following:

\begin{theorem} \label{THMVishikconnections} Let $1 \leq t < h$.  Suppose that $\varphi$ is nondegenerate, and that $\mathrm{MDT}(\varphi)$ admits an element $\Lambda$ with $a(\Lambda) = \mathfrak{j}_{t-1}$. Let $j$ and $n_1,\hdots,n_j$ be the unique nonnegative integers for which $\mydim{\varphi} - 2\mathfrak{j}_{t-1} - \mathfrak{j}_t = 2^{n_1} -2^{n_2} + \cdots + (-1)^{j-1}2^{n_j}$ and $n_1>n_2> \cdots > n_{j-1} > n_j + 1$. For each integer $1 \leq k \leq j$, consider the integer
$$d_k: = \left(\frac{\mydim{\varphi} - 2\mathfrak{j}_{t-1} - \mathfrak{j}_t}{2}\right) + \sum_{i=k}^j (-1)^{k+i-1}2^{n_i - 1}. $$
Then $(d_k)_{\mathrm{lo}} \in \Lambda$. Moreover, if $t \leq t' \leq h$ is such that $\mathfrak{j}_{t'-1} < d_k \leq \mathfrak{j}_{t'}$, then $d_k + \mathfrak{i}_t \leq \mathfrak{j}_{t'}$. 
 \begin{proof} The second statement follows from the first and Proposition \ref{PROPsummandshifts} (2). It therefore suffices to prove the first statement. In the notation of \cite{Vishik2}, the element $\Lambda \in \mathrm{MDT}(\varphi)$ gives an indecomposable direct summand $N$ of $M(X)$ in $\mathrm{Chow}(F,\mathbb{F}_2)$ with $a(N) = \mathfrak{j}_{t-1}$ (see Remark \ref{REMSMDT} (1)). The integer $d_X - 2\mathfrak{j}_{t-1} - \mathfrak{j}_t$ is then equal to $\mathrm{dim}(N) + 1$, and the claim is equivalent to the second statement in \cite[Thm. 2.1]{Vishik2} (formulated in our setting). Given Theorem \ref{THMexcellentconnections}, however, the proof of the latter goes through in characteristic $2$: The auxiliary roles played by \cite[Prop. 1.1]{Vishik2} and \cite[Obs. 2.3]{Vishik2} are assumed here by Proposition \ref{PROPsplittingpatternconnections} and Lemma \ref{LEMMDTforisotropicforms}, and the assertion of \cite[Lem. 2.2]{Vishik2} remains valid in light of Corollary \ref{CORstbinvarianceofuppersummands}. With these remarks, the proof then goes through verbatim. \end{proof} \end{theorem}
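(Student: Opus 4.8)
The plan is to deduce Theorem \ref{THMVishikconnections} from Theorem \ref{THMexcellentconnections} by transporting Vishik's original argument (\cite[Thm. 2.1]{Vishik2}) into the present framework, using the substitutes that have already been established in the excerpt. First I would recall the setup: given the element $\Lambda \in \mathrm{MDT}(\varphi)$ with $a(\Lambda) = \mathfrak{j}_{t-1}$, Remark \ref{REMSMDT} (1) identifies $\Lambda$ with an indecomposable direct summand $N$ of $M(X)$ in $\mathrm{Chow}(F,\mathbb{F}_2)$ satisfying $a(N) = \mathfrak{j}_{t-1}$; moreover, by Corollary \ref{CORdimensionofsummand} applied to $\Lambda$ (noting that $\mathfrak{j}_{t-1} \leq a(\Lambda) < \mathfrak{j}_t$), the upper index $b(\Lambda)$ equals $d_X - \mathfrak{j}_{t-1} - (\mathfrak{i}_t - 1)$, so that $\dim(N) + 1 = b(\Lambda) - a(\Lambda) + 1 = d_X - 2\mathfrak{j}_{t-1} - \mathfrak{j}_t + 2 - 1 + 1$; after bookkeeping one checks this matches $d_X - 2\mathfrak{j}_{t-1} - \mathfrak{j}_t = \mathrm{dim}(N) + 1$ as claimed in the statement. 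With this dictionary in place, the integers $d_k$ appearing in the statement are precisely the lower endpoints of the Tate motives predicted by the binary-decomposition of the ``outer shell'' of $N$ in Vishik's argument.

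The core of the proof is then to reproduce, step by step, the argument of \cite[Thm. 2.1]{Vishik2}, replacing each tool used there by its counterpart here. Specifically: (i) wherever \cite{Vishik2} invokes \cite[Prop. 1.1]{Vishik2} (the ``outer excellent connection'' coming from a splitting-tower step), I would invoke Proposition \ref{PROPsplittingpatternconnections}, which gives exactly the connection between $(\mathfrak{j}_{t-1} + i)_{\mathrm{lo}}$ and $(d_X - (\mathfrak{j}_t - 1 - i))^{\mathrm{up}}$ for $0 \leq i < \mathfrak{i}_t - 1$; (ii) wherever \cite[Obs. 2.3]{Vishik2} is used (the structure of $\mathrm{MDT}$ for isotropic forms / the behaviour of connected components under isotropic reduction), I would use Lemma \ref{LEMMDTforisotropicforms}; (iii) wherever \cite[Lem. 2.2]{Vishik2} is invoked (stable-birational invariance of the upper summand), I would use Corollary \ref{CORstbinvarianceofuppersummands}; and (iv) the central input — that excellent pairs are connected — is supplied directly by Theorem \ref{THMexcellentconnections}, which now holds in characteristic $2$ for nondegenerate forms thanks to \cite{Primozic}. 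The passage from the first statement to the second is then immediate from Proposition \ref{PROPsummandshifts} (2): once $(d_k)_{\mathrm{lo}} \in \Lambda$ and $\mathfrak{j}_{t'-1} < d_k \leq \mathfrak{j}_{t'}$ with $t \leq t'$, part (2) of that proposition yields $d_k + \mathfrak{i}_t - 1 < \mathfrak{j}_{t'}$, hence $d_k + \mathfrak{i}_t \leq \mathfrak{j}_{t'}$.

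The main obstacle I anticipate is purely one of verifying that Vishik's argument really does go through ``verbatim'' — that is, that the only places where he uses smoothness of the quadric, the full motivic category, or characteristic $\neq 2$ are exactly the four points enumerated above, and that each substitution is legitimate. In particular one must check that the composition-of-correspondences and partial-diagonal-pullback manipulations in \cite{Vishik2} have already been licensed here: composition is handled by \S\ref{SECcorrespondences} and Lemma \ref{LEMcompositionformulas}, pullback along partial diagonals $X^m \to X^{2m}$ is replaced by the map $\mathrm{mult}$ of Proposition \ref{PROPpullbackalongdiagonal}, and scalar extension to $F(\varphi)$ combined with \cite[Cor. 57.11]{EKM} is available just as in the proofs of Propositions \ref{PROPsplittingpatternconnections} and \ref{PROPsummandshifts}. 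Since this kind of transfer has already been carried out successfully for the earlier results of \S\ref{SECMDT}, I expect no genuine new difficulty, and I would therefore state the theorem with the indicated argument and omit the lengthy verbatim reproduction, precisely as the excerpt does.
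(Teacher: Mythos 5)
Your proposal follows exactly the paper's own route: reduce the second assertion to the first via Proposition \ref{PROPsummandshifts} (2), translate $\Lambda$ into an indecomposable summand $N$ of $M(X)$ via Remark \ref{REMSMDT} (1), and rerun Vishik's proof of \cite[Thm. 2.1]{Vishik2} with Theorem \ref{THMexcellentconnections} as the main input and Proposition \ref{PROPsplittingpatternconnections}, Lemma \ref{LEMMDTforisotropicforms} and Corollary \ref{CORstbinvarianceofuppersummands} substituting for \cite[Prop. 1.1]{Vishik2}, \cite[Obs. 2.3]{Vishik2} and \cite[Lem. 2.2]{Vishik2} respectively. This matches the paper's argument in every essential respect.
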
 

We expect that Theorem \ref{THMexcellentconnections} remains valid if we relax the requirement that $\varphi$ be nondegenerate. Before discussing this and its implications, we first give some applications of Theorem \ref{THMVishikconnections} to problems in the degenerate setting. 

\subsection{Forms of Nondefective Height 1 Revisited} Let $n$ be the unique integer for which $2^n < \mydim{\varphi} \leq 2^{n+1}$. In this subsection, we return to the case where $h = 1$, i.e., where $\varphi$ has nondefective height $1$. If $r +s \leq 2^n$ (equivalently, $\mydim{\varphi} \leq 2^{n+1}-s$) and $s < \frac{2^{n+1}}{3}$, then Conjecture \ref{CONJnondefectiveheight1} predicts that $\varphi$ is a (close) Pfister neighbour in this case. With a slight adjustment of the second condition, the first condition may in fact be removed:

\begin{proposition} \label{PROPnondefectiveheight1withoutr+s} Suppose that $\varphi$ has nondefective height $1$. If $s \leq \frac{\mydim{\varphi}}{3}$, then $\mydim{\varphi} = 2^{n+1} - s$. 
\begin{proof} Since $\mydim{\varphi} = 2r + s$, the assumption on $s$ tells us that $r \geq s$. Since $h=1$, we then have that $\mathfrak{i}_1 = r \geq s$ (Lemma \ref{LEMnondefectiveheight1}).  Let $\psi$ be a form of type $(r,1)$ dominated by $\varphi$. Since $\mathfrak{i}_1 \geq s$, we have that $\varphi \stb \psi$ (Lemma \ref{LEMstbsubforms}). In particular, we have $\Lambda^U(X) = \Lambda^U(X_{\psi})$ (Corollary \ref{CORstbinvarianceofuppersummands}).  Since $h=1$, Proposition \ref{PROPMDTfornondefectiveheight1} then gives that $|\Lambda^U(X_{\psi})| = |\Lambda^U(X)| = 2$. Since $\psi$ is nondegenerate, Theorem \ref{THMVishikconnections} then tells us that $\Izhdim{\psi}$ is a power of $2$. But since $\varphi \stb \psi$, Theorem \ref{THMstb} then tells us that the same is true of 
$$ r+s = \mydim{\varphi} - r = \mydim{\varphi} - \mathfrak{i}_1 = \Izhdim{\varphi}. $$
By Corollary \ref{CORmaxsplitting}, we then have that $r + s = 2^n$, and so $\mydim{\varphi} = 2(r+s) - s = 2^{n+1} - s$, as claimed. \end{proof} \end{proposition}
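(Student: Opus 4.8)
The plan is to reduce the degenerate form $\varphi$ (of nondefective height $1$ and small quasilinear part) to a nondegenerate form $\psi$ to which Vishik's theorem on excellent connections (Theorem \ref{THMVishikconnections}) applies. First I would observe that the hypothesis $s \leq \frac{\mydim{\varphi}}{3}$, combined with $\mydim{\varphi} = 2r+s$, yields $r \geq s$; and since $h_{\mathrm{nd}} = 1$, Lemma \ref{LEMnondefectiveheight1} gives $\mathfrak{i}_1 = r$, so $\mathfrak{i}_1 = r \geq s$. This inequality $\mathfrak{i}_1 \geq s$ is exactly what is needed to invoke Lemma \ref{LEMstbsubforms}: any form $\psi$ of type $(r,1)$ dominated by $\varphi$ satisfies $\mydim{\psi} = 2r + 1 > 2^n \geq r + s = \Izhdim{\varphi}$ (using Corollary \ref{CORmaxsplitting} for the middle inequality, or more directly $\mydim{\psi} > \Izhdim{\varphi}$ since $\mydim{\psi} - \Izhdim{\varphi} = 2r + 1 - (r+s) = r - s + 1 \geq 1$), hence $\varphi \stb \psi$, and in particular $\Izhdim{\varphi} = \Izhdim{\psi}$.

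Next I would transport the motivic information across the stable birational equivalence. By Corollary \ref{CORstbinvarianceofuppersummands}, $\Lambda^U(X) = \Lambda^U(X_\psi)$. Since $\varphi$ has nondefective height $1$, so does any form stably birationally equivalent to it whose Izhboldin dimension agrees (a form has nondefective height $1$ iff $\witti{1} = r$ iff $\Izhdim = \mydim - r$, i.e. $r = \mydim - \Izhdim$; for $\psi$ this is $r = 2r+1 - \Izhdim{\psi} = 2r+1 - \Izhdim{\varphi} = 2r + 1 - (r+s)$, which forces $\Izhdim{\varphi} = r+s$ — this is automatic, so one must argue $h_{\mathrm{nd}}(\psi) = 1$ directly from $\varphi \stb \psi$ and Theorem \ref{THMstb}); alternatively one notes that since $\psi$ is dominated by $\varphi$ with $\mydim{\psi} > \Izhdim{\varphi}$, Lemma \ref{LEMstbsubforms} already gives $\Izhdim{\psi} = \Izhdim{\varphi}$, and then $\witti{1}{\psi} = \mydim{\psi} - \Izhdim{\psi} = (2r+1) - \Izhdim{\varphi}$, while $h_{\mathrm{nd}}(\psi) = 1 \iff \witti{1}{\psi} = r \iff \Izhdim{\varphi} = r+1$... — here I would be careful. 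The cleanest route: use Proposition \ref{PROPMDTfornondefectiveheight1} directly. Since $h_{\mathrm{nd}}(\varphi) = 1$, that proposition gives $\Lambda^U(X) = \lbrace 0_{\mathrm{lo}}, (r+s-1)^{\mathrm{up}} \rbrace$, so $|\Lambda^U(X)| = 2$, and hence $|\Lambda^U(X_\psi)| = 2$ by the stable birational invariance. Now $\psi$ is \emph{nondegenerate}, so Theorem \ref{THMVishikconnections} applies to $\psi$: the condition $|\Lambda^U(X_\psi)| = 2$ (i.e. the upper summand is binary) forces, via the description of the $d_k$'s in that theorem with $t = 1$ and $\Lambda = \Lambda^U(X_\psi)$, that there is only one value $d_k$, which can only happen when $j = 1$, i.e. $\mydim{\psi} - \witti{1}{\psi}$ — more precisely $\Izhdim{\psi}$ — equals $2^{n_1}$, a single $2$-power.

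The final step is bookkeeping. From $\Izhdim{\psi}$ being a power of $2$ and $\varphi \stb \psi$ we get $\Izhdim{\varphi} = \Izhdim{\psi} = 2^{n'}$ for some $n'$. But $\Izhdim{\varphi} = \mydim{\varphi} - \witti{1}{\varphi} = \mydim{\varphi} - r = r + s$ (using $h_{\mathrm{nd}}(\varphi) = 1$), so $r + s = 2^{n'}$. By Corollary \ref{CORmaxsplitting}, $\Izhdim{\varphi} \geq 2^n$ where $2^n < \mydim{\varphi} \leq 2^{n+1}$; and since $r + s < \mydim{\varphi} \leq 2^{n+1}$ we get $r+s \leq 2^n$, forcing $n' = n$, i.e. $r + s = 2^n$. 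Then $\mydim{\varphi} = 2r + s = 2(r+s) - s = 2^{n+1} - s$, as claimed. The main obstacle I anticipate is the middle step: confirming that the binary condition $|\Lambda^U(X_\psi)| = 2$ really does force $\Izhdim{\psi}$ to be a $2$-power via Theorem \ref{THMVishikconnections} (this is the content of Vishik's ``binary motive theorem'', flagged in the Remark following that theorem as a special case), and making sure the type-$(r,1)$ form $\psi$ is genuinely nondegenerate and of the right dimension so that the theorem's hypotheses are met — in particular that $\mathrm{MDT}(\psi)$ admits an element with $a(\Lambda) = \mathfrak{j}_0(\psi) = 0$, namely $\Lambda^U(X_\psi)$ itself, which is automatic.
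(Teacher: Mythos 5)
Your proposal is correct and follows essentially the same route as the paper: pass to a nondegenerate subform $\psi$ of type $(r,1)$, use $\mathfrak{i}_1 = r \geq s$ with Lemma \ref{LEMstbsubforms} to get $\varphi \stb \psi$, transfer the binary upper summand via Corollary \ref{CORstbinvarianceofuppersummands} and Proposition \ref{PROPMDTfornondefectiveheight1}, invoke Theorem \ref{THMVishikconnections} to conclude $\Izhdim{\psi} = \Izhdim{\varphi} = r+s$ is a $2$-power, and finish with Corollary \ref{CORmaxsplitting}. The only difference is cosmetic: your detour about whether $h_{\mathrm{nd}}(\psi) = 1$ is unnecessary (as you yourself note), and you correctly state the inequality $\mathfrak{i}_1 = r \geq s$ where the paper's text has a typo.
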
 

In view of the discussion of the previous section, we therefore make the following conjecture on the classification of forms of nondefective height $1$: 

\begin{conjecture} \label{CONJsecondconjectureonnondefectiveheight1} If $\varphi$ has nondefective height $1$, and $s \leq \frac{\mydim{\varphi}}{3}$, then $\varphi$ is a Pfister neighbour.  \end{conjecture}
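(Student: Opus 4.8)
The plan is to reduce Conjecture \ref{CONJsecondconjectureonnondefectiveheight1} to Conjecture \ref{CONJnondefectiveheight1} (equivalently, by Corollary \ref{CORequivalenceofconjectures}, to Conjecture \ref{CONJHoffmannLaghribi}) using the structural information already extracted for forms of nondefective height $1$. Suppose $\varphi$ has nondefective height $1$ and $s \leq \frac{\mydim{\varphi}}{3}$. By Proposition \ref{PROPnondefectiveheight1withoutr+s}, we have $\mydim{\varphi} = 2^{n+1} - s$, where $n$ is the unique integer with $2^n < \mydim{\varphi} \leq 2^{n+1}$. The first step is to rewrite the hypotheses in terms of $(r,s)$: since $\mydim{\varphi} = 2r+s = 2^{n+1}-s$, we get $r = 2^n - s$, so in particular $r + s = 2^n$, i.e. $\varphi$ already satisfies the condition $r+s \leq 2^n$ appearing in Conjecture \ref{CONJnondefectiveheight1}.

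The next step is to verify the second hypothesis of Conjecture \ref{CONJnondefectiveheight1}, namely $s < \frac{2^{n+1}}{3}$. From $s \leq \frac{\mydim{\varphi}}{3} = \frac{2^{n+1}-s}{3}$ we obtain $3s \leq 2^{n+1} - s$, hence $4s \leq 2^{n+1}$, i.e. $s \leq 2^{n-1} = \frac{2^{n+1}}{4} < \frac{2^{n+1}}{3}$. (One should also treat the trivial edge cases: if $s = 0$ then $\varphi$ is nondegenerate of nondefective height $1$, hence a general Pfister form or codimension-$1$ subform of one by the nondegenerate theory — see Remark \ref{REMPfister} — and the conclusion is immediate; similarly the assertion is vacuous or handled by Lemma \ref{LEMclosePfisterneighbours} in low-dimensional exceptional ranges.) With both hypotheses of Conjecture \ref{CONJnondefectiveheight1} now in force, that conjecture yields directly that $\varphi$ is a Pfister neighbour.

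The honest assessment is that this is not a proof of Conjecture \ref{CONJsecondconjectureonnondefectiveheight1} but rather a reduction: the statement is conjectural precisely because Conjecture \ref{CONJnondefectiveheight1} (and hence Conjecture \ref{CONJHoffmannLaghribi}) is open in general, known only for $s \leq 4$ by \cite[Thm. 6.6]{HoffmannLaghribi1}. Thus the "main obstacle" is not something one can circumvent within the present framework — it is exactly the unresolved degenerate Pfister neighbour problem. What \emph{can} be stated unconditionally is the content already packaged in Proposition \ref{PROPnondefectiveheight1withoutr+s} together with Theorem \ref{THMPfisterneighbourproblem}: under the hypothesis $s \leq \frac{\mydim{\varphi}}{3}$, the form $\varphi$ has maximal splitting, is a virtual Pfister neighbour, has $\Lambda^U(X) = \lbrace 0_{\mathrm{lo}}, (2^n-1)^{\mathrm{up}} \rbrace$, and has $\mydim{\varphi} = 2^{n+1}-s$ — all the discrete behaviour expected of a close Pfister neighbour. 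So the realistic "proof" here consists of: (i) invoking Proposition \ref{PROPnondefectiveheight1withoutr+s} to pin down the dimension; (ii) translating $s \leq \frac{\mydim{\varphi}}{3}$ into $r+s = 2^n$ and $s \leq 2^{n-1}$; (iii) applying Conjecture \ref{CONJnondefectiveheight1} (or citing \cite[Thm. 6.6]{HoffmannLaghribi1} to obtain the unconditional case $s \leq 4$, and Proposition \ref{PROPreductionofPNproblem} for the reductions to Conjectures \ref{CONJSRostmotives}). If the intent is an unconditional theorem, one would instead present Proposition \ref{PROPreductionofPNproblem}-style reductions and the $s \leq 4$ case, with the general statement flagged explicitly as Conjecture \ref{CONJsecondconjectureonnondefectiveheight1}.
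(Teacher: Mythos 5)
This statement is a conjecture; the paper offers no proof of it, only the motivation supplied by Proposition \ref{PROPnondefectiveheight1withoutr+s} and the discussion of \S\ref{SECPfisterneighbourconjecture}. Your reduction --- using Proposition \ref{PROPnondefectiveheight1withoutr+s} to force $\mydim{\varphi}=2^{n+1}-s$, hence $r+s=2^n$ and $s\le 2^{n-1}<\tfrac{2^{n+1}}{3}$, and then invoking Conjecture \ref{CONJnondefectiveheight1} --- is exactly the implication the paper intends, and your honest assessment that the statement remains open (being tied to the Hoffmann--Laghribi conjecture via Corollary \ref{CORequivalenceofconjectures}, with only small $s$ known unconditionally) is correct.
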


For larger values of $s$, it is less clear what can be expected. 

\subsection{The Values of the First Higher Isotropy Index} In this subsection, we consider the possible values of the integer $\mathfrak{i}_1$. Recall from Theorem \ref{THMi1} that $\mathfrak{i}_1$ is at most the largest $2$-power divisor of $\mydim{\varphi} - \mathfrak{i}_1 = \Izhdim{\varphi}$.  If we fix the value of $\mydim{\varphi}$ and take no further information into account, then this result cannot be bettered: Given integers $d \geq 2$ and $i \geq 1$ such that $i$ is at most the largest power of $2$ that divides $d - i$, there exist an extension $K/F$ and an anisotropic quadratic form $\psi$ over $K$ such that $\mydim{\psi} = d$ and $\witti{1}{\psi} = i$. If we take into account the type $(r,s)$, however, then the situation already changes. First, the standard construction that demonstrates the optimality of the previous assertion (cf. the proof of \cite[Thm. 79.9]{EKM}) yields only the following:

\begin{lemma} \label{LEMvaluesofi1} Let $v$ be a nonnegative integer, and let $x$ be the unique integer for which $x2^v < r \leq (x+1)2^v$. Suppose that $i$ is a positive integer satisfying the following conditions:
\begin{itemize} \item[$\mathrm{(i)}$] $2r + s - i$ is divisible by $2^v$;
\item[$\mathrm{(ii)}$] $i \leq r - x2^v$. \end{itemize}
Then there exist an extension $K/F$ and an anisotropic quadratic form $\psi$ of type $(r,s)$ over $K$ such that $\witti{1}{\psi} =i$.  Moreover, $\Izhdim{\psi} = 2r + s - i = y2^v$ for some integer $y \geq 2x + 1$. 
\begin{proof} By (i), there exists an integer $y$ such that $2r + s - i = y2^v$. By (ii), we have
$$ y2^v = 2r + s - i \geq 2r + s - (r - x2^v) = r + s + x2^v > 2x2^v, $$
and so $y \geq 2x + 1$. Let $X_1,\hdots,X_v Y_1,\hdots,Y_{2x +2},Z_1,\hdots, Z_{y - (2x + 1)}$ be indeterminates, and set $K: = F(X_1,\hdots,Z_{y - (2x+1)})$.  Over $K$, we may consider the form $\tau: = \mathfrak{b} \otimes q$, where $\mathfrak{b} := \pfister{X_1,\hdots,X_v}_b$ and $q : = Y_1[1,Y_2] \perp \cdots \perp Y_{2x + 1}[1,Y_{2x + 2}] \perp \langle Z_1,\hdots,Z_{y - (2x + 1)} \rangle$.  We claim that $\witti{1}{\tau} = 2^v$. Consider the field $L: = K[T]/(T^2 + T + Y_2)$. A routine computation (using the rationality of $L/F(X_1,\hdots,\hat{Y_2},\hdots,Z_{y-(2x+1)})$, where the hat indicates omission) shows that $\windex{\tau_{L}} = 2^v$. Since $L/K$ is separable, Lemma \ref{LEMnondefectiveheight} then gives that $\witti{1}{\tau} \leq 2^v$. On the other hand, if $L/K$ is any other separable extension, then $\mathfrak{b}_L$ is anisotropic (anisotropic quasilinear quadratic forms remain anisotropic under separable extensions) and is still a divisor of $\anispart{(\tau_L)}$ (a routine consequence of the roundness of bilinear and quadratic Pfister forms). We therefore have that $\witti{1}{\varphi} \geq 2^v$, whence the claim.  Observe now that $\tau$ has dimension $(y+1)2^v$ and type $(r',s')$, where $r' = (x+1)2^v,$ and $s' =(y-(2x+1))2^v$.  Since
\begin{alignat*}{3} s \quad & = \quad && y2^v + i - 2r \\ \quad & \leq \quad && y2^v + (r - x2^v) - 2r \\ 
\quad & = \quad && (y-x)2^v - r \\ \quad & = \quad && \left((x+1)2^v - r\right) + \left(y-(2x+1)\right)2^v \\ \quad & = \quad && (r'-r) + s', \end{alignat*}
$\tau$ therefore admits a subform $\psi$ of type $(r,s)$.  Then
$$ \mydim{\psi} = 2r + s = y2^v + i = (y+1)2^v - (2^v-i) = \mydim{\tau} - (2^v -i), $$
and since $i\leq 2^v$ (assumption (ii)),  we then have that $\witti{1}{\psi} = i$ by Lemma \ref{LEMstbsubforms}. This proves the first statement, and we then have that $\Izhdim{\psi} = y2^v$, whence the second.  \end{proof} \end{lemma}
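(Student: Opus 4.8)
The plan follows the classical template of \cite[Thm.~79.9]{EKM} for producing forms with a prescribed first higher isotropy index, adapted to the degenerate characteristic-$2$ setting: I would realise $\psi$ as a subform of a generic form $\tau$ whose first higher isotropy index is exactly $2^v$, and then transport that value to $\psi$ by means of Lemma~\ref{LEMstbsubforms}. To begin, condition (i) lets me write $2r+s-i = y2^v$ for an integer $y$, and condition (ii) together with $r>x2^v$ gives
$$ y2^v \;=\; 2r+s-i \;\geq\; 2r+s-(r-x2^v) \;=\; r+s+x2^v \;>\; 2x\cdot 2^v, $$
so $y\geq 2x+1$, which is the final clause of the statement.

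Next I would pass to a purely transcendental extension $K/F$ obtained by adjoining finitely many indeterminates, and over $K$ form $\tau:=\mathfrak{b}\otimes q$, where $\mathfrak{b}$ is the $v$-fold bilinear Pfister form $\pfister{X_1,\dots,X_v}_b$ with indeterminate slots and $q$ is a generic nondefective quadratic form with indeterminate coefficients, of a type $(x+1,\,y-(2x+1))$ chosen precisely so that $\tau$ is anisotropic of dimension $(y+1)2^v$ and of type $(r',s')$ with $r'=(x+1)2^v$ and $s'=(y-(2x+1))2^v$. The genericity of the coefficients forces $\tau$ to be anisotropic over $K$. The heart of the argument is the claim $\witti{1}{\tau}=2^v$. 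For the bound $\witti{1}{\tau}\leq 2^v$ I would exhibit an explicit separable quadratic extension $L/K$---the one adjoining a root of $T^2+T+Y_2$ for a suitable coefficient $Y_2$ of $q$---over which $\tau$ acquires Witt index $2^v$; this is a direct computation, once one notes that $L$ is rational over the rational function field in the remaining indeterminates, and then Lemma~\ref{LEMnondefectiveheight} (which presents $\witti{1}{\tau}$ as the least positive Witt index attained by $\tau$ over separable extensions) gives the inequality. For the reverse bound I would argue that over any separable extension $L/K$ the bilinear Pfister factor $\mathfrak{b}_L$ stays anisotropic (anisotropic quasilinear forms survive separable extensions, and $\mathfrak{b}$ is anisotropic over $K$) and continues to divide $\anispart{(\tau_L)}$ (a routine consequence of the roundness of bilinear and quadratic Pfister forms together with $\tau=\mathfrak{b}\otimes q$), so that $\mydim{\anispart{(\tau_L)}}$ is always a multiple of $2^v$; hence the Witt index can never drop by fewer than $2^v$, i.e.\ $\witti{1}{\tau}\geq 2^v$.

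Granting the claim, $\Izhdim{\tau}=\mydim{\tau}-2^v=y2^v$. A short computation using (ii) and $2r+s-i=y2^v$ shows $s\leq (r'-r)+s'$ with $r'-r=(x+1)2^v-r\geq 0$, so a form of type $(r',s')$ dominates a subform $\psi$ of type exactly $(r,s)$. Then $\mydim{\psi}=2r+s=y2^v+i=\mydim{\tau}-(2^v-i)$, and since (ii) forces $i\leq r-x2^v\leq 2^v$ we get $\mydim{\psi}=\Izhdim{\tau}+i>\Izhdim{\tau}$; in particular $\psi$ is anisotropic. Lemma~\ref{LEMstbsubforms} now yields $\tau\stb\psi$ and $\Izhdim{\psi}=\Izhdim{\tau}=y2^v$, hence $\witti{1}{\psi}=\mydim{\psi}-\Izhdim{\psi}=i$, and the ``moreover'' part is immediate.

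I expect the main obstacle to be the computation $\witti{1}{\tau}=2^v$ and, upstream of it, the precise choice of the generic form $q$ (the number of nondegenerate binary blocks versus quasilinear slots) so that $\mathfrak{b}\otimes q$ simultaneously has the prescribed dimension $(y+1)2^v$, admits a type-$(r,s)$ subform, and has transparent behaviour under separable extensions. The delicate point is that the roundness argument for the lower bound on $\witti{1}{\tau}$ and the rationality argument for the upper bound must be made to work for one and the same tensor form, and that the type arithmetic relating $(r',s')$ to $(r,s)$ must match the divisibility hypothesis (i) and the inequality (ii) exactly.
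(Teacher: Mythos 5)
Your proposal is correct and follows essentially the same route as the paper's proof: the same generic tensor form $\tau=\mathfrak{b}\otimes q$ with $q$ of type $(x+1,\,y-(2x+1))$, the same two-sided argument for $\witti{1}{\tau}=2^v$ (an explicit separable quadratic extension plus Lemma \ref{LEMnondefectiveheight} for the upper bound, divisibility by the anisotropic Pfister factor for the lower bound), the same type arithmetic $s\leq(r'-r)+s'$, and the same final appeal to Lemma \ref{LEMstbsubforms}. No gaps.
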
 

All examples of anisotropic forms with nontrivial first higher isotropy index of which we are currently aware of are either of this type or stably of this type (i.e., acquire the desired shape over an extension that preserves the anisotropy of the form).  We therefore formulate the following question, a positive integer to which would yield a refinement of Theorem \ref{THMi1} that takes the type invariant into account:

\begin{question} \label{Qi1} Let $u$ be the largest integer for which $\Izhdim{\varphi}$ is divisible by $2^u$, and let $x$ be the unique integer for which $x2^u < r \leq (x+1)2^u$. Is is then true that $\mathfrak{i}_1 \leq r - x2^u$?
\end{question}

Theorem \ref{THMi1} gives the following:

\begin{lemma} \label{LEMQi1} Let $u$ be the largest integer for which $\Izhdim{\varphi}$ is divisible by $2^u$, and let $x$ be the unique integer for which $x2^u < r \leq (x+1)2^u$. Then $\Izhdim{\varphi} = y2^u$ for an odd integer $y \geq 2x + 1$. In particular $\mathfrak{i}_1 \leq 2(r-x2^u) + s - 2^u$.  
\begin{proof} By hypothesis there is an odd integer $y$ with $\Izhdim{\varphi} = y2^u$. Set $\alpha: = r - x2^u$. Note that $\alpha>0$ by definition. Now by Theorem \ref{THMi1}, we have $\mathfrak{i}_1 \leq 2^u$, and so
$$ y2^u = \Izhdim{\varphi} = 2r + s - \mathfrak{i}_1 \geq 2r + s - 2^u = (2x-1)2^u + 2 \alpha + s. $$
Since $y$ is odd, the first statement follows. Then
$$ \mathfrak{i}_1 = 2r + s - y2^u \leq 2r + s - (2x+1)2^u = 2\alpha + s - 2^u, $$
and so the second statement also holds.  \end{proof} \end{lemma}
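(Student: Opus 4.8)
The plan is to unwind the hypotheses and feed them into Karpenko's theorem on $\witti{1}{\varphi}$ (Theorem \ref{THMi1}). First I would record the two relevant facts: by hypothesis $2^u$ is the exact power of $2$ dividing $\Izhdim{\varphi}$, so we may write $\Izhdim{\varphi} = y2^u$ with $y$ odd; and by Theorem \ref{THMi1}, since $\Izhdim{\varphi}$ is divisible by $2^u$, we have $\mathfrak{i}_1 = \witti{1}{\varphi} \le 2^u$. Set $\alpha := r - x2^u$; since $x$ is chosen so that $x2^u < r \le (x+1)2^u$, we have $0 < \alpha \le 2^u$.

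Next I would compute the lower bound for $\Izhdim{\varphi}$. Using $\Izhdim{\varphi} = \mydim{\varphi} - \mathfrak{i}_1 = 2r + s - \mathfrak{i}_1$ together with $\mathfrak{i}_1 \le 2^u$ and $s \ge 0$, we get
$$ y2^u = \Izhdim{\varphi} = 2r + s - \mathfrak{i}_1 \ge 2r - 2^u = 2(x2^u + \alpha) - 2^u = (2x-1)2^u + 2\alpha. $$
Since $\alpha > 0$, this forces $y2^u > (2x-1)2^u$, hence $y > 2x-1$; as $y$ is odd and $2x-1$ is odd, the next possibility is $y \ge 2x+1$. This gives the first assertion. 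For the second assertion, rearrange $\Izhdim{\varphi} = y2^u$ to get $\mathfrak{i}_1 = 2r + s - y2^u$, and then use $y \ge 2x+1$:
$$ \mathfrak{i}_1 = 2r + s - y2^u \le 2r + s - (2x+1)2^u = 2(r - x2^u) + s - 2^u = 2\alpha + s - 2^u, $$
which is the claimed inequality $\mathfrak{i}_1 \le 2(r-x2^u) + s - 2^u$.

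This is essentially a two-line bookkeeping argument once Theorem \ref{THMi1} is invoked, so there is no real obstacle; the only subtlety worth stating carefully is the parity step — the inequality $y2^u \ge (2x-1)2^u + 2\alpha$ with $\alpha > 0$ gives $y > 2x-1$ as integers, and oddness of $y$ then upgrades this to $y \ge 2x+1$ (one cannot have $y = 2x$). Everything else is direct substitution of the identities $\mydim{\varphi} = 2r+s$ and $\Izhdim{\varphi} = \mydim{\varphi} - \witti{1}{\varphi}$.
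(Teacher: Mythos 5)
Your proposal is correct and follows essentially the same argument as the paper: invoke Theorem \ref{THMi1} to get $\mathfrak{i}_1 \le 2^u$, derive $y2^u \ge (2x-1)2^u + 2\alpha$ with $\alpha = r - x2^u > 0$, use oddness of $y$ to conclude $y \ge 2x+1$, and rearrange for the bound on $\mathfrak{i}_1$. Your write-up is in fact slightly more careful than the paper's (which silently drops the $+s$ term in the intermediate equality), so there is nothing to correct.
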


Consider in particular the case where $s < 2^u + \mathfrak{i}_1$.  With the notation of the lemma, we then have that
$$ (y-2x)2^u = \Izhdim{\varphi} - (2x)2^u = 2(r-x2^u) + s - \mathfrak{i}_1 < 2(r-x2^u) + 2^u < 3\cdot 2^u. $$
Since $y$ is odd and at least $2x + 1$, it follows that $y = 2x + 1$, i.e., $\Izhdim{\varphi} = (2x+1)2^u$.  Put another way, we have
$$ \mathfrak{i}_1 = 2(r-x2^u) -(2^u - s). $$
Thus, when $s<2^u + \mathfrak{i}_1$, giving a positive answer to Question \ref{Qi1} amounts to showing that $r - x2^u \leq 2^u - s$. In other words, it amounts to a positive answer to the following:

\begin{conjecture} \label{CONJi1smalls} Let $u$ be the largest integer for which $\Izhdim{\varphi}$ is divisible by $2^u$, and let $x$ be the unique integer for which $x2^u<r \leq (x+1)2^u$. If $s<2^u + \mathfrak{i}_1$, then $r + s \leq (x+1)2^u$. \end{conjecture}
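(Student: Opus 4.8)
\emph{Reduction.} The first step is to put the statement into a more usable form. Under the standing hypothesis $s < 2^u + \mathfrak{i}_1$, the computation in Lemma~\ref{LEMQi1} and the discussion following it already shows that $\Izhdim{\varphi} = (2x+1)2^u$ and $\mathfrak{i}_1 = 2(r - x2^u) - (2^u - s)$. Substituting this relation into the two sides of the desired inequality $r - x2^u \leq 2^u - s$ shows that it is equivalent to $\mathfrak{i}_1 \leq r - x2^u$ — i.e.\ to the assertion of Question~\ref{Qi1} — and also to the clean form $\mathfrak{i}_1 + s \leq 2^u$. Since $\mathfrak{i}_1 \leq r$ always holds (the anisotropic part of $\varphi_{F(\varphi)}$ dominates $\mathrm{ql}(\varphi)$), the case $x = 0$ is trivial, so we may assume $x \geq 1$; then $\Izhdim{\varphi}$ is not a power of $2$, so $\varphi$ does not have maximal splitting, and the goal becomes the lower bound $r \geq x2^u + \mathfrak{i}_1$ on the number of hyperbolic planes of $\varphi$. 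The case $h = 1$ is then also settled: here $\mathfrak{i}_1 = r$ (Lemma~\ref{LEMnondefectiveheight1}), so Theorem~\ref{THMi1} gives $r = \mathfrak{i}_1 \leq 2^u$, forcing $x = 0$ against our assumption. Thus $h \geq 2$ in the cases that remain.

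\emph{Strategy.} For $h \geq 2$, the plan is to combine an induction along the nondefective splitting tower with the structure of the upper connected component $\Lambda^U(X)$ of $\mathrm{MDT}(\varphi)$. The first anisotropic kernel $\varphi_1 = \anispart{(\varphi_{F(\varphi)})}$ is a nondefective nonquasilinear form of type $(r - \mathfrak{i}_1, s)$ and nondefective height $h - 1$, with $\Izhdim{\varphi_1} = (2x+1)2^u - \mathfrak{i}_1 - \mathfrak{i}_2$; applying the inductive hypothesis and Theorem~\ref{THMi1} to $\varphi_1$ would, in favourable cases, yield a lower bound for $r - \mathfrak{i}_1$ in terms of $\mathfrak{i}_2$ and the $2$-adic data of $\Izhdim{\varphi_1}$, which, fed back through the relation $r + \mathfrak{i}_2 + s = (2x+1)2^u$ obtained above, would give the required bound on $r$. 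Controlling the change of the $2$-adic parameters from stage to stage is where the structure of $\Lambda^U(X)$ enters: by Proposition~\ref{PROPuppersummand} and Corollary~\ref{CORdimensionofsummand} we have $a(\Lambda^U(X)) = 0$ and $b(\Lambda^U(X)) = \Izhdim{\varphi} - 1 = (2x+1)2^u - 1$, and Theorem~\ref{THMKarpenkouppersummand} (which is available in our degenerate setting) shows that every positive $i$ with $i_{\mathrm{lo}} \in \Lambda^U(X)$ is divisible by $2^v$, where $v$ is smallest with $\mathfrak{i}_1 \leq 2^v$, and that the smallest such positive $i$ equals $\mathfrak{j}_{t_0}$ for some $1 \leq t_0 < h$ with $v_2(\mathfrak{i}_{t_0+1}) \geq v_2(\mathfrak{i}_1)$. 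The idea is to propagate this divisibility down the splitting tower using the isotropic-reduction maps of Proposition~\ref{PROPisotropicreductionalongsplittingtower}, and to compare $\varphi$ (through the Vishik-type decomposition Theorem~\ref{THMVishik} and Corollary~\ref{CORVishik}) with nondegenerate forms exhibiting the same separable splitting behaviour, so as to force $2^{v_2(\mathfrak{i}_1)} \mid (r - x2^u)$ and ultimately $r - x2^u \geq \mathfrak{i}_1$.

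\emph{Main obstacle.} The crux is exactly the step pinning down which ``upper'' symbols lie in $\Lambda^U(X)$, and this demands a degenerate analogue of Vishik's theorem on excellent connections — precisely the content of Conjecture~\ref{CONJexcellentconnections} and its consequences. For nondegenerate forms these are now available (Theorem~\ref{THMexcellentconnections} and Theorem~\ref{THMVishikconnections}), but in the degenerate case the action of the cohomological-type Steenrod operations on $\ratchow{X^2}$ is not intrinsically defined over $F$, so Conjecture~\ref{CONJexcellentconnections} is only known in dimension $\leq 9$ (Lemma~\ref{LEMexcellentconnectionsindimensionatmost9}). A complete proof of Conjecture~\ref{CONJi1smalls} is therefore out of reach at present; what the plan can realistically deliver — and what Theorem~\ref{THMi1restrictionssingular} records — is the conjecture in low dimension (via Lemma~\ref{LEMexcellentconnectionsindimensionatmost9}) together with those cases in which $\varphi$ may be replaced by a stably birationally equivalent \emph{nondegenerate} form of the same Izhboldin dimension (Lemma~\ref{LEMstbsubforms}, Theorem~\ref{THMstb}), where the sharp nondegenerate results apply directly. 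Even granting the excellent-connections input, a secondary difficulty is the combinatorial bookkeeping needed to convert the resulting connection pattern in $\Lambda^U(X)$ — tracked shell by shell along the splitting tower — into the single inequality $\mathfrak{i}_1 + s \leq 2^u$; this elaboration of Karpenko's original arguments is where most of the technical work would lie.
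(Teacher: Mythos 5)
This statement is a conjecture, and the paper does not prove it; it only records two pieces of partial evidence, namely Theorem \ref{THMi1restrictionssingular} (the case $\mathfrak{i}_1 \geq s$, proved by passing to a stably birationally equivalent nondegenerate subform and invoking Theorem \ref{THMVishikconnections}) and Proposition \ref{PROPimplicationsofexcellentconnections}~(3) (the implication from Conjecture \ref{CONJexcellentconnections}). You correctly recognize this, and your opening reduction is exactly the paper's own: under $s<2^u+\mathfrak{i}_1$ one has $\Izhdim{\varphi}=(2x+1)2^u$ and $\mathfrak{i}_1=2(r-x2^u)-(2^u-s)$, so the conclusion is equivalent to $\mathfrak{i}_1\leq r-x2^u$ and to $\mathfrak{i}_1+s\leq 2^u$; the disposal of $x=0$ and $h=1$ is also fine. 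Your diagnosis of the obstruction — that everything hinges on knowing which upper symbols lie in $\Lambda^U(X)$, which is precisely the content of the degenerate excellent-connections Conjecture \ref{CONJexcellentconnections} — matches the paper's.

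Where your sketch diverges from what the paper actually does (conditionally) is the mechanism. The paper's argument in Proposition \ref{PROPimplicationsofexcellentconnections}~(3) involves no induction along the splitting tower: it applies the (conjectural) analogue of Theorem \ref{THMVishikconnections} once, with $t=1$, to place the single element $(d_j)_{\mathrm{lo}}$ with $d_j=\tfrac{\Izhdim{\varphi}}{2}-2^{u-1}$ in $\Lambda^U(X)$ (the hypothesis $s<2^u+\mathfrak{i}_1$ is used exactly to check $d_j<r$), and then Proposition \ref{PROPsummandshifts}~(2) converts this into $d_j+\mathfrak{i}_1\leq r$, which is the desired inequality. By contrast, the route you propose through Theorem \ref{THMKarpenkouppersummand} and isotropic reduction could not close on its own: that theorem requires $|\Lambda^U(X)|>2$ and only imposes divisibility constraints on elements \emph{already known} to lie in $\Lambda^U(X)$; it cannot manufacture the specific element $(d_j)_{\mathrm{lo}}$ whose presence drives the bound, so the inductive bookkeeping you describe would still be blocked at the same point. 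Since you flag this yourself and do not claim a proof, there is no error to report — only the observation that the conditional argument, once the excellent-connections input is granted, is considerably more direct than your strategy suggests.
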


In this direction, Theorem \ref{THMVishikconnections} allows us to say the following:

\begin{theorem} \label{THMi1restrictionssingular} Let $u$ be the largest integer for which $\Izhdim{\varphi}$ is divisible by $2^u$, and let $x$ be the unique integer for which $x2^u < r \leq (x+1)2^u$. If $\witti{1}{\varphi} \geq s$, then $s < 2^u$ and $\mathfrak{i}_1 \leq r -x2^u \leq 2^u - s$.  In particular, Conjecture \ref{CONJi1smalls} holds in this case.
\begin{proof} We proceed as in the proof of Proposition \ref{PROPnondefectiveheight1withoutr+s}. Let $\psi$ be a form of type $(r,1)$ dominated by $\varphi$. Since $\mathfrak{i}_1 \geq s$, we have that $\varphi \stb \psi$ (Lemma \ref{LEMstbsubforms}). In particular, we have that $\Izhdim{\varphi} = \Izhdim{\psi}$ and $\Lambda^U(X) = \Lambda^U(X_{\psi})$ (Lemma \ref{LEMstbsubforms}, Corollary \ref{CORstbinvarianceofuppersummands}).  Let $j$ and $n_1,\hdots,n_j$ be the unique integers for which $\Izhdim{\varphi} = 2^{n_1} - 2^{n_2} + \cdots + (-1)^{j-1}2^{n_j}$ and $n_1>n_2> \cdots > n_{j-1} > n_j + 1$. By definition, we have $n_j = u$. Consider the integer
$$ d_j: = \frac{\Izhdim{\varphi}}{2} - 2^{u-1} = \left(\frac{\mydim{\varphi} - 2\mathfrak{j}_0 -  \mathfrak{j}_1}{2}\right)+(-1)^{2j-1} 2^{n_j-1}. $$
Since $\psi$ is nondegenerate, and since $\Izhdim{\psi} = \Izhdim{\varphi}$, Theorem \ref{THMVishikconnections} tells us that $(d_j)_{\mathrm{lo}} \in \Lambda^U(X_{\psi})$. Since $\Lambda^U(X_{\psi}) = \Lambda^U(X)$, Proposition \ref{PROPsummandshifts} (2) then tells us that $\frac{\Izhdim{\varphi}}{2} - 2^{u-1} + \mathfrak{i}_1 \leq r$. But since $\Izhdim{\varphi} = 2r + s - \mathfrak{i}_1$, this says that $\mathfrak{i}_1 \leq 2^u - s$. In particular, $s < 2^u$. By the remarks preceding Conjecture \ref{CONJi1smalls}, we then have that $\mathfrak{i}_1 = 2(r-x2^u) - (2^u -s)$. Since $\mathfrak{i}_1 \leq 2^u - s$, we then in fact have that $\mathfrak{i}_1 \leq r - x2^u \leq 2^u - s$, as desired.  \end{proof} \end{theorem}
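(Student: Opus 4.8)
The plan is to mimic the proof of Proposition \ref{PROPnondefectiveheight1withoutr+s}, using a nondegenerate subform of $\varphi$ to which the results on nondegenerate forms apply, and then transport conclusions back to $\varphi$ via stable birational invariance. First I would pass to a subform $\psi$ of $\varphi$ of type $(r,1)$; since $\mathfrak{i}_1 = \witti{1}{\varphi} \geq s$, Lemma \ref{LEMstbsubforms} gives $\varphi \stb \psi$, and hence $\Izhdim{\varphi} = \Izhdim{\psi}$ by the same lemma and $\Lambda^U(X) = \Lambda^U(X_{\psi})$ by Corollary \ref{CORstbinvarianceofuppersummands}. Note that $\psi$ is nondegenerate (it has type $(r,1)$), so the machinery of \S \ref{SECexcellentconnections} for nondegenerate forms is available. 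Since $\windex{\varphi} = 0$, we have $\mathfrak{j}_0 = 0$ and $\mathfrak{j}_1 = \mathfrak{i}_1$, so $\mydim{\varphi} - 2\mathfrak{j}_0 - \mathfrak{j}_1 = \mydim{\varphi} - \mathfrak{i}_1 = \Izhdim{\varphi}$, which is also $\Izhdim{\psi}$.

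Next I would write $\Izhdim{\varphi} = 2^{n_1} - 2^{n_2} + \cdots + (-1)^{j-1}2^{n_j}$ with $n_1 > n_2 > \cdots > n_{j-1} > n_j + 1$. By definition of $u$ as the largest integer with $2^u \mid \Izhdim{\varphi}$, the smallest term $n_j$ equals $u$. Applying Theorem \ref{THMVishikconnections} to $\psi$ (with $t=1$, so that the relevant summand of $\mathrm{MDT}(\psi)$ is $\Lambda^U(X_{\psi})$, which exists by Lemma \ref{LEMrationalityoffdiagonalelement}), the integer
$$ d_j := \left(\frac{\Izhdim{\varphi}}{2}\right) + (-1)^{2j-1}2^{n_j - 1} = \frac{\Izhdim{\varphi}}{2} - 2^{u-1} $$
satisfies $(d_j)_{\mathrm{lo}} \in \Lambda^U(X_{\psi}) = \Lambda^U(X)$. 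Now Proposition \ref{PROPsummandshifts} (2) (applied within $\mathrm{MDT}(\varphi)$, with $t = 1$) tells us that $d_j + \mathfrak{i}_1 \leq r$ — here I use that $d_j < \mathfrak{j}_1 = \mathfrak{i}_1 \leq r$ puts $d_j$ in the first shell, or more precisely that $d_j$ lies in the range $\mathfrak{j}_{t'-1} \leq d_j < \mathfrak{j}_{t'}$ for a unique $t' \geq 1$, whence $d_j + \mathfrak{i}_1 \leq \mathfrak{j}_{t'} \leq r$. Substituting $d_j = \frac{\Izhdim{\varphi}}{2} - 2^{u-1}$ and $\Izhdim{\varphi} = 2r + s - \mathfrak{i}_1$ gives $\frac{2r+s-\mathfrak{i}_1}{2} - 2^{u-1} + \mathfrak{i}_1 \leq r$, i.e. $\frac{\mathfrak{i}_1}{2} + \frac{s}{2} \leq 2^{u-1}$, i.e. $\mathfrak{i}_1 \leq 2^u - s$. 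In particular $s < 2^u$ (since $\mathfrak{i}_1 \geq 1$).

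Finally, having $s < 2^u$ and $\mathfrak{i}_1 \leq 2^u - s$, we are in the regime $s < 2^u + \mathfrak{i}_1$ discussed just before Conjecture \ref{CONJi1smalls}, where the identity $\mathfrak{i}_1 = 2(r - x2^u) - (2^u - s)$ holds. Combining this with $\mathfrak{i}_1 \leq 2^u - s$ yields $2(r - x2^u) \leq 2(2^u - s)$, hence $r - x2^u \leq 2^u - s$, and then $\mathfrak{i}_1 = 2(r-x2^u) - (2^u-s) \leq (r - x2^u)$ as well. This gives the full chain $\mathfrak{i}_1 \leq r - x2^u \leq 2^u - s$, and the final sentence — that Conjecture \ref{CONJi1smalls} holds in this case — is then immediate since the conjecture is exactly the assertion $r - x2^u \leq 2^u - s$ under the hypothesis $s < 2^u + \mathfrak{i}_1$. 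The main obstacle is the bookkeeping in applying Theorem \ref{THMVishikconnections}: one must check that the summand $\Lambda^U(X_{\psi})$ really has $a(\Lambda^U) = \mathfrak{j}_0 = 0$ (Proposition \ref{PROPuppersummand} (1)), verify the correct sign of the $2^{n_j-1}$ term in $d_k$ for $k = j$, and confirm that $d_j$ genuinely lands in a shell indexed by an integer $t' \geq 1$ so that Proposition \ref{PROPsummandshifts} (2) applies with the right inequality; the rest is elementary $2$-adic arithmetic.
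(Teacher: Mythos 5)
Your proposal is correct and follows essentially the same route as the paper: pass to a nondegenerate subform $\psi$ of type $(r,1)$, use $\mathfrak{i}_1 \geq s$ to get $\varphi \stb \psi$ and hence $\Lambda^U(X) = \Lambda^U(X_{\psi})$, apply Theorem \ref{THMVishikconnections} to $\psi$ to place $(d_j)_{\mathrm{lo}}$ in $\Lambda^U(X)$, and then combine Proposition \ref{PROPsummandshifts} (2) with the $2$-adic arithmetic preceding Conjecture \ref{CONJi1smalls}. The bookkeeping points you flag (the sign of the $2^{n_j-1}$ term, $a(\Lambda^U)=0$, and the shell placement of $d_j$) all check out exactly as in the paper's argument.
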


The preceding result is enough, for instance, to completely describe the situation in which $\mydim{\varphi} \leq 9$: 

\begin{examples} \label{EXSi1slowdimension} Suppose $\mydim{\varphi} \leq 9$. We discuss the possible values of $\mathfrak{i}_1$ and when they may occur. Note that $\mathfrak{i}_1 \leq r$ by Lemma \ref{LEMnondefectiveheight}. \vspace{.5 \baselineskip}

\noindent \underline{$\mydim{\varphi} \in \lbrace 2,3,5,9 \rbrace$}. In this case $\mathfrak{i}_1 = 1$ by Corollary \ref{CORmaxsplitting}.  \vspace{.5 \baselineskip}

\noindent \underline{$\mydim{\varphi} =4$}. In this case,  $\mathfrak{i}_1$ is either $1$ or $2$. If $\mathfrak{i}_1 = 2$, then $r = 2$, so $\varphi$ is nondegenerate of Knebusch height $1$. It is well known that the nondegenerate anisotropic forms of dimension $4$ and Knebusch height $1$ are precisely the anisotropic general $2$-fold Pfister forms (\cite[Prop. 25.6]{EKM}), so $\mathfrak{i}_1 = 2$ when $\varphi$ is a general $2$-fold Pfister form, and $\mathfrak{i}_1 = 1$ otherwise. \vspace{.5 \baselineskip}

\noindent \underline{$\mydim{\varphi} = 6$}. In this case, $\mathfrak{i}_1$ is again either $1$ or $2$ by Corollary \ref{CORmaxsplitting}. If $\varphi$ is a Pfister neighbour, then $\witti{1}{\varphi} = 2$ by Lemma \ref{LEMPneighbours} (3).  Conversely, if $\mathfrak{i}_1 = 2$, then $\varphi$ has maximal splitting, and is hence a Pfister neighbour by \cite[Thm. 1.2]{HoffmannLaghribi2}.  Thus, $\mathfrak{i}_1 = 2$ if $\varphi$ is a Pfister neighbour, and $\mathfrak{i}_1 = 1$ otherwise. Note that in order to be in the first case, we require that $r \geq 2$, i.e., that $\varphi$ is either nondegenerate or of type $(2,2)$. 

\vspace{.5 \baselineskip}

\noindent \underline{$\mydim{\varphi} = 8$}.  In this case, $\mathfrak{i}_1$ is $1$, $2$ or $4$ by Theorem \ref{THMi1}. In order for $i_1$ to be $4$, we must have that $r =4$, i.e., that $\varphi$ is nondegenerate of Knebusch height $1$. Again, it is well known that the nondegenerate anisotropic forms of dimension $8$ and Knebusch height $1$ are precisely the anisotropic $3$-fold general Pfister forms (\cite[Prop. 25.6]{EKM}). Now in order for $\mathfrak{i}_1$ to equal $2$, we must have that $r \geq 2$, and so $\varphi$ must have type $(4,0)$, $(3,2)$ or $(2,4)$. However, Theorem \ref{THMi1restrictionssingular} excludes the second case, so $\varphi$ must be nondegenerate or of type $(2,4)$.  Consider first the case where $\varphi$ is nondegenerate. If $\varphi \simeq \mathfrak{b} \otimes [1,a]$ for some $4$-dimensional bilinear form $\mathfrak{b}$ over $F$ and $a \in F$, then $\mathfrak{i}_1$ is even.  Indeed, consider the separable quadratic extension $K: = F[T]/(T^2 + T + a)$. Since $[1,a]_K$ is hyperbolic, so is $\varphi_K$, and hence $(\varphi_1)_{K(\varphi)}$. By \cite[Cor. 23.6]{EKM}, it follows that $\varphi_1 \simeq \mathfrak{c} \otimes [1,a]_K$ for some symmetric bilinear form $\mathfrak{c}$ over $K$. But $\varphi$ has trivial discriminant, so the same is true of $\varphi_1$, and hence $\mathfrak{c}$ must be of even dimension, proving the claim. In particular,  since $r = 4$, we have $\witti{1}{\varphi} = 2$ provided that $\varphi$ is not a general $3$-fold Pfister form. Conversely, suppose that $\mathfrak{i}_1 = 2$. The $\Izhdim{\varphi} = 8 - 2$, and so $2_{\mathrm{lo}} \in \Lambda^U(X)$ by Theorem \ref{THMVishikconnections}. By Corollary \ref{CORconnectiontoshell2}, it follows that $\mathfrak{i}_2 = 2$.  In view of the preceding discussion, this means that $\varphi_1$ is a $2$-fold general Pfister form. In particular, the Clifford algebra of $\varphi_1$ is Brauer equivalent to a quaternion algebra. By the index reduction theorem (see \cite[\S 30]{EKM}), the same is then true of the Clifford algebra of $\varphi$. In particular, there exists a $2$-fold general Pfister form $\tau$ over $F$ such that the $12$-dimensional form $\varphi \perp \tau$ represents an element of $I^3_q(F)$. Scaling $\tau$ if needed, we can assume that $\varphi \perp \tau$ is isotropic. The anisotropic part of $\varphi \perp \tau$ then has dimension at most $8$ (\cite[Thm. 4.10]{DeMedts}), and so $\varphi \perp \tau \sim \pi$ for a general $3$-fold Pfister form $\pi$ over $F$ (\cite[Cor. 25.12]{EKM}). Since $\pi \perp \tau \sim \varphi$ (Lemma \ref{LEMWittequivalenceofnondefective}), $\pi \perp \tau$ is isotropic. By \cite[Thm. 24.2 and Prop. 24.1]{EKM}, it follows that there exist symmetric bilinear forms $\mathfrak{c}$ and $\mathfrak{d}$ over $F$, and an element $a \in F$ such that $\pi \simeq \mathfrak{c} \otimes [1,a]$ and $\tau \simeq \mathfrak{d} \otimes [1,a]$. If we again set $K: = F[T]/(T^2 +T+a)$, we then get that $\varphi_K \sim \pi_K \perp \tau_K$ is hyperbolic. By \cite[Cor. 23.6]{EKM}, it then follows that $\varphi \simeq \mathfrak{b} \otimes [1,a]$ for a $4$-dimensional symmetric bilinear form $\mathfrak{b}$ over $F$. Consider now the case where $\varphi$ has type $(2,4)$. If $\varphi \simeq \pfister{a}_b \otimes q$ for some $a \in F^\times$ and form $q$ of type $(1,2)$ over $F$, then $\mathfrak{i}_1$ is even by the argument in the proof of Lemma \ref{LEMvaluesofi1}.  Since $r = 2$, we must then have that $\mathfrak{i}_1 = 2$.  Conversely, if $\mathfrak{i}_1 = 2$, then $\varphi \simeq \pfister{a}_b \otimes q$ for some $a \in F^\times$ and form $q$ of type $(1,2)$ over $F$ by \cite[Theorem 7.5]{HoffmannLaghribi1}.  In summary:
\begin{itemize} \item $\mathfrak{i}_1 = 4$ if and only if $\varphi$ is a general $3$-fold Pfister form;
\item $\mathfrak{i}_1 = 2$ if and only if $\varphi$ is not a general $3$-fold Pfister form, and one of the following holds:
\begin{itemize} \item $\varphi  \simeq \mathfrak{b} \otimes [1,a]$ for some $4$-dimensional symmetric bilinear form $\mathfrak{b}$ of non-trivial determinant over $F$;
\item $\varphi \simeq \pfister{a}_b \otimes q$ for some $a \in F^\times$ and some form $q$ of type $(1,2)$ over $F$; \end{itemize}
\item $\mathfrak{i}_1 = 1$ in all other cases. \end{itemize} \end{examples}

As far as Question \ref{Qi1} goes, the first open case appears in dimension $12$: If $\varphi$ is a $12$-dimensional form of type $(3,6)$, we do not know if it is possible for $\mathfrak{i}_1$ to equal $2$.  Our expectation is that this is not possible, and we present in the next subsection a conjectural approach to this based on the methods of this article. This approach should allow to treat Conjecture \ref{CONJi1smalls}, as well as other cases of Question \ref{Qi1} in which $s$ is small relative to $r$. In general, however, it is unclear to what extent the methods used here are applicable in situations where $s$ is ``large''. For example, we also do not know if $\mathfrak{i}_1$ can equal $2$ if $\varphi$ is a $14$-dimensional form of type $(3,8)$ or $(5,4)$. Unlike the situation for forms of type $(3,6)$, however, we do not have any clear approach to this problem at present. 

\subsection{Excellent Connections for Degenerate Forms} We conclude this article by making the conjecture that Theorem \ref{THMexcellentconnections} remains valid in the case where $\varphi$ is degenerate:

\begin{conjecture} \label{CONJexcellentconnections} Let $a$ and $b$ be integers with $0 \leq a,d_X - b <r$. If the pair $(a,b)$ is excellent, then $a_{\mathrm{lo}}$ and $b^{\mathrm{up}}$ are connected in $\Lambda(X)$.  \end{conjecture}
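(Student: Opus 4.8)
The plan is to reduce the degenerate case to the nondegenerate case (Theorem \ref{THMexcellentconnections}) by combining the stable-birational-invariance results of \S\ref{SECVishik} with the maximal-splitting-type constructions already used above. The key structural observation is that an excellent pair $(a,b)$ for $\varphi$ is determined entirely by $\mydim{\varphi}$ and the position of $a$ among the partial sums $\sum_{i=1}^{k}m_i$; in particular, if $\psi$ is an anisotropic nondegenerate form with $\mydim{\psi}=\mydim{\varphi}$, then $(a,b)$ is excellent for $\varphi$ if and only if it is excellent for $\psi$ in the shifted sense that accounts for $\Lambda(X_\psi)$ having the full range $0\le i<\mydim{\psi}/2$ of low symbols. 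So the first step is purely combinatorial: unwind the definition of excellent pair, split $\Lambda(X)$ into its shells via the nondefective splitting pattern, and use Proposition \ref{PROPsplittingpatternconnections} to reduce to the statement that the ``diagonal-type'' connection predicted by the excellent-pair condition holds. Since excellent connections are, by Lemma \ref{LEMexcellentpairs}, exactly the connections present for strongly excellent forms, one is really being asked to prove that the $\mathrm{MDT}$ of an arbitrary anisotropic nonquasilinear $\varphi$ contains at least the connections forced by the excellent form of the same dimension and quasilinear part.

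The second step is to set up the descent argument. Following Vishik, fix the excellent pair $(a,b)$ and consider the field $L$ in a suitable partial splitting tower over which $\varphi$ acquires an $(a+1)$-dimensional totally isotropic subspace but not an $(a+2)$-dimensional one (i.e. $\witti{0}{\varphi_L}=a+1$ in the appropriate shell); pass to $\varphi':=\anispart{(\varphi_L)}$. Using Proposition \ref{PROPisotropicreductionalongsplittingtower} and the reduction Lemma \ref{LEMreductiontomaximalWittindex}, connections in $\Lambda(X_{\varphi'})$ transfer to connections in $\Lambda(X)$ after shifting, so one reduces to the case $a=\mathfrak{j}_0=0$, i.e. to the upper connected component $\Lambda^U(X)$. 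For the upper component, the excellent-pair condition with $a=0$ forces $b=2^{n_1-1}-1$ where $2^{n_1}$ is the smallest power of two $\ge\mydim{\varphi}$ wait — more precisely $\mydim{\varphi}=2^{n_1}-2^{n_2}+\cdots$, and one must show $(2^{n_1-1}-1)^{\mathrm{up}}$ is connected to $0_{\mathrm{lo}}$, which by Corollary \ref{CORdimensionofsummand} is equivalent to $b(\Lambda^U(X))\ge 2^{n_1-1}-1$, i.e. to $\Izhdim{\varphi}\ge 2^{n_1-1}$; but that is exactly Corollary \ref{CORmaxsplitting}. So the \emph{first} excellent connection ($k=1$) of the upper component is already known unconditionally. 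The remaining excellent connections ($k\ge 2$) are the genuinely new content: these record, roughly, that after passing to the function field of $X$ the upper component of $X$ restricts to (a shift of) the upper component of the form with dimension $\mydim{\varphi}-2\cdot 2^{n_1-1}+\cdots$, i.e. the ``excellent continuation.'' The plan here is to run the Steenrod-operation argument of \cite{Vishik2} — the heart of Theorem \ref{THMexcellentconnections} — replacing pullback along partial diagonals $X^2\to X^3$ by Proposition \ref{PROPpullbackalongdiagonal}, scalar extension to $F(X)$ by \cite[Cor. 57.11]{EKM} as in the proofs of Propositions \ref{PROPsplittingpatternconnections} and \ref{PROPsummandshifts}, and isotropic reduction by Proposition \ref{PROPisotropicreductionalongsplittingtower}, exactly as was done to port Theorems \ref{THMKarpenkouppersummand} and the following theorem.

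\textbf{The main obstacle} is precisely the point flagged in the excerpt: the action of the cohomological-type Steenrod operations $S^j$ on $\overline{\mathrm{Ch}}(X^2)$ is \emph{not} known to be defined over $F$ when $\varphi$ is degenerate (it is only the image of operations on the smooth locus $U$), whereas Vishik's proof of excellent connections uses the $F$-rationality of the Steenrod action in an essential way — specifically, that the operations descend to $\overline{\mathrm{Ch}}(X)$ itself and interact with the motivic decomposition over $F$, not merely over $\overline F$. Concretely, several steps in \cite{Vishik2} apply a Steenrod operation to a cycle that is $F$-rational and conclude the result is $F$-rational; in our setting we only know $S^j(\overline\alpha)=\theta(S^j_U(\iota^*\alpha))$ for $\alpha\in\mathrm{Ch}(X)$ via Proposition \ref{PROPbasicresultonrationalcycles}(3), which is enough to keep $\overline{\mathrm{Ch}}$ stable under $S^j$ but \emph{not} obviously enough to reproduce the finer descent arguments. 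Bridging this gap — either by showing the relevant Steenrod-type identities used by Vishik already hold at the level of $\overline{\mathrm{Ch}}$ with the structure developed in \S\ref{SECrationalcycles}, or by finding a Steenrod-free route to the $k\ge 2$ excellent connections — is the crux, and is exactly why the excerpt only claims the conjecture rather than a theorem. A reasonable partial target, and the one I would pursue first, is to verify the conjecture in low dimension (as in Lemma \ref{LEMexcellentconnectionsindimensionatmost9}), where the $k\ge 2$ connections either do not occur or can be obtained from Theorem \ref{THMVishikconnections} applied to a nondegenerate stable-birational partner produced by Lemma \ref{LEMstbsubforms} and Theorem \ref{THMPfisterneighbourproblem}.
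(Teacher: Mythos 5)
The statement you are addressing is Conjecture \ref{CONJexcellentconnections}: the paper does not prove it, and your proposal, as you acknowledge, does not either. You correctly identify the crux --- the cohomological-type Steenrod action on $\overline{\mathrm{Ch}}(X^2)$ is only defined through the smooth locus and is not known to descend to an action defined over $F$ --- and this matches the paper's own stated reason for leaving the statement as a conjecture. Your fallback (low-dimensional verification, plus transfer from a nondegenerate stably birational partner) is essentially what the paper carries out in Lemma \ref{LEMexcellentconnectionsindimensionatmost9}, which handles $\mydim{\varphi}\leq 10$ (with one open case in dimension $10$) by combining the known values of $\mathfrak{i}_1$ in low dimension, Proposition \ref{PROPsplittingpatternconnections}, virtual Pfister neighbours (Lemma \ref{LEMMDTforVPNs}) and isotropic reduction (Lemma \ref{LEMMDTforisotropicforms}).

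Two of your intermediate reductions are, however, incorrect. First, the $k=1$ connection for the upper component is not ``already known unconditionally'': the conjecture asserts that the specific element $(2^{n_1-1}-1)^{\mathrm{up}}$ lies in $\Lambda^U(X)$, which is much stronger than $b(\Lambda^U(X))=\Izhdim{\varphi}-1\geq 2^{n_1-1}-1$, and Corollary \ref{CORmaxsplitting} only gives the latter. Already for a $12$-dimensional form of type $(3,6)$ the conjectured connections would force $\mathfrak{i}_1\leq 1$ (Proposition \ref{PROPimplicationsofexcellentconnections} (2)), which is the first open case of Question \ref{Qi1}; nothing of the sort follows from Hoffmann's bound. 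Second, the proposed reduction to $a=0$ via isotropic reduction does not work: Proposition \ref{PROPisotropicreductionalongsplittingtower} only shifts $a$ down to $\mathfrak{j}_{t-1}$, the bottom of its shell in the nondefective splitting pattern, whereas the windows $\sum_{i<k}m_i\leq a<\sum_{i\leq k}m_i$ in the definition of an excellent pair are determined by $\mydim{\varphi}$ alone and need not align with the $\mathfrak{j}_t$; the excellent-pair structure is not preserved under passage to $\varphi_{t-1}$, whose dimension has a different binary decomposition; and the constraint $d_X-b<r$ can fail at $a=0$ even when it holds for the given pair (again type $(3,6)$ in dimension $12$: the excellent pairs are $(1,8)$ and $(2,9)$, but $(0,7)$ is not one). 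Vishik's argument does not proceed by such a reduction, and porting it does indeed require precisely the descent of the Steenrod action that is missing here --- so the honest conclusion is that your proposal restates the conjecture and its known obstruction rather than making progress on it.
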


The proof of Theorem \ref{THMexcellentconnections} given in \cite{Vishik2} is currently not transferrable to the degenerate setting, since although we have an action of cohomological-type Steenrod operations on $\ratchow{X \times X}$,  we do not have an action such operations on $\mathrm{Ch}(X \times X)$ itself. Over fields of characteristic different from $2$, however, there are also \emph{homological-type} Steenrod operations acting on the mod-2 Chow groups of any variety, smooth or not (see \cite{Brosnan} or \cite[Ch. XI]{EKM}). We expect that analogous operations should exist in characteristic $2$, and that these operations are more-or-less sufficient to prove Conjecture \ref{CONJexcellentconnections}. We give here some implications of the validity of Conjecture \ref{CONJexcellentconnections}:

\begin{proposition} \label{PROPimplicationsofexcellentconnections} Suppose that Conjecture \ref{CONJexcellentconnections} holds.  Then:
\begin{enumerate} \item Let $1 \leq t < h$, and suppose that $\mathrm{MDT}(\varphi)$ admits an element $\Lambda$ with $a(\Lambda) = \mathfrak{j}_{t-1}$. Let $j$ and $n_1,\hdots,n_j$ be the unique nonnegative integers for which $\mydim{\varphi} - 2\mathfrak{j}_{t-1} - \mathfrak{j}_t = 2^{n_1} -2^{n_2} + \cdots + (-1)^{j-1}2^{n_j}$ and $n_1>n_2> \cdots > n_{j-1} > n_j + 1$. For each integer $1 \leq k \leq j$, consider the integer
$$d_{k}: = \left(\frac{\mydim{\varphi} - 2\mathfrak{j}_{t-1} - \mathfrak{j}_t}{2}\right) + \sum_{i=k}^j (-1)^{k+i-1}2^{n_i - 1}. $$
If $d_k < r$, we then have that $(d_k)_{\mathrm{lo}} \in \Lambda$. Moreover, if $t \leq t' \leq h$ is such that $\mathfrak{j}_{t'-1} < d_{k} \leq \mathfrak{j}_{t'}$, then then $d_{k} + \mathfrak{i}_t \leq \mathfrak{j}_{t'}$. 
\item Write $\mydim{\varphi} = 2^n + m$ for integers $n \geq 0$ and $1 \leq m \leq 2^n$. If $r<m$, then $\witti{1}{\varphi} \leq m-r$. 
\item Conjecture \ref{CONJi1smalls} holds. 
\end{enumerate}
\begin{proof} (1) Again, the second statement follows from the first and Proposition \ref{PROPsummandshifts} (2). Given the validity of Conjecture \ref{CONJexcellentconnections}, however, the proof of Theorem \ref{THMVishikconnections} carries over verbatim. 

Now, in proving the remaining statements, let first us note that since $a(\Lambda^U(X)) = 0 = \mathfrak{j}_0$, we are in a position to apply (1) with $t=1$. 

(2) Since $\mathfrak{i}_1 \leq r$, we have $\mathfrak{i}_1 < m$, and hence $\Izhdim{\varphi} = 2^{n+1} -d$ for some integer $1 \leq d < 2^n$. Taking $t = 1$ in (1), we then have that the integer $d_2$ is defined and equal to $\Izhdim{\varphi} - 2^n = m - \mathfrak{i}_1$. If we had $m - \mathfrak{i}_1 < r$, part (1) would then give that $m \leq r$, contradicting our assumption. We must therefore have that $m - \mathfrak{i}_1 \geq r$, i.e., $\mathfrak{i}_1 \leq m-r$. 

(3) As in the statement of Conjecture \ref{CONJi1smalls}, let $u$ be the largest integer for which $\Izhdim{\varphi}$ is divisible by $2^u$. Suppose that $s< 2^u + \mathfrak{i}_1$.  We again consider the statement of (1) with $t=1$.  As in the proof of Theorem \ref{THMi1restrictionssingular}, the integer $d_j$ is equal to $\frac{\Izhdim{\varphi}}{2} - 2^{u-1}$.  Since $s < 2^n + \mathfrak{i}_1$, we have
$$ \frac{\Izhdim{\varphi}}{2} - 2^{u-1} = \frac{2r + s - \mathfrak{i}_1}{2} - 2^{u-1} =  r +  \frac{s - (2^u + \mathfrak{i}_1)}{2} <r, $$
and so $\frac{\Izhdim{\varphi}}{2} - 2^{u-1} + \mathfrak{i}_1 \leq r$ by (1). Exactly as in the proof of Theorem \ref{THMi1restrictionssingular}, this then leads to the conclusion that $\mathfrak{i}_1 \leq r - x2^u \leq 2^u -s$. 
\end{proof} \end{proposition}

\begin{remarks} \begin{enumerate}[leftmargin=*] \item The statement in part (1) of the preceding proposition may be interpreted as follows: Suppose $\psi$ is an anisotropic nondegenerate form over an extension of $F$ such that $\mydim{\psi} = \mydim{\varphi}$ and $\wittj{t}{\psi} = \mathfrak{j}_t$ for all $1 \leq t \leq h$.  The set $\Lambda(X)$ may be viewed as a subset of $\Lambda(X_{\psi})$. We expect that any connections among elements of this subset that are forced to exist by the Knebusch splitting pattern of $\psi$ (in particular, those given by the $t=1$ case of Theorem \ref{THMVishikconnections}) remain valid in $\Lambda(X)$ (now understood as an invariant of $\varphi$). More generally,  recall that the shell pyramid diagram for $X$ is the shell pyramid diagram for $X_{\psi}$, but with the shells indexed by integers $\geq h$ deleted (see \S \ref{SUBSECcyclesonX^2}). Suppose that we have a restriction on the diagram for $X_{\psi}$ that is entirely determined by the Knebusch splitting pattern of $\psi$. If we delete the shells indexed by integers $\geq h$, then we expect that what remains of the restriction is a valid restriction on the diagram for $X$. 
\item Note the statement in part (2) implies that $\mathfrak{i}_1 \neq 2$ when $\varphi$ is a $12$-dimensional form of type $(3,6)$ (i.e., settles the first open case of Question \ref{Qi1}). This is not implied by Conjecture \ref{CONJi1smalls}, and so the scope of Conjecture \ref{CONJexcellentconnections} is broader. At the same time, the statement in part (1) says nothing about the problem of whether $\mathfrak{i}_1$ can equal $2$ when $\varphi$ is a $14$-dimensional form of type $(3,8)$ or $(5,4)$. \end{enumerate}
\end{remarks} 

We conclude with the following lemma, which gives some meagre evidence for Conjecture \ref{CONJexcellentconnections}:

\begin{lemma} \label{LEMexcellentconnectionsindimensionatmost9} Conjecture \ref{CONJexcellentconnections} holds in the case where $\mydim{\varphi} \leq 9$. 
\begin{proof} In view of Theorem \ref{THMexcellentconnections}, we may assume that $\varphi$ is degenerate, i.e., that $s \geq 2$. We then have that $\mydim{\varphi} \geq 4$.  \vspace{.5 \baselineskip}

\noindent \underline{$\mydim{\varphi}=4$}.  In this case, $\varphi$ has type $(1,2)$, and there are no excellent pairs to consider. \vspace{.5 \baselineskip}

\noindent \underline{$\mydim{\varphi} = 5$}. In this case, $\varphi$ has type $(1,3)$, and the only excellent pair to consider is $(0,3)$. But in this case we have $\mathfrak{i}_1 = 1$ (Examples \ref{EXSi1slowdimension}), and so $0_{\mathrm{lo}}$ and $3^{\mathrm{up}}$ are connected in $\Lambda(X)$ by Proposition \ref{PROPsplittingpatternconnections}. \vspace{.5 \baselineskip}

\noindent \underline{$\mydim{\varphi} = 6$}. In this case, $\varphi$ has type $(1,4)$ or $(2,2)$. In the first case, however, there are no excellent pairs to consider, and so we can assume that $\varphi$ has type $(2,2)$.  Here, there are two excellent pairs to consider, namely $(0,3)$ and $(1,4)$.  But since $r = 2$, $2$ lies in the nondefective splitting pattern of $\varphi$. By Proposition \ref{PROPVPNs}, it follows that $\varphi$ is a virtual Pfister neighbour. But Lemma \ref{LEMMDTforVPNs} then gives the desired connections in $\Lambda(X)$ between $0_{\mathrm{lo}}$ and $3^{\mathrm{up}}$ as well as $1_{\mathrm{lo}}$ and $4^{\mathrm{up}}$.\vspace{.5 \baselineskip}

\noindent \underline{$\mydim{\varphi} = 7$}. In this case, $\varphi$ has type $(1,5)$ or $(2,3)$.  In the first case, there are no excellent pairs to consider. Suppose therefore that $\varphi$ has type $(2,3)$. Here, there is one excellent pair to consider, namely $(1, 4)$. But since $\varphi$ has type $(2,3)$, we have $\mathfrak{i}_1 = 1$ (Examples \ref{EXSi1slowdimension}). Since $r = 2$, we must then also have that $h = 2$ and $\mathfrak{i}_2 = 1$. By Proposition \ref{PROPsplittingpatternconnections}, it then follows that $1_{\mathrm{lo}}$ and $4^{\mathrm{up}}$ are connected in $\Lambda(X)$. \vspace{.5 \baselineskip}

\noindent \underline{$\mydim{\varphi} = 8$}.  In this case, $\varphi$ has type $(1,6)$, $(2,4)$ or $(3,2)$. In the first two cases, there are no excellent pairs to consider. Suppose therefore that $\varphi$ has type $(3,2)$. Then there are two excellent pairs to consider, namely $(1,4)$ and $(2,5)$. Since $\varphi$ has type $(3,2)$, we have $\mathfrak{i}_1 = 1$ (Examples \ref{EXSi1slowdimension}). Then $\varphi_1$ is a $6$-dimensional form of type $(1,3)$. By the preceding discussion, $0_{\mathrm{lo}}$ and $3^{\mathrm{up}}$, as well as $1_{\mathrm{lo}}$ and $4^{\mathrm{up}}$ are then connected in $\Lambda(X_{\varphi_1})$. By Lemma \ref{LEMMDTforisotropicforms}, $1_{\mathrm{lo}}$ and $4^{\mathrm{up}}$, as well as $2_{\mathrm{lo}}$ and $5^{\mathrm{up}}$ are then connected in $\Lambda(X)$. \vspace{.5 \baselineskip}

\noindent \underline{$\mydim{\varphi} = 9$}.  In this case, $\varphi$ has type $(1,7)$, $(2,5)$ or $(3,3)$. In all cases, we have the excellent pair $(0,7)$.  But $\mathfrak{i}_1 = 1$ in this case (Examples \ref{EXSi1slowdimension}), so $0_{\mathrm{lo}}$ and $7^{\mathrm{up}}$ are connected in $\Lambda(X)$ by Proposition \ref{PROPsplittingpatternconnections}. If $\varphi$ has type $(1,7)$ or $(2,5)$, then there are no other excellent pairs to consider. Suppose now that $\varphi$ has type $(3,3)$. We then have one other excellent pair to consider, namely $(2,5)$.  Again, however, since $\mathfrak{i}_1 = 1$, $\varphi_1$ is a $7$-dimensional form of type $(2,3)$. By the preceding discussion, $1_{\mathrm{lo}}$ and $4^{\mathrm{up}}$ are then connected in $\Lambda(X_{\varphi_1})$. By Lemma \ref{LEMMDTforisotropicforms}, $2_{\mathrm{lo}}$ and $5^{\mathrm{up}}$ are then connected in $\Lambda(X)$. \vspace{.5 \baselineskip}
\end{proof} \end{lemma}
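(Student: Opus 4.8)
The plan is to verify Conjecture \ref{CONJexcellentconnections} for $\mydim{\varphi} \le 10$ by reducing to cases via the type $(r,s)$ and using the three structural tools already available: Theorem \ref{THMexcellentconnections} (the nondegenerate case, which settles $s \le 1$), Lemma \ref{LEMMDTforisotropicforms} together with the fact that $h_{\mathrm{nd}}$ drops after passing to $\varphi_1$ (which lets one pass to lower-dimensional forms and re-use cases already handled), and Proposition \ref{PROPsplittingpatternconnections} together with Lemma \ref{LEMMDTforVPNs} (which produce the required connections once one knows $\mathfrak{i}_1$, or that $\varphi$ is a virtual Pfister neighbour). Since the excellent pairs for a form of type $(r,s)$ involve only indices $0 \le a, d_X - b < r$, the number of pairs grows with $r$, so for fixed small $\mydim{\varphi}$ only the larger values of $r$ (equivalently, the smaller values of $s$, but still $s \ge 2$) produce any nontrivial obligation; for each such type one needs $\mathfrak{i}_1$, which is supplied by Examples \ref{EXSi1slowdimension} in every relevant case.

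First I would dispose of $\mydim{\varphi} \le 3$ trivially (no room for $s \ge 2$ and an excellent pair), and invoke Theorem \ref{THMexcellentconnections} to reduce to $\varphi$ degenerate, i.e. $s \ge 2$, hence $\mydim{\varphi} \ge 4$. Then I would run through $\mydim{\varphi} \in \{4,5,6,7,8,9,10\}$ one dimension at a time. For each dimension, list the types $(r,s)$ with $s \ge 2$; for each type, either observe there are no excellent pairs to check (this happens whenever $r$ is small, e.g. type $(1,s)$ always, and many type-$(2,s)$ cases), or write down the finitely many excellent pairs explicitly. For the pairs that remain: if $\mathfrak{i}_1$ is known (from Examples \ref{EXSi1slowdimension}, which covers $\mydim{\varphi} \le 9$) one gets connections $(\mathfrak{j}_{t-1}+i)_{\mathrm{lo}} \leftrightarrow (d_X - (\mathfrak{j}_t - 1 - i))^{\mathrm{up}}$ directly from Proposition \ref{PROPsplittingpatternconnections}; when $2$ lies in the nondefective splitting pattern (which happens when $r=2$, since then $\mathfrak{j}_1 = 2$ forces $h_{\mathrm{nd}}=1$ or so), Proposition \ref{PROPVPNs}(3) makes $\varphi$ a virtual Pfister neighbour and Lemma \ref{LEMMDTforVPNs}(2) delivers the connections between $i_{\mathrm{lo}}$ and $(2^n + i - 1)^{\mathrm{up}}$ for $0 \le i < m$; and when $\varphi$ is isotropic (in particular when one inducts from $\varphi_1$), Lemma \ref{LEMMDTforisotropicforms} transports connections established for the lower-dimensional form $\anispart{\varphi}$ up to $\Lambda(X)$ with the shift by $\windex{\varphi}$.

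The anticipated bottleneck is $\mydim{\varphi} = 10$, which is not covered by Examples \ref{EXSi1slowdimension} and where the type $(5,0)$ — i.e. the nondegenerate case — is already handled by Theorem \ref{THMexcellentconnections}, so the genuinely new types are $(2,6), (3,4), (4,2)$. For $(2,6)$ there is likely nothing to check ($r=2$ gives at most the pairs forced by the splitting pattern, which follow from $2$ being in the nondefective splitting pattern and Proposition \ref{PROPVPNs}(3), since $m = 10 - 8 = 2$ here, so Lemma \ref{LEMMDTforVPNs} applies directly). For $(4,2)$ and $(3,4)$ one needs to pin down $\mathfrak{i}_1$: the key input is Theorem \ref{THMi1} (bounding $\mathfrak{i}_1$ by the largest $2$-power dividing $\Izhdim{\varphi}$, so $\mathfrak{i}_1 \le 2$ in both cases) combined with Theorem \ref{THMi1restrictionssingular} and the restriction from Proposition \ref{PROPuppersummand}/Theorem \ref{THMKarpenkouppersummand}; if $\mathfrak{i}_1 = 2$ forces $|\Lambda^U(X)| = 2$ (as one expects from Theorem \ref{THMPfisterneighbourproblem} once one checks $\varphi_1$ is defined over $F$ — which for height-two forms with $\mathfrak{i}_1 = r/?$ needs care), the excellent-pair obligation for the upper component becomes $\Lambda^U(X) = \{0_{\mathrm{lo}}, (2^n-1)^{\mathrm{up}}\}$, matching the required connection. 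If pinning down $\mathfrak{i}_1$ in dimension $10$ proves too delicate with the tools at hand, the safe fallback is to state the lemma for $\mydim{\varphi} \le 9$ only, exactly as the Examples are phrased; but I expect the argument above, tracing through the $(4,2)$ and $(3,4)$ cases by the same virtual-Pfister-neighbour / isotropic-reduction bookkeeping used for dimension $8$, to close dimension $10$ as well.
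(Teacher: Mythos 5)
Your strategy for dimensions $\leq 9$ is exactly the paper's: reduce to $s \geq 2$ via Theorem \ref{THMexcellentconnections}, enumerate types $(r,s)$ dimension by dimension, observe that small $r$ leaves no excellent pairs to check, and for the remaining types read off $\mathfrak{i}_1$ from Examples \ref{EXSi1slowdimension} and produce the required connections via Proposition \ref{PROPsplittingpatternconnections}, the virtual-Pfister-neighbour route (Proposition \ref{PROPVPNs} plus Lemma \ref{LEMMDTforVPNs}) for the type-$(2,2)$ case, and isotropic reduction (Lemma \ref{LEMMDTforisotropicforms}) to transport connections from $\varphi_1$ upward for the type-$(3,2)$ and $(3,3)$ cases. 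That part of your proposal is correct and is the paper's argument.

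Your caution about dimension $10$ is warranted, and your optimism that it closes is not. The paper's own proof in fact stops at dimension $9$, and in the discussion immediately following the lemma the authors concede that for $\mydim{\varphi} = 10$ there is one case they cannot handle: $\varphi$ of type $(3,4)$ with nondefective splitting pattern $(1,3)$. Your proposed escape routes fail precisely there. Theorem \ref{THMi1restrictionssingular} requires $\witti{1}{\varphi} \geq s$, which is false for type $(3,4)$ with $\mathfrak{i}_1 = 1$ and $s = 4$; and with splitting pattern $(1,3)$ the connections forced by Proposition \ref{PROPsplittingpatternconnections} are $0_{\mathrm{lo}} \leftrightarrow 8^{\mathrm{up}}$, $1_{\mathrm{lo}} \leftrightarrow 6^{\mathrm{up}}$, $2_{\mathrm{lo}} \leftrightarrow 7^{\mathrm{up}}$, whereas the excellent pairs demand $0_{\mathrm{lo}} \leftrightarrow 7^{\mathrm{up}}$ and $1_{\mathrm{lo}} \leftrightarrow 8^{\mathrm{up}}$; since $2$ is not in the splitting pattern, the virtual-Pfister-neighbour route is also unavailable. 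So your fallback — proving the statement only for $\mydim{\varphi} \leq 9$ — is the correct resolution and is what the argument actually delivers; the other dimension-$10$ types ($(2,6)$, $(4,2)$, and $(3,4)$ with other splitting patterns) do close by the bookkeeping you describe, but the $(3,4)$/$(1,3)$ case is a genuine gap that the tools of the paper do not fill.
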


When $\mydim{\varphi} = 10$, there is just one case in which we do not know the validity of Conjecture \ref{CONJexcellentconnections}, namely the case where $\varphi$ has type $(3,4)$ and nondefective splitting pattern $(1,3)$. Here, $2$ is not in the nondefective splitting pattern of $\varphi$, and so $\varphi$ is not a virtual Pfister neighbour. In particular, the argument used to handle the case of $6$-dimensional forms in the proof of the preceding lemma is not applicable here. Note that it is relatively straightforward to classify the anisotropic forms of type $(3,4)$ and nondefective splitting pattern $(1,3)$: They are precisely the $10$-dimensional forms similar to $\anispart{(\qpfister{a,b}{c} \perp \langle 1,d,e,de \rangle)}$ for some $a,b,c,d,e \in F^\times$.  Thus, if we exclude this specific class of forms, then the statement of Conjecture \ref{CONJexcellentconnections} is also valid in dimension $10$.  \vspace{1 \baselineskip}

\noindent {\bf Acknowledgements.} The work of both authors was supported by the NSERC Discovery Grant No. RGPIN-2019-05607.

\bibliographystyle{alphaurl}

\end{document}